\newlength{\textlarg}
\newtheorem{prop}{Proposition}[section]
\newtheorem{thm}[prop]{Theorem}
\newtheorem*{thm*}{Theorem}
\newtheorem*{addendum*}{Addendum}
\newtheorem{cor}[prop]{Corollary}
\newtheorem{lem}[prop]{Lemma}
\newtheorem{thmintro}{Theorem}
\newtheorem{corintro}[thmintro]{Corollary}
\newtheorem*{convention*}{Convention}
\theoremstyle{definition}
\newtheorem*{defn*}{Definition}
\newtheorem{remark}[prop]{Remark}
\newtheorem*{scholium*}{Scholium}
\theoremstyle{remark}
\newtheorem{example}[prop]{Example}
\newtheorem*{example*}{Example}
\numberwithin{equation}{section}
\newcommand{\fixbug}{}
\newcommand{\ro}{\varrho}
\newcommand{\RR}{\mathbf{R}}
\newcommand{\ZZ}{\mathbf{Z}}
\newcommand{\la}{\langle}
\newcommand{\ra}{\rangle}
\newcommand{\inv}{^{-1}}
\newcommand{\se}{\subseteq}
\newcommand{\lra}{\longrightarrow}
\newcommand{\cathyp}{{\upshape CAT($-1$)}\xspace}
\newcommand{\tangle}[2]
{\angle_\mathrm{T}(#1,#2)}
\newcommand{\aangle}[3]
{\angle_{#1}(#2,#3)}
\newcommand{\cangle}[3]
{\overline{\angle}_{#1}(#2,#3)}
 \DeclareMathOperator{\Ker}{Ker}
\DeclareMathOperator{\Isom}{Is}
\DeclareMathOperator{\LF}{Rad_{\mathscr{L\!E\!}}}
\newcommand{\bd}{\partial} 
\def\Aut{\mathop{\mathrm{Aut}}\nolimits}
\def\min{\mathop{\mathrm{min}}\nolimits}
\begin{document}

\title{Amenable hyperbolic groups}
\author[P-E.~Caprace]{Pierre-Emmanuel Caprace{\large $^\spadesuit$}}
\address{Universit\'e catholique de Louvain, IRMP, Chemin du Cyclotron 2, 1348 Louvain-la-Neuve, Belgium}
\email{pe.caprace@uclouvain.be}
\thanks{{$\spadesuit$} F.R.S.-FNRS Research Associate, supported in part by FNRS grant F.4520.11 and the European Research Council}
\author[Y.~Cornulier]{Yves \lowercase{de} Cornulier{\large $^\varheartsuit$}}
\address{Laboratoire de Math\'ematiques\\
B\^atiment 425, Universit\'e Paris-Sud 11\\
91405 Orsay\\France}
\email{yves.cornulier@math.u-psud.fr}
\thanks{{$\varheartsuit$} Supported by ANR project {\it ``QuantiT"} JC08\textunderscore 318197.}

\author[N.~Monod]{Nicolas Monod{\large $^\vardiamondsuit$}}
\address{EPFL\\
SB -- {\sc MathGeom} -- EGG, Station 8\\
        CH--1015 Lausanne, 
        Switzerland}
\email{nicolas.monod@epfl.ch}
\thanks{{$\vardiamondsuit$} Supported in part by the Swiss National Science Foundation and the European Research Council}

\author[R.~Tessera]{Romain Tessera{\large $^\clubsuit$}}
\address{UMPA, ENS Lyon\\46 All\'ee d'Italie\\ 69364 Lyon Cedex\\ France}
\email{rtessera@umpa.ens-lyon.fr}
\thanks{{$\clubsuit$} Supported in part by ANR project GGAA}

\date{First draft: February 2012; revised: September 2013.}
\keywords{Gromov hyperbolic group, locally compact group, amenable group, contracting automorphisms, compacting automorphisms}
%

\subjclass[2010]{Primary 20F67; Secondary 05C63, 20E08, 22D05, 43A07, 53C30, 57S30}








\begin{abstract}
We give a complete characterization of the locally compact groups that are non-elementary Gromov-hyperbolic and amenable.  They coincide with the class of mapping tori of discrete or continuous one-parameter groups of compacting automorphisms. We moreover give a description of all Gromov-hyperbolic locally compact groups with a cocompact amenable subgroup: modulo a compact normal subgroup, these turn out to be either rank one simple Lie groups, or automorphism groups of semi-regular trees acting doubly transitively on the set of ends. As an application, we show that the class of hyperbolic locally compact groups with a  cusp-uniform non-uniform lattice, is very restricted.
\end{abstract}

\maketitle
\fixbug

\setcounter{tocdepth}{1}    
\tableofcontents

\section{Introduction}
\subsection{From negatively curved Lie groups to amenable hyperbolic groups}
John Milnor~\cite{milnor1976curvatures} initiated the study of left-invariant Riemannian metrics on general Lie groups and observed that a connected Lie group admitting a left-invariant \emph{negatively curved} Riemannian metric is necessarily soluble; he asked about a more precise characterization. This was answered by E.~Heintze~\cite{Heintze}: a connected Lie group has a negatively curved left-invariant Riemannian metric if and only if it can be written as a semidirect product $N\rtimes_{\alpha}\mathbf{R}$, where the group $N$ is a (nontrivial) nilpotent Lie group, which is contracted by the action of positive elements of $\mathbf{R}$, i.e.~$\lim_{t\to +\infty}\alpha(t)x =1$ for all $x\in N$. 

All these groups thus constitute examples  of locally compact groups that are both \emph{amenable} and (non-elementary) \emph{Gromov-hyperbolic}. The purpose of the present paper is to study this more general class of groups. 

It should be emphasized that, although most works devoted to Gromov-hyperbol\-icity focus on finitely generated discrete groups, Gromov's original concept was designed to encompass more general metric groups. We shall mostly focus here on compactly generated locally compact groups; this point of view is in fact very natural, as the full isometry group of a Gromov-hyperbolic metric space might very well be non-discrete. The definition reads as follows: A locally compact group $G$ is \textbf{Gromov-hyperbolic} (or, for short, \textbf{hyperbolic}) if it admits a compact generating set such that the associated word metric is Gromov-hyperbolic. In particular hyperbolicity is invariant under quasi-isometries. The definition might look unfamiliar to readers used to deal with locally compact spaces, since the Cayley graph associated with a compact generating set  is in general far from locally finite; moreover the natural action of the group on its Cayley graph need not be continuous. This matter of fact is however mitigated by the following characterization, proved in Corollary~\ref{cor:eqprop} below: a locally compact $G$ is Gromov-hyperbolic if and only if it admits a continuous proper cocompact isometric action on a Gromov hyperbolic proper geodesic metric space.

Gromov~\cite[\S 3.1,\S 8.2]{Gro} divides hyperbolic groups into three classes.

\begin{itemize}
\item The visual boundary  $\bd G$ is empty. Then $G$ is compact.
\item The visual boundary $\bd G$ consists of two points. This holds if and only if $G$ has an infinite cyclic closed cocompact subgroup. Actually, this can be improved as follows (Proposition~\ref{pro:debout}): $G$ has a unique maximal compact normal subgroup $W$ such that $G/W$ is isomorphic to a cocompact group of isometries of the real line, namely isomorphic to $\mathbf{Z}$, $\mathbf{Z}\rtimes\{\pm 1\}$, $\mathbf{R}$, or $\mathbf{R}\rtimes\{\pm 1\}$.
\item The visual boundary is uncountable.
\end{itemize}
Hyperbolic groups belonging to the first two classes are called \textbf{elementary}  and the above description provides for them a largely satisfactory classification; we shall focus on non-elementary hyperbolic groups. For example, a semisimple real Lie group is non-elementary hyperbolic if and only if it has real rank one. All Heintze groups mentioned above are non-elementary hyperbolic.

\medskip
In order to  state our first result, we introduce the following terminology. An automorphism $\alpha \in \Aut(H)$ of a locally compact group $H$ is called \textbf{compacting} if there is some compact subset $V \subseteq H$ such that for each $g \in H$, we have $\alpha^n(g) \in V$ for all sufficiently large $n>0$. In the  special case where  $\lim_{n \to \infty} \alpha^n(g) = 1$ for all $g \in G$, we say that   $\alpha$ is  \textbf{contracting}. 

The following result provides a first characterization of amenable hyperbolic groups, in the spirit of Heintze's characterization. 

\begin{thmintro}\label{thm:Amen:intro}
A locally compact group is amenable and non-elementary hyperbolic if and only if it can be written as a semidirect product $H\rtimes_\alpha\mathbf{Z}$ or $H\rtimes_\alpha\mathbf{R}$, where  $\alpha(1)$ is a {\em compacting} automorphism of the noncompact group $H$.
\end{thmintro}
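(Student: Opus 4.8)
The plan is to establish the two implications separately, working throughout with the geometric model provided by Corollary~\ref{cor:eqprop}: after replacing $G$ by such a model when needed, we may assume that $G$ acts continuously, properly and cocompactly by isometries on a proper geodesic Gromov-hyperbolic space $X$, whose boundary $\partial X$ is then $G$-equivariantly identified with $\partial G$.

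\emph{The implication ``$\Leftarrow$''.} Suppose $G=H\rtimes_\alpha\mathbf Z$ or $H\rtimes_\alpha\mathbf R$ with $\alpha(1)$ compacting and $H$ noncompact. For amenability one uses that a locally compact group with a compacting automorphism $\beta$ is amenable: there is a $\beta$-invariant compact set $V_0$ with $H=\bigcup_n\beta^{-n}(V_0)$, and dividing by the largest compact normal $\beta$-invariant subgroup reduces matters to the contracting case, where the group is amenable by the structure theory of locally compact groups with a contracting automorphism (I will invoke the version recorded in the earlier sections). Since $G$ is then an extension of $\mathbf Z$ or $\mathbf R$ by the amenable group $H$, it is amenable. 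For hyperbolicity one realises $G$ on a Heintze-type space attached to $\alpha$ (a mapping-torus / horospherical geometry) and verifies the thin-triangle inequality; the compacting hypothesis supplies exactly the exponential contraction in the $H$-directions that forces hyperbolicity, the only extra subtlety compared with the Lie-group situation being the possibly nontrivial compact part of $H$, which one removes by passing to the quotient of $G$ by its largest compact normal subgroup contained in $H$ and transferring hyperbolicity back across this proper, compact-kernel quotient. Finally $G$ is non-elementary: it is noncompact and not two-ended, since a two-ended group is compact-by-($\mathbf Z$ or $\mathbf R$), which would force $H$ to be compact; Gromov's trichotomy then makes $\partial G$ uncountable.

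\emph{The implication ``$\Rightarrow$''.} Assume $G$ is amenable and non-elementary hyperbolic. Amenability gives a $G$-invariant probability measure on the compact space $\partial X$, and the standard analysis of invariant measures on boundaries of hyperbolic spaces shows that $G$ either has a bounded orbit in $X$, or fixes a pair of points of $\partial X$, or fixes a single point. Cocompactness and non-compactness of $X$ exclude a bounded orbit; if $G$ fixed a pair $\{\xi_1,\xi_2\}$, then up to an index-two subgroup it would preserve a quasi-geodesic line between them and act cocompactly on $X$, forcing $X$ to be quasi-isometric to $\mathbf R$ and $G$ two-ended, contrary to non-elementarity. Hence $G$ fixes a unique $\xi\in\partial X$. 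Fixing a basepoint $o$ and homogenising the Busemann cocycle at $\xi$ yields a continuous homomorphism $\beta\colon G\to\mathbf R$ (the Busemann character); as $G$ is compactly generated, $\beta(G)$ is $\{0\}$, infinite cyclic, or all of $\mathbf R$. It is not $\{0\}$: otherwise $G$ would preserve every horosphere centred at $\xi$, so $Go$ would lie on a single horosphere $S$ and $X=GK$ would be contained in a bounded neighbourhood of $S$, contradicting that the ray from $o$ to $\xi$ leaves every such neighbourhood. Set $H:=\ker\beta$; if $H$ were compact then $G$ would be compact-by-($\mathbf Z$ or $\mathbf R$), hence elementary, so $H$ is noncompact, as required.

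\emph{Splitting and the compacting property.} If $\beta(G)$ is infinite cyclic, a generator lifts to $g_0\in G$ with $\langle g_0\rangle\cong\mathbf Z$ closed and $G=H\rtimes\langle g_0\rangle$. If $\beta(G)=\mathbf R$, one must produce a continuous one-parameter subgroup mapping isomorphically onto $\mathbf R$: here $\beta(G)=\mathbf R$ forces the identity component $G^\circ$ to be nontrivial with $\beta(G^\circ)=\mathbf R$ (else $G/G^\circ$ would be totally disconnected and admit a continuous surjection onto $\mathbf R$, which is impossible), and one then extracts the required subgroup $\mathbf R\hookrightarrow G$ from the structure theory of connected locally compact groups; I expect this to be the most delicate structural point of the argument. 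In either case, after reversing the generator if needed, we obtain $G=H\rtimes_\alpha T$ with $T=\mathbf Z$ or $\mathbf R$ and $\beta$ strictly positive in the positive direction of $T$; let $g_0$ be the generator, respectively the time-one element, and let $\alpha(1)$ be conjugation by $g_0$. To see that $\alpha(1)$ is compacting, note that by properness and cocompactness on the proper space $X$ the sets $G_R=\{g:d(go,o)\le R\}$ are compact and exhaust $G$, so it suffices to bound $d(\alpha(1)^n(h)o,o)=d(hg_0^{-n}o,\,g_0^{-n}o)$ uniformly in $h\in H$ for $n$ large. Now $g_0$ is loxodromic with one axis endpoint at $\xi$, and $\beta(g_0)>0$ gives $g_0^{-n}o\to\xi$; for fixed $h$, the sequence $\{hg_0^{-n}o\}$ runs along the $h$-translate of this axis, a quasi-geodesic ray converging to $h\xi=\xi$ with the same quasi-isometry constants, hence staying within a distance $D'$ of the axis where $D'$ depends only on $\delta$ and those constants, thus uniformly in $h$. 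Since $\beta(h)=0$, the $1$-Lipschitz Busemann function takes equal values at $hg_0^{-n}o$ and $g_0^{-n}o$, which pins down the matching parameter along the axis to within $O(1)$; the triangle inequality then yields $\limsup_n d(\alpha(1)^n(h)o,o)\le R_0$ with $R_0$ independent of $h$. Hence $\alpha(1)^n(h)\in G_{R_0}$ for all large $n$, so $\alpha(1)$ is compacting, completing the proof.
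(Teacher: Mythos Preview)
Your overall architecture is right and matches the paper's: the ``$\Rightarrow$'' direction goes through a fixed boundary point, the Busemann character, and then a compacting argument for the conjugation action on the kernel; the ``$\Leftarrow$'' direction needs a hyperbolicity proof for $H\rtimes_\alpha\mathbf Z$. But there are two genuine gaps.

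\textbf{Closedness of $\beta(G)$.} You assert that ``as $G$ is compactly generated, $\beta(G)$ is $\{0\}$, infinite cyclic, or all of $\mathbf R$''. This is not a valid inference: a continuous homomorphism from a compactly generated locally compact group to $\mathbf R$ can have dense non-closed image (think of $\mathbf Z^2\to\mathbf R$, $(m,n)\mapsto m+\sqrt 2\,n$). The paper proves closedness separately (Proposition~\ref{prop:coctamen}(\ref{it2})), and the argument is not formal: one first shows, via the focal geometry, that $G$ contains a cobounded subgroup of the form $[\Gamma,\Gamma]\rtimes\langle\alpha\rangle$ (Proposition~\ref{prop:coco}), and then uses properness of the action to extract convergent subsequences and conclude that the image of $\beta$ is discrete or all of $\mathbf R$. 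Without this, your case split has no foundation, and in particular the $\mathbf R$-splitting discussion is premature.

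\textbf{Hyperbolicity in the converse.} Your sentence ``one realises $G$ on a Heintze-type space attached to $\alpha$ \dots and verifies the thin-triangle inequality'' is not a proof. Heintze's construction is specific to connected nilpotent Lie groups with a contracting one-parameter group; for a general locally compact $H$ (possibly totally disconnected, possibly with nontrivial compact limit group for $\alpha$) there is no such Riemannian model available a priori. The paper handles this by an entirely different, purely combinatorial route: it shows (Corollary~\ref{cor:CompactingConfining}) that a compacting automorphism is \emph{confining} into some compact set $A$, and then proves directly (Proposition~\ref{prop:Contracting}) that the Cayley graph of $H\rtimes\langle\alpha\rangle$ with respect to $A\cup\{\alpha^{\pm}\}$ is $\delta$-hyperbolic, by analysing the shape of geodesics and bounding the thinness of triangles by hand. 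The CAT($-1$) model spaces (millefeuille spaces) do exist, but their construction requires all of Section~\ref{sec:compaction} and is logically downstream of the hyperbolicity statement, not a means to prove it.

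Your compacting argument in the last paragraph is correct and is essentially the ``preliminary observation'' in the proof of Proposition~\ref{prop:coco}: two quasi-geodesic rays to $\xi$ with matched Busemann values are uniformly close. The paper packages the same idea slightly differently, deducing compaction from the dynamics on $\partial X\setminus\{\xi\}$ (Proposition~\ref{prop:coctamen}(\ref{it5})), but your direct metric version is fine.
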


We shall give a much more detailled statement in Section~\ref{proofaa}. For now, observe that besides Heintze groups, examples of amenable and non-elementary hyperbolic locally compact groups are provided by the stabilizer of an end in the full automorphism group of a semi-regular locally finite tree. One can also combine a Heintze group with a tree group by some kind of {warped product} construction, which also yields an example of an amenable non-elementary hyperbolic locally compact group. As a result of our analysis, it turns out that \emph{all amenable hyperbolic groups are obtained in this way}. For this more comprehensive description of amenable hyperbolic groups, we refer to Theorem~\ref{thm:main} below. At this point, let us simply mention  the following consequence of that description. 

\begin{thmintro}\label{thmintro:cat-1}
Every amenable hyperbolic  locally compact group acts continuously, properly and cocompactly by isometries on a proper, geodesically complete $\mathrm{CAT}(-1)$ space. 
\end{thmintro}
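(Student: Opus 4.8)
The plan is to derive this from the structural description provided by Theorem~\ref{thm:main}, so that the statement becomes a geometric verification. First observe that it is harmless to replace $G$ by its quotient $G/W$ by a compact normal subgroup $W$: if $G/W$ acts continuously, properly and cocompactly by isometries on a space $X$, then so does $G$ via the projection $G \to G/W$; continuity and cocompactness are immediate, and properness holds since the preimage in $G$ of a compact subset of $G/W$ is compact. By Theorem~\ref{thm:main} we may thus assume $G$ is one of the explicit ``standard'' amenable hyperbolic groups, built as a warped product out of a \emph{Heintze group} (an $N\rtimes_\alpha\mathbf{R}$ of the introduction, with $\alpha$ contracting) and the stabilizer $\Stab_{\Aut(T)}(\xi)$ of an end in the automorphism group of a semi-regular locally finite tree $T$. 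The two pure cases already come with canonical models. For a Heintze group, Heintze's theorem furnishes a left-invariant Riemannian metric of pinched negative curvature; rescaling the metric so that the sectional curvature is $\le -1$ yields a complete simply connected Riemannian manifold $M$, which is a proper, geodesically complete $\mathrm{CAT}(-1)$ space, and the group acts on $M$ by left translations --- an isometric, smooth (hence continuous), proper and cocompact action. For the tree case, the tree $T$ with unit edge lengths is a proper, geodesically complete $\mathrm{CAT}(-1)$ space (a metric tree without leaves is $\mathrm{CAT}(-\kappa)$ for every $\kappa$), and $\Stab_{\Aut(T)}(\xi)$ acts on it continuously (the action of the automorphism group of a locally finite tree is continuous), properly (vertex stabilizers are compact), and cocompactly (there are finitely many orbits of vertices and edges, since $\Stab(\xi)$ acts transitively on each horosphere centred at $\xi$ and on the set of such horospheres).

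In the mixed case I would construct $X$ by interweaving these two pictures. Let $\omega \in \partial M$ be the endpoint of the flow (fixed by the Heintze group) and $b_\omega$ the associated Busemann function, normalised so that its integer level sets correspond to the vertex levels of $T$ measured from $\xi$. The space $X$ is assembled from copies of $M$ --- or of bounded slabs of $M$ lying between consecutive horospheres of $b_\omega$ --- arranged along the tree $T$ and glued to one another along horoballs centred at $\omega$, so that the copies branch, in the expanding direction of the flow, exactly as $T$ branches away from $\xi$; since the horocyclic part of $\Stab(\xi)$ is totally disconnected, $X$ is in general not a manifold but a foliated space whose leaves are isometric to $M$. (The $\mathbf{Z}$-case is not genuinely different: the branching then occurs at integer levels but $X$ remains a continuous, geodesically complete space, just as $T$ already is.) Properness of $X$ follows from local compactness of $M$ together with local finiteness of $T$; geodesic completeness follows from that of $M$ and from the fact that the construction extends every local geodesic; and $G$ acts on $X$ isometrically by combining its left-translation action on the $M$-leaves with its action on $T$, this combined action being continuous, proper and cocompact essentially because it is so on each of the two constituents while the transverse directions are compact.

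The crux --- and the expected main obstacle --- is to verify that $X$ is $\mathrm{CAT}(-1)$. Each piece is $\mathrm{CAT}(-1)$ and the gluing loci are horoballs, which are closed and convex in the Hadamard manifold $M$, so Reshetnyak's gluing theorem certifies that every \emph{finite} sub-assembly is $\mathrm{CAT}(-1)$; one then passes to the full $X$ by an exhaustion argument, writing $X$ as a non-decreasing union of finite sub-assemblies each convex in the next, so that any geodesic triangle, hence the comparison inequality, already lives at a finite stage. Two points demand genuine care. First, one must arrange the interweaving so that no isometrically embedded flat strip appears: a naive gluing of two copies of $M$ along a common horosphere would, when the horospheres of $M$ are flat (for instance in the real hyperbolic case), create a flat strip transverse to the branching horosphere, which $\mathrm{CAT}(-1)$ forbids; avoiding this dictates the precise way the copies of $M$ must be offset along the tree, and is the technical heart of the construction. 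Second, one must check that the rescaling of Heintze's metric delivering curvature $\le -1$ can be fixed once and for all, uniformly across all leaves --- which holds because the leaves are isometric images of one another under the flow --- so that the curvature bound is genuinely global on $X$.
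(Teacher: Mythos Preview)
Your approach matches the paper's: Theorem~\ref{thmintro:cat-1} is read off from Theorem~\ref{thm:main}, and the required \cathyp model is the millefeuille space of \S\ref{sec:millefeuille}, defined as the fibre product $X[k]=\{(x,y)\in M\times T:b(x)=b'(y)\}$ for Busemann functions $b,b'$. Your description via gluing copies of $M$ along horoballs, and your invocation of Reshetnyak, are both correct.

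Your final paragraph, however, manufactures a difficulty that is not there. Since the gluing loci are horoballs --- closed convex subsets of the \cathyp space $M$ --- Reshetnyak's theorem yields the \cathyp property immediately; no flat strip can arise, and there is nothing to ``offset''. The scenario you worry about (gluing along a bare horosphere) simply does not occur: any two sheets of the millefeuille meet along a full horoball, not merely along its bounding horosphere, because two rays in $T$ directed towards $\xi$ share an entire half-line once they merge, and over that half-line the fibre product sits inside a single copy of $M$. What you call the ``technical heart of the construction'' therefore dissolves; the fibre-product definition already does all the work, and the passage about rescaling and avoiding flat strips can be deleted.
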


Those $\mathrm{CAT}(-1)$ spaces will be constructed as fibered products of homogeneous negatively curved manifolds with trees. We call them  \textbf{millefeuille spaces}; see \S\ref{sec:millefeuille} below for a more precise description. Those millefeuille spaces provide rather nice model spaces for amenable hyperbolic groups; one should keep in mind that      for general hyperbolic locally compact groups (even discrete ones), it is an outstanding problem to determine if they can act properly cocompactly on any CAT($-1$) (or even CAT(0)) space \cite[\S 7.B]{Groa}.

\medskip
Another consequence of our study is a complete answer to a  question  appearing at the very end of the paper~\cite{kaimanovich2002boundary} by Kaimanovich and Woess: they asked whether there exists a one-ended locally finite hyperbolic graph with a vertex-transitive group $\Gamma$ of automorphisms fixing a point at infinity. For planar graphs, this was recently settled in the negative by Georgakopoulos and Hamann~\cite{georgakopoulos2011fixing}. We actually show that the answer is negative in full generality.

\begin{corintro}\label{cor:woesskai}
If a locally finite hyperbolic graph admits a vertex-transitive group of automorphisms fixing a point at infinity, then it is quasi-isometric to a regular tree and in particular cannot be one-ended.
\end{corintro}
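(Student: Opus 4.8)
The plan is to recognise the hypothesised symmetry group as a hyperbolic locally compact group fixing a point at infinity, run it through the classification (Theorems~\ref{thm:Amen:intro} and~\ref{thmintro:cat-1}), and then use total disconnectedness to eliminate the manifold part of the associated millefeuille space.

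\emph{From the graph to a group.} I take $X$ to be connected and regard it as a proper geodesic space via its geometric realisation. With the permutation topology, $\Aut(X)$ is totally disconnected and locally compact, and acts continuously and properly on $X$ with compact vertex stabilisers. Given a vertex-transitive $\Gamma\le\Aut(X)$ fixing some $\xi\in\partial X$, set $G:=\overline{\Gamma}$. Then $G$ is a closed, hence totally disconnected and locally compact, subgroup of $\Aut(X)$; it is still vertex-transitive (it contains $\Gamma$) and still fixes $\xi$. By Corollary~\ref{cor:eqprop}, $G$ is Gromov-hyperbolic, and (the action on $X$ being proper and cocompact) $G$ is quasi-isometric to $X$. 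Moreover $X$ is infinite (otherwise $\partial X=\emptyset$ and there is nothing to fix), so $G$ is non-compact; by Gromov's trichotomy recalled above, either $\partial X$ has exactly two points or $X$ is non-elementary.

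\emph{Applying the classification.} If $\partial X$ has two points, then $G$ — hence $X$ — is quasi-isometric to $\mathbf{R}$, i.e.\ to the regular tree of degree $2$, and we are done. Otherwise $G$ is a non-elementary hyperbolic locally compact group fixing a point at infinity, so by Theorem~\ref{thm:Amen:intro} it is amenable and of the form $H\rtimes_\alpha\mathbf{Z}$ or $H\rtimes_\alpha\mathbf{R}$ with $\alpha(1)$ a compacting automorphism of the non-compact group $H$. Since $G$ is totally disconnected it has no nontrivial connected closed subgroup, which excludes the $\mathbf{R}$-factor; thus $G\cong H\rtimes_\alpha\mathbf{Z}$ with $H$ totally disconnected and locally compact.

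\emph{No manifold part.} By Theorem~\ref{thmintro:cat-1}, $G$ acts properly and cocompactly by isometries on a millefeuille space, a fibred product $\mathbf{H}\times_{\mathbf{R}}T$ of a homogeneous negatively curved manifold $\mathbf{H}$ with a tree, and $X$ is quasi-isometric to this space. I would now show that $\mathbf{H}$ carries no nilpotent ``Heintze'' part, so that the millefeuille space reduces to the tree $T$. The cleanest route is through the boundary: the structural (horospherical) description of amenable hyperbolic groups identifies $\partial X\setminus\{\xi\}$ with the contraction subgroup of $(H,\alpha)$, which is totally disconnected because $H$ is; hence $\partial X$ is totally disconnected and cannot contain the positive-dimensional sphere $\partial\mathbf{H}\setminus\{\xi\}$ that a nontrivial Heintze factor would contribute. (Equivalently, the detailed classification exhibits the nilpotent factor as a connected closed subgroup of $G$ modulo a compact kernel, which must then be trivial.) Thus $X$ is quasi-isometric to the tree $T$; being non-elementary, $\partial X=\partial T$ is a Cantor set, so $T$ is a locally finite bushy tree and is quasi-isometric to the $3$-regular tree. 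In every case $X$ is quasi-isometric to a regular tree of degree $\ge 2$, which has at least two ends; since the number of ends is a quasi-isometry invariant, $X$ is not one-ended.

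\emph{Main obstacle.} The delicate point is the last step — ruling out a nontrivial homogeneous negatively curved factor, equivalently showing that a totally disconnected amenable hyperbolic group is quasi-isometric to a tree. This rests on the structure theory of totally disconnected locally compact groups admitting a compacting automorphism, and on the anatomy of isometry groups of millefeuille spaces, developed in the body of the paper; once that theory and Theorems~\ref{thm:Amen:intro}--\ref{thmintro:cat-1} are in hand, the reduction carried out in the first two paragraphs is routine.
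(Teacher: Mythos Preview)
Your argument is correct and tracks the paper's proof closely through the first two paragraphs: pass to the closure $G\le\Aut(X)$, observe it is a totally disconnected hyperbolic locally compact group fixing $\xi$, dispose of the elementary case, and in the non-elementary case write $G\cong H\rtimes_\alpha\ZZ$ with $H$ totally disconnected and $\alpha$ compacting. One small imprecision: Theorem~\ref{thm:Amen:intro} as stated is an equivalence between ``amenable and non-elementary hyperbolic'' and the semidirect-product form; it does not by itself deduce amenability from ``fixes a boundary point''. You need an extra sentence invoking Adams' lemma (Lemma~\ref{lem:AmyStab}) --- or, equivalently, the focal condition in Theorem~\ref{thm:main} --- before applying Theorem~\ref{thm:Amen:intro}.

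Where you diverge from the paper is the endgame. Having arrived at $G\cong H\rtimes_\alpha\ZZ$ with $H$ totally disconnected, the paper simply quotes Lemma~\ref{lem:TD_compactable}: a totally disconnected compaction group $H\rtimes\langle\alpha\rangle$ acts properly, vertex- and edge-transitively on a $(k+1)$-regular tree, hence is quasi-isometric to it. Your route through Theorem~\ref{thmintro:cat-1} and a boundary-dimension argument to kill the Heintze factor is valid (indeed $\bd X\setminus\{\xi\}$ is homeomorphic to a quotient of the totally disconnected group $\Ker\beta_\xi$ by a compact subgroup, by Proposition~\ref{prop:coctamen}(\ref{utrans}), hence totally disconnected; note it is $\Ker\beta_\xi$, not the contraction group, that acts simply transitively up to compact stabiliser), but it is a detour: the millefeuille machinery already builds the tree via Lemma~\ref{lem:TD_compactable}, so invoking the whole fibred product only to then strip away the manifold part is circular in spirit. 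The paper's one-line appeal to Lemma~\ref{lem:TD_compactable} is the cleaner finish.
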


\medskip
The proof of Theorem~\ref{thm:Amen:intro} can be outlined as follows. If a non-elementary hyperbolic locally compact group $G$ is amenable, it fixes a point in its visual boundary $\bd G$ since otherwise, the ping-pong lemma provides a discrete free subgroup. Notice moreover that since $G$ acts cocompactly on itself, it must contain some hyperbolic isometry. The $G$-action on itself therefore provides a special instance of what we call a \emph{focal action}: namely the action of a group $\Gamma$ on a hyperbolic space $X$ is called \textbf{focal} if $\Gamma$ fixes a boundary point $\xi \in \bd X$ and contains some hyperbolic isometry. If $\Gamma$ fixes a point in $\bd X$ but does not contain any hyperbolic isometry, then the action is called 
\textbf{horocyclic}. Any group admits a horocyclic action on some hyperbolic space, so that not much can be said about the latter type. On the other hand, it is perhaps surprising that focal actions are on the contrary much more restricted: for example any focal action is quasi-convex (see Proposition~\ref{prop:undistorted}). In addition, we shall see how to associate  {\em canonically} a nontrivial {\bf Busemann quasicharacter} $\beta:\Gamma\to\RR$ whenever $\Gamma $ has a focal action on $X$ fixing $\xi \in \bd X$. Roughly speaking, if $b$ is a Busemann function at $\xi$, it satisfies, up to a bounded error, $\beta(g)=b(x)-b(gx)$ for all $(g,x)\in\Gamma\times X$. We refer the reader to \S\ref{hbq} for a rigorous definition. 

Coming back to the setting of Theorem~\ref{thm:Amen:intro}, the amenability of $G$ implies that the {Busemann quasicharacter} is actually a genuine continuous character. The fact that an element which is not annihilated by the character acts as a compacting automorphism on the kernel of that character is finally deduced from an analysis of the dynamics of the boundary action, which concludes the proof of one implication.  

For the converse implication, we give a direct proof that the Cayley graph of a semi-direct product of the requested form is Gromov-hyperbolic. This part of the argument happens to use only metric geometry, without any local compactness assumption. This approach therefore yields a rather general hyperbolicity criterion, which is stated in Theorem~\ref{thm:Contracting} below.

\subsection{Hyperbolic groups with a cocompact amenable subgroup}

We emphasize that, while hyperbolicity is stable under compact extensions, and even under any quasi-isometry, this is not the case for amenability, although amenability is of course invariant under quasi-isometries in the class of discrete groups. Indeed, a noncompact simple Lie group $G$ is  non-amenable but contains a cocompact amenable subgroup, namely the minimal parabolic $P$. The issue is that $G$ is unimodular while $P$ is not, so that $G/P$ does not carry any $G$-invariant measure.  In particular, the class of hyperbolic locally compact groups containing a cocompact amenable subgroup is strictly larger than the class of amenable hyperbolic locally compact groups. The following result shows that there are however not so many non-amenable examples in that class. 

\begin{thmintro}\label{thm:Clight}
Let $G$ be a non-amenable hyperbolic locally compact group. If $G$ contains a cocompact amenable closed subgroup, then $G$ has a unique maximal compact normal subgroup $W$, and exactly one of the following holds:
\begin{enumerate}[(1)]

\item  $G/W$ is the group of isometries or orientation-preserving isometries of a rank one symmetric space of noncompact type.

\item\label{ac} $G/W$ has a continuous, proper, faithful action by automorphisms on a locally finite non-elementary tree $T$, without inversions and with exactly two orbits of vertices, such that the induced $G$-action on the set of ends $\bd T$ is $2$-transitive. In particular, $G/W$ decomposes as a nontrivial amalgam of two profinite groups over a common open subgroup.
\end{enumerate}
\end{thmintro}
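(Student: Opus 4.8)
The plan is to show that the cocompact amenable subgroup is forced to be an end stabiliser, so that $G$ acts transitively --- and then two-transitively --- on its boundary, after which the structure of a rank-one Moufang set takes over. \emph{Step 1: reduction to a faithful transitive boundary action.} Elementary hyperbolic groups being amenable, $G$ is non-elementary; being cocompact, $P$ is quasi-isometric to $G$, hence non-elementary hyperbolic and amenable, so by Theorem~\ref{thm:Amen:intro} (in the refined form of Theorem~\ref{thm:main}) it fixes a point $\xi\in\bd P=\bd G$. By that same analysis, a non-elementary hyperbolic locally compact group is amenable precisely when it fixes a point of its boundary; as $G$ is not amenable it fixes no point of $\bd G$, and so the $G$-action on $\bd G$ is minimal (standard North--South dynamics, using that $G$ has hyperbolic elements but no fixed boundary point). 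Hence $G\cdot\xi$ is dense in $\bd G$; but $\Stab_G(\xi)\supseteq P$ is cocompact, so $G\cdot\xi\cong G/\Stab_G(\xi)$ is compact, hence closed, hence equal to $\bd G$. Thus $G$ is transitive on $\bd G$. Since $\Stab_G(\xi)$ is itself cocompact and fixes $\xi$, it is amenable, so we may replace $P$ by it and assume $P=\Stab_G(\xi)\cong H\rtimes_\alpha\ZZ$ or $H\rtimes_\alpha\RR$ with $\alpha(1)$ compacting on the noncompact group $H$. Finally, let $W$ be the kernel of $G\curvearrowright\bd G$: as for hyperbolic locally compact groups in general, $W$ is compact (an element acting trivially on $\bd G$ stays within bounded distance of every geodesic ray, hence has bounded displacement) and is the unique maximal compact normal subgroup (a compact normal subgroup fixes a point of $\bd G$, and its boundary fixed-point set is $G$-invariant, hence all of $\bd G$ by minimality). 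Passing to $G/W$ we assume the boundary action faithful; then $\bd G$ is a homogeneous compact metrisable space.

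\emph{Step 2: two-transitivity via the contraction group.} This is the heart of the argument. On $Y:=\bd G\setminus\{\xi\}$ the focal dynamics of $P$ provide a parabolic visual metric for which the kernel $H$ of the Busemann character acts cocompactly by isometries, while $\alpha(1)$ acts as a genuine contraction fixing the second fixed point $\eta$ of $\alpha(1)$. I would prove that the \emph{contraction group} $U=\{h\in H:\ \alpha(1)^n h\,\alpha(1)^{-n}\to 1\}$ is closed and acts \emph{simply transitively} on $Y$. Granting this, $P$, and hence $G$, acts two-transitively on $\bd G$, and the pair of opposite parabolics $\Stab_G(\xi),\Stab_G(\eta)$ with their common torus $\Stab_G(\xi)\cap\Stab_G(\eta)$ and root group $U$ is exactly the datum of a locally compact rank-one Moufang set.

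\emph{Step 3: conclusion in the two cases.} By the structure theory of contraction groups, $U$ splits as a direct product of a simply connected nilpotent Lie group and a totally disconnected group; since $U\cong Y$ and $\bd G=Y\cup\{\xi\}$ is the boundary of a hyperbolic group, one of the two factors is trivial (a product ``manifold $\times$ Cantor set'' cannot so arise). If $U$ is totally disconnected, pick a compact open subgroup $U_0$ with $\alpha(1)(U_0)\subseteq U_0$; the two-sided filtration $\dots\subseteq\alpha(1)(U_0)\subseteq U_0\subseteq\alpha(1)^{-1}(U_0)\subseteq\dots$, with trivial intersection, union $U$, and finite successive indices (forced by properness and cocompactness of the action, equivalently by compactness of $\bd G$), is the vertex set of a locally finite biregular tree $T$; one checks that $G$ acts on $T$ continuously, properly, faithfully, without inversions, with two orbits of vertices, and two-transitively on $\bd T=\bd G$, and Bass--Serre theory presents $G$ as a nontrivial amalgam of the two (profinite) vertex stabilisers over their common open intersection --- this is case (2). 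If instead $U$ is a simply connected nilpotent Lie group, then $\bd G=U\cup\{\xi\}\cong S^m$ is a sphere, $P=U\rtimes\Stab_G(\xi,\eta)$ is a Lie group and hence so is $G$ (it is transitive on the sphere $\bd G$ with Lie point-stabiliser, so the solution to Hilbert's fifth problem applies), and the classification of two-transitive locally compact group actions on spheres --- equivalently of rank-one Tits systems of Lie type --- identifies $G$ with the isometry group, or its orientation-preserving subgroup, of a rank-one symmetric space of noncompact type --- this is case (1). Undoing the reduction, the original $G$ is the corresponding extension by its maximal compact normal subgroup $W$.

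\emph{Main obstacle.} The crucial and most delicate point is the simple transitivity of the contraction group $U$ on the punctured boundary $Y=\bd G\setminus\{\xi\}$ in Step~2: deducing it from the sole existence of a cocompact amenable subgroup requires a careful analysis of how the Busemann character, the parabolic metric on $Y$ and the compacting automorphism $\alpha(1)$ fit together, and it is here that local compactness of $G$ is used in an essential way. A second substantial input is the classification of two-transitive locally compact group actions on spheres invoked in the connected case; the totally disconnected case is comparatively routine once the contraction group is in hand.
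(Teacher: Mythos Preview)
Your Step~1 is essentially correct and close to the paper's argument: the paper likewise shows that the cocompact amenable subgroup fixes a boundary point, that $G$ is transitive on $\bd G$, and deduces $2$-transitivity. The paper obtains $2$-transitivity more directly, via Proposition~\ref{prop:coctamen}(\ref{utrans}): the kernel $\Ker(\beta_\xi)$ of the Busemann character already acts transitively on $\bd X\setminus\{\xi\}$, so no finer analysis of contraction groups is needed at this stage.

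There is, however, a genuine error in Step~2. Your central claim --- that the contraction group $U=\{h\in H:\alpha(1)^n h\,\alpha(1)^{-n}\to 1\}$ is \emph{closed} --- is false in key examples. The paper explicitly points this out (see the discussion opening \S\ref{sec:compaction} on the closure of a contraction group): for $G=\Aut(T)$ with $T$ a regular locally finite tree, the contraction group of any hyperbolic element is \emph{never} closed. So the statement you identify as the ``main obstacle'' is not merely hard to prove; it is not true. What is true is that the \emph{closure} $\overline{U_\alpha}$ is compacted by $\alpha$ (Proposition~\ref{prop:ClosureContraction}), and that $\Ker(\beta_\xi)$ acts transitively on $\bd X\setminus\{\xi\}$ with compact point stabilisers --- but then you lose \emph{simple} transitivity, and with it your clean identification $\bd G\cong U\cup\{\infty\}$ and the Moufang-set framework as you have set it up.

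Step~3 has a second gap in the totally disconnected case. Your tree $T$ is the Bass--Serre tree of an HNN extension built from data internal to $P=\Stab_G(\xi)$; it carries a natural $P$-action but no evident $G$-action, and ``one checks that $G$ acts on $T$'' hides the entire difficulty. The paper's route is quite different: after establishing $2$-transitivity, it invokes Proposition~\ref{pro:gt} to reduce to the dichotomy rank-one Lie versus totally disconnected (via the structure of $G^\circ$, not via the contraction group), and in the totally disconnected case works with the Cayley--Schreier graph of $G$ itself --- on which $G$ visibly acts --- showing it has infinitely many ends and then applying Nevo's theorem to obtain a $G$-tree. In the connected case the paper gets the rank-one conclusion directly from the Lie-theoretic analysis of $G^\circ$ in Proposition~\ref{pro:gt}, rather than via a classification of $2$-transitive actions on spheres.
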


A locally compact group is called a \textbf{standard rank one group} if it has no nontrivial compact normal subgroup and satisfies one of the two conditions (1) or (2) in Theorem~\ref{thm:Clight}. 
Standard rank one groups of type (2)  include simple algebraic groups of rank one over non-Archimedean local fields and complete Kac--Moody groups of rank two over finite fields. More exotic examples, and a thorough study in connection with finite primitive groups, are due to Burger and Mozes~\cite{burger2000groups}.

Any standard rank one group contains a cocompact amenable subgroup, namely the stabilizer of a boundary point, so that the converse of Theorem~\ref{thm:Clight} holds as well. In fact, several other characterizations of standard rank one groups are provided by Theorem~\ref{thm:application} below; we shall notably see that they coincide with those noncompact hyperbolic locally compact groups acting transitively on their boundary. 

A consequence of Theorem~\ref{thm:Clight} is that a non-amenable hyperbolic locally compact group which contains a cocompact amenable subgroup is necessarily unimodular. This is a noteworthy fact, since  a non-amenable hyperbolic locally compact groups has no reason to be unimodular in general. For example, consider the HNN extension of $\mathbf{Z}_p$ by the isomorphism between its subgroups $p\mathbf{Z}_p$ and $p^2\mathbf{Z}_p$ given by multiplication by $p$; this group is hyperbolic since it lies as a cocompact subgroup in the automorphism group of a $(p+p^2)$-regular tree, but it is neither amenable nor unimodular.

\subsection{When are non-uniform lattices relatively hyperbolic? }

In a similar way as the concept of Gromov hyperbolic groups  was designed to axiomatize fundamental groups of compact manifolds of negative sectional curvature, relative hyperbolicity was introduced, also by Gromov~\cite{Gro}, to axiomatize fundamental groups of finite volume manifolds of pinched negative curvature. Several equivalent definitions exist in the literature. Let us only recall one of them, which is the most appropriate for our considerations; we refer the reader to rich literature on relative hyperbolicity for other definitions and comparisons between those (the most relevant one for the definition we chose is \cite{Yaman}).  Let $G$ be a locally compact group acting continuously and properly by isometries on a hyperbolic metric space $X$.  Following P.~Tukia~\cite[p.~74]{Tukia},  we say that the $G$-action (or $G$ itself if there is no ambiguity on the action) is \textbf{cusp-uniform} if every boundary point $\xi \in \bd X$ is either a conical limit point or a bounded parabolic point (this notion was introduced by B.~Bowditch~\cite{bowditch1999boundaries}, who called it `\emph{geometrically finite}'). 
The group $G$ is called \textbf{relatively hyperbolic} if it admits some cusp-uniform action on a proper hyperbolic geodesic metric space.


For example, fundamental groups of  finite volume manifolds of pinched negative curvature and, in particular, non-uniform lattices in rank one simple Lie groups, are all relatively hyperbolic: their action on the universal cover of the manifold (resp. on the associated symmetric space) is cusp-uniform. Since rank one simple Lie groups are special instances of hyperbolic locally compact groups, one might expect that  non-uniform lattices in more general  hyperbolic locally compact groups are always relatively hyperbolic. The following result 
shows that this is far from true. 

\begin{thmintro}\label{thm:RelHyp:cocpt}
Let $X$ be a proper hyperbolic geodesic metric space and $G \leq \Isom(X)$ be a closed subgroup acting cocompactly.  

If the action of some  non-cocompact closed subgroup  $\Gamma \leq G$ on $X$ is cusp-uniform, 
then $G$ has a maximal compact normal subgroup $W$ such that $G/W$ is a standard rank one group. 
 \end{thmintro}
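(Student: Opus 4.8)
The plan is to reduce the statement to the classification of standard rank one groups obtained earlier in the paper. Since the isometry group of a proper metric space acts properly on that space, the closed subgroup $G\le\Isom(X)$ acts properly on $X$; being moreover cocompact, Corollary~\ref{cor:eqprop} shows that $G$ is a hyperbolic locally compact group, the $G$-action identifying $\bd G$ with $\bd X$. We may assume that $X$ is non-elementary. By Theorem~\ref{thm:Clight} it then suffices to prove that $G$ is non-amenable and that it contains a cocompact amenable closed subgroup: the conclusion, including the existence of the maximal compact normal subgroup $W$, is then exactly the statement of that theorem. (Alternatively, by Theorem~\ref{thm:application}, it would be enough to exhibit a transitive $G$-action on $\bd X$.)

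First I would extract the consequences of geometric finiteness of $\Gamma$. Conical limit points and bounded parabolic points all lie in the limit set $\Lambda(\Gamma)$, so the hypothesis forces $\Lambda(\Gamma)=\bd X$. In particular $\Gamma$ is non-elementary, $\bd X$ is perfect, the conical limit points of $\Gamma$ --- hence those of $G$ --- are dense in $\bd X$, and $\Gamma$ has only finitely many orbits of bounded parabolic points. The relevant, non-degenerate case is that $\Gamma$ genuinely possesses a bounded parabolic point $\xi_0$: were every boundary point conical for $\Gamma$, then $\Gamma$ would act convex-cocompactly and, since $\Lambda(\Gamma)=\bd X$, cocompactly on $X$ itself --- the ``uniform'' situation, reducing to the structure theory of hyperbolic groups acting cocompactly rather than to relative hyperbolicity. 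So fix such a $\xi_0$ and set $P:=\Gamma_{\xi_0}$; this is a non-compact parabolic subgroup, acting horocyclically on $X$ (so $\Lambda(P)=\{\xi_0\}$), and it acts cocompactly on $\bd X\setminus\{\xi_0\}$.

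The next step is to show that $G$ is non-amenable. Suppose it were amenable; being non-elementary hyperbolic it would then fix a boundary point $\zeta_0\in\bd X$ (otherwise ping-pong produces a discrete free subgroup), and hence so would $\Gamma$. As $\Lambda(\Gamma)=\bd X$ has more than two points, $\Gamma$ cannot act horocyclically, so it contains a hyperbolic isometry and therefore acts focally, with $\zeta_0$ as its canonical fixed point. But $P\le\Gamma$ fixes $\zeta_0$ as well, and being parabolic it fixes a unique boundary point; thus $\xi_0=\zeta_0$, whence $P=\Gamma_{\zeta_0}=\Gamma$, contradicting the fact that the parabolic group $P$ contains no hyperbolic element while $\Gamma$ does.

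Finally --- and this is the main obstacle --- one produces a cocompact amenable closed subgroup of $G$, the natural candidate being the full stabiliser $G_{\xi_0}$. Containing $P$, it acts cocompactly on $\bd X\setminus\{\xi_0\}$. Combining this ``local'' cocompactness at the cusp with the ``global'' cocompactness of $G$ on $X$, one should show on the one hand that $G$ acts transitively on $\bd X$ --- so that $G_{\xi_0}=G\cdot\xi_0$ is compact, i.e. $G_{\xi_0}$ is cocompact in $G$ --- and on the other hand that $G_{\xi_0}$ acts cocompactly on a horoball centred at $\xi_0$, so that, fixing the corresponding boundary point, it is amenable hyperbolic by Theorem~\ref{thm:Amen:intro}. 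The delicate ingredient in both assertions is the promotion of cocompactness into genuine dynamical rigidity, via the north--south dynamics of hyperbolic isometries and a pigeonhole/compactness argument over a fundamental domain for $G\curvearrowright X$; in particular one must produce a hyperbolic element of $G$ with fixed point $\xi_0$ to control the radial direction near the cusp, using that $\xi_0$ is a conical limit point for $G$ together with the cocompact $G_{\xi_0}$-action on $\bd X\setminus\{\xi_0\}$. Once $G$ is seen to be a non-amenable hyperbolic locally compact group containing a cocompact amenable closed subgroup, Theorem~\ref{thm:Clight} delivers the conclusion; the elementary case and the bookkeeping around $W$ are comparatively routine.
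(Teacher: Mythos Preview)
Your overall architecture is right and matches the paper's: reduce to Theorem~\ref{thm:application} (equivalently Theorem~\ref{thm:Clight}) by showing that $G$ is of general type and that some boundary stabilizer is cocompact (and amenable). Your treatment of the limit set ($\Lambda(\Gamma)=\bd X$ via Lemma~\ref{lem:bordplein}) and of non-amenability is fine, though the paper obtains the latter more directly from Proposition~\ref{prop:RelHyp:bis}. Note also that amenability of any boundary stabilizer is immediate from Lemma~\ref{lem:AmyStab}; there is no need to arrange a cocompact action on a horoball and invoke Theorem~\ref{thm:Amen:intro}.

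The genuine gap is in your ``main obstacle'' paragraph, and the paper does \emph{not} overcome it the way you suggest. You try to work directly at the bounded parabolic point $\xi_0$ and manufacture a hyperbolic element of $G$ fixing $\xi_0$; but conicality of $\xi_0$ for $G$ does not readily produce such an element, and the \emph{cocompact} action of $\Gamma_{\xi_0}$ on $\bd X\setminus\{\xi_0\}$ is weaker than the \emph{transitive} action you would need. The paper's key manoeuvre is to abandon $\xi_0$ and pass to a limit: choose a geodesic ray $\rho$ to $\xi_0$, use cocompactness of $\Isom(X)$ to find $g_n\in\Isom(X)$ and $t_n\to\infty$ with $g_n\rho(t_n)\to x_0$ and $g_n\rho$ converging to a bi-infinite geodesic $\ell$, and set $\eta=\lim_n g_n\xi_0$. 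One then shows, for a uniform $R$, that $\Isom(X)_\eta\cdot\mathcal N_R(\ell)=X$: given $x\in X$, the bounded parabolic hypothesis supplies $\gamma_n\in\Gamma_{\xi_0}$ pushing $g_n^{-1}x$ into $\mathcal N_R(\rho)$, and since $\beta_{\xi_0}(\gamma_n)=0$ the conjugates $g_n\gamma_n g_n^{-1}$ stay bounded and subconverge to an element of $\Isom(X)_\eta$ sending $x$ into $\mathcal N_R(\ell)$. This gives transitivity of $\Isom(X)_\eta$ on $\bd X\setminus\{\eta\}$, hence double transitivity of $\Isom(X)$ (and of $G$) on $\bd X$, and Theorem~\ref{thm:application} concludes. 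In short, the missing idea is this renormalization to a limit point $\eta$, which simultaneously replaces the one-sided ray by a two-sided geodesic and turns cocompactness on the complement into transitivity.
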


Some hyperbolic right-angled buildings, as well as most hyperbolic Kac--Moody buildings, are known to admit non-uniform lattices. Theorem~\ref{thm:RelHyp:cocpt} implies that, provided the building has dimension~$\ge 2$, these lattices are not cusp-uniform (i.e., they are not relatively hyperbolic with respect to the family of        
stabilizers of parabolic points).

As remarked above, a non-uniform lattice in a rank one simple Lie group is always relatively hyperbolic. In the case of tree automorphism groups, this is not the case. Necessary and sufficient conditions for a non-uniform lattice have been described by F.~Paulin~\cite{Paulin}: the key being that a connected fundamental domain for the action of the lattice on the tree has finitely many cusps.

\subsection{Amenable relatively hyperbolic groups}

As we have seen, there are nontrivial examples of locally compact groups that are both amenable and hyperbolic. We may wonder whether even more general examples might be obtained  by considering the class of relatively hyperbolic groups. The following shows that this is in fact not the case. 

\begin{thmintro}\label{thm:RelHyp}
Let $G$ be an amenable locally compact group. If $G$ is relatively hyperbolic, then $G$ is hyperbolic. 
\end{thmintro}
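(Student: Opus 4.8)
The plan is to start from a geometrically finite action of $G$, say on a proper geodesic hyperbolic space $X$ (continuous and proper by isometries, as in the definition), and to show that amenability forces this action to be \emph{focal} and to have no bounded parabolic points, hence to be convex-cocompact; hyperbolicity of $G$ then follows from Corollary~\ref{cor:eqprop}.

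First I would use amenability to restrict the type of the action. Since $G$ is amenable it contains no discrete non-abelian free subgroup, so the ping-pong lemma forbids the $G$-action on $X$ from being of general type; hence the action is bounded, horocyclic, or focal. If it is bounded, then $G$ has a bounded orbit and, the action being proper, $G$ is compact, hence trivially hyperbolic. If it is horocyclic, then $G$ contains no hyperbolic isometry; if moreover $G$ has a bounded orbit we are back to the previous case, and otherwise $\Lambda(G)$ is reduced to at most a single point (the fixed boundary point), so that, since every conical limit point and every bounded parabolic point lies in $\Lambda(G)$, geometric finiteness forces $\bd X = \Lambda(G)$ to have at most one point -- a degenerate situation which yields that $G$ is compact (if $\bd X=\varnothing$) or is excluded by our conventions on geometrically finite actions (whose underlying compactum $\bd X$ is required to be perfect, following \cite{Yaman}). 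We are thus reduced to the case where the action is focal: $G$ fixes a boundary point $\xi \in \bd X$ and contains a hyperbolic isometry.

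The key observation is then that a focal action admits no bounded parabolic point. Indeed, if $\eta \in \bd X$ were one, its stabilizer $P = \Stab_G(\eta)$ would be a parabolic subgroup: noncompact, containing no hyperbolic isometry, and fixing $\eta$ as its unique boundary point. But $P \le G$ and $G$ fixes $\xi$, so $P$ fixes $\xi$ as well; if $\eta \ne \xi$ this contradicts the uniqueness of the fixed point of $P$, while if $\eta = \xi$ then $P = \Stab_G(\xi) = G$ contains no hyperbolic isometry, contradicting focality. Hence, by geometric finiteness, \emph{every} point of $\bd X$ is a conical limit point, so in particular $\bd X = \Lambda(G)$, and the action is convex-cocompact: $G$ acts cocompactly on the $G$-invariant weak convex hull $C \subseteq X$ of $\bd X = \Lambda(G)$. (This last implication is the standard equivalence between the various characterizations of geometric finiteness -- in the present setting it amounts, via Tukia's conical-limit-point theorem, to the statement that a convergence action all of whose points are conical is a uniform convergence action -- but one may also deduce it more cheaply from Proposition~\ref{prop:undistorted}: since the focal action is quasi-convex, a $G$-orbit lies within bounded Hausdorff distance of $C$, so $G$ acts cocompactly on $C$.) As $C$, suitably thickened, is a proper geodesic hyperbolic space carrying a continuous, proper and cocompact $G$-action, Corollary~\ref{cor:eqprop} shows that $G$ is hyperbolic.

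I expect the main obstacle to be the last step, namely carrying the classical theory of geometric finiteness and of conical limit points -- developed mostly for discrete groups acting on rank one symmetric spaces -- over to the present framework of locally compact groups acting on abstract proper hyperbolic spaces: one has to verify that the convex hull can be replaced by a genuine proper geodesic space, that the restricted action remains continuous, that parabolic stabilizers behave as expected (in particular the noncompactness of parabolic subgroups in the definition of a bounded parabolic point, on which the key observation rests), and that the various degenerate small-boundary situations are correctly disposed of by the conventions adopted for geometrically finite actions.
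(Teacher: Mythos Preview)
Your overall strategy matches the paper's: exclude the general-type case via amenability, exclude the horocyclic case, and conclude cocompactness in the remaining cases. However, you omit the lineal case from your case split, and your treatment of the focal case takes an unnecessary detour that imports properties of bounded-parabolic stabilizers not contained in the paper's definition.

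The paper's proof (Proposition~\ref{prop:RelHyp:bis}) is more direct. After ruling out horocyclic (for such an action, any $\eta\neq\xi$ is neither conical nor bounded parabolic), it handles the bounded, lineal, and focal cases \emph{uniformly}: all three are quasi-convex by Proposition~\ref{prop:undistorted}. Geometric finiteness then forces $\bd X=\bd_X G$ via Lemma~\ref{lem:bordplein} (points outside the limit set are neither conical nor non-isolated bounded parabolic), and Lemma~\ref{lem:full_limit_set} (a quasi-convex subgroup with full limit set is cocompact) finishes the argument. There is no need to argue that bounded parabolic points are absent, nor to invoke Tukia's theorem or to manufacture a convex hull.

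Your assertion that the stabilizer $P=G_\eta$ of a bounded parabolic point is ``noncompact, containing no hyperbolic isometry, and fixing $\eta$ as its unique boundary point'' is correct in the Bowditch--Yaman convergence-group framework, but the paper's definition of \emph{bounded parabolic} only requires that $G_\eta$ act cocompactly on $\bd X\setminus\{\eta\}$; the extra properties would need separate justification. Note also that your parenthetical alternative at the end --- quasi-convexity from Proposition~\ref{prop:undistorted} together with $\bd X=\Lambda(G)$ --- is precisely the paper's route, and once you take it the entire discussion of bounded parabolic points becomes superfluous.
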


\subsection*{Organization of the paper}

We start with a preliminary section presenting a general construction associating a proper geodesic metric space $X$ to an arbitrary compactly generated locally group $G$, together with a continuous, proper, cocompact $G$-action by isometries. This provides a useful substitute for Cayley graphs, which is better behaved since it avoids the lack of continuity and local compactness that Cayley graphs may have in the non-discrete case. Since  $G$ is quasi-isometric to $X$, the hyperbolicity of the former is equivalent to the hyperbolic of the latter. 

The proofs of the main results are then spread over the rest of the paper, and roughly go into three steps. The first part consists in a general study of isometric actions on hyperbolic spaces, culminating in the proof of Theorem~\ref{thm:Contracting} which implies that certain groups given as semi-direct products with cyclic factor are hyperbolic. This part is mostly developed in a purely metric set-up, without the assumption of local compactness. It occupies  Section~\ref{sec:AmenableActions} and Section~\ref{sec:Focal}, and yields the implication from right to left in Theorem~\ref{thm:Amen:intro}.

In Section~\ref{sec:ProperActions}, we start making local compactness assumptions, but yet not exploiting any deep structural result about locally compact groups. This is where Theorem~\ref{thm:RelHyp} is proven. 
This chapter moreover provides the implication from left to right in Theorem~\ref{thm:Amen:intro}, whose proof is thus completed in Section~\ref{sec:ProofThmA}.

Finally, a more comprehensive version of Theorem~\ref{thm:Amen:intro}, as well as its corollaries, is proved in \S\ref{proofaa}
after some preliminary work about the structure of groups admitting compacting automorphisms in Section~\ref{sec:compaction}, and on the construction of millefeuille spaces in \S \ref{sec:millefeuille}. Similarly, a more comprehensive version of Theorem~\ref{thm:Clight} is stated and proved in \S \ref{proofb}. Theorem~\ref{thm:RelHyp:cocpt} is then easily deduced in the next subsection.


\subsection*{Acknowledgement} We thank the referee, whose comments were helpful in improving the presentation of the paper.

\section{Preliminaries on geodesic spaces for locally compact groups}\label{sec:pre}

It is well-known that a topological group with a proper, cocompact action by isometries on a locally compact geodesic metric space is necessarily locally compact and compactly generated. It turns out that the converse is true, and that the space can be chosen to be a piecewise-manifold. This is the content of Proposition~\ref{prop:Cayley} below. Its relevance to the rest of the paper is through Corollary~\ref{cor:eqprop}. The remainder of the section is devoted to its proof and is independent from the rest of the paper, so the reader can, in a first reading, take the proposition and its corollary for granted and go directly to Section~\ref{sec:AmenableActions}.

\begin{prop}\label{prop:Cayley}
Let $G$ be a compactly generated, locally compact group. There exists a finite-dimensional (in the sense of topological dimension) locally compact geodesic metric space $X$ with a continuous, proper, cocompact $G$-action by isometries.

In fact $X$ is a connected locally finite gluing of Riemannian manifolds along their boundaries.
\end{prop}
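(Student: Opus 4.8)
The plan is to realize $X$ as a quotient of a ``Cayley manifold'' built by gluing finitely many smooth blocks along copies of a compact generating set, and then promote the continuous Cayley-graph action to a continuous isometric action by thickening the edges into manifolds.

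First I would fix a compact symmetric generating set $S$ containing an identity neighborhood; by compact generation we may take $S$ open with compact closure, and $S = S^{-1} = S^2 \cap S$ may be arranged so that $\overline S$ is again a compact symmetric generating set. The naive Cayley graph $\mathrm{Cay}(G,\overline S)$ has vertex set $G$ and an edge $\{g, gs\}$ for each $s\in \overline S$; the left $G$-action on it is by isometries but fails to be continuous precisely because the edge set is not locally finite and the edge through $g$ attached to $gs$ moves discontinuously with $s$. The remedy, which is the technical heart of the argument, is to replace the single edge $\{g, gs\}$ by a whole family of paths parametrized continuously by $s \in \overline S$: concretely, glue to $G$ (viewed as a discrete set, or better as a disjoint union of points) a copy of $\overline S \times [0,1]$ for each $g \in G$, identifying $(s,0)$ with $g$ and $(s,1)$ with $gs$. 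Since $\overline S$ is a compact manifold-with-boundary only after we give it a smooth structure, I would instead first embed $\overline S$ into some $\mathbf{R}^N$ (or into a compact Riemannian manifold), take a compact smooth neighborhood, and work with that; the combinatorics are unaffected because $\overline S$ still generates.

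Next I would equip this glued object $Y$ with a length metric: each block $\overline S \times [0,1]$ carries the product metric (with $\overline S$ given the restricted Euclidean metric, say), and $X$ is the associated path-metric space on the quotient $Y$. Local compactness and finite-dimensionality are then local statements: near a point in the interior of a block there is nothing to check, and near a vertex $g$ one sees only the blocks emanating from $g$ and from each $gs^{-1}$, i.e. finitely many ``directions'' worth of blocks but each a compact manifold, so a neighborhood of $g$ is a finite gluing of compact manifolds-with-boundary, hence compact; this is where ``locally finite gluing of Riemannian manifolds along their boundaries'' comes from, and the dimension is $N+1$. Geodesicity follows from the Hopf--Rinow-type theorem for complete locally compact length spaces once completeness is checked, and completeness is clear since $G$ is discrete in $X$ and the blocks are complete. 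Cocompactness is immediate: the union of all blocks emanating from the identity surjects onto $X$ under the $G$-action and is compact. Properness of the action (preimages of compact sets are compact, and point stabilizers are compact) follows from the fact that $G$ acts freely and properly discontinuously on the vertex set $G$ together with local compactness of $X$.

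The main obstacle, and the only genuinely delicate point, is \emph{continuity} of the $G$-action on $X$. Here the block construction pays off: the action of $h \in G$ sends the block over $g$ isometrically to the block over $hg$ by the identity on the $\overline S \times [0,1]$ factor, and as $h \to h_0$ in $G$ the image point $h\cdot x$ varies continuously because, for $x$ in a fixed block and $h$ near $h_0$, the combinatorial data $g \mapsto hg$ is eventually constant near any compact set while the within-block coordinates are literally unchanged — so the only thing to verify is joint continuity of the map $G \times X \to X$ at points $(h_0, g)$ sitting over vertices, which reduces to the observation that a neighborhood basis of $g$ in $X$ is carried into a neighborhood basis of $h_0 g$ in a controlled way because $h_0$ is an isometry and the vertex set is discrete. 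I would spell this out by choosing, for each compact $K\subseteq X$, a finite set $F\subseteq G$ of blocks covering $K$ and noting that $\{h : hF = h_0 F \text{ pointwise on the relevant blocks}\}$ is an open neighborhood of $h_0$; on it the action map is manifestly continuous block-by-block. (One should also record that changing the generating set, or taking a different compact neighborhood of $\overline S$, changes $X$ only up to $G$-equivariant quasi-isometry, which is what makes the corollary usable.)
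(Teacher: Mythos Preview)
There is a genuine gap: your construction does not produce a locally compact space (nor a locally finite gluing) once $G$ is non-discrete. You attach one block $\overline{S}\times[0,1]$ for each $g\in G$, with $(s,0)\sim g$ and $(s,1)\sim gs$. At a vertex $h$ you then see the cone point of the single block $B_h$, but also, for \emph{every} $s\in\overline{S}$, the point $(s,1)$ of the block $B_{hs^{-1}}$. Since distinct blocks $B_{hs^{-1}}$ and $B_{h{s'}^{-1}}$ (for $s\neq s'$) share only vertices, a small ball around $h$ contains uncountably many pairwise disjoint open branches, one per $s\in\overline{S}$; this bouquet of uncountably many pieces is not locally compact, and the assertion ``finitely many directions worth of blocks'' is simply false. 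The auxiliary step of embedding $\overline{S}$ into some $\mathbf{R}^N$ to get a manifold-with-boundary also fails in general: if $G$ has a nontrivial totally disconnected part (say $G=\mathbf{Q}_p$), compact neighbourhoods of the identity are Cantor sets. Finally, your continuity argument (``the combinatorial data $g\mapsto hg$ is eventually constant near any compact set'') presupposes that nearby elements of $G$ act identically on vertices, which is exactly what fails for non-discrete $G$.

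The paper's proof avoids this by invoking structure theory before building anything geometric. One first mods out the maximal compact normal subgroup of $G^\circ$ so that $G^\circ$ becomes a genuine Lie group (Yamabe). Then one finds a compact subgroup $U<G$ with $U\cap G^\circ=1$ whose image $W$ in the totally disconnected quotient $Q=G/G^\circ$ is open (van Dantzig). The coset space $Q/W$ now carries a \emph{locally finite} Schreier--Cayley graph, with edge types indexed by the finite set $W\backslash S/W$, and the space $X$ is obtained by fibering the connected manifold $G^\circ$ over the edges of this graph. The two missing ingredients in your approach --- local finiteness of the combinatorics, and a manifold structure on the pieces --- come respectively from the compact open subgroup in $G/G^\circ$ and from the Lie structure on $G^\circ$; neither is available from the raw Cayley graph of $G$.
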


An immediate consequence is the fact that a closed cocompact subgroup of a compactly generated locally compact group is itself compactly generated. This is well-known and can be established more simply by a direct algebraic argument, see \cite{MS59}.

\medskip
Let us begin by illustrating Proposition~\ref{prop:Cayley} with significant examples.
\begin{itemize}
\item When $G$ is discrete, we consider its Cayley graph with respect to a finite generating set.
\item When $G$ is a connected Lie group, $X$ is taken as $G$ endowed with a left-invariant Riemannian metric.
\item When $G$ is an arbitrary Lie group, we pick a finite subset $S$ whose image in $G/G^\circ$ is a generating subset and endow the (non-connected) manifold $G$ with a left-invariant Riemannian metric; then for each coset $L$ of $G^\circ$ and each $s\in S$, we consider a strip $L\times [0,1]$ (with the product Riemannian metric) and attach it to $G$ by identifying $(g,0)$ to $g$ and $(g,1)$ to $gs$. The resulting space $G$ is path-connected and endowed with the inner length metric associated to the Riemannian metric on each strip.
\item When $G$ is totally disconnected, a Cayley--Abels graph construction is available, generalizing the discrete case. It goes back to Abels \cite[Beispiel~5.2]{abels1974specker}.
It consists in picking a compact generating subset $S$ that is bi-invariant under the action of some compact open subgroup $K$, considering the (oriented, but unlabeled) Cayley graph of $G$ with respect to $S$, and modding out by the right action of $K$. The resulting graph is locally finite; the action of $G$ is continuous, vertex-transitive and proper, the stabilizer of the base-vertex is $K$.
\end{itemize}


The general case is a common denominator between the latter two  constructions.
Roughly speaking, we construct $X$ by fibering a Lie quotient associated to $G^\circ$ over a Cayley--Abels graph for $G/G^\circ$. The following classical theorem allows to bypass some of the technical difficulties.  

\begin{thm}[H.~Yamabe] \label{thm:Yamabe}
Let $G$ be a connected-by-compact locally compact group. Then $G$ is compact-by-(virtually connected Lie).
\end{thm}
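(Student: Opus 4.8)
The statement is a special case of Yamabe's solution of Hilbert's fifth problem, so the plan is to deduce it from the two pillars of the Gleason--Montgomery--Zippin--Yamabe circle of ideas: (a) a locally compact group with \emph{no small subgroups} (NSS) is a Lie group; and (b) a connected-by-compact locally compact group admits arbitrarily small compact normal subgroups whose quotient is a Lie group (equivalently, is NSS). Granting these two facts, the theorem reduces to elementary bookkeeping, which I would carry out as follows.

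First, the easy reductions. Write $G^\circ$ for the identity component, so that by hypothesis $Q := G/G^\circ$ is compact. For any compact normal subgroup $K \le G$, the quotient $G/K$ is again connected-by-compact: the image $q(G^\circ)$ of $G^\circ$ under $q\colon G \to G/K$ is connected and contains the identity, hence lies in $(G/K)^\circ$, so the canonical surjection $G \to (G/K)/(G/K)^\circ$ kills $G^\circ$ and therefore factors through $Q$, exhibiting the compact group $(G/K)/(G/K)^\circ$ as a continuous image of $Q$. Next, a connected-by-compact \emph{Lie} group $L$ is automatically \emph{virtually} connected: since Lie groups are locally connected, $L^\circ$ is open, so $L/L^\circ$ is discrete; being also compact, it is finite. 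Combining these two observations, it suffices to produce a \emph{single} compact normal subgroup $K \le G$ such that $G/K$ is a Lie group: that $G/K$ is then connected-by-compact and Lie, hence virtually connected Lie, and $G$ is compact-by-(virtually connected Lie), as desired.

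The existence of such a $K$ is exactly step (b) applied to $G$: in a suitable identity neighborhood one extracts a compact normal subgroup $K$ for which $G/K$ has no small subgroups, and step (a) then upgrades this to $G/K$ being a Lie group. The genuine content of the theorem is buried in (a) and (b), and this is the main --- indeed the only real --- obstacle. Proving (a) requires manufacturing a faithful finite-dimensional linear representation of an NSS group out of its one-parameter subgroups, with all the relevant estimates controlled by a Gleason metric; proving (b) requires locating, inside a prescribed neighborhood of $1$, a compact normal subgroup whose quotient can be shown to be NSS, again via the Gleason-metric / escape-function machinery together with the structure theory of compact groups (Peter--Weyl). Since the result is classical and attributed to Yamabe, and is used here solely as an input to Proposition~\ref{prop:Cayley}, I would not attempt to reprove (a) and (b): I would cite Montgomery--Zippin (or Tao's monograph on Hilbert's fifth problem) for them and carry out only the elementary repackaging above, which converts ``$G$ is a connected-by-compact pro-Lie group'' into the precise form ``compact-by-(virtually connected Lie)'' needed in the sequel.
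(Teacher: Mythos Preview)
Your proposal is correct and matches the paper's approach: the paper's entire proof is the sentence ``See Theorem~4.6 in~\cite{montgomery1955topological}'', so it simply cites the Montgomery--Zippin/Yamabe result, exactly as you propose to do. Your additional bookkeeping (checking that a compact-normal quotient of a connected-by-compact group remains connected-by-compact, and that a connected-by-compact Lie group is virtually connected) is a welcome explicit verification of the repackaging that the paper leaves implicit.
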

(By convention (A)-by-(B) means with a normal subgroup satisfying (A) so that the quotient group satisfies (B).)
\begin{proof}
See Theorem~4.6 in~\cite{montgomery1955topological}.
\end{proof}

\begin{lem}\label{lem:open-image}
Let $G$ be any locally compact group. There is a compact subgroup $K$ whose image in $G/G^\circ$ is open. If $G^\circ$ is Lie, then we can assume that $K\cap G^\circ=1$.
\end{lem}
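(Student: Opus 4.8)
I would prove the two assertions separately, reducing each time to the almost connected case.

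\emph{First assertion.} First I would apply van Dantzig's theorem to the totally disconnected group $G/G^\circ$ to produce an open subgroup $U\leq G$ with $G^\circ\leq U$ and $U/G^\circ$ compact; then $U^\circ=G^\circ$ and $U$ is connected-by-compact, so Theorem~\ref{thm:Yamabe} yields a compact normal subgroup $W\trianglelefteq U$ with $U/W$ virtually connected Lie. Next I would check that $WG^\circ$ is open in $U$, hence in $G$: if $L\leq U$ denotes the preimage of the open finite-index subgroup $(U/W)^\circ$, then $W\subseteq L$ and $G^\circ\subseteq L$ (the image of $G^\circ$ in $U/W$ is connected), while $L/WG^\circ$ is at the same time a continuous image of the connected group $L/W=(U/W)^\circ$ and of the totally disconnected group $L/G^\circ\leq U/U^\circ$, so it is trivial and $L=WG^\circ$. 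Since $G\to G/G^\circ$ is open, the image of $K:=W$ is then open in $G/G^\circ$.

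\emph{Second assertion.} Now suppose $G^\circ$ is a Lie group. Using van Dantzig once more I would pass to an open subgroup in which $G^\circ$ is cocompact, so we may assume $G$ almost connected. A Lie group has no small subgroups, so I can fix an open neighbourhood $V$ of $1$ in $G$ containing no nontrivial subgroup of $G^\circ$. The key step is to invoke the refined (Gleason--Yamabe) form of Yamabe's theorem, also contained in~\cite{montgomery1955topological}: the almost connected group $G$ has a compact normal subgroup $W$ with $W\subseteq V$ and $G/W$ a Lie group. By the choice of $V$ we get $W\cap G^\circ=1$. Since $G/W$ is an almost connected Lie group it has finitely many components and, by the same argument with connected and totally disconnected groups as above, $(G/W)^\circ=G^\circ W/W$; hence $G^\circ W$ is a closed subgroup of finite index, so it is open. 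In $G^\circ W$ the normal subgroups $G^\circ$ and $W$ intersect trivially and generate, so $G^\circ W=G^\circ\times W$; taking $K:=W$ gives a compact subgroup disjoint from $G^\circ$ with open image $G^\circ W/G^\circ$ in $G/G^\circ$.

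\emph{Main obstacle.} The only non-formal ingredient is precisely the one used in the second assertion: the weak form of Yamabe's theorem stated in Theorem~\ref{thm:Yamabe} suffices for the first assertion, but to force $W\cap G^\circ=1$ one really needs to keep $W$ inside the prescribed neighbourhood $V$, i.e.\ the strong statement that compact normal subgroups with Lie quotient may be taken arbitrarily small. This cannot be dispensed with: an arbitrary Yamabe subgroup need not avoid $G^\circ$, since there are compact groups with Lie identity component that do not split as a direct product over it. Everything else is routine bookkeeping with identity components and the fact that a group that is simultaneously connected and totally disconnected is trivial.
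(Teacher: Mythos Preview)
Your proof is correct. The first assertion is handled essentially as in the paper: van Dantzig plus Yamabe, followed by the observation that the preimage of $(U/W)^\circ$ equals $WG^\circ$ because the quotient is simultaneously connected and totally disconnected. The paper isolates this last step as a separate lemma (Lemma~\ref{lem:quot-Lie}, showing $\pi(G^\circ)=L^\circ$ for any Lie quotient $L$), which you reprove inline in the special case needed.

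For the second assertion the routes diverge. You invoke the full strength of Gleason--Yamabe (arbitrarily small compact normal subgroups with Lie quotient in an almost connected group), and you correctly flag this as the one nontrivial ingredient. The paper avoids this: it keeps the compact subgroup $K$ obtained in the first part and applies only the Peter--Weyl theorem to $K$ itself, producing a normal subgroup $K'\trianglelefteq K$ (not of $G$) with $K/K'$ Lie and $K'$ contained in the chosen neighbourhood $V$, whence $K'\cap G^\circ=1$. Then $\pi(K)/\pi(K')$ is simultaneously Lie (as a quotient of $K/K'$) and profinite (as a quotient of the compact totally disconnected group $\pi(K)$), hence finite, so $\pi(K')$ is open. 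Thus your statement that the strong form ``cannot be dispensed with'' is not quite right: Peter--Weyl for compact groups suffices, which is a lighter tool than the small-normal-subgroup version of Gleason--Yamabe. The trade-off is that your $K$ ends up normal in (the almost connected reduction of) $G$, which the paper's $K'$ is not; but the lemma does not require normality, so the paper's cheaper argument is adequate.
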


\begin{proof}[Proof of Lemma~\ref{lem:open-image}]
By van Dantzig's theorem~\cite[p.~18]{vanDantzig31}, the totally disconnected  group $G/G^\circ$ contains a compact open subgroup; let $H$ be the pre-image in $G$ of that subgroup, so that $H$ is open in $G$ and $H/H^\circ$ is compact.

Thus Yamabe's theorem  applies and $H$ contains a compact normal subgroup $K$ such that $H/K$ is a Lie group. Since $H/H^\circ$ is compact, it follows that $H/K$ has finitely many connected components and therefore, upon replacing $H$ by a smaller open subgroup, we can assume that $H/K$ is a connected Lie group. The next lemma, of independent interest, implies in particular that we have $H^\circ.K=H$; thus indeed the image of $K$ in $G/G^\circ$ is the open subgroup $H/H^\circ$.

For the additional statement, assume that $G^\circ$ is Lie. Then there exists a neighbourhood $V$ of 1 such that $V\cap G^\circ$ contains no nontrivial subgroup. Since any compact group is pro-Lie (by Peter-Weyl's Theorem), $V$ contains a normal subgroup $K'$ of $K$ such that $K/K'$ is Lie; clearly we have $K'\cap G^\circ  =1$. If $\pi$ is the projection to $G/G^\circ$, it follows that $\pi(K)/\pi(K')$ is both Lie and profinite, hence is finite, so $\pi(K')$ is open in $G/G^\circ$ as well. 
\end{proof}

\begin{lem}\label{lem:quot-Lie}
Let $G$ be a locally compact group with a quotient map $\pi \colon G\twoheadrightarrow L$ onto a Lie group $L$. Then $\pi(G^\circ)=L^\circ$.
\end{lem}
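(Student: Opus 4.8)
The inclusion $\pi(G^\circ)\subseteq L^\circ$ is clear, since $\pi(G^\circ)$ is a connected subgroup of $L$ containing the identity. The content is the reverse inclusion, and the subtlety is that $G^\circ$ need not be open in $G$, so one cannot simply invoke the openness of $\pi$ to conclude that $\pi(G^\circ)$ is open in $L$. The plan is to reduce, through a chain of restrictions to open subgroups of $G$, to the case where $G$ is an extension of a connected Lie group by a compact normal subgroup lying inside $\ker\pi$, and then to conclude by a short computation.

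First I would reduce to the case $L$ connected. Since $L$ is a Lie group, $L^\circ$ is open, so $\pi^{-1}(L^\circ)$ is an open subgroup of $G$ containing $G^\circ$; hence $(\pi^{-1}(L^\circ))^\circ=G^\circ$, and $\pi$ restricts to a quotient map $\pi^{-1}(L^\circ)\to L^\circ$. Replacing $G$ by $\pi^{-1}(L^\circ)$ we may assume $L=L^\circ$, so that it suffices to prove $\pi(G^\circ)=L$. Next, van Dantzig's theorem~\cite{vanDantzig31} applied to the totally disconnected group $G/G^\circ$ provides an open subgroup $U\le G$ with $G^\circ\le U$ and $U/G^\circ$ compact; once more $U^\circ=G^\circ$, and $\pi(U)$ is an open subgroup of the connected group $L$, hence equals $L$. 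So we may replace $G$ by $U$ and assume in addition that $G/G^\circ$ is compact.

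Now $G$ is (locally compact and) almost connected, so by Yamabe's theorem (Theorem~\ref{thm:Yamabe}) in its usual refined form~\cite{montgomery1955topological}, every neighbourhood of $1$ in $G$ contains a compact normal subgroup $Q$ with $G/Q$ a Lie group. The key refinement is to take such a $Q$ inside $\ker\pi$: as $L=G/\ker\pi$ is a Lie group it has no small subgroups, so there is a neighbourhood $V$ of $1$ in $G$ such that every subgroup of $G$ contained in $V$ lies in $\ker\pi$; choosing $Q\subseteq V$ gives a compact normal subgroup $Q\le\ker\pi$ with $G/Q$ Lie. Writing $q\colon G\to G/Q$ for the projection and replacing $G$ one final time by the open subgroup $q^{-1}\big((G/Q)^\circ\big)$ (which has identity component $G^\circ$, still surjects onto $L$ under $\pi$, and still contains $Q$), we may assume that $M:=G/Q$ is a connected Lie group. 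I expect this step — arranging the approximating compact normal subgroup to lie in $\ker\pi$, so that $\pi$ genuinely factors through the \emph{Lie} group $G/Q$ — to be the main obstacle: an arbitrary Yamabe subgroup would only pin down $\pi(G^\circ)$ up to a compact subgroup of $L$, which is not enough.

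It remains to show $G=G^\circ Q$, for then $\pi(G^\circ)=\pi(G^\circ Q)=\pi(G)=L$ because $Q\le\ker\pi$. The set $G^\circ Q$ is a closed normal subgroup of $G$: normal because both $G^\circ$ and $Q$ are, and closed because $G^\circ$ is closed and $Q$ is compact. The quotient $G/G^\circ Q$ is, on one hand, a quotient of the connected group $M=G/Q$, hence connected; and, on the other hand, a quotient of the totally disconnected locally compact group $G/G^\circ$, hence totally disconnected (a quotient of a totally disconnected locally compact group by a closed subgroup is totally disconnected). A nonempty group that is both connected and totally disconnected is trivial, so $G=G^\circ Q$, which completes the proof. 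The elementary principle driving the whole argument is exactly that connectedness and total disconnectedness together force triviality.
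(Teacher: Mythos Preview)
Your proof is correct and follows the same overall architecture as the paper's: reduce to $L$ connected, pass to an open almost-connected subgroup via van Dantzig, invoke Yamabe, and conclude by the ``connected and totally disconnected implies trivial'' principle (which the paper also uses, to show $NG^\circ$ is dense).

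The difference lies in the endgame. The paper first records two easy base cases --- $N=\ker\pi$ compact, and $G$ a Lie group --- where $NG^\circ$ is automatically closed; it then reduces the general almost-connected case to these by factoring $\pi$ as $G\to G/(N\cap W)\to L$, observing that $G/(N\cap W)$ embeds in the Lie group $G/N\times G/W$ and hence is itself Lie. You bypass this factorization entirely: using that $L$ has no small subgroups, you choose the Yamabe subgroup $Q$ to lie inside $\ker\pi$ from the start, so that $\pi$ already factors through the Lie group $G/Q$ and $G=G^\circ Q$ gives the conclusion in one stroke. Your route is slightly more direct, at the cost of invoking the approximation form of Yamabe (arbitrarily small compact normal $Q$ with $G/Q$ Lie) rather than the mere existence form stated as Theorem~\ref{thm:Yamabe}; both forms are in \cite{montgomery1955topological}, so this is a matter of taste rather than strength.
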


\begin{proof}
Obviously (considering $\pi^{-1}(L^\circ)\to L^\circ$) we can suppose that $L$ is connected.
We have to show that $NG^\circ=G$, where $N$ is the kernel of $\pi$. Since $G/\overline{NG^\circ}$ is both a quotient of the connected group $L=G/N$ and of the totally disconnected group $G/G^\circ$, it is trivial, in particular $NG^\circ$ is dense. This allows to conclude at least when $N$ is compact or $G$ is a Lie group (not assumed connected). Indeed, in both cases this assumption implies that $NG^\circ$ is closed (when $G$ is a Lie group, see this by modding out by $N\cap G^\circ$).

In general, let $\Omega$ be an open subgroup of $G$ such that $\Omega/G^\circ$ is compact. Since $\pi$ is an open map, $\pi(\Omega)$ is an open subgroup of $L$ and therefore is equal to $L$. This allows to assume that $G$ is connected-by-compact. By Yamabe's theorem, $G$ thus has a maximal compact normal subgroup $W$. We can factor $\pi$ as the composition of two quotient maps $G\to G/(N\cap W)\to G/N=L$. The left-hand map has compact kernel, and $G/(N\cap W)$ has a continuous injective map into $G/N\times G/W$ and therefore is a Lie group. So the result follows from the two special cases above.
\end{proof}

\begin{remark}
The assumption that $L$ is a Lie group is essential in Lemma~\ref{lem:quot-Lie}. Indeed, let $G = \RR \times \ZZ_p$, where $\ZZ_p$ denotes the (compact) additive group of the $p$-adic integers. Let $Z$ be a copy of $\ZZ$ embedded diagonally in $G$, and let $L = G/Z$ be the quotient group. The group $L$ is the so-called \textbf{solenoid} and can alternatively be defined as the inverse limit of the iterated $p$-fold covers of the circle group. It is connected (but not locally arcwise connected). The image of $G^\circ = \RR$ under the quotient map $\pi : G \to L$ is dense, but properly contained, in $L$.
\end{remark}

\begin{proof}[Proof of Proposition~\ref{prop:Cayley}]
Let $G$ be a compactly generated locally compact group. Upon modding out $G$ by the unique maximal compact normal subgroup of $G^\circ$, we can assume that $G^\circ$ is a Lie group and   endow it with a left-invariant Riemannian metric. Set $Q=G/G^\circ$. By Lemma~\ref{lem:open-image}, there is a compact subgroup $U<G$ whose image $W$ in $Q$ is open and $U\cap G^\circ=1$.

Let $S\se Q$ be a compact generating set with $S=S\inv=WSW$. Since $W$ is open, $W\backslash S/W$ is finite and we pick a finite set $\{z_1, \ldots, z_r\}$ of representatives $z_i\in S$. We define for $1=1, \ldots, r$
$$W_i :=W\cap z_i W z_i\inv, \kern5mm L_i := U G^\circ \cap z_i U G^\circ  z_i\inv, \kern5mm V_i := L_i \cap U \kern5mm X_i:=G/V_i \times [0,1].$$
We recall that a Cayley--Abels graph for $Q$ is given by the discrete vertex set $Q/W$ and the $r$ (oriented edge)-sets $Q/W_i$, with natural $Q$-action and source and target maps. On the other hand, there are canonical surjective $G$-maps $X_i\twoheadrightarrow Q/W_i$. The fibres of these maps are $L_i/V_i \times [0,1] \cong G^\circ  \times [0,1]$, which are indeed connected Riemannian manifolds. By construction, the $G$-action is compatible with the gluings of boundary components of $X_i$ determined by the map to the Cayley--Abels graph, and we endow the resulting connected space $X$ with the inner length metric associated to the Riemannian structure. Explicitly, the gluing is generated by identifying $(g V_i, 0)$ with $(h V_j, 0)$ whenever $gU=hU$ and $(g V_i, 1)$ with $(h V_i, 0)$ whenever $g z_i U = hU$.
\end{proof}

The above construction is of course much more general than what is needed in the present article.
Indeed, as a consequence of the results of the article, a hyperbolic locally compact group has a continuous proper cocompact action either on a millefeuille space (this includes the special case when this space is simply a homogeneous negatively curved manifold), or on a connected graph. 
Actually, the latter description shows that the space can be chosen in addition to be contractible: indeed, in the case of a connected graph, the Rips complex construction as described in~\cite[1.7.A]{Gro} is applicable.

For the time being, we only record the following consequence of Proposition~\ref{prop:Cayley}.

\begin{cor}\label{cor:eqprop}
For a  locally compact group $G$, the following are equivalent.
\begin{enumerate}[(i)]
\item $G$ is hyperbolic, i.e.\ compactly generated and word hyperbolic with respect to some compact generating set.
\item $G$ has a continuous proper cocompact isometric action on a proper geodesic hyperbolic space.\qed
\end{enumerate}
\end{cor}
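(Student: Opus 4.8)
The plan is to prove the equivalence in Corollary~\ref{cor:eqprop} by combining Proposition~\ref{prop:Cayley} with the standard fact that word-hyperbolicity is a quasi-isometry invariant among geodesic metric spaces.

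\medskip
\emph{From (i) to (ii).} Suppose $G$ is compactly generated and word-hyperbolic with respect to some compact generating set $S$. Apply Proposition~\ref{prop:Cayley} to obtain a finite-dimensional locally compact geodesic metric space $X$ carrying a continuous, proper, cocompact $G$-action by isometries. Fix a basepoint $x_0\in X$; by the Milnor--Schwarz lemma (valid here because the action is continuous, proper, cocompact and $X$ is a proper geodesic space), the orbit map $g\mapsto gx_0$ is a quasi-isometry between $G$ with the word metric $d_S$ and $X$. Since $X$ is geodesic and quasi-isometric to the hyperbolic space $(G,d_S)$, it is itself hyperbolic. (One needs that a geodesic space quasi-isometric to a hyperbolic geodesic space is hyperbolic; this is the standard stability of hyperbolicity under quasi-isometry, e.g.\ via the thin-triangles or the four-point condition together with a quasi-geodesic stability argument. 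Strictly, $(G,d_S)$ is not geodesic, only a length space up to bounded error, but its Cayley graph is a geodesic space quasi-isometric to it, so we may run the argument through the Cayley graph.) Moreover $X$ is proper because it is locally compact, geodesic and complete (being locally compact geodesic it is proper by Hopf--Rinow). This yields the required action, proving (ii).

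\medskip
\emph{From (ii) to (i).} Suppose $G$ acts continuously, properly and cocompactly by isometries on a proper geodesic hyperbolic space $X$. It is classical that a topological group admitting such an action on a locally compact (here proper) geodesic space is locally compact and compactly generated — this is recalled at the start of Section~\ref{sec:pre}. Fix $x_0\in X$; again by the Milnor--Schwarz lemma the orbit map realizes a quasi-isometry from $G$, equipped with the word metric associated to a suitable compact generating set, to $X$. Since $X$ is hyperbolic, so is $(G,d_S)$, i.e.\ $G$ is word-hyperbolic with respect to $S$; hence (i) holds. Note this shows hyperbolicity is independent of the chosen compact generating set, since any two compact generating sets give quasi-isometric word metrics.

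\medskip
The only genuinely non-formal ingredient is the Milnor--Schwarz lemma in the locally compact (non-discrete, non-proper-action-on-a-graph) setting, together with the quasi-isometry invariance of hyperbolicity; both are standard, and the former is precisely what Proposition~\ref{prop:Cayley} was set up to feed into. There is no real obstacle here: the statement is essentially a repackaging of Proposition~\ref{prop:Cayley}, and the proof in the text is a one-line "qed" appeal to it. The mild subtlety worth a sentence is that the word metric on $G$ is only a length pseudometric, not geodesic, so the quasi-isometry invariance of hyperbolicity should be applied between the \emph{Cayley graph} of $(G,S)$ and $X$, both of which are genuine geodesic spaces.
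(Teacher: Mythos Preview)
Your proposal is correct and matches the paper's intended argument: the corollary is marked with a bare \qed\ precisely because it is the immediate consequence of Proposition~\ref{prop:Cayley} together with the Milnor--Schwarz lemma and the quasi-isometry invariance of hyperbolicity, exactly as you spell out. The only small wrinkle is your justification that $X$ is proper: Hopf--Rinow requires completeness in addition to local compactness, and you assert completeness without argument---but this does follow, since a locally compact length space admitting a cocompact isometric action has a uniform lower bound on the radius of relatively compact balls, forcing Cauchy sequences to converge.
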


\section{Actions on hyperbolic spaces}\label{sec:AmenableActions}

\selectlanguage{french}
\begin{flushright}
\begin{minipage}[t]{0.7\linewidth}
\small\itshape  {\og \c Ca faut avouer, dit Trouscaillon qui, dans cette simple ellipse, utilisait hyperboliquement le cercle vicieux de la parabole.\fg}
\upshape
\begin{flushright}
(R.~Queneau, Zazie dans le m\'etro, 1959)
\end{flushright}
\end{minipage}
\end{flushright}
\selectlanguage{english}  

\fixbug

\medskip

After reviewing some basic features of groups acting on hyperbolic spaces, the goal of this section is to highlight the importance of  \emph{focal} actions   (see Section~\ref{sec:Gromov_classif} below for the precise definitions). Indeed, while actions of general type have been studied in thorough detail in a myriad of papers on hyperbolic spaces, other actions, sometimes termed as ``elementary", have been considered as uninteresting. Notably, and as a consequence of an inadequate terminology, the distinction between \emph{horocyclic} and \emph{focal} actions has been eclipsed. Several basic results  in this section (especially Proposition~\ref{prop:undistorted}, Lemma~\ref{lem:focaltt}, and Proposition~\ref{prop:coctamen}) illustrate how different these two types of actions are and how {essential it is to take} a specific look at focal actions.

Throughout this section, we let $X$ be a Gromov-hyperbolic geodesic metric space.

Recall that $X$ is called \textbf{proper} if closed balls are compact; due to the Hopf--Rinow theorem for length spaces, it is equivalent to require that $X$ be locally compact and complete. Recall further that the full isometry group $\Isom(X)$, endowed with the compact open topology, is a second countable locally compact group. We emphasize that $X$ will \emph{not} be assumed proper, unless explicitly stated otherwise.

\subsection{Gromov's classification}\label{sec:Gromov_classif}

The material in this section follows  from \cite[3.1]{Gro}.
Let $\Gamma$ be an abstract group, and consider an arbitrary isometric action $\alpha$ of $\Gamma$ on a nonempty hyperbolic geodesic metric space $X$.

The {\bf visual boundary} (or {\bf boundary}) $\bd X$ of $X$ is defined as follows. Fix a basepoint $x$ in $X$, define the norm $|y|_x=d(x,y)$ and 
the Gromov product $$2(y|z)_x=|y|_x+|z|_x-d(y,z).$$ Note that $|(y|z)_x-(y|z)_w|\le d(x,w).$ A sequence $(x_n)$ in $X$ is {\bf Cauchy-Gromov} if $(x_n|x_m)_x$ tends to infinity when $n,m$ both tend to $\infty$; by the previous inequality, this does not depend on the choice of $x$. We identify two Cauchy-Gromov sequences $(y_n)$ and $(z_n)$ if $(y_n|z_n)$ tends to infinity. This is indeed an equivalence relation if $X$ is $\delta$-hyperbolic, in view of the inequality
$$(y|z)_x\ge\min[(y|w)_x,(w|z)_x]-\delta,$$
whose validity is a definition of $\delta$-hyperbolicity (if it holds for all $x,y,z,w$).
The boundary $\bd X$ is the quotient set of Cauchy-Gromov sequences by this equivalence relation. (In other words, consider the uniform structure given by the entourages $\{(y,z) : (y|z)_x \geq r\}$ as $r$ ranges over $\mathbf{R}$; then $\bd X$ is the completion from which the canonical image of $X$ has been removed.)

If $\Gamma$ is a group acting on $X$ by isometries, the \textbf{boundary} $\bd_X\Gamma$, also called the \textbf{limit set} of $\Gamma$ in $X$, consists of those elements $(y_n)$ in the boundary, that can be represented by a Cauchy-Gromov sequence of the form $(g_nx)$ with $g_n\in \Gamma$. Since $(g_nx)$ and $(g_nw)$ are equivalent for all $x,w$, this does not depend on $x$. The action of $\Gamma$ induces an action on $\bd X$, which preserves the subset $\bd_X \Gamma$. 

A crucial case is when $\Gamma$ is generated by one isometry $\phi$. Recall that $\phi$ is called
\begin{itemize}
\item[{\bf --~elliptic}] if it has bounded orbits;
\item[{\bf --~parabolic}] if it has unbounded orbits and $\lim_{n\to\infty}\frac1n|\phi^n(x)|_x=0$; 
\item[{\bf --~hyperbolic}] if $\lim_{n\to\infty}\frac1n|\phi^n(x)|_x>0$.
\end{itemize}
The above limit always exists by subadditivity, and the definition clearly does not depend on the choice of $x$. Also, it is straightforward that if $\phi$ preserves a geodesic subset $Y$, then the type of $\phi|_Y$ is the same as the type of $\phi$. In terms of boundary, it can be checked \cite[Chap.~9]{coornaert1990geometrie} that 
\begin{itemize}
\item $\phi$ is elliptic $\Leftrightarrow$ $\bd_X\langle\phi\rangle$ is empty;
\item $\phi$ is parabolic $\Leftrightarrow$ $\bd_X\langle\phi\rangle$ is a singleton;
\item $\phi$ is hyperbolic $\Leftrightarrow$ $\bd_X\langle\phi\rangle$ consists of exactly two points.
\end{itemize}

For an action of an arbitrary group $\Gamma$, Gromov's classification \cite[3.1]{Gro} goes as follows. The action is called 
\begin{itemize}
\item {\bf elementary} and
\begin{itemize}
\item[{\bf -- bounded}] if orbits are bounded;
\item[{\bf -- horocyclic}] if it is unbounded and has no hyperbolic element;
\item[{\bf -- lineal}] if it has a hyperbolic element and any two hyperbolic elements have the same endpoints;
\end{itemize}
\item {\bf non-elementary}\footnote{We follow Gromov's convention. It turns out that in the special case of proper actions of discrete groups, focal actions do not exist and thus elementary actions are precisely those with a finite orbit on the boundary. For this reason, Gromov's conventions were misinterpreted by several authors, who unaccurately consider the focal case as elementary.} and
\begin{itemize}
\item[{\bf -- focal}] if it has a hyperbolic element, is not lineal and any two hyperbolic elements have a common endpoint (it easily follows that there is a common endpoint for all hyperbolic elements);
\item[{\bf -- general type}] if it has two hyperbolic elements with no common endpoint.
\end{itemize}
\end{itemize}

These conditions can be described in terms of the boundary $\bd_X \Gamma$.
\begin{prop}\label{prop:types}
The action of $\Gamma$ is 
\begin{itemize}
\item {\bf bounded} if and only if $\bd_X \Gamma$ is empty;
\item {\bf horocyclic} if and only if $\bd_X \Gamma$ is reduced to one point; then $\bd_X\Gamma$ is the unique finite orbit of $\Gamma$ in $\bd X$;
\item {\bf lineal} if and only if $\bd_X \Gamma$ consists of two points; then $\bd_X\Gamma$ contains all finite orbits of $\Gamma$ in $\bd X$;
\item {\bf focal} if and only if $\bd_X \Gamma$ is uncountable and $\Gamma$ has a fixed point $\xi$ in $\bd_X\Gamma$; then $\{\xi\}$ is the unique finite orbit of $\Gamma$ in $\bd X$;
\item {\bf of general type} if and only if $\bd_X \Gamma$ is uncountable and $\Gamma$ has no finite orbit in $\bd X$.
\end{itemize}
In particular, the action is elementary if and only if $\bd_X\Gamma$ has at most two elements, and otherwise $\bd_X\Gamma$ is uncountable.
\end{prop}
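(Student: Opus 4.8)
The plan is to deduce everything from the single-isometry classification recalled above together with the standard dynamics of hyperbolic isometries on the boundary --- North--South convergence, ping-pong, and coarse quasi-axes, all as in \cite[3.1]{Gro} and \cite{coornaert1990geometrie}. The key preliminary remark is that Gromov's five dynamical types are mutually exclusive and exhaust all isometric actions (split according to whether there is a hyperbolic element; if not, whether orbits are bounded; if so, whether two hyperbolic elements fail to share an endpoint, and otherwise whether all hyperbolic elements share both endpoints or only one), and that the five boundary conditions in the statement --- $\bd_X\Gamma$ empty, one point, two points, uncountable with a $\Gamma$-fixed point, uncountable with no finite orbit --- are likewise mutually exclusive. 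Thus it is enough to prove, for each type, the implication towards the matching boundary condition; the five equivalences, and then the final assertion (since the first three types give $\#\bd_X\Gamma\le 2$ and the last two give $\#\bd_X\Gamma$ uncountable), follow formally. Throughout I will freely use two elementary facts: if a finite-index subgroup of $\Gamma$ has bounded orbits then so does $\Gamma$; and a subgroup fixing three distinct boundary points has bounded orbits, while a subgroup fixing exactly two boundary points is bounded or lineal --- in each case because it coarsely stabilises a bounded ``centre'' of an ideal triangle, respectively a quasi-geodesic joining the two points.

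\emph{Elementary types.} That bounded orbits imply $\bd_X\Gamma=\emptyset$ is clear; conversely, if the action is unbounded it is horocyclic, lineal, focal, or of general type, and in the last three cases a hyperbolic element already puts its two endpoints into $\bd_X\Gamma$. A horocyclic action fixes a boundary point $\xi$ (this is where the absence of a properness hypothesis requires a little care: if there is a parabolic element its fixed point works, and an all-elliptic unbounded action still fixes a unique boundary point by the usual ``fixed point or fixed end'' dichotomy), and an unbounded orbit must then accumulate onto $\xi$: using that $\Gamma$ coarsely preserves a horofunction at $\xi$, a short Gromov-product estimate shows that $(g_nx|\xi)_x$ or $(g_n\inv x|\xi)_x$ tends to $\infty$, which by $\delta$-hyperbolicity yields a Cauchy--Gromov subsequence converging to $\xi$. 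Hence $\bd_X\Gamma=\emptyset$ exactly for bounded actions, and for a horocyclic action $\bd_X\Gamma$ is a nonempty $\Gamma$-invariant set; it cannot have two points, for a subgroup of index at most $2$ would then fix both, fail (by the transfer remark) to have bounded orbits, hence be lineal, hence contain a hyperbolic element --- impossible. So $\#\bd_X\Gamma=1$; that point is $\Gamma$-fixed; and it is the unique finite orbit, since a finite orbit of $\ge 3$ points would make $\Gamma$ bounded and one of $2$ points would again produce a hyperbolic element. For a lineal action with hyperbolic element $h$: the endpoints $h^\pm$ lie in $\bd_X\Gamma$; $\Gamma$ sends hyperbolic elements to hyperbolic elements, all with endpoints $\{h^+,h^-\}$, so it preserves this pair, coarsely preserves a quasi-axis of $h$, and every orbit lies in a bounded neighbourhood of it; hence $\bd_X\Gamma=\{h^+,h^-\}$, and the same arguments show every finite orbit lies in it.

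\emph{Focal case.} Let $\xi$ be the common endpoint of all hyperbolic elements; it is an endpoint of our hyperbolic $h$, and after replacing $h$ by $h\inv$ we may assume $\xi=h^+$ is its attracting fixed point; write $\eta=h^-\neq\xi$. By non-lineality there is a hyperbolic $h''$ whose second endpoint $\eta''$ differs from both $\xi$ and $\eta$. For any $g\in\Gamma$, the hyperbolic element $ghg\inv$ has endpoints $\{g\xi,g\eta\}$, so $\xi\in\{g\xi,g\eta\}$; if $g\xi\neq\xi$ then $g\eta=\xi$, and the same applied to $gh''g\inv$ gives $g\eta''=\xi=g\eta$, whence $\eta''=\eta$, a contradiction. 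Therefore $g\xi=\xi$ for every $g$, so $\Gamma$ fixes $\xi$, and $\xi=\lim_n h^nx\in\bd_X\Gamma$. For uncountability, observe that $g:=(h'')\inv h\,h''$ is hyperbolic with attracting fixed point $(h'')\inv\xi=\xi$ but repelling fixed point $(h'')\inv\eta\neq\eta$; in the visual quasi-metric on $\bd X\minus\{\xi\}$ based at $\xi$, both $h$ and $g$ act as expanding quasi-homotheties about their (distinct) repelling points, so sufficiently high powers of $h\inv$ and $g\inv$ are contractions with disjoint image balls and satisfy a contracting-ping-pong condition; the resulting attractor is a Cantor set inside $\bd_X\Gamma$, which is therefore uncountable. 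Finally $\{\xi\}$ is the unique finite orbit: a second $\Gamma$-fixed point would be a second common endpoint of all hyperbolic elements, contradicting non-lineality through $h''$; a $2$-point orbit would, via the stabiliser and transfer remarks, yield a hyperbolic element with that pair as endpoint set, forcing the $\Gamma$-fixed point $\xi$ into a $2$-element orbit; and a larger orbit would make $\Gamma$ bounded.

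\emph{General type.} Choose hyperbolic $a,b$ with no common endpoint. For $N$ large, North--South dynamics makes $a^N$ and $b^N$ satisfy the classical ping-pong with respect to small neighbourhoods of $\{a^\pm\}$ and $\{b^\pm\}$; thus $\langle a^N,b^N\rangle$ is free of rank two with Cantor limit set contained in $\bd_X\Gamma$, which is hence uncountable. And $\Gamma$ has no finite orbit on $\bd X$: a $\Gamma$-fixed point would be a common endpoint of $a$ and $b$; a $2$-point orbit would, after passing to a subgroup of index at most $2$ that contains $a^2,b^2$ (with the same endpoints as $a,b$) and fixes the pair, force $a$ and $b$ to share both endpoints; a larger orbit would make $\Gamma$ bounded. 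This establishes the five implications and hence the proposition. The one genuinely delicate point is the focal case: the interplay of the fixed-point argument with non-lineality above, and especially the uncountability, where the quasi-homothety picture on the punctured boundary has to be set up with care precisely because $X$ is not assumed proper; the non-emptiness of the limit set of an unbounded action is the only other spot where non-properness must be watched.
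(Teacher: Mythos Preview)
Your overall architecture is sound and the lineal, focal, and general-type arguments are essentially correct (your explicit verification that $\xi$ is fixed by all of $\Gamma$, not just by hyperbolic elements, and your ping-pong for uncountability are more detailed versions of what the paper defers to Lemma~\ref{lem:schottky}). However, there is a genuine gap in the horocyclic case, and it propagates through your preliminary toolkit.

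You invoke a ``usual fixed point or fixed end dichotomy'' to produce a boundary point fixed by an all-elliptic unbounded group. That dichotomy is standard for proper spaces (or trees), but for a general geodesic hyperbolic space it is precisely the content of the horocyclic case and is not available as a black box. The paper does not argue this way: it cites \cite[Th.~9.2.1]{coornaert1990geometrie}, whose proof shows directly that when no element is hyperbolic, any sequence $(g_n x)$ with $|g_n x|_x\to\infty$ is automatically Cauchy--Gromov. This gives $\#\bd_X\Gamma\le 1$ in one stroke, with no need to first exhibit a fixed boundary point. Your subsequent Gromov-product estimate already presupposes the fixed $\xi$, so it does not close the gap.

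The same non-properness issue undermines your preliminary claim that a subgroup fixing two boundary points ``coarsely stabilises a quasi-geodesic joining the two points'': in a non-proper space no geodesic (or even quasi-geodesic) between two boundary points need exist. The paper handles the analogous step---showing that a horocyclic group cannot have a second finite orbit---by passing to a metric ultrapower $X^*$ to manufacture the missing geodesic, and then using the consequence of the CDP argument that a horocyclic orbit meets any quasi-geodesic in a bounded set. Your route through ``two fixed points plus unbounded implies lineal implies hyperbolic element'' would work once you supply the ultrapower step, but as written it assumes what needs to be constructed. (In the focal and general-type cases you can avoid this issue entirely by using North--South dynamics of the hyperbolic elements you already have: a hyperbolic isometry fixes exactly its two endpoints on $\bd X$, so any further fixed pair must coincide with them.)
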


\begin{proof}[Sketch of proof] 
If the action is horocyclic, the proof of \cite[Theorem 9.2.1]{coornaert1990geometrie} shows that for every sequence $(g_n)$ such that $|g_nx|_x$ tends to infinity, the sequence $(g_nx)$ is Cauchy-Gromov; it follows that $\bd_X\Gamma$ is a singleton. It follows in particular that the intersection of an orbit with any quasi-geodesic is bounded. 

If $\bd_X\Gamma=\{\xi\}$ and $\Gamma$ has another finite orbit on the boundary, then we can suppose that it has another fixed point $\eta$ by passing to a subgroup of finite index.  
Let us consider a (metric) ultrapower $X^*$ of $X$; namely $X^*$ is obtained as follows:  
endow the space of bounded sequences in $X$ with the pseudo-distance defined as the limit of the distances along a non-principal ultrafilter; then $X^*$ is the metric space obtained by identifying sequences at pseudo-distance zero.
%
It admits a canonical isometric embedding of $X$; it is also a geodesic metric space and is hyperbolic with the same hyperbolicity constant, and the $\Gamma$-action canonically extends to an action $X^*$. There is an natural inclusion $\partial X\subset\partial X^\ast$; since $X$ is $\Gamma$-invariant, it follows that $\partial_X\Gamma=\partial_{X^\ast}\Gamma$ and in particular, the type of the action on $X^*$ is the same as the type of the action on $X$, i.e., horocyclic. Moreover, any pair of distinct points in $\partial X$ can be joined by a geodesic in $X^*$. Consider a geodesic in $X^*$ joining $\xi$ and $\eta$. Its $\Gamma$-orbit is a $\Gamma$-invariant quasi-geodesic.
Since the action is horocyclic, the above remark shows that the action of $\Gamma$ on this quasi-geodesic, and hence on $X$, is bounded, a contradiction.

The other verifications are left to the reader (the uncountability of $\bd_X \Gamma$ in the non-elementary cases follows from Lemma~\ref{lem:schottky}).
\end{proof}

\subsection{Basic properties of actions and quasi-convexity}

As before, $X$ is a hyperbolic geodesic space, without properness assumptions.
Recall that a subset $Y\subset X$ is {\bf quasi-convex} if there exists $c\ge 0$ such that for all $x,y\in Y$ there exist a sequence $x=x_0,\dots,x_n=y$ in $Y$ with $d(x_i,x_{i+1})\le c$ for all $i$ and $n\le c(d(x,y)+1)$. 
We say that an action is {\bf quasi-convex} if some (and hence every) orbit is quasi-convex. If the acting group $\Gamma$ is locally compact and the action is metrically proper, this is equivalent to the requirement that $\Gamma$ is compactly generated and undistorted in $X$ (i.e.\ the orbit map $g\mapsto gx$ is a quasi-isometric embedding for some/all $x\in X$).

The notions of horocyclic and focal actions, i.e.\ those unbounded actions with a unique fixed point at infinity, are gathered under the term of {\em quasi-parabolic} actions in~\cite{Gro,karlsson2004some}, while horocyclic actions were termed {\em parabolic}.
 Nevertheless, the following proposition, which does not seem to appear in the literature, shows that horocyclic and focal actions exhibit a dramatically opposite behaviour.

\begin{prop}\label{prop:undistorted} 
If the action of $\Gamma$ is bounded, lineal or focal, then it is quasi-convex. On the other hand, a horocyclic action is never quasi-convex, while an action of general type can be either quasi-convex or not.
\end{prop}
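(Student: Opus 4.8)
The plan is to establish the four assertions of Proposition~\ref{prop:undistorted} separately, treating the bounded and lineal cases quickly and concentrating effort on the focal case and on the horocyclic non-quasi-convexity.

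\textbf{Bounded and lineal actions.} If the action is bounded, every orbit has finite diameter, so it is quasi-convex with constant $c=\diam(\Gamma x)+1$: any two orbit points can be joined by a single hop. If the action is lineal, fix a hyperbolic element $\phi$ with axis endpoints $\xi_\pm$; in an ultrapower $X^*$ there is an honest geodesic line $\ell$ joining $\xi_-$ to $\xi_+$, and $\Gamma$ coarsely preserves $\ell$ (every element of $\Gamma$ permutes $\{\xi_-,\xi_+\}$, hence moves $\ell$ within bounded Hausdorff distance, the bound being uniform because the translation lengths and displacements are controlled by $\delta$ and the geometry of a fundamental chunk of $\ell$). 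An orbit then lies within bounded distance of $\ell$, and $\ell$ is isometric to $\RR$, so the orbit is quasi-convex (this is essentially the observation already used in the sketch proof of Proposition~\ref{prop:types}). One must be slightly careful since $X$ need not be proper, but the ultrapower argument is exactly the device the authors have already deployed, so I would reuse it verbatim.

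\textbf{Focal actions.} This is the substantive case and where I expect the main obstacle. The strategy is: pick a hyperbolic element $\phi\in\Gamma$ fixing the global fixed point $\xi\in\bd_X\Gamma$, with $\phi$ attracting towards $\xi$ (replace $\phi$ by $\phi^{-1}$ if needed). Fix a basepoint $x$ and a geodesic ray (or bi-infinite geodesic, passing to $X^*$ if necessary) $\rho$ from a point near $x$ towards $\xi$; the powers $\phi^n x$ fellow-travel $\rho$. Now given an arbitrary $g\in\Gamma$, I want to join $x$ to $gx$ by an orbit path of length $O(d(x,gx))$ with uniformly bounded hops. The idea is to ``push towards $\xi$ and come back'': because $\phi$ is hyperbolic towards $\xi$ and $g$ fixes $\xi$, for $n$ large enough the points $\phi^n x$ and $g\phi^n x = \phi^n(\phi^{-n}g\phi^n)x$ both lie far out along rays to $\xi$, and in a hyperbolic space two rays to the same boundary point become uniformly close; so $\phi^n x$ and $g\phi^n x$ are within a bounded distance (depending on $\delta$ and on $d(x,gx)$ only through how far $n$ must be taken, which is linear in $d(x,gx)$ by the hyperbolicity of $\phi$). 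The orbit path $x, \phi x,\dots,\phi^n x, g\phi^n x,\dots, g\phi x, gx$ — inserting one bounded-length detour to bridge $\phi^n x$ to $g\phi^n x$ — then has the required linear length bound and bounded step size, with constants depending only on $\delta$, on $d(x,\phi x)$, and on the translation length of $\phi$. The key quantitative input is the standard hyperbolic fact that, given $(a\mid b)_x\ge R$, a geodesic from $a$ to $b$ stays $\delta$-close to any geodesic from $x$ towards the ``direction'' of $a$ and $b$; I would cite this from \cite{coornaert1990geometrie} or \cite{Gro}. The obstacle to watch is making the required exponent $n$ genuinely linear in $d(x,gx)$ uniformly in $g$: this follows because the Busemann-type decrease $b_\xi(\phi^n x)\to-\infty$ is linear in $n$ (hyperbolicity of $\phi$) while $|b_\xi(x)-b_\xi(gx)|\le d(x,gx)$.

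\textbf{Horocyclic actions are never quasi-convex.} Here I invoke the observation recorded in the sketch proof of Proposition~\ref{prop:types}: for a horocyclic action, for every sequence $(g_n)$ with $|g_n x|_x\to\infty$ the sequence $(g_n x)$ is Cauchy--Gromov, equivalently the intersection of any orbit with any quasi-geodesic is bounded. If the orbit $\Gamma x$ were quasi-convex, then (since a quasi-convex set in a geodesic hyperbolic space is quasi-geodesic — the defining chains of bounded hops assemble, after the usual tameness adjustments, into a quasi-geodesic through the orbit) the orbit would be a quasi-geodesic intersecting itself in an unbounded set, forcing it to be bounded; but a horocyclic action is by definition unbounded, a contradiction. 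I would spell out the ``quasi-convex orbit $\Rightarrow$ quasi-geodesic'' step carefully, since that is the only non-formal point, using that the hops have bounded length and number $\le c(d(x,y)+1)$.

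\textbf{General type can go either way.} Finally, to see an action of general type need not be quasi-convex, take a convex-cocompact (hence quasi-convex, of general type) action, e.g. a free group acting on a tree or on $\Hyp^2$, for the positive side; for the negative side, take a finitely generated group of general type which is distorted in the hyperbolic space it acts on — for instance pick any non-quasi-convex (infinitely generated, or distorted) subgroup generating a general-type action, or more concretely observe that one can pre-compose a convex-cocompact action with an injection from a group into which the orbit map is not a quasi-isometric embedding; it suffices to exhibit one example and one counterexample, so I would just name a standard convex-cocompact representation and a standard non-quasi-convex subgroup of a hyperbolic group (e.g. a fiber subgroup of a hyperbolic $3$-manifold group, which is of general type on $\Hyp^3$ but exponentially distorted). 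This half of the statement requires only exhibiting examples, so it is routine once the vocabulary is in place.
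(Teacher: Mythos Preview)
Your overall strategy matches the paper's, but the focal case has a genuine gap. You assert that for $n$ large, $d(\phi^n x, g\phi^n x)$ is bounded by a constant depending only on $\delta$. This fails: two quasi-geodesic rays to $\xi$ eventually become close \emph{as sets}, but the specific points $\phi^n x$ and $g\phi^n x$ lie at Busemann levels differing by roughly $\beta_\xi(g)$ (since the quasicharacter $\beta_\xi$ is at bounded distance from $g\mapsto h(x,gx)$), and hence $d(\phi^n x, g\phi^n x)\ge |\beta_\xi(g)|-O(\delta)$, which is unbounded over $g\in\Gamma$. The repair is immediate---allow different exponents on the two legs, pairing $\phi^n x$ with $g\phi^m x$ for $n-m\approx\beta_\xi(g)/\beta_\xi(\phi)$---but the paper's argument avoids all such bookkeeping: the orbit $\Gamma x_0$ is a union of quasi-geodesic rays $g\langle\phi\rangle x_0$ to $\xi$, so any two orbit points $y,z$ are joined to $\xi$ by rays lying in the orbit, and the thin-triangle property (two sides of a quasi-geodesic triangle $\delta$-neighbour the third) then yields a quasi-geodesic from $y$ to $z$ within the orbit directly.

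Your horocyclic paragraph has the right idea but the phrase ``the orbit would be a quasi-geodesic intersecting itself in an unbounded set'' conflates quasi-convex with quasi-geodesic. The clean statement is the paper's: for each $c$ there is $c'$ so that any orbit meets any $c$-quasi-geodesic inside the union of two $c'$-balls; if the orbit were quasi-convex with constant $c$, the $c$-quasi-geodesic in the orbit joining two orbit points at distance $>2c'+c$ contradicts this. For general type the paper uses the single group $\SL_2(\ZZ)$, acting cocompactly on its Bass--Serre tree but not quasi-convexly on $\HH^2$; your fibre-subgroup example is also valid.
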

\begin{proof}
The bounded case is trivial and in the lineal case, $\Gamma$ preserves a subset at bounded Hausdorff distance of a geodesic and is thus quasi-convex.

Assume that the action is focal with $\xi \in \bd X$ as a global fixed point. We have to prove that some given orbit is quasi-convex. Let $\alpha$ be a hyperbolic element,       
and let $x_0$ be a point on a geodesic line $[\xi,\eta]$ between the two        
fixed points $\xi$ and $\eta$ of $\alpha$ (embed if necessary $X$ into a (metric) ultrapower $X^*$ as in the proof of Proposition \ref{prop:types} to ensure the existence of this geodesic). The $\langle\alpha\rangle$-orbit     
of $x_0$ is a discrete quasi-geodesic. Observe that the orbit $\Gamma x_0$ is the     
union of quasi-geodesics $g\langle\alpha\rangle x_0$, with $g$ varying in       
$\Gamma$. In particular, $\Gamma x_0$ contains quasi-geodesics between all its points      
to $\xi$.                                                                       
 Now, let $x$ and $y$ be two points in $\Gamma x_0$.                                  
{Recall} that given a quasi-geodesic triangle between three points in the reunion of a          
hyperbolic space with its boundary, the union of two edges of this triangle    
is quasi-convex. Applying this to $x$, $y$ and $\xi$, we see that a          
quasi-geodesic between $x$ and $y$ can be found in the orbit $\Gamma x_0$, which      
is therefore quasi-convex.

If the action of $\Gamma$ is horocyclic, we observed in the proof of Proposition~\ref{prop:types} that the intersection of any orbit with a quasi-geodesic is bounded. More precisely, for every $c$ there exists $c'$ (depending only on $\delta$ and $c$) such that the intersection of any orbit and any $c$-quasi-geodesic is contained in the union of two $c'$-balls. If the action is quasi-convex, given $x$ there exists $c$ such that any two points in $\Gamma x$ are joined by a $c$-quasi-geodesic within $\Gamma x$; taking two points at distance $>2c'+c$ we obtain a contradiction.

For the last statement, it suffices to exhibit classical examples: for instance $\textnormal{SL}_2(\ZZ)$ has a proper cocompact action on a tree, but its action on the hyperbolic plane is not quasi-convex.
\end{proof}

Let $\Gamma$ act on $X$ by isometries. Recall that a {\bf Schottky subsemigroup}, resp.\ {\bf subgroup}, for the action of $\Gamma$ on $X$ is a pair $(a,b)$ such that the orbit map $g\mapsto gx$, is a quasi-isometric embedding of the free semigroup (resp.\ subgroup) on $(a,b)$. An elementary application of the ping-pong lemma \cite[8.2.E, 8.2.F]{Gro} yields the following.

\begin{lem}\label{lem:schottky}
If the action of $\Gamma$ is focal (resp.\ general type), then there is a Schottky subsemigroup (resp.\ subgroup) for the action of $\Gamma$ on $X$.\qed
\end{lem}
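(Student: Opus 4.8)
The plan is to apply the ping-pong lemma: its group form in the general-type case and its semigroup form in the focal case, the dynamical input in both being the North--South behaviour of hyperbolic isometries on the boundary. As $X$ is not assumed proper, I would first pass to a metric ultrapower $X^{*}$ (as done elsewhere in the paper), in which every hyperbolic element of $\Gamma$ has honest attracting/repelling fixed points $g^{\pm}\in\bd X^{*}$ joined by a geodesic line and has the usual North--South dynamics on $\bar X:=X^{*}\cup\bd X^{*}$; since the Schottky property concerns only the orbit of one basepoint $x_0\in X$, it is enough to produce it in $X^{*}$.

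For the general-type case I would take hyperbolic $a,b\in\Gamma$ with no common endpoint, so that $a^{+},a^{-},b^{+},b^{-}$ are four distinct points of $\bd X^{*}$. North--South dynamics provide pairwise disjoint neighbourhoods $A^{\pm}\ni a^{\pm}$, $B^{\pm}\ni b^{\pm}$ in $\bar X$ and a basepoint $x_0$ outside all four such that $a^{\pm n}(\bar X\setminus A^{\mp})\subseteq A^{\pm}$ and $b^{\pm n}(\bar X\setminus B^{\mp})\subseteq B^{\pm}$ for all large $n$. Fixing such an $n$: the classical ping-pong lemma gives that $\langle a^{n},b^{n}\rangle$ is free of rank two, and for the quasi-isometric embedding I would check that, for a reduced word $w=c_1\cdots c_m$ in $a^{\pm n},b^{\pm n}$, the orbit points $x_0,c_1x_0,c_1c_2x_0,\dots,wx_0$ form a quasi-geodesic: consecutive points are a definite distance apart, while the turning at the $k$-th vertex is governed by the Gromov product $(c_k^{-1}x_0\mid c_{k+1}x_0)_{x_0}$, which is uniformly bounded because reducedness puts $c_k^{-1}x_0$ and $c_{k+1}x_0$ in disjoint small neighbourhoods of distinct fixed points. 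By the local-to-global principle for quasi-geodesics this yields $d(x_0,wx_0)\ge\lambda m-\mu$ with $\lambda>0$, i.e.\ the Schottky property of $(a^{n},b^{n})$.

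For the focal case, let $\xi$ be the global fixed point. Take any hyperbolic $a_0\in\Gamma$; since $a_0$ fixes $\xi$ while $a_0^{n}\zeta\to a_0^{+}$ for every $\zeta\ne a_0^{-}$, necessarily $\xi\in\{a_0^{+},a_0^{-}\}$, so after replacing $a_0$ by its inverse if needed I may take $a:=a_0$ with $a^{-}=\xi$ and $a^{+}=:\eta\ne\xi$. By Proposition~\ref{prop:types}, $\{\xi\}$ is the unique finite $\Gamma$-orbit in $\bd X$, hence $\eta$ is not $\Gamma$-fixed; picking $h\in\Gamma$ with $h\eta\ne\eta$ and putting $b:=hah^{-1}$ (again hyperbolic) gives $b^{-}=h\xi=\xi$ and $b^{+}=h\eta=:\eta'\ne\eta$. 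So $a,b$ are hyperbolic sharing their repelling point $\xi$ but with distinct attracting points. Now choose disjoint neighbourhoods $U_a\ni\eta$, $U_b\ni\eta'$, a neighbourhood $V\ni\xi$ disjoint from $U_a\cup U_b$, and a basepoint $x_0\notin U_a\cup U_b\cup V$; for $n$ large, $a^{n}(\bar X\setminus V)\subseteq U_a$ and $b^{n}(\bar X\setminus V)\subseteq U_b$. Since $U_a$, $U_b$ and $x_0$ all avoid $V$, semigroup ping-pong applies: a nonempty word in $\{a^{n},b^{n}\}$ sends $x_0$ into $U_a\cup U_b$, hence off $x_0$, and two nonempty words with distinct first letters send $x_0$ to different members of $\{U_a,U_b\}$; with the injectivity of $a^{n},b^{n}$ this forces the subsemigroup they generate to be free, and the quasi-isometric embedding follows from the same quasi-geodesic estimate as in the general-type case. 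Thus $(a^{n},b^{n})$ is a Schottky subsemigroup.

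The genuinely substantive ingredients are the North--South dynamics of hyperbolic isometries on the boundary --- standard, but it is precisely what necessitates the ultrapower detour when $X$ is not proper --- and the upgrade of ping-pong combinatorics to an actual quasi-isometric embedding via the quasi-geodesic estimate; both are contained in \cite[8.2]{Gro} and \cite[Chap.~9]{coornaert1990geometrie}. Everything else is routine, the only structural input being the uniqueness of the finite boundary orbit from Proposition~\ref{prop:types}, used to manufacture the second hyperbolic element $b$ in the focal case. That construction --- forcing $b$ to share $a$'s repelling point without sharing its attracting point --- is the step I would expect to require the most care.
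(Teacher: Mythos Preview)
The paper gives no proof beyond the citation to Gromov's ping-pong lemma \cite[8.2.E, 8.2.F]{Gro}, and your proposal is precisely a correct unpacking of that reference: North--South dynamics plus ping-pong, with the local-to-global quasi-geodesic estimate for the quasi-isometric embedding. Your handling of the focal case---conjugating to produce a second hyperbolic element with the same repelling point $\xi$ but a distinct attracting point, then running semigroup ping-pong---is exactly the right setup and matches Gromov's 8.2.F.
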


It is useful to use a (metrizable) topology on $\bd X$. A basis of neighbourhoods of the boundary point represented by the Cauchy-Gromov sequence $(x_i)$ is 
$$V_n=\{(y_i):\;\liminf (y_i|x_i)\ge n\}.$$

Gromov shows \cite[8.2.H]{Gro} that if the action is of general type, then the actions of $\Gamma$ on $\bd_X\Gamma$ and on $\bd_X\Gamma\times \bd_X\Gamma$ are topologically transitive. This very important (and classical) fact will not be used in the paper. On the other hand, in the focal case, we have the following observation, which, as far as we know, is original.

\begin{lem}\label{lem:focaltt}
Let the isometric action of $\Gamma$ on $X$ be focal with $\xi \in \bd X$ as a fixed point. Then the action of $\Gamma$ on $\bd_X\Gamma-\{\xi\}$ is topologically transitive.
\end{lem}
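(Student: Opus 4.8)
The plan is to exploit the Schottky subsemigroup produced by Lemma~\ref{lem:schottky} together with a contraction/attraction property of hyperbolic isometries on the boundary. First I would fix a hyperbolic element $\alpha \in \Gamma$; since the action is focal, $\alpha$ fixes $\xi$ and has a second fixed point $\alpha_- \in \bd_X\Gamma - \{\xi\}$, which is the \emph{repelling} endpoint (so that $\alpha^{-n}$ has $\alpha_-$ as attracting point and $\xi$ as repelling point, or vice versa after replacing $\alpha$ by $\alpha^{-1}$). Recall the standard North--South dynamics of a hyperbolic isometry of a hyperbolic space on its boundary: for any neighbourhood $U$ of the attracting fixed point and any compact set $C \subseteq \bd X$ avoiding the repelling fixed point, one has $\alpha^n C \subseteq U$ for all large $n$. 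Thus for any point $\eta \in \bd_X\Gamma - \{\xi\}$ different from $\alpha_-$, the sequence $\alpha^n \eta$ converges to the attracting endpoint of $\alpha$, and $\alpha^{-n}\eta$ converges to the other one; in particular the $\langle\alpha\rangle$-orbit of any such $\eta$ accumulates at both $\alpha_-$ and the attracting endpoint of $\alpha$.

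To upgrade this to topological transitivity on all of $\bd_X\Gamma - \{\xi\}$, I would argue as follows. Let $U, V$ be two nonempty open subsets of $\bd_X\Gamma - \{\xi\}$; we must find $g \in \Gamma$ with $gU \cap V \neq \emptyset$. Using Lemma~\ref{lem:schottky}, pick a Schottky subsemigroup $(a,b)$, so that $\Gamma$ contains many hyperbolic elements whose attracting endpoints are dense in $\bd_X\Gamma$ (the attracting endpoints of the elements $w(a,b)$ for $w$ ranging over positive words form a dense subset of the limit set — this is the standard consequence of ping-pong, analogous to the density of fixed points of hyperbolic elements in the general-type case). Concretely, I would choose a hyperbolic element $h_1$ whose attracting endpoint $h_1^+$ lies in $V$ and whose repelling endpoint $h_1^-$ is \emph{not} $\xi$ — this is possible because in the focal case the common fixed point is $\xi$, so by taking a suitable conjugate or product of Schottky generators one arranges $h_1^- \neq \xi$; similarly choose $h_2$ with $h_2^+ \in U$, $h_2^- \neq \xi$. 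Shrinking $U$ and $V$, we may assume $\overline{U}, \overline{V}$ are compact subsets of $\bd X - \{\xi, h_1^-, h_2^-\}$ (here we use that $\bd_X\Gamma - \{\xi\}$ is uncountable, hence these finitely many points do not exhaust a neighbourhood). By North--South dynamics, $h_1^n V' \to h_1^+$ and for large $n$ we push a small neighbourhood into $V$; combining: for large $m,n$ the element $g = h_1^{m} h_2^{-n}$ — wait, I should instead directly use that $h_1^m$ maps any compact set avoiding $h_1^-$ into any given neighbourhood of $h_1^+ \in V$, and pick one point of $U$ mapped into $V$ after first moving $U$ off $h_1^-$ by some element of $\Gamma$.

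Cleaning up the bookkeeping, the cleanest route is: first show that for a \emph{single} hyperbolic $\alpha$, every point $\eta \neq \xi$ with $\eta \neq \alpha_-$ has $\langle\alpha\rangle$-orbit closure containing $\alpha_-$; then, given $U$ and $V$, use density of attracting endpoints of hyperbolic elements of $\Gamma$ (ping-pong) to find a hyperbolic $h$ with $h^+ \in V$, then move some point of $U$ away from $h^-$ by an element $\gamma \in \Gamma$ (possible since $\Gamma$ does not fix any point other than $\xi$, and $h^- \ne \xi$ can be arranged, so the $\Gamma$-orbit of a point in $U$ is not contained in $\{h^-\}$), and finally apply a large power $h^k$ to land in $V$; then $g = h^k\gamma$ works. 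The main obstacle I anticipate is the non-properness of $X$: the North--South dynamics statement and the ping-pong density argument are usually phrased for proper spaces, so I would need to either pass to the metric ultrapower $X^*$ (as done elsewhere in this section to obtain geodesics between boundary points) and check that boundary dynamics descend, or re-derive the needed attraction estimate directly from the Gromov-product inequalities for $\delta$-hyperbolic spaces — the latter is elementary but is the one place where care is required. The rest is a routine combination of Lemma~\ref{lem:schottky} and the contraction estimate.
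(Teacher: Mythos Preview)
Your plan has a conceptual error about focal actions and, once that is fixed, a genuine gap.

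\medskip
\textbf{The error.} You write that you will ``choose a hyperbolic element $h_1$ whose attracting endpoint $h_1^+$ lies in $V$ and whose repelling endpoint $h_1^-$ is \emph{not} $\xi$''. This is impossible. By definition of a focal action, \emph{every} hyperbolic element of $\Gamma$ has $\xi$ as one of its two endpoints. So if $h_1^+\in V\subseteq\bd_X\Gamma-\{\xi\}$ then necessarily $h_1^-=\xi$. (The good news is that this actually simplifies your North--South step: for $u\in U$ you automatically have $u\ne\xi=h_1^-$, so $h_1^{k}u\to h_1^+\in V$ with no need to first move $U$ off $h_1^-$.)

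\medskip
\textbf{The gap.} With the error corrected, your argument still hinges on the claim that the set $F$ of non-$\xi$ endpoints of hyperbolic elements of $\Gamma$ is dense in $\bd_X\Gamma-\{\xi\}$: you need some hyperbolic $h$ with non-$\xi$ endpoint inside the given open set $V$. You attribute this to ping-pong (Lemma~\ref{lem:schottky}), but the limit set of a Schottky subsemigroup is only a Cantor set inside $\bd_X\Gamma$, not a dense subset in general (think of a Heintze group acting on complex hyperbolic space, where $\bd_X\Gamma-\{\xi\}$ is a Heisenberg group). The set $F$ is $\Gamma$-invariant, so its closure is a closed $\Gamma$-invariant subset of $\bd_X\Gamma$ containing points other than $\xi$; but showing that any such set is all of $\bd_X\Gamma$ is precisely (a strengthening of) the topological transitivity you are trying to prove. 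In other words, the ``density of attracting endpoints'' step is circular here: in the general-type case it follows from double topological transitivity, which is established by an independent argument, but in the focal case you have no such independent input.

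\medskip
\textbf{What the paper does instead.} The paper bypasses this circularity by first invoking Proposition~\ref{prop:undistorted}: since focal actions are quasi-convex, one may replace $X$ by an orbit and assume the action is cobounded. Then a short direct geometric argument works: given $\eta$ and an open $\Omega$ in $\bd X-\{\xi\}$, pick a quasi-geodesic $D\subset\Gamma x$ from $\xi$ to $\eta$, pick a ball $B$ so that every quasi-geodesic from $\xi$ through $B$ has its other endpoint in $\Omega$, and use coboundedness to translate $D$ through $B$. This actually proves the stronger statement that every $\Gamma$-orbit in $\bd_X\Gamma-\{\xi\}$ is dense, and needs neither Schottky semigroups nor a North--South lemma.
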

\begin{proof}
By Proposition~\ref{prop:undistorted}, there is no loss of generality in assuming that the action of $\Gamma$ on $X$ is cobounded, so that $X$ is covered by $r$-balls around points of an orbit $\Gamma x$ for some $r<\infty$. Fix a point $\eta$ and an open subset $\Omega$ in $\bd X-\{\xi\}$. For some $c$ there exists a $c$-quasi-geodesic $D$ in $\Gamma x$ joining $\xi$ and $\eta$; let $gx$ be one of its points. There exists a $r$-ball $B$ so that every $c$-quasi-geodesic with endpoint $\xi$ and passing through $B$ has its second endpoint in $\Omega$. There exists $h\in\Gamma$ such that $hgx$ belongs to $B$. It follows that the second endpoint of $hD$, which equals $h\eta$, lies in $\Omega$. 
\end{proof}

\begin{lem}\label{lem:properU}
Let the isometric action of $\Gamma$ on $X$ be horocyclic with fixed point $\xi$ in $\bd X$. Fix $x\in X$ and endow $\Gamma$ with the left-invariant (pseudo)distance $d_x(g,h)=d(gx,hx)$. 

Then the action of $\Gamma$ on $\bd X-\{\xi\}$ satisfies the following property (akin to metric properness):
 for every closed subset $K$ of $\bd X$ not containing $\xi$, the set $\{g\in\Gamma:gK\cap K\neq\varnothing\}$ is bounded in $(\Gamma,d_x)$.
\end{lem}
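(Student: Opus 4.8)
The plan is to argue by contradiction. Suppose the set $S=\{g\in\Gamma:gK\cap K\neq\varnothing\}$ is unbounded in $(\Gamma,d_x)$; then I can pick $g_n\in S$ with $d(x,g_nx)\to\infty$ and, for each $n$, boundary points $\eta_n\in K$ with $\zeta_n:=g_n\eta_n\in K$. The heuristic is that, the action being horocyclic, the orbit point $g_nx$ is forced to run off to $\xi$, so $g_n$ pushes everything that is not aimed at $g_n^{-1}x$ very close to $\xi$; hence it cannot preserve the closed set $K$, which sits at bounded ``Gromov distance'' from $\xi$ precisely because it avoids $\xi$. Note that the argument is purely metric and needs no properness.

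I would isolate two inputs. First, since $K$ is closed in $\bd X$ and $\xi\notin K$, the description of the topology of $\bd X$ through the entourages $\{(y,z):(y|z)_x\ge r\}$ provides an $M<\infty$ with $(\eta|\xi)_x\le M$ for all $\eta\in K$; in particular $(\eta_n|\xi)_x\le M$ and $(\zeta_n|\xi)_x\le M$. Second, since $|g_nx|_x=d(x,g_nx)\to\infty$ and the action is horocyclic, the argument in the proof of Proposition~\ref{prop:types} (via \cite[Theorem~9.2.1]{coornaert1990geometrie}) shows that $(g_nx)_n$ is a Cauchy--Gromov sequence; being of the form $(g_nx)$ with $g_n\in\Gamma$, it represents the unique point of $\bd_X\Gamma$, namely $\xi$, so $(g_nx|\xi)_x\to\infty$. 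The same applies to $g_n^{-1}$ because $d(x,g_n^{-1}x)=d(g_nx,x)\to\infty$, hence also $(g_n^{-1}x|\xi)_x\to\infty$.

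The computational core is the elementary identity, valid for every isometry $g$ of $X$ and all $x,z\in X$,
\[ (gz|gx)_x+(z|g^{-1}x)_x=d(x,gx), \]
obtained by expanding both Gromov products and using only that $g$ is an isometry; it extends to boundary points up to an additive error depending only on $\delta$. Apply it with $g=g_n$, letting $z$ tend to $\eta_n$. For $n$ large, $(g_nx|\xi)_x>M+\delta\ge(\zeta_n|\xi)_x+\delta$, so the four-point inequality with basepoint $x$ applied to $\zeta_n,\xi,g_nx$ forces $(\zeta_n|g_nx)_x\le(\zeta_n|\xi)_x+\delta\le M+\delta$; symmetrically, $(g_n^{-1}x|\xi)_x>(\eta_n|\xi)_x+\delta$ gives $(\eta_n|g_n^{-1}x)_x\le M+\delta$. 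Substituting into the identity yields $d(x,g_nx)\le 2M+O(\delta)$, contradicting $d(x,g_nx)\to\infty$, and the lemma follows.

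The part that needs genuine care, rather than any conceptual difficulty, is the bookkeeping with Gromov products of boundary points: one must fix a convention for the boundary-extended Gromov product, verify that both the four-point inequality and the displayed identity persist up to a controlled additive loss, and check that ``$K$ closed, $\xi\notin K$'' really does yield the uniform bound $M$. Everything else is estimating with $\delta$'s.
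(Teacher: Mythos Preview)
Your argument is correct and takes a genuinely different route from the paper's proof. The paper passes to a metric ultrapower so as to have geodesics between $\xi$ and points of $K$, observes that all such geodesics pass through a fixed ball $B$, and then uses the pointwise inequality $d(g^2y,y)\le d(gy,y)+2\delta$ for non-hyperbolic $g$ (from \cite[Lemma~9.2.2]{coornaert1990geometrie}) to bound $d(y,gy)$ explicitly. Your approach avoids ultrapowers and geodesics entirely: you work directly with Gromov products, exploit the key structural fact (quoted from the proof of Proposition~\ref{prop:types}) that any unbounded orbit sequence in a horocyclic action is Cauchy--Gromov towards $\xi$, and close with the exact identity $(gz|gx)_x+(z|g^{-1}x)_x=d(x,gx)$, which is a nice observation. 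The paper's proof is more visual and yields an explicit constant; yours is cleaner in that it stays purely at the level of four-point inequalities and needs no auxiliary enlargement of $X$. The only place to be careful, as you note yourself, is the bookkeeping when extending Gromov products to $\bd X$: both the identity and the four-point estimate survive only up to $O(\delta)$, but this is standard and harmless for the contradiction.
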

\begin{proof}
We can replace $X$ by a (metric) ultrapower (see the proof of Proposition \ref{prop:types}), allowing the existence of geodesics between any two points (at infinity or not).

Let $K$ be a closed subset of $\bd X$ not containing $\xi$. Note that there exists a ball $B$ of radius $10\delta$ such that every geodesic whose endpoints are $\xi$ and some point in $K$, passes through $B$. Fix $g\in\Gamma$ such that $gK\cap K\neq\varnothing$. In particular, there exists a geodesic $D$ such that both $D$ and $gD$ are issued from $\xi$ and pass through $B$. It follows that before hitting $B$, $D$ and $gD$ lie at bounded distance (say, $\le 50\delta$) from each other. Fix some $y\in D\cap B$ and let $D_y$ be the geodesic ray in $D$ joining $y$ to $\xi$. Then either $gD_y$ is contained in the $50\delta$-neighbourhood of $D_y$, or vice versa. Since $g$ is not hyperbolic, using the inequality $d(g^2y,y)\le d(gy,y)+2\delta$ (see \cite[Lemma~9.2.2]{coornaert1990geometrie}) it easily follows (using that $y$, $gy$ and $g^2y$ are close to the geodesic ray $D_y$ and the equality $d(y,gy)=d(gy,g^2y)$) that $d(y,gy)\le 152\delta$. Thus the set $\{g\in\Gamma; gK\cap K\neq\varnothing\}$ is $d_y$-bounded, hence $d_x$-bounded.
\end{proof}

Define the {\bf bounded radical} of $\Gamma$ as the union $B_X(\Gamma)$ of all normal subgroups $N$ that are \textbf{$X$-bounded}, i.e.\  such that the action of $N$ on $X$ is bounded.

\begin{lem}\label{lem:bdrad}Suppose that the $\Gamma$-action is not horocyclic. Then the following properties hold.
\begin{enumerate}[(a)]
\item\label{bdr} The action of $B_X(\Gamma)$ on $X$ is bounded. 
\item\label{nli} If moreover the $\Gamma$-action is neither lineal, $B_X(\Gamma)$ is equal to the kernel $K$ of the action of $\Gamma$ on $\bd_X\Gamma$.
\end{enumerate}
\end{lem}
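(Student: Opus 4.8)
The plan is to deduce both assertions from the following self-contained principle, which I shall call $(\star)$: \emph{a set $H$ of isometries of $X$ that fixes two distinct points $\xi,\eta\in\bd X$ and contains no hyperbolic element acts boundedly on $X$.} To prove $(\star)$ I would pass to a metric ultrapower $X^*$, which is again hyperbolic and in which $\xi$ and $\eta$ are joined by a bi-infinite geodesic $L$; fix a basepoint $x_0\in L$. For $h\in H$ the image $hL$ is again a geodesic from $\eta$ to $\xi$, hence lies at Hausdorff distance $O(\delta)$ from $L$; composing $h|_L$ with the nearest-point projection onto $L$ gives an orientation-preserving coarse self-isometry of $L\cong\mathbf{R}$ — that is, a coarse translation by a real number $s(h)$ — and one gets $\bigl|\, d(hx_0,x_0)-|s(h)|\,\bigr|=O(\delta)$. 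A routine estimate, using only the $O(\delta)$-stability of geodesics between two ideal endpoints, shows that $s\colon H\to\mathbf{R}$ is a quasi-morphism whose defect $D$ depends on $\delta$ alone. If $h\in H$ is not hyperbolic then $\tfrac1n d(h^nx_0,x_0)\to 0$, hence $\tfrac1n|s(h^n)|\to 0$; combining this with $|s(h^n)-n\,s(h)|\le(n-1)D$ forces $|s(h)|\le D$. Thus $d(hx_0,x_0)\le D+O(\delta)$ for every $h\in H$, so $Hx_0$, and therefore every $H$-orbit in $X$, is bounded.

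For assertion (i) I would argue along Gromov's trichotomy. If the $\Gamma$-action is bounded there is nothing to prove, since then $B_X(\Gamma)=\Gamma$. Otherwise, as the action is not horocyclic, it contains a hyperbolic element $\phi$ with distinct endpoints $\phi^\pm\in\bd X$. The crucial observation is that every normal $X$-bounded subgroup $N\trianglelefteq\Gamma$ fixes $\phi^+$ and $\phi^-$: writing $\phi^+=\lim_k\phi^kx$, normality gives $\phi^{-k}n\phi^k\in N$, so $d(n\phi^kx,\phi^kx)=d(\phi^{-k}n\phi^kx,x)$ stays bounded as $k\to\infty$ while $|\phi^kx|_x\to\infty$; hence $(n\phi^kx)$ and $(\phi^kx)$ are equivalent Cauchy--Gromov sequences and $n\phi^+=\phi^+$, and symmetrically for $\phi^-$. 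Since the product of two normal $X$-bounded subgroups is again normal and $X$-bounded, $B_X(\Gamma)$ is a normal subgroup; it fixes $\phi^+$ and $\phi^-$, and it has no hyperbolic element (any such element would lie in some $X$-bounded $N$, whose limit set is empty). Applying $(\star)$ to $B_X(\Gamma)$ then yields (i).

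For assertion (ii) I would additionally assume the action is not lineal. If it is bounded then $\bd_X\Gamma=\varnothing$ and $K=\Gamma=B_X(\Gamma)$; otherwise the action is focal or of general type, so $\bd_X\Gamma$ is uncountable by Proposition~\ref{prop:types}. The inclusion $B_X(\Gamma)\subseteq K$ is the same computation as above, carried out for an arbitrary $\zeta\in\bd_X\Gamma$ in place of $\phi^\pm$: every normal $X$-bounded $N$ fixes every $\zeta\in\bd_X\Gamma$, i.e.\ $N\le K$. For the reverse inclusion, $K$ is normal in $\Gamma$ and contains no hyperbolic element — a hyperbolic isometry fixes precisely its two endpoints in $\bd X$ (see \cite[Chap.~9]{coornaert1990geometrie}), whereas an element of $K$ fixes the whole uncountable set $\bd_X\Gamma$ — and $K$ fixes the two distinct points $\phi^\pm\in\bd_X\Gamma$; hence $(\star)$ shows $K$ is $X$-bounded, so $K\subseteq B_X(\Gamma)$, and equality follows.

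The one genuinely geometric input, and the step I expect to require the most care, is the verification that the coordinate map $s$ in $(\star)$ is a quasi-morphism with defect controlled by $\delta$ alone; this hinges on the facts that two geodesics sharing both ideal endpoints are uniformly Hausdorff-close and that an orientation-preserving quasi-isometry of $\mathbf{R}$ lies at bounded distance from a translation. Once $(\star)$ is in place, both parts of the lemma become formal consequences of manipulating Cauchy--Gromov sequences together with Gromov's classification; I would also remark that the excluded classes — horocyclic for (i), and in addition lineal for (ii) — are exactly the ones for which the conclusions genuinely fail, as shown for instance by $\mathbf{Z}$ acting on $\mathbf{R}$ by translations, where the action is lineal and $B_X(\mathbf{Z})=1\ne\mathbf{Z}=K$.
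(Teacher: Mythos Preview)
Your proof is correct and takes a genuinely different route from the paper's. The paper argues via the classification: it first shows $B_X(\Gamma)\subseteq K$, then uses Proposition~\ref{prop:types} to deduce that the $K$-action is bounded, horocyclic, or lineal, and invokes Lemma~\ref{lem:properU} (metric properness of horocyclic actions on $\bd X\setminus\{\xi\}$) to exclude the horocyclic case; when $K$ is lineal this forces $\Gamma$ to be lineal, and $B_X(\Gamma)$ is handled by a second appeal to Proposition~\ref{prop:types}. You instead isolate the single geometric principle $(\star)$ and prove it directly by a quasi-morphism argument along a bi-infinite geodesic joining the two fixed boundary points, then apply it twice. This is more self-contained---Lemma~\ref{lem:properU} is never used---and it pinpoints exactly the mechanism doing the work; the paper's route is more modular, recycling machinery already developed. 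Your $(\star)$ is close in spirit to the Busemann construction of \S\ref{hbq}: your $s$ is essentially the restriction of $\beta_\xi$ to the joint stabilizer of $\xi$ and $\eta$, and the bound $|s(h)|\le D$ is the statement that this quasicharacter vanishes on non-hyperbolic elements with uniformly controlled defect.

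One small point of presentation: you state $(\star)$ for a \emph{set} $H$, but your argument uses $h^n$ and the quasi-morphism inequality, so it really runs on each cyclic subgroup $\langle h\rangle$ separately, with all constants depending only on $\delta$; this is precisely what is needed (uniformity over $h\in H$ comes from the constants, not from any group structure on $H$), and it would be worth saying so explicitly.
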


\begin{proof}
If $N$ is an $X$-bounded normal subgroup, then $Nx$ is bounded, and therefore the $Ny$ are uniformly bounded when $y$ ranges over an orbit $\Gamma x$. In particular, the action of $N$ on $\bd_X\Gamma$ is trivial. This proves the inclusion $B_X(\Gamma)\subset K$ (without assumption on the action).

Note that the action of $K$ on $\bd_XK$ is trivial, so by Proposition~\ref{prop:types}, the action of $K$ is bounded, lineal or horocyclic; Lemma~\ref{lem:properU} then shows that the action of $K$ cannot be horocyclic.
\begin{itemize}
\item If the action of $K$ on $\bd_XK$ is bounded, it follows from the definition of $B_X(\Gamma)$ that $K\subset B_X(\Gamma)$, and both (\ref{bdr}) and (\ref{nli}) follow.

\item If the action of $K$ on $\bd_XK$ is lineal, its 2-element boundary is preserved by $\Gamma$ and therefore the action of $\Gamma$ is lineal as well (so we do have to consider (\ref{nli}). Since $B_X(\Gamma)$ consists of elliptic isometries, its action is either bounded or horocyclic, but since a horocyclic action cannot preserve a 2-element subset in $\bd_X\Gamma$ by Proposition~\ref{prop:types}, the action of $B_X(\Gamma)\subset K$ cannot be horocyclic and therefore is bounded, so (\ref{bdr}) is proved.
\end{itemize}
\end{proof}

\subsection{Horofunctions and the Busemann quasicharacter}\label{hbq}

The material in this section is based on \cite[7.5.D]{Gro} and \cite[Sec.~4]{manning}. 

Let $X$ be an arbitrary hyperbolic space 
and $\xi \in \bd X$ be a point at infinity. We define a \textbf{horokernel} based at $\xi$ to be any accumulation point (in the topology of pointwise convergence) of a sequence of functions
$$X\times X\lra \RR, \kern10mm (x,y) \longmapsto d(x, x_n) - d(y, x_n),$$
where $\{x_n\}$ is any sequence in $X$ converging to $\xi$, i.e.\ a Cauchy-Gromov sequence representing $\xi$. By the Tychonoff theorem, the collection $\mathcal{H}_\xi$ of all horokernels based at $\xi$ is non-empty; it consists of continuous functions, indeed $1$-Lipschitz in each variable. Moreover, any horokernel is antisymmetric by definition. For definiteness, many authors propose the gordian definition of the \textbf{Busemann kernel} $b_\xi$ of $\xi$ as the supremum of $\mathcal{H}_\xi$ (losing continuity and antisymmetry in general); it turns out that it remains at bounded distance of any horokernel, the bound depending only on the hyperbolicity constant of $X$, see~\S\,8 in~\cite{GhysHarpe}. Let us also mention the notion of {\bf horofunction} $h'(x)=h(x,x_0)$, which depends on the choice of a basepoint $x_0$.

\medskip
Recall that a function $f:\Gamma\to \RR$ defined on a group $\Gamma$ is a \textbf{quasicharacter} (also known as quasimorphism) if the \textbf{defect}
$$\sup_{g, h\in G} \Big|f(g) + f(h) - f(gh) \Big|$$
is finite; it is called \textbf{homogeneous} if moreover $f(g^n)=nf(g)$ for all $g\in G$ and $n\in\ZZ$; in that case, $f$ is constant on conjugacy classes. Given an isometric group action on $X$ fixing $\xi$, there is a canonical homogeneous quasicharacter associated to the action, which was constructed by J.~Manning \cite[Sec.~4]{manning}. The following is a variant of an idea appearing in T.~B\"uhler's (unpublished) Master's thesis.

\begin{prop}\label{prop:Buse-char}
Let $G$ be a locally compact group acting continuously by isometries on $X$. Let $\xi\in \partial X$, $h\in \mathcal{H}_\xi$ and $x\in X$. Then the function
$$\beta_\xi: G_\xi \lra \RR,\kern10mm \beta_\xi(g)= \lim_{n\to\infty} \frac 1n h(x, g^n x)$$
is a well-defined continuous homogeneous quasicharacter, called {\bf Busemann quasicharacter} of $G_\xi$, and is independent of $h\in \mathcal{H}_\xi$ and of $x\in X$.

Moreover, the differences $\beta_\xi(g) - h(x, g x)$ and $\beta_\xi(g) - b_\xi(x, g x)$ are bounded (the bound depending only on the hyperbolicity constant of $X$).
\end{prop}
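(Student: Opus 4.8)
The plan is to show that $\beta_\xi$ is well-defined first, then establish the quasicharacter property, homogeneity, continuity, and independence, and finally bound the two differences. The central mechanism throughout is the \emph{quasi-cocycle} relation satisfied by any horokernel $h \in \mathcal{H}_\xi$ fixed by (a power of) $g$: because $h$ is a pointwise limit of genuine cocycles $(x,y)\mapsto d(x,x_n)-d(y,x_n)$, one has the exact identity $h(x,z)=h(x,y)+h(y,z)$ along sequences — more precisely, $h$ is an honest cocycle in its two variables, being a pointwise limit of cocycles. The subtlety is that $g$ does not fix $h$ on the nose, but fixes $\xi$, hence permutes $\mathcal{H}_\xi$ and moves $h$ by a bounded amount: $\sup_{x,y}|h(gx,gy)-h(x,y)| \le C(\delta)$, using that any two horokernels based at the same boundary point are uniformly close (the fact about $b_\xi$ quoted from \S 8 of \cite{GhysHarpe}). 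Writing $c_g(x) := h(x,gx)$, one then gets that $n \mapsto c_g(x) + c_g(gx) + \cdots + c_g(g^{n-1}x) = h(x,g^n x) + O(n\cdot 0)$... more carefully, the cocycle identity gives $h(x,g^n x) = \sum_{k=0}^{n-1} h(g^k x, g^{k+1}x)$ exactly, and each term $h(g^k x, g^{k+1}x)$ differs from $h(x,gx)$ by at most $kC(\delta)$ in the worst case but, crucially, the relevant estimate is the subadditivity $|h(x,g^{m+n}x) - h(x,g^m x) - h(x,g^n x)| \le C'(\delta)$, obtained by applying the near-invariance of $h$ under $g^m$. This makes $n \mapsto h(x,g^n x)$ a \emph{quasi-additive} sequence, so Fekete's lemma (in its quasi-additive form) shows $\frac1n h(x,g^n x)$ converges; this is the well-definedness, and simultaneously gives $|\beta_\xi(g) - h(x,gx)| \le C'(\delta)$, i.e. the first half of the "moreover".

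Next, for the quasicharacter property: one estimates $|h(x,(gh)^n x) - h(x,g^n x) - h(x,h^n x)|$. This is where one invokes the standard fact (Manning, \cite[Sec.~4]{manning}) that for isometries fixing a common boundary point the horokernel displacements combine up to bounded error; concretely, $\beta_\xi(g) + \beta_\xi(h) - \beta_\xi(gh)$ is computed from $\lim \frac1n[h(x,g^nx)+h(x,h^nx)-h(x,(gh)^nx)]$, and each of the three terms is within $C'(\delta)$ of $h(x,gx)$, $h(x,hx)$, $h(x,ghx)$ respectively, while $|h(x,gx)+h(x,hx)-h(x,ghx)| = |h(x,gx) + h(gx,ghx) + (h(x,hx)-h(gx,ghx)) - h(x,ghx)|$, and the cocycle identity collapses $h(x,gx)+h(gx,ghx) = h(x,ghx)$ exactly, leaving $|h(x,hx) - h(gx,ghx)| \le C(\delta)$ by near-$g$-invariance. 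Hence the defect of $\beta_\xi$ is at most $3C'(\delta) + C(\delta)$, finite. Homogeneity $\beta_\xi(g^m) = m\beta_\xi(g)$ is immediate from the definition by passing to the subsequence $n = m\ell$. Independence of $h$ and of $x$ both follow from the uniform closeness of horokernels and from $|h(x,g^n x) - h(x',g^n x')| \le 2d(x,x')$ (the $1$-Lipschitz property), which disappears after dividing by $n$.

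For continuity: the map $g \mapsto h(x,gx)$ is continuous because the action is continuous by isometries and $h$ is continuous (indeed $1$-Lipschitz) in each variable, so each $g \mapsto \frac1n h(x,g^n x)$ is continuous; but a pointwise limit of continuous functions need not be continuous, so instead I would use that $\beta_\xi$ is a homogeneous quasicharacter with $|\beta_\xi(g) - h(x,gx)| \le C'(\delta)$ bounded, combined with the standard argument that a homogeneous quasicharacter on a topological group which is bounded on a neighbourhood of the identity is continuous (it is a uniform limit on such a neighbourhood of the continuous functions $g \mapsto \frac1n h(x,g^n x)$ once one knows the defect bound forces $|\frac1n h(x,g^n x) - \beta_\xi(g)| \le C'(\delta)/n$ — wait, that last bound is exactly what quasi-additivity plus Fekete yields, uniformly in $g$). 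So in fact $\frac1n h(x,g^n\!\cdot\! x) \to \beta_\xi$ \emph{uniformly} on $G_\xi$, with rate $C'(\delta)/n$, and uniform limits of continuous functions are continuous — this cleanly delivers continuity and is the point I would be most careful to state correctly. Finally, the second difference bound, $|\beta_\xi(g) - b_\xi(x,gx)| \le C''(\delta)$, follows by combining $|\beta_\xi(g) - h(x,gx)| \le C'(\delta)$ with $|h(x,gx) - b_\xi(x,gx)| \le C(\delta)$, the latter being the quoted uniform bound between any horokernel and the Busemann kernel. The main obstacle, I expect, is bookkeeping the hyperbolicity-dependent constants consistently — in particular verifying that "$g$ permutes $\mathcal{H}_\xi$ and moves $h$ boundedly" is legitimate (it rests on $gh \in \mathcal{H}_\xi$ whenever $h \in \mathcal{H}_\xi$ and $g$ fixes $\xi$, plus the diameter bound on $\mathcal{H}_\xi$), and making sure the quasi-additivity constant $C'(\delta)$ genuinely does not depend on $g$ or $x$.
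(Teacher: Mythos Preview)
Your argument is correct and follows the same core mechanism as the paper: both establish that $g\mapsto h(x,gx)$ is a quasicharacter via the exact cocycle identity $h(x,z)=h(x,y)+h(y,z)$ together with the bound $|h(gx,gy)-h(x,y)|\le C(\delta)$ coming from $g\cdot h\in\mathcal H_\xi$ and the finite diameter of $\mathcal H_\xi$; both then homogenize. The paper simply does this in the cleaner order (quasicharacter first, then Fekete), whereas you establish convergence for fixed $g$ first and the defect bound afterwards, but this is cosmetic.

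The one genuine difference is continuity. The paper observes that $\beta_\xi$ is a Borel homogeneous quasicharacter on a locally compact group and invokes the automatic-continuity result \cite[7.4]{BIW}. Your route---noting that the Fekete estimate actually yields the \emph{uniform} bound $\bigl|\tfrac1n h(x,g^n x)-\beta_\xi(g)\bigr|\le C(\delta)/n$ for all $g\in G_\xi$, so that $\beta_\xi$ is a uniform limit of continuous functions---is more elementary and entirely self-contained, avoiding the external reference. (Your hesitation about this step was unwarranted: the inequality $|a_{kn}-ka_n|\le (k-1)D$, which follows by induction from quasi-additivity, gives $|a_n/n - L|\le D/n$ after letting $k\to\infty$, and $D=C(\delta)$ is independent of $g$.) One small presentational point: avoid using $h$ simultaneously for the horokernel and for a group element in the defect computation.
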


\begin{proof}
By a direct computation, we have
$$h(x,g_1g_2x)-h(x,g_1x)-h(x,g_2x)=h(g_1x,g_1g_2x)-h(x,g_2x);$$
since $\xi$ is fixed by $g_1$, the latter quantity is bounded by a constant depending only on the hyperbolicity of $X$.
Therefore, the function $g\mapsto h(x, g x)$ is a continuous quasicharacter.
Given any quasicharacter $f$ on a group $G$, it is well-known that for all $g$, the sequence $f(g^n)/n$ converges (because the sequence $\{f(g^n)-c\}_n$ is subadditive, where the constant $c$ is the defect) and that the limit is a homogeneous quasicharacter (by an elementary verification). This limit is the unique homogeneous quasicharacter at bounded distance from $f$ and a bounded perturbation of $f$ yields the same limit. Returning to our situation, it only remains to justify that the limit is continuous. It is Borel by definition, and any Borel homogeneous quasicharacter on a locally compact group is continuous~\cite[7.4]{BIW}.
\end{proof}

The Busemann quasicharacter is useful for some very basic analysis of boundary dynamics:

\begin{lem}\label{lem:busatt}
Let $\Gamma$ act on $X$ by isometries and let $\xi$ be a boundary point. Then the (possibly empty) set of hyperbolic isometries in $\Gamma_\xi$ is $\{g\in \Gamma_\xi: \beta_\xi(g)\neq 0\}$, and the set of those with attracting fixed point $\xi$ is $\{g\in \Gamma_\xi: \beta_\xi(g)>0\}$. In particular, the action of $\Gamma_\xi$ is bounded/horocyclic if and only if $\beta_\xi=0$, and lineal/focal otherwise.
\end{lem}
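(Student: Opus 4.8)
The plan is to reduce everything to the cyclic subgroup $\langle g\rangle$ and compare the Busemann quasicharacter $\beta_\xi$ with the translation-length behaviour classified in \S\ref{sec:Gromov_classif}. First I would record that, by Proposition~\ref{prop:Buse-char}, for a horokernel $h\in\mathcal H_\xi$ one has $\beta_\xi(g)=\lim_n \tfrac1n h(x,g^nx)$ and $|\beta_\xi(g)-h(x,g^nx)/n|$ is bounded uniformly in $n$ (apply the displayed bound to $g^n$ and divide by $n$). Since $h$ is $1$-Lipschitz in each variable, $|h(x,g^nx)|\le d(x,g^nx)=|g^nx|_x$, so $|\beta_\xi(g)|\le \lim_n \tfrac1n|g^nx|_x$, which is the stable translation length $\ell(g)$. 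Hence $\beta_\xi(g)\neq 0$ forces $\ell(g)>0$, i.e.\ $g$ is hyperbolic; this is the easy inclusion.

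For the converse inclusion, suppose $g$ is hyperbolic. Then $\bd_X\langle g\rangle$ consists of the two fixed points $g^+,g^-$, and after embedding $X$ in an ultrapower $X^*$ (as done repeatedly in the paper) there is a genuine geodesic line $L$ from $g^-$ to $g^+$ on which $g$ acts by translation by $\ell(g)>0$; the orbit $\langle g\rangle x$ stays within bounded distance of $L$. Since $g$ fixes $\xi$, the point $\xi$ must be one of $g^\pm$ (a hyperbolic isometry fixes exactly its two endpoints). Computing $h(x,g^nx)$ along $L$: if $\xi=g^+$, then moving from $x$ toward $g^nx$ is moving away from $\xi$, so $h(x,g^nx)$ behaves like $-\ell(g)n$ up to bounded error, giving $\beta_\xi(g)=-\ell(g)<0$; if $\xi=g^-$, symmetrically $\beta_\xi(g)=+\ell(g)>0$. (Here I must fix the sign convention: a horokernel based at $\xi$ represented by $x_m\to\xi$ is $h(x,y)=\lim_m[d(x,x_m)-d(y,x_m)]$, so $h(x,y)$ is large positive when $y$ is ``closer to $\xi$'' than $x$; thus $\xi=g^+$, the attracting point, means $g^nx\to\xi$, hence $h(x,g^nx)\to+\infty$ — so in fact the attracting endpoint gives $\beta_\xi(g)>0$. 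I would state the convention explicitly and carry it through so the two displayed formulas in the lemma come out with the signs as written.) In all cases $\beta_\xi(g)\neq 0$, and it is $>0$ exactly when $\xi$ is the attracting fixed point of $g$; this proves both the first and second assertions.

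For the final sentence, observe that $\Gamma_\xi$ contains a hyperbolic element if and only if $\beta_\xi\not\equiv 0$ on $\Gamma_\xi$, by what we just proved. By Gromov's classification recalled in \S\ref{sec:Gromov_classif} together with Proposition~\ref{prop:types}, an action fixing the boundary point $\xi$ is either bounded or horocyclic (no hyperbolic element) or lineal or focal (some hyperbolic element), so the dichotomy $\beta_\xi=0$ versus $\beta_\xi\neq 0$ matches bounded/horocyclic versus lineal/focal.

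**Main obstacle.** The only genuinely delicate point is the sign bookkeeping in the hyperbolic case: pinning down, once and for all, the orientation convention for horokernels based at $\xi$ and checking that the estimate $h(x,g^nx)=\pm\ell(g)n+O(\delta)$ holds (this uses that the $\langle g\rangle$-orbit fellow-travels the axis $L$ and that $h$ agrees, up to a $\delta$-bounded error, with the Busemann function of that geodesic ray — cf.\ the comparison of horokernels with $b_\xi$ quoted from \cite{GhysHarpe}). Everything else is a short estimate or a direct appeal to Proposition~\ref{prop:Buse-char} and Proposition~\ref{prop:types}.
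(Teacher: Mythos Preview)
Your proposal is correct and covers the same ground as the paper's proof; the ingredients (the Lipschitz bound $|h(x,g^nx)|\le d(x,g^nx)$, comparison of $h$ along an axis/quasi-geodesic, and the classification in \S\ref{sec:Gromov_classif}) are identical. The only organizational difference is that you and the paper swap which implication is ``easy'': you first use the Lipschitz bound to get $\beta_\xi(g)\neq 0\Rightarrow\ell(g)>0$, then compute along an axis to treat the hyperbolic case; the paper instead first treats the hyperbolic case by the horokernel comparison, and for the converse argues directly that $\beta_\xi(g)>0$ forces $\{g^nx\}_{n\in\ZZ}$ to be a quasi-geodesic (since $d(x,g^nx)\ge h(x,g^nx)=n\beta_\xi(g)+O(1)$) with $+\infty$-endpoint $\xi$, bypassing any appeal to a pre-existing axis or the fact that a hyperbolic isometry fixes only its two endpoints. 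The paper's route is marginally slicker in that it avoids this last auxiliary fact, but your version is equally valid and the sign bookkeeping you flag as the ``main obstacle'' is indeed routine once the horokernel convention is fixed.
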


\begin{proof}
Elements $g$ of $\Gamma_\xi$ acting as hyperbolic isometries with attracting fixed point $\xi$ satisfy $\beta_\xi(g)>0$, as we see by direct comparison with horokernels. In particular all elements $g$ acting as hyperbolic isometries satisfy $\beta_\xi(g)\neq 0$. 

Conversely, if $g\in \Gamma_\xi$ satisfies $\beta_\xi(g)>0$, then $\beta_\xi(g^n)$ being linear in $n$, the sequence $\{g^n x : n\in\ZZ\}$ is a quasi-geodesic with the $+\infty$-endpoint at $\xi$. By hyperbolicity of $X$, it follows that $g$ is a hyperbolic isometry with attracting fixed point $\xi$. It also follows that if $g\in\Gamma_\xi$ satisfies $\beta_\xi(g)\neq 0$ then it acts as a hyperbolic isometry.
\end{proof}

The Busemann quasicharacter $\beta_\xi$ is particularly nice in connection with amenability: indeed, as a corollary of Proposition~\ref{prop:Buse-char}, we deduce the following. 

\begin{cor}\label{cor:Buse-char}
Let $G$ be a locally compact group acting continuously by isometries on $X$ and fixing the boundary point $\xi \in \bd X$. Assume that $G$ is amenable, or that $X$ is proper.

Then the Busemann quasicharacter $\beta_\xi: G \to \RR$ is a continuous group homomorphism (then called {\bf Busemann character}) at bounded distance of $g\mapsto b_\xi(x, g x)$ independently of $x\in X$.
\end{cor}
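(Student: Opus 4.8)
The plan is to squeeze everything out of Proposition~\ref{prop:Buse-char}, which already provides that $\beta_\xi$ is a continuous homogeneous quasicharacter lying within a bound $C=C(\delta)$ of both $g\mapsto h(x,gx)$ and $g\mapsto b_\xi(x,gx)$. Since two homogeneous quasicharacters at bounded distance coincide and a homomorphism is its own homogenization, it suffices to exhibit \emph{some} continuous homomorphism $\phi\colon G\to\RR$ lying at bounded distance from $g\mapsto b_\xi(x,gx)$; then automatically $\phi=\beta_\xi$, so $\beta_\xi$ is a character, and the final clause of the statement is just the corresponding assertion of Proposition~\ref{prop:Buse-char}. I would produce such a $\phi$ from a $G$-invariant ``sharp Busemann cocycle'': an antisymmetric function $\sigma\colon X\times X\to\RR$ that is $1$-Lipschitz in each variable, obeys the \emph{exact} chain rule $\sigma(x,z)=\sigma(x,y)+\sigma(y,z)$, stays within $C(\delta)$ of $b_\xi$, and satisfies $\sigma(gx,gy)=\sigma(x,y)$ for all $g\in G$. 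Indeed, given such a $\sigma$, the map $\phi(g)=-\sigma(x,gx)$ is a homomorphism by the chain rule and $G$-invariance, is continuous because the $G$-action on $X$ is, and obeys $|\phi(g)+b_\xi(x,gx)|\le C(\delta)$.

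\textbf{The amenable case.} The set $\mathcal H_\xi$ of horokernels at $\xi$ is compact for the topology of pointwise convergence (it sits in the compact box $\prod_{(x,y)}[-d(x,y),d(x,y)]$), is permuted by $G$ since $G$ fixes $\xi$, and is acted on by $G$ through sup-norm isometries; hence its closed convex hull $\mathcal C$ is a $G$-invariant compact convex set. Every element of $\mathcal C$ inherits antisymmetry, the Lipschitz property and the exact chain rule from the horokernels (all stable under convex combinations, the chain rule because it is linear), and lies within $C(\delta)$ of $b_\xi$ since every horokernel does. The $G$-action on $\mathcal C$ is by affine homeomorphisms, so amenability of $G$ furnishes a fixed point $\sigma\in\mathcal C$, which is the desired sharp Busemann cocycle. (Equivalently one may average the quasicharacter $g\mapsto b_\xi(x,gx)$ directly against a left-invariant mean on $L^\infty(G)$, using that for fixed $g$ the function $k\mapsto b_\xi(x,gkx)-b_\xi(x,kx)$ is bounded.)

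\textbf{The proper case.} Now $G$ may fail to be amenable and the averaging above is unavailable, so one has to use hyperbolic geometry. By Lemma~\ref{lem:busatt} one may assume the $G$-action is lineal or focal, the bounded and horocyclic cases giving $\beta_\xi=0$. In either case $G$ contains a hyperbolic isometry with attracting fixed point $\xi$, and properness of $X$ yields genuine geodesic rays pointing at $\xi$ (Arzel\`a--Ascoli); along any such ray $\rho$ the kernel $b_\xi$ is \emph{exactly} affine of slope one, and $\sigma_\rho(x,y):=\lim_{t\to\infty}\bigl(d(x,\rho(t))-d(y,\rho(t))\bigr)$ is a well-defined exact cocycle within $C(\delta)$ of $b_\xi$. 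The problem is then to select such a ray $\rho$, up to a bounded modification, $G$-equivariantly, so that $\sigma_\rho$ is $G$-invariant. In the lineal case this is controlled by the geometry of the single quasi-line joining $\xi$ to the other boundary fixed point; in the focal case one would combine the affineness of $b_\xi$ along rays to $\xi$ with Lemma~\ref{lem:schottky} and the topological transitivity of $G\curvearrowright\bd_X G\setminus\{\xi\}$ from Lemma~\ref{lem:focaltt}, the point being that the boundary dynamics forces any two cocycles arising this way to coincide.

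\textbf{Expected main obstacle.} The one genuinely nontrivial step is this last one in the proper case: upgrading a cocycle determined by the dynamics of a single hyperbolic element into one invariant under all of $G$, with no amenability available. In effect one must show that over a proper hyperbolic space the obstruction to $b_\xi$ satisfying the exact chain rule is a coboundary which the boundary dynamics trivializes $G$-equivariantly; I would expect this to go through the lineal/focal dichotomy, leaning on Lemmas~\ref{lem:busatt}, \ref{lem:schottky} and~\ref{lem:focaltt} and on the rigidity of $b_\xi$ along geodesic rays to $\xi$ --- the amenable case being, by contrast, a soft averaging argument.
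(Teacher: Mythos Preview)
Your treatment of the amenable case is correct; the fixed-point argument in the closed convex hull of $\mathcal H_\xi$ works (up to a harmless sign slip: your $\phi$ is at bounded distance from $-b_\xi(x,gx)$, not $+b_\xi(x,gx)$, so it is $-\phi$ that equals $\beta_\xi$). The paper is more direct here: it simply quotes the one-line fact that every homogeneous quasicharacter on an amenable group is a homomorphism, via $f(g)=m\big(h\mapsto f(gh)-f(h)\big)$ for an invariant mean $m$ --- which you also mention parenthetically.

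In the proper case there is a genuine gap. You correctly isolate the obstacle --- choosing, $G$-equivariantly, a sharp cocycle from among the $\sigma_\rho$ --- but you do not overcome it, and the tools you invoke do not help. Two ray-cocycles $\sigma_\rho$, $\sigma_{\rho'}$ differ by a bounded coboundary $x\mapsto c(x)$, and nothing in Lemmas~\ref{lem:schottky} or~\ref{lem:focaltt} forces this to vanish; topological transitivity of $G$ on $\bd_X G\setminus\{\xi\}$ means $G$ moves rays around freely, which is the opposite of what a fixed-point argument needs. Without amenability there is no averaging available, and the ``rigidity'' you allude to is not established anywhere.

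The paper's idea, which you are missing, is to \emph{reduce the proper case to the amenable one}. After disposing of the case $\beta_\xi=0$, one may enlarge $G$ to $\Isom(X)_\xi$, since the quasicharacter factors through the image there. The action being lineal or focal, Proposition~\ref{prop:undistorted} makes a large bounded neighbourhood of any orbit quasi-geodesic; replacing $X$ by such a neighbourhood yields a proper, cocompact action. Now Adams' Lemma~\ref{lem:AmyStab} applies: the stabilizer of a boundary point in a bounded-geometry proper hyperbolic space is amenable. Hence $G$ is amenable after all, and the first case finishes the proof.
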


\begin{proof}[Proof of Corollary~\ref{cor:Buse-char}]
The well-known fact that a homogeneous quasicharacter $f$ of an amenable group $M$ is a homomorphism can be verified explicitly by observing that one has $f(g) = m\big(h\mapsto f(gh) - f(h)\big)$ when $m$ is an invariant mean on $M$.

If $G$ is amenable, this applies directly. Now assume that $X$ is proper. If $\beta_\xi=0$ the result is trivial, so assume that the action is lineal/focal. Since $X$ is proper, we can suppose that $G=\Isom(X)_\xi$ (and thus acts properly) and we argue as follows. By Proposition~\ref{prop:undistorted}, sufficiently large bounded neighbourhoods of a  $G$-orbit  are quasi-geodesic. Therefore, replacing $X$ by such a subset if necessary, we can assume that the $G$-action on $X$ is cocompact, so that Lemma~\ref{lem:AmyStab} below applies to show that $\Isom(X)_\xi$ is amenable. We are thus reduced to the previous case.
\end{proof}

{A metric space is called \textbf{quasi-geodesic} if there is some constant $c$ such that any two points can be joined by a $c$-quasi-geodesic. For example, any orbit of a quasi-convex group of isometries in a hyperbolic metric space is quasi-geodesic. Remark that it is always possible to embed a quasi-geodesic subset coboundedly into a geodesic space (by gluing geodesic paths), but it is delicate to get a proper geodesic space.}
			
\begin{lem}[S. Adams]\label{lem:AmyStab}
Let $X$ be a proper quasi-geodesic hyperbolic space having a cocompact isometry group (or more generally, having bounded geometry), then for every $\xi \in \bd X$, the stabilizer $\Isom(X)_\xi$ is amenable. 
\end{lem}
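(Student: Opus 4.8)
The plan is to derive the lemma from the \emph{topological amenability of the boundary action} $\Isom(X)\curvearrowright\bd X$, a theorem of Adams, by a soft extraction of amenability at a fixed point. Put $G:=\Isom(X)$, which (as recalled above) is a second countable locally compact group acting properly on $X$, and let $H:=G_\xi$, a closed and hence locally compact subgroup. We may assume $G$ acts coboundedly on $X$ — automatic under either hypothesis of the lemma in the situations of interest — and, since stability of quasi-geodesics holds in $\delta$-hyperbolic spaces, the construction below is unaffected by using quasi-geodesics in place of geodesics, so I write as if $X$ were geodesic. Thus the argument splits into two parts: (a) $G\curvearrowright\bd X$ is topologically amenable; (b) from (a) deduce that $H$ is amenable.

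Part (b) is soft. Topological amenability of an action passes to closed subgroups (Anantharaman-Delaroche and Renault), so $H\curvearrowright\bd X$ is topologically amenable; fix an asymptotically $H$-equivariant net of continuous maps $\theta_i:\bd X\to\mathrm{Prob}(H)$. Since $H$ fixes $\xi$, the net $\nu_i:=\theta_i(\xi)\in\mathrm{Prob}(H)$ satisfies $\sup_{h\in K}\|h\cdot\nu_i-\nu_i\|_1=\sup_{h\in K}\|h\cdot\theta_i(\xi)-\theta_i(h\xi)\|_1\to 0$ for every compact $K\subseteq H$; convolving with a compactly supported approximate identity produces an almost invariant net in $L^1(H)$, i.e.\ $H$ is amenable.

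For part (a) I would carry out Adams' construction. Fix $x_0\in X$ and a Haar measure $\mu$ on $G$, and choose $R$ so large that $0<c\le\mu(\{g:gx_0\in B(y,R)\})\le C<\infty$ uniformly in $y\in X$, the lower bound coming from coboundedness and the uniform upper bound from properness together with bounded geometry. For $\eta\in\bd X$ let $\gamma_\eta$ be the geodesic ray from $x_0$ to $\eta$, write $\gamma_\eta^{(n)}$ for its initial segment of length $n$ and $N_R(\cdot)$ for the $R$-neighbourhood, and set
\[
\theta_n(\eta):=\frac{1}{Z_n(\eta)}\,\mu\big|_{\{g\,:\,gx_0\in N_R(\gamma_\eta^{(n)})\}},\qquad Z_n(\eta):=\mu\big(\{g:gx_0\in N_R(\gamma_\eta^{(n)})\}\big).
\]
After a harmless smoothing in $R$ the maps $\eta\mapsto\theta_n(\eta)$ are continuous, and a covering argument using the choice of $R$ gives $Z_n(\eta)\asymp n$ uniformly in $\eta$. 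For $g\in G$ the measure $g\cdot\theta_n(\eta)$ is the normalised restriction of $\mu$ to $\{g':g'x_0\in N_R(g\,\gamma_\eta^{(n)})\}$; and by $\delta$-hyperbolicity the rays $g\,\gamma_\eta$ (from $gx_0$ to $g\eta$) and $\gamma_{g\eta}$ (from $x_0$ to $g\eta$) are $2\delta$-fellow travellers outside a ball of radius $O(d(x_0,gx_0)+\delta)$, so the symmetric difference of $N_R(g\,\gamma_\eta^{(n)})$ and $N_R(\gamma_{g\eta}^{(n)})$ lies in two sets of diameter $O(d(x_0,gx_0)+\delta)$; pulling back under $g'\mapsto g'x_0$ and covering by $R$-balls bounds its $\mu$-mass by $O(d(x_0,gx_0)+\delta+1)$. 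Since $Z_n\asymp n$, it follows that $\|g\cdot\theta_n(\eta)-\theta_n(g\eta)\|_1=O\big((d(x_0,gx_0)+\delta+1)/n\big)$, which tends to $0$ as $n\to\infty$, uniformly in $\eta\in\bd X$ and in $g$ over compact subsets of $G$. Hence $(\theta_n)$ is the desired asymptotically equivariant net.

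The main obstacle is part (a), and within it the delicate point is \emph{uniformity}: the inequalities $c\le\mu(\{g:gx_0\in B(y,R)\})\le C$, $Z_n\asymp n$, and the $O(d(x_0,gx_0)+\delta+1)$ bound on the symmetric difference must all hold with constants independent of the base region, which is precisely what bounded geometry (automatic when $G$ is cocompact) supplies and what fails for arbitrary proper hyperbolic spaces. I would also remark that when the $H$-action on $X$ is bounded or lineal one can bypass Adams' theorem entirely: $H$ then stabilises a point or a quasi-geodesic axis, the pointwise stabiliser of the axis has bounded orbits and so is compact by properness of the $G$-action, and $H$ is compact, respectively compact-by-(a cocompact group of isometries of $\RR$); so the genuine content of the lemma lies in the focal case.
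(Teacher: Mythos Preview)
Your proposal is correct. The paper itself does not prove this lemma but simply cites Adams' original paper and Kaimanovich's later simplification; your sketch of part~(a) is precisely Kaimanovich's argument (the ``simpler proof'' the paper alludes to), and your part~(b) is the standard extraction of amenability of a point-stabiliser from topological amenability of the action.

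Two small remarks. First, your reduction ``we may assume $G$ acts coboundedly'' is fine under the cocompactness hypothesis but not under bounded geometry alone; in that generality one should run the construction with a bounded-geometry reference measure on $X$ itself (as Kaimanovich does) rather than with Haar measure on $G$, since the lower bound $c\le\mu(\{g:gx_0\in B(y,R)\})$ can fail if the orbit misses large regions of $X$. Second, in part~(b) the detour through ``topological amenability restricts to closed subgroups'' is unnecessary: evaluating the original maps $\theta_n:\bd X\to\mathrm{Prob}(G)$ at $\xi$ already gives an almost $H$-invariant net in $\mathrm{Prob}(G)$, and since $H$ acts properly on $G$ this yields amenability of $H$ directly (via a Bruhat function, say). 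Both points are cosmetic; the substance of your argument is sound.
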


\begin{proof}
See  \cite[6.8]{Adams96}; another simpler proof can be found in~\cite{kaimanovich2004boundary}. Remark that in \cite{Adams96}, the result is stated assuming $X$ is geodesic, but quasi-geodesic is enough with no alteration of the proof, and is a more robust statement because it allows passing to a quasi-convex subset. 
\end{proof}

\begin{remark}
If $X$ is proper, one can also use the amenability of $\Isom(X)_\xi$ to prove the existence of a (non-canonical) $\Isom(X)_\xi$-invariant function $X\times X\to\RR$ at bounded distance of the horokernels based at $\xi$. Indeed, the Tychonoff theorem implies that $\mathcal{H}_\xi$ is compact and the desired function is obtained by integrating an invariant measure on $\mathcal{H}_\xi$. More generally, one can use the amenability of the $\Isom(X)$-action on $\partial X$ to make an $\Isom(X)$-equivariant choice of such Busemann-like functions, depending measurably on $\partial X$.
\end{remark}

\begin{example}\label{exfocreg}
There are many instances where the Busemann quasicharacter is a character: the trivial case of bounded/horocyclic actions, and also the case when $X$ is proper, by Corollary~\ref{cor:Buse-char}. It is also the case when the hyperbolic space $X$ is CAT(0): indeed in this case there is a \emph{unique} horokernel based at each $\xi\in\partial X$. Therefore, this unique horokernel coincides with the Busemann function $b_\xi$ and moreover we then have $\beta_\xi(g) = b_\xi(x, gx)$ for all $g\in \Isom(X)_\xi$ and all $x\in X$.

On the other hand, here is an example of an oriented lineal action where the Busemann character $\beta_\xi$ is {\bf not} a homomorphism. Consider the centralizer in $\textnormal{Homeo}(\RR)$ of the translation $t\mapsto t+1$. This can be interpreted as the universal covering of the group of oriented homeomorphisms of the circle, and we thus denote it by $H=\widetilde{\textnormal{Homeo}}^+(\RR/\ZZ)$. Endow $\RR$ with the structure of Cayley graph with respect to the generating set $[-1,1]$, i.e.\ with the incidence relation $x\sim y$ if $|x-y|\le 1$. This incidence relation is preserved by the action of $H$, which thus acts on the Cayley graph. This Cayley graph is obviously quasi-isometric to $\RR$ and this action is transitive and lineal. If $\xi=+\infty$, then $\beta_\xi$ is the translation number $\beta_\xi(g)=\lim g^n(0)/n$, which is a classical example of non-homomorphic homogeneous quasicharacter. Actually, in restriction to some suitable subgroups (e.g.\ the inverse image of $\textnormal{PSL}_2(\ZZ)$ in $H$), this quasicharacter remains non-homomorphic and this provides examples where the acting group is finitely generated.

Here is now an example of a focal action, based on the same group. Set $C=\ZZ/2\ZZ$ (or any nontrivial finite group). Consider the permutational wreath product $G=C\wr_\RR H=C^{(\RR)}\rtimes H$, where 
$H$ acts by shifting the indices in $C^{(\RR)}=\bigoplus_{t\in\RR}C$ according to its action on $\RR$. 
Let $A\subset C^{(\RR)}$ be the subgroup of elements with support in $[0,+\infty)$ and let $W\subset H$ be the set of elements with translation number in $[-1,1]$ (i.e.\ those elements $\phi$ such that $\phi(0)\in [-1,1]$). Then $G$ is generated by $A\cup W$ and the corresponding Cayley graph is hyperbolic, the action of $G$ being focal. To see this, first observe that if $\alpha\in H$ be the translation $t\mapsto t+1$, then $\langle\alpha\rangle$ is cobounded in $H$ and thus it suffices to check that $C^{(\RR)}\rtimes\langle\alpha\rangle$, with the word metric associated to the generating set $W\cup\{\alpha\}$, is hyperbolic focal. But this is indeed the case   by Theorem~\ref{thm:Contracting}. 
\end{example}

\section{Focal actions and confining automorphisms}\label{sec:Focal}

Recall from the introduction that the action of a group $\Gamma$  on a hyperbolic metric space $X$ is called {\bf focal} if  it fixes a unique boundary point $\xi$ and if some element of $\Gamma$ acts as a hyperbolic isometry. Let  $\beta:\Gamma\to\RR$ the associated Busemann quasicharacter, as in \S\ref{hbq}.  The action of $\Gamma$ is said to be {\bf regular focal} if $\beta$ is a homomorphism. This holds in particular  if $X$ is CAT(0), or if  $X$ is proper (i.e.\ if balls are compact). 
The latter case will in fact crucial in the proof of Theorem~\ref{thm:main}. Example~\ref{exfocreg} illustrates that a focal action need not be regular in general.

\medskip
Let $H$ be a group (with no further structure a priori) and let $\alpha$ be an automorphism of $H$ and $A$ a subset of $H$. We say that the action of $\alpha$ is {\bf [strictly] confining $H$ into $A$} (we omit $H$ when no ambiguity incurs) if it satisfies the following three conditions
\begin{itemize}
\item $\alpha(A)$ is [strictly] contained in $A$;

\item  $H=\bigcup_{n\geq 0}\alpha^{-n}(A)$;

\item $\alpha^{n_0} (A\cdot A)\subset A$ for some non-negative integer $n_0$.
\end{itemize}

In case $H$ is a locally compact group, there is a close relation between confining and compacting automorphisms, which will be clarified in Corollary~\ref{cor:CompactingConfining} below. 

\medskip
Notice that the group $G=H \rtimes \la \alpha \ra$ is generated by the set $S=\{\alpha^{\pm}\}\cup A$. Endow $G$ with  the word metric $d_S$ associated to $S$. 
Given an action of a group $\Gamma$ on a metric space $X$ and a point $x\in X$, define a pseudo-metric on $\Gamma$ by $d_x(g,h)=d(g\cdot x,h\cdot x)$.

\begin{thm}\label{thm:Contracting}
Let $\Gamma$ be a group with a cobounded isometric action on a geodesic metric space $X$. Then the following assertions are equivalent.
\begin{enumerate}
\item\label{xhyp} $X$ is hyperbolic and the $\Gamma$-action is regular focal;

\item\label{cobo}There exist an element $\alpha\in\Gamma$ and a subset $A\subset [\Gamma,\Gamma]$ such that
\begin{itemize}
\item the image of $\alpha$ in $\Gamma/[\Gamma,\Gamma]$ has infinite order;
\item the action of $\alpha$ on $[\Gamma,\Gamma]$ is confining into $A$;
\item setting $G=[\Gamma,\Gamma]\rtimes\langle\alpha\rangle$ and $S=A\cup\{\alpha^\pm\}$, the inclusion map 
$(G,d_S)\to (\Gamma,d_x)$ is a quasi-isometry for some (hence every) $x\in X$.
\end{itemize}
\end{enumerate}
Moreover if (\ref{cobo}) holds, the Busemann character in (\ref{xhyp}) is proportional, in restriction to $G$, with the obvious projection to $\ZZ$.
\end{thm}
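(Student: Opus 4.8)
The plan is to prove the two implications separately and to obtain the closing ``Moreover'' clause as a by-product of $(\ref{cobo})\Rightarrow(\ref{xhyp})$. Two preliminary observations are used throughout. First, since $(G,d_S)\hookrightarrow(\Gamma,d_x)$ is a quasi-isometry and $(\Gamma,d_x)$ is quasi-isometric to $X$ (the action being cobounded), hyperbolicity of $X$ as well as the Gromov type and Busemann data of the $\Gamma$-action on $X$ may all be read off from the Cayley graph of $G$ with respect to $S$. Second, $G=[\Gamma,\Gamma]\rtimes\langle\alpha\rangle$ is normal in $\Gamma$, because $\Gamma/[\Gamma,\Gamma]$ being abelian forces $\gamma\alpha\gamma^{-1}\in[\Gamma,\Gamma]\alpha\subseteq G$ for every $\gamma\in\Gamma$.

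For $(\ref{cobo})\Rightarrow(\ref{xhyp})$ I would first set up coordinates on $G$: the retraction $t\colon G\to\ZZ$ killing $H:=[\Gamma,\Gamma]$ and sending $\alpha$ to $1$ (it is $1$-Lipschitz for $d_S$), and the depth $\mathrm{dep}(h)=\min\{n\ge 0:\alpha^n(h)\in A\}$, finite by the second confining axiom. The first and third axioms ($\alpha(A)\subseteq A$ and $\alpha^{n_0}(A\cdot A)\subseteq A$) yield, after a bookkeeping computation, the distance formula $d_S(e,h\alpha^k)\asymp\mathrm{dep}(h)+|k|$ together with a canonical ``go away from $\xi$, make one $A$-step, come back'' combing whose turning level sits at depth $\asymp\mathrm{dep}(h)$. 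One then verifies $\delta$-hyperbolicity by hand: two such combing paths fellow-travel until the level at which their $A$-steps become $\alpha$-incompatible, and $\alpha^{n_0}(A\cdot A)\subseteq A$ forces horizontal separation to grow by a definite factor at each step away from $\xi$ (equivalently, to decay toward $\xi$) --- the exponential-divergence mechanism behind the hyperbolicity of trees, combinatorial horoballs and rank-one symmetric spaces --- so triangles are thin with $\delta$ depending only on $n_0$ and the ambient constants. Hence $(G,d_S)$, and with it $X$, is hyperbolic. The endpoint $\xi$ of the ray $e,\alpha,\alpha^2,\dots$ is fixed by $\alpha$ and, because every $h\in H$ lands in $A$ after finitely many $\alpha$'s, displaced boundedly by $H$; so $G$ fixes $\xi$, $\alpha$ is hyperbolic (as $d_S(e,\alpha^n)=|n|$), and the action is not lineal (e.g.\ $\partial_X G$ is uncountable by Lemma~\ref{lem:schottky}), hence focal. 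As $G\triangleleft\Gamma$ and $\xi$ is the unique $G$-fixed boundary point, $\Gamma$ also fixes $\xi$ and contains the hyperbolic $\alpha$, so the $\Gamma$-action on $X$ is focal. For the Busemann character: by Proposition~\ref{prop:Buse-char}, $\beta_\xi$ stays a bounded distance from $g\mapsto b_\xi(x,gx)$; along the combing, each $\alpha$-step shifts the Busemann function by the fixed constant $c=b_\xi(e,\alpha e)\ne 0$ (nonzero since $\alpha$ is hyperbolic, by Lemma~\ref{lem:busatt}) and each $A$-step by a bounded amount, so $|b_\xi(e,ge)-c\,t(g)|$ is bounded on $G$. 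Thus $\beta_\xi-c\,t$ is a homogeneous quasicharacter bounded on $G$, hence identically zero: $\beta_\xi|_G=c\,t$, which is exactly the ``Moreover'' statement. In particular $\beta_\xi$ kills $[\Gamma,\Gamma]$; a homogeneous quasicharacter vanishing on the derived subgroup descends to $\Gamma^{\mathrm{ab}}$, and on an abelian group a homogeneous quasicharacter is a homomorphism, so $\beta_\xi\colon\Gamma\to\RR$ is a character and the action is \emph{regular} focal.

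For $(\ref{xhyp})\Rightarrow(\ref{cobo})$, start from $X$ hyperbolic and the $\Gamma$-action regular focal with fixed point $\xi$; its Busemann character $\beta$ is then a homomorphism (regularity) and nonzero (focality, via Lemma~\ref{lem:busatt}), so $\beta$ kills $H=[\Gamma,\Gamma]$ and any $\alpha$ with $\beta(\alpha)\ne 0$ --- sign chosen so positive powers push points toward $\xi$ --- has infinite order in $\Gamma/[\Gamma,\Gamma]$. Put $A=\{h\in H:d(x,hx)\le K\}$ for a suitable $K$. Elements of $H$ act as non-hyperbolic isometries fixing $\xi$ ($\beta|_H=0$), so their orbits lie in horospheres centred at $\xi$; conjugating by $\alpha$ pushes the basepoint deeper into the horoball, which contracts horizontal displacement exponentially, whence $\alpha^n(h)\in A$ for $n$ large (so $H=\bigcup_n\alpha^{-n}(A)$) and $\alpha^{n_0}(A\cdot A)\subseteq A$ for $n_0$ large: $\alpha$ is confining into $A$. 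Finally one must check that $(G,d_S)\hookrightarrow(\Gamma,d_x)$ is a quasi-isometry: equivalence of $d_S$ with $d_x|_G$ is the horoball distance estimate of the previous paragraph read in reverse, and coboundedness and non-distortion of $G$ in $(\Gamma,d_x)$ use quasi-convexity of focal actions (Proposition~\ref{prop:undistorted}) together with the fact that $\ker\beta$ acts boundedly or horocyclically (Lemma~\ref{lem:busatt}, Proposition~\ref{prop:types}), which permits choosing coset representatives of $G$ in $\Gamma$ with bounded displacement. The proportionality of $\beta|_G$ with the projection to $\ZZ$ is the ``Moreover'' clause, already established above.

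The main obstacle, I expect, is the direct $\delta$-hyperbolicity argument in $(\ref{cobo})\Rightarrow(\ref{xhyp})$: extracting a clean distance formula on $G$ from the bare combinatorial confining axioms and converting it into thin-triangle estimates with no curvature, CAT$(0)$ or local-compactness input available. A secondary difficulty is the coboundedness and non-distortion of $[\Gamma,\Gamma]\rtimes\langle\alpha\rangle$ inside $\Gamma$ in the converse direction, where the possibly infinite quotient $\Gamma/([\Gamma,\Gamma]\langle\alpha\rangle)$ must be tamed using the boundedness or horocyclicity of $\ker\beta$.
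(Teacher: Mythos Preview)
Your overall strategy matches the paper's: prove hyperbolicity of $(G,d_S)$ directly from the confining axioms for $(\ref{cobo})\Rightarrow(\ref{xhyp})$, and extract $\alpha$ and $A$ from the Busemann character for $(\ref{xhyp})\Rightarrow(\ref{cobo})$. Your regularity argument (pushing $\beta_\xi$ down to $\Gamma^{\mathrm{ab}}$ once you know it kills $[\Gamma,\Gamma]$) is in fact slightly cleaner than the paper's QI-invariance route via Lemma~\ref{lem:quasicharacter}. For the hyperbolicity step, the paper takes a more combinatorial path than your divergence sketch: it shows that $1$-geodesics lying entirely in $H$ have length $\le k_0$ (Lemma~\ref{lem:Hdistorted}), then that every $1$-geodesic is at bounded distance from a word $\alpha^{-i}g_1\cdots g_k\alpha^j$ with $k\le k_0$ (Lemma~\ref{lem:geodesics}), and finally collapses any triangle, via repeated applications of this normal form, to a bounded digon. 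Your approach is viable too, but the paper's buys an explicit $\delta$ in terms of $n_0$ with no analysis.

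Two genuine gaps, however. First, your use of Lemma~\ref{lem:schottky} to rule out lineality is circular: that lemma presupposes a focal or general-type action. The paper instead observes that if $\alpha(A)\subsetneq A$ then $H=\bigcup_n\alpha^{-n}(A)$ is unbounded while consisting entirely of non-hyperbolic isometries (since $A^{2^n}\subset\alpha^{-nn_0}(A)$), hence the $H$-action is horocyclic with a unique boundary fixed point; this forces the $G$-action to be focal rather than lineal.

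Second, and more seriously, your coboundedness argument for $G=[\Gamma,\Gamma]\rtimes\langle\alpha\rangle$ inside $(\Gamma,d_x)$ does not work as stated. Knowing that $\Ker\beta$ acts boundedly or horocyclically does not by itself produce bounded coset representatives for $G$ in $\Gamma$, since $\Ker\beta/[\Gamma,\Gamma]$ can be arbitrary. The paper's trick (Proposition~\ref{prop:coco}) is different and specific: given $g\in\Gamma$, pick $k$ with $|\beta_\xi(g)-\beta_\xi(\alpha^k)|<|\beta_\xi(\alpha)|$; the quasi-geodesic rays $n\mapsto g\alpha^n$ and $n\mapsto\alpha^{n+k}$ both tend to $\xi$ with bounded Busemann gap, so $d(g\alpha^n,\alpha^{n+k})\le C+r_0$ for large $n$, and rewriting gives
\[
d\bigl(g,\,[g,\alpha^{-n}]\alpha^k\bigr)=d\bigl(g,\,g\alpha^{-n}g^{-1}\alpha^{n+k}\bigr)\le C+r_0,
\]
with $[g,\alpha^{-n}]\alpha^k\in[\Gamma,\Gamma]\rtimes\langle\alpha\rangle$. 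This commutator identity is the missing idea; once you have it, the confining property of $\alpha$ on $H$ into a metric ball follows from the same asymptotic-ray estimate applied to $g\in H$.
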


The implication (\ref{cobo})$\Rightarrow$(\ref{xhyp}) includes the fact that for every $G=H\rtimes\ZZ$ as above, $(G,d_S)$ is Gromov-hyperbolic (see Proposition~\ref{prop:Contracting}); this remains true when $\alpha(A)=A$ but in this case $(G,d_S)$ is elementary hyperbolic and quasi-isometric to the real line.

Beyond the locally compact case, a simple example of a group $H$ as above is a Banach space, $A$ being the unit ball and $\alpha$ being the multiplication by some positive scalar $\lambda<1$.

\subsection{From focal actions to focal hyperbolic groups}

 The following proposition reduces the proof of Theorem~\ref{thm:Contracting} to a statement in terms of metric groups:
A group $(\Gamma,d)$ is regular focal if and only if it has a subgroup   $G$ with a semidirect decomposition $G= H \rtimes \la \alpha \ra$ and a subset $A$ so that $\alpha$ is confining into $A$ and  the inclusion map 
$(G,d_S)\to (\Gamma,d)$ is a quasi-isometry.

\begin{prop}\label{prop:action/group}
Let $G$ be a group acting by isometries on a hyperbolic metric space $X$, and let $o\in X$. Let $d_G$ be any left-invariant pseudo-metric on $G$ such that the orbit map $(G,d_G)\rightarrow G\cdot o$ is a quasi-isometry. Then 
\begin{itemize}
\item[(i)] the action is focal if and only if 
$(G,d_G)$ is hyperbolic and the left $G$-action on $(G,d_G)$ is focal;  
\item[(ii)]  the action is regular focal if and only if 
$(G,d_G)$ is hyperbolic and the left $G$-action on $(G,d_G)$ is regular focal.
 \end{itemize}
\end{prop}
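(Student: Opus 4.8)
The plan is to read off both properties in (i) and (ii) from the $G$-action on the limit set $\bd_X G$, using Gromov's classification (Proposition~\ref{prop:types}) and the Busemann quasicharacter (Proposition~\ref{prop:Buse-char}), and to exploit that this action on $\bd_X G$ is the geometry common to $X$, to the orbit $G\cdot o$, and to any geodesic hyperbolic space that $G$-equivariantly models the metric group $(G,d_G)$. So the first task is to produce such a geodesic model sitting next to $X$, under either hypothesis.

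First I would verify that the orbit $Y_0:=G\cdot o$ is quasi-convex in $X$. If $G$ acts focally on $X$, this is Proposition~\ref{prop:undistorted}. If instead $(G,d_G)$ is a focal hyperbolic group, then in particular $(G,d_G)$ is quasi-isometric to a geodesic hyperbolic space, hence is a quasi-geodesic pseudo-metric space; carried along the equivariant quasi-isometry $(G,d_G)\to Y_0$, this makes $Y_0$ an intrinsically quasi-geodesic subset of $X$, and stability of quasi-geodesics in the hyperbolic space $X$ forces $Y_0$ to be quasi-convex in $X$. Granting quasi-convexity of $Y_0$, I would embed $Y_0$ (hence $(G,d_G)$) coboundedly and $G$-equivariantly into a geodesic space $Y$ --- for instance the connected graph on vertex set $Y_0$ in which two points are adjacent when their $X$-distance is at most the quasi-convexity constant. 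Then $Y$ is geodesic, $G$ acts on it cocompactly by isometries, $Y$ is hyperbolic with $\bd Y=\bd Y_0$; and since $Y_0$ is a quasi-convex orbit, the inclusion $Y_0\hookrightarrow X$ identifies $\bd Y_0$ $G$-equivariantly with $\bd_X G\subseteq\bd X$, the action on it being the restriction of the $G$-action on $\bd X$. Thus $Y$ is a geodesic hyperbolic space with a cocompact isometric $G$-action, $G$-equivariantly quasi-isometric to $(G,d_G)$, with $\bd Y=\bd_X G$ as $G$-spaces; by definition, "$(G,d_G)$ is a [regular] focal hyperbolic group" means precisely that $G$ acts [regular] focally on $Y$.

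Part (i) then follows by applying Proposition~\ref{prop:types} to $X$ and to $Y$: the $G$-action on $X$ is focal iff $\bd_X G$ is uncountable with a $G$-fixed point, the $G$-action on $Y$ is focal iff $\bd Y$ is uncountable with a $G$-fixed point, and these conditions coincide because $\bd_X G=\bd Y$ as $G$-spaces. For part (ii), I may assume we are in the focal case and compare the Busemann quasicharacters $\beta$ (of $G$ on $X$) and $\beta'$ (of $G$ on $Y$) at the common fixed point $\xi$. By Lemma~\ref{lem:busatt}, together with the fact that the dynamical type of a single element and the attracting/repelling alternative are preserved under the equivariant quasi-isometry $Y\to Y_0\hookrightarrow X$, the homogeneous quasicharacters $\beta$ and $\beta'$ have the same sign at every element of $G$; in particular $\ker\beta=\ker\beta'=:N$, the set of elements not acting hyperbolically. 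If $\beta$ is a genuine homomorphism then $N$ is a normal subgroup and $G/N$, being isomorphic to a subgroup of $\RR$, is abelian, so for all $g,h\in G$ and all $k\ge1$ the elements $g^kh^k$ and $(gh)^k$ lie in the same coset of $N$; plugging this into the defect inequality for $\beta'$ (which vanishes on $N$) and using homogeneity yields $|k\beta'(gh)-k\beta'(g)-k\beta'(h)|\le 2\cdot(\text{defect of }\beta')$ for every $k$, hence $\beta'$ is a homomorphism. By symmetry $\beta$ is a homomorphism iff $\beta'$ is, which is (ii).

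I expect this last step to be the real obstacle: the Busemann quasicharacter is \emph{not} a quasi-isometry invariant up to a single positive scalar (already stable translation lengths are not), so one cannot transport the property of being a homomorphism by a direct numerical comparison of $\beta$ and $\beta'$, and is forced into the structural argument above --- the two kernels coincide, a homomorphic one makes the common kernel have abelian quotient, and this rigidifies the other quasicharacter via averaging over powers. The construction of the geodesic model $Y$ in the second paragraph, and in particular the $G$-equivariant identification of its boundary with $\bd_X G$, is routine but is where the absence of properness and the use of a mere pseudo-metric have to be handled with some care.
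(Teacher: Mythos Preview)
Your proof is correct and follows essentially the same strategy as the paper: reduce to the orbit via quasi-convexity (Proposition~\ref{prop:undistorted}), identify boundaries $G$-equivariantly so that focality is read off the common $G$-set $\bd_X G$, and for~(ii) exploit that both Busemann quasicharacters vanish on the same normal subgroup~$N$, so that both factor through the abelian group $G/N$ where every homogeneous quasicharacter is a genuine homomorphism.

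One correction to your closing paragraph: contrary to what you write, a direct numerical comparison \emph{is} available, and is exactly what the paper uses. Since $|\beta_\xi(g)|$ equals the stable translation length $\lim_n d(x,g^nx)/n$ (Proposition~\ref{prop:Buse-char}), and an equivariant $C$-quasi-isometry changes stable translation lengths by at most the multiplicative factor~$C$, one obtains $C^{-1}|\beta(g)|\le|\beta'(g)|\le C|\beta(g)|$; combined with the sign agreement you extracted from Lemma~\ref{lem:busatt}, this gives $C^{-1}\beta-C\le\beta'\le C\beta+C$. The paper then invokes Lemma~\ref{lem:quasicharacter}, whose proof is precisely your argument: the hypothesis forces $\ker\beta=\ker\beta'=N$, and on the abelian quotient $G/N$ every homogeneous quasicharacter is a character. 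Your inline computation comparing $(gh)^k$ with $g^kh^k$ modulo~$N$ is thus a correct alternative packaging of the very same idea, just bypassing the explicit numerical inequality by going straight to the kernel equality via boundary dynamics.
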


Let us start with two useful lemmas.

\begin{lem}
Let $f$ be a homogeneous quasicharacter on a group $G$. Suppose that $f$ is bounded in restriction to some normal subgroup $N$. Then $f$ induces a (homogeneous) quasicharacter on $G/N$. In particular if all homogeneous quasicharacter of $G/N$ are characters, then so is $f$.
\end{lem}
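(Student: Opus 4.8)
The statement is a routine fact about homogeneous quasicharacters, so the plan is to produce the induced quasicharacter directly and then check that its defect is controlled. First I would fix a homogeneous quasicharacter $f$ on $G$ with defect $D=\sup_{g,h}|f(g)+f(h)-f(gh)|$, and assume $|f(n)|\le M$ for all $n\in N$. Since $f$ is homogeneous it is constant on conjugacy classes; in particular $f(gng^{-1})=f(n)$ for all $g\in G$, $n\in N$. The key observation is that $f$ is \emph{$M$-constant on cosets of $N$}: for $g\in G$ and $n\in N$ one has $|f(gn)-f(g)|\le |f(gn)-f(g)-f(n)|+|f(n)|\le D+M$. Hence the formula $\bar f(gN):=f(g)$ is well-defined up to an additive error of $D+M$; to get a genuine function on $G/N$ one simply picks any set-theoretic section $\sigma:G/N\to G$ and sets $\bar f:=f\circ\sigma$.

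Next I would verify that $\bar f$ is a quasicharacter on $G/N$. For $\bar g,\bar h\in G/N$ write $g=\sigma(\bar g)$, $h=\sigma(\bar h)$, $k=\sigma(\bar g\bar h)$; then $gh$ and $k$ lie in the same coset of $N$, so $|f(gh)-f(k)|\le D+M$ by the coset estimate above, and therefore
\[
|\bar f(\bar g)+\bar f(\bar h)-\bar f(\bar g\bar h)|=|f(g)+f(h)-f(k)|\le |f(g)+f(h)-f(gh)|+|f(gh)-f(k)|\le 2D+M,
\]
so $\bar f$ has finite defect. It is not yet homogeneous, but as recalled in the proof of Proposition~\ref{prop:Buse-char}, every quasicharacter is at bounded distance from a unique homogeneous one, obtained as $\bar f_\infty(\bar g)=\lim_n \bar f(\bar g^n)/n$; replacing $\bar f$ by $\bar f_\infty$ gives a homogeneous quasicharacter on $G/N$. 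Finally, $\bar f_\infty$ pulls back to a homogeneous quasicharacter on $G$ that agrees with $f$ up to a bounded error (composing $G\to G/N$ with $\bar f_\infty$ lands within $2D+M$ of $\bar f$, hence within a bounded distance of $f$), and since homogeneous quasicharacters at bounded distance coincide, the pullback of $\bar f_\infty$ equals $f$. This proves that $f$ descends to $G/N$, and the ``in particular'' clause is immediate: if every homogeneous quasicharacter of $G/N$ is a character, then $\bar f_\infty$ is a character, hence so is its pullback $f$.

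I do not anticipate any serious obstacle here; the only mildly delicate point is to keep the two passages ``$f\rightsquigarrow\bar f$'' (which merely requires boundedness of $f|_N$) and ``$\bar f\rightsquigarrow\bar f_\infty$'' (homogenization) logically separate, and to invoke the uniqueness of the homogeneous representative at the right moment so as to conclude that the construction genuinely returns $f$ rather than some bounded perturbation of it.
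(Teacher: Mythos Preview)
Your argument is correct, but the paper takes a shorter route. Instead of showing that $f$ is constant on cosets \emph{up to bounded error} and then repairing this via a section, homogenization, and the uniqueness of the homogeneous representative, the paper proves directly that $f(gn)=f(g)$ \emph{exactly}. The trick is to exploit homogeneity to kill the error: since $N$ is normal one has $(gn)^k=g^k n_k$ with $n_k\in N$, so
\[
f(gn)=\frac{f((gn)^k)}{k}=\frac{f(g^k n_k)}{k}=\frac{f(g^k)+A}{k}=f(g)+\frac{A}{k},
\]
where $|A|\le D+M$; letting $k\to\infty$ gives $f(gn)=f(g)$. Thus $f$ factors through $G/N$ with no choice of section and is already homogeneous there.

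What this buys: the paper's argument is a one-liner and avoids the logical bookkeeping you correctly flagged as ``mildly delicate'' (keeping the passage to $\bar f$ and the homogenization step separate, then invoking uniqueness). Your approach, on the other hand, is the generic template for pushing quasicharacters to quotients and would work even if one did not want to assume homogeneity of $f$ from the start (one would simply homogenize at the end). In the present lemma, where homogeneity is given, the paper's averaging trick is the cleaner choice.
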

\begin{proof}
Let $g\in G$ and $n\in N$. Using that $f$ is bounded on $N$, we get $$f(gn)=f((gn)^k)/k=f(g^kn_k)/k= f(g^k)/k+A/k=f(g)+A/k$$ for some $n_k\in N$ and some bounded $A=A(g,n,k)$. Letting $k$ tend to infinity, we obtain that $f(gn)=f(g)$, which proves the lemma.
\end{proof}

\begin{lem}\label{lem:quasicharacter}
Let $f_1$ and $f_2$ be homogeneous quasicharacters on a group $G$ such that $$C^{-1}f_1-C\leq f_2\leq Cf_1+C$$ for some $C\geq 1$. If $f_1$ is a character, then so is  $f_2$.
\end{lem}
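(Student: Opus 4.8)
The plan is to reduce the statement for $f_2$ to the case of a genuine character by exploiting homogeneity. First I would recall the standard fact that a homogeneous quasicharacter $f$ on a group $G$ is a character if and only if it is additive on every pair of commuting elements; equivalently, $f$ vanishes on the commutator subgroup $[G,G]$ and factors through the abelianization $G/[G,G]$, where it becomes a homomorphism to $\mathbf{R}$ (an abelian group has no nonzero nonhomomorphic homogeneous quasicharacters — this is immediate since a homogeneous quasicharacter on an abelian group is actually a homomorphism, as $f(gh) = \lim f((gh)^n)/n = \lim f(g^n h^n)/n = f(g) + f(h)$ using that $g,h$ commute and the defect is bounded).

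Given that reformulation, the proof is short. Since $f_1$ is a character, it is in particular a homomorphism to $\mathbf{R}$, so $f_1$ vanishes on $[G,G]$. By the sandwich inequality $C^{-1} f_1 - C \le f_2 \le C f_1 + C$, the restriction of $f_2$ to $[G,G]$ satisfies $-C \le f_2 \le C$, i.e. $f_2$ is bounded on $[G,G]$. Now I would invoke the preceding lemma (the one immediately above, with $N = [G,G]$): a homogeneous quasicharacter bounded on a normal subgroup $N$ descends to a homogeneous quasicharacter on $G/N$, and if all homogeneous quasicharacters on $G/N$ are characters, then $f_2$ itself is a character. Since $G/[G,G]$ is abelian, all its homogeneous quasicharacters are characters by the remark above, so the hypothesis of that lemma is met and we conclude that $f_2$ is a character.

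I do not anticipate a serious obstacle here; the only point requiring a little care is making sure the chain of reductions is correctly set up, namely that the bound on $f_2|_{[G,G]}$ really is a consequence of the two-sided inequality (it is: on $[G,G]$ we have $f_1 = 0$, hence $-C \le f_2 \le C$ there), and that the previously proved lemma is being applied with the right normal subgroup. If one prefers to avoid citing the abelian case as a black box, one can argue directly: for $g \in G$ and $n \in [G,G]$, homogeneity gives $f_2(gn) = f_2((gn)^k)/k = f_2(g^k n_k)/k = f_2(g^k)/k + O(1/k) \to f_2(g)$ as $k \to \infty$, using that $(gn)^k = g^k n_k$ with $n_k \in [G,G]$ and $f_2$ bounded on $[G,G]$; this shows $f_2$ is constant on cosets of $[G,G]$, hence induced from $G/[G,G]$ where homogeneity forces additivity, so $f_2$ is a character. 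Either route gives the claim.
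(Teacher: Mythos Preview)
Your proof is correct and follows essentially the same approach as the paper: use the preceding lemma to descend $f_2$ to an abelian quotient, where every homogeneous quasicharacter is a character. The only cosmetic difference is that the paper takes $N = \Ker(f_1)$ (so $G/N \cong f_1(G) \subseteq \mathbf{R}$) rather than $N = [G,G]$, and first sharpens the inequality to $C^{-1}f_1 \le f_2 \le Cf_1$ via homogeneity; since $[G,G] \subseteq \Ker(f_1)$ and both quotients are abelian, either choice works.
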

\begin{proof}
Since $f_1$ and $f_2$ are homogeneous, we have $C^{-1}f_1 \leq f_2\leq Cf_1$. Let $N = \Ker(f_1)$. By the previous lemma, both $f_1$ and $f_2$ induce a homogenous quasicharacter of the quotient $G/N$, and their value on $G$ is completely determined by their value on $G/N$. Since the quotient $G/N = f_1(G)$ is abelian, all its homogeneous quasicharacters are characters, and the claim follows.
\end{proof}

\begin{proof}[Proof of Proposition~\ref{prop:action/group}]
Recall that the orbits of a focal action are quasi-convex (Proposition~\ref{prop:undistorted}), hence are quasi-geodesic subspaces of $X$. Therefore, 
the fact that the action is focal (resp. regular focal) or not can be read on the restriction of the action on one orbit of $G$. In other words, this proves the proposition when $d_G$ is exactly the distance  induced by the orbit map. Now we need to prove that being focal (resp. regular focal) for a metric group $G$ only depends on a choice of metric {\it up to quasi-isometry}. This is clear for focal, since a quasi-isometry induces a homeomorphism between the boundaries, and therefore does not change the dynamics of the $G$-action on its boundary. The quasi-isometric invariance of the  regularity condition follows from Lemma~\ref{lem:quasicharacter}. 
\end{proof}

\subsection{From regular focal groups to confining automorphisms}

The implication (i)$\Rightarrow$(ii) in Theorem~\ref{thm:Contracting} will be deduced from the following.

\begin{prop}\label{prop:coco}
Let $(\Gamma,d)$ be a regular focal hyperbolic metric group. Let $\xi$ be the unique fixed point of the boundary and $\beta_\xi$ be the corresponding Buseman character. 
Set $H = [\Gamma, \Gamma] \subseteq \Ker \beta_\xi$ and let $\alpha \not \in \Ker \beta_\xi$. Then
\begin{itemize}
\item[(i)] $\la H \cup \{\alpha\} \ra \cong H\rtimes\langle\alpha\rangle$ is a 
cobounded, normal subgroup of $(\Gamma,d)$. (In particular, if $\Gamma$ is locally compact and the action continuous and proper, it is a 
cocompact normal subgroup of $\Gamma$.)
\item[(ii)] There exist $r_0>0$ satisfying:  for all $r>0$ there exists $n_0$ such that for all $n\geq n_0$, $\alpha^n(B(1,r)\cap H) \subset B(1,r_0)\cap H.$ In particular, $\alpha$ is confining into $A=B(1,r_0)\cap H$.
 \end{itemize}
\end{prop}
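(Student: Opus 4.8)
The plan is to analyze the dynamics of the focal action near the fixed boundary point $\xi$, using the Busemann character $\beta_\xi$ as the main bookkeeping device. First I would establish part (i): since $\beta_\xi$ is a genuine homomorphism and $H=[\Gamma,\Gamma]$, we have $H\subseteq\Ker\beta_\xi$, and any $\alpha$ with $\beta_\xi(\alpha)\neq 0$ generates, together with $H$, a subgroup isomorphic to $H\rtimes\langle\alpha\rangle$ because $\langle\alpha\rangle\cap H=1$ (as $\beta_\xi$ is injective on $\langle\alpha\rangle$). Normality is clear since $H$ is normal and $\Gamma/H$ is abelian hence $\Gamma=H\cdot\langle\alpha\rangle\cdot\Ker\beta_\xi$... more carefully, I would argue that $\Ker\beta_\xi$ is generated boundedly by $H$ together with elliptic-type behaviour: by Lemma~\ref{lem:busatt}, elements of $\Ker\beta_\xi$ act as bounded or horocyclic isometries, but the action of $\Ker\beta_\xi$ cannot be horocyclic (that would contradict focality of the ambient action via Lemma~\ref{lem:bdrad}), so $\Ker\beta_\xi$ acts boundedly on $X$; combined with coboundedness of the $\langle\alpha\rangle$-orbit, this gives that $H\rtimes\langle\alpha\rangle$ is cobounded. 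That $\beta_\xi$ restricted to this subgroup is proportional to the projection to $\ZZ$ is then immediate.

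Next, for part (ii), the key geometric input is that by Corollary~\ref{cor:Buse-char} (or directly since the action is regular focal), $\beta_\xi(g)$ is within a universal constant of $b_\xi(x,gx)$ for a fixed basepoint $x$, and that the orbit of $\alpha$ along a geodesic towards $\xi$ is a quasi-geodesic ray converging to $\xi$. So I would fix $x=o$ on such a ray and observe that for $h\in H$, since $\beta_\xi(h)=0$, the point $ho$ stays within bounded $b_\xi$-distance of $o$; the quantity $d(o,ho)$ may be large, but applying $\alpha^n$ pushes $ho$ "down the horoball" towards $\xi$. The main step is to show that a bounded ball $B(1,r)\cap H$, after applying $\alpha^n$ for $n$ large (depending on $r$), lands in a ball of fixed radius $r_0$. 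Geometrically: the set $\{h
o : h\in B(1,r)\cap H\}$ is a bounded set inside a fixed horoball neighborhood of $\xi$; since $\alpha$ is a hyperbolic isometry translating towards $\xi$ with positive translation length $\beta_\xi(\alpha)$, iterating $\alpha$ moves this bounded set deeper into the horoball, and the $\delta$-thinness of horoballs (bounded sets at bounded Busemann-level, pushed down, have uniformly bounded diameter projected back) forces $\alpha^n(B(1,r)\cap H)\cdot o$ into a ball of radius $r_0$ around $\alpha^n o$ — but one must translate back by $\alpha^{-n}$, or rather observe that $\alpha^n h\alpha^{-n}$... wait, here we genuinely want $\alpha^n(h)=\alpha^n h \alpha^{-n}$ where $\alpha^n$ acts by conjugation, so the relevant estimate is on $d(o, \alpha^n h\alpha^{-n}o)=d(\alpha^{-n}o, h\alpha^{-n}o)$, and since $\alpha^{-n}o$ recedes towards the repelling point $\eta$, one uses that $h$ moves points far out near $\eta$ only a bounded amount (as $h$ fixes $\xi$ and $h\notin$ hyperbolic, via the contraction behaviour near $\eta$).

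Thus the precise argument for (ii): write $d(o,\alpha^n(h)o)=d(\alpha^{-n}o,\, h\,\alpha^{-n}o)$; as $n\to\infty$, $\alpha^{-n}o\to\eta$ (the repelling fixed point of $\alpha$, which exists by Lemma~\ref{lem:busatt} and hyperbolicity since $\beta_\xi(\alpha)>0$, say, after possibly replacing $\alpha$ by $\alpha^{-1}$—though note $\eta$ need not be fixed by all of $\Gamma$), and $h$ fixes $\xi\neq\eta$; the displacement of a point $p$ by $h$, i.e. $d(p,hp)$, when $p$ lies far along $[\eta,\xi]$ near $\eta$, is controlled: using the standard fact that for an isometry fixing $\xi$, $d(p,hp)$ is roughly $|b_\xi(p,hp)|$ plus a term that decays as $p$ moves away from $\xi$ toward a point not equal to $\xi$, together with $\beta_\xi(h)=0$ giving $b_\xi$ bounded, one gets $d(\alpha^{-n}o, h\alpha^{-n}o)\to$ something bounded by a constant $r_0$ independent of $h\in B(1,r)\cap H$, with $n_0$ depending on $r$. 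The first two confining conditions ($\alpha(A)\subset A$ for $A=B(1,r_0')\cap H$ with $r_0'$ slightly larger, $H=\bigcup\alpha^{-n}(A)$, and $\alpha^{n_0}(A\cdot A)\subset A$) then follow formally from (ii) applied with suitable radii. The hard part will be making the "displacement near $\eta$ is small" estimate uniform and rigorous in a non-proper hyperbolic space (passing to an ultrapower to get the geodesic $[\eta,\xi]$, and tracking all constants through the thin-triangle and quasi-geodesic stability estimates); everything else is bookkeeping with $\beta_\xi$ and coboundedness.
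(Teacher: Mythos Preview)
There are genuine gaps in both parts. For~(i), your key claim that $\Ker\beta_\xi$ acts boundedly on $X$ is false: in the focal case $\Ker\beta_\xi$ is necessarily unbounded (otherwise $\Gamma$ would be at bounded Hausdorff distance from the quasi-geodesic $\langle\alpha\rangle$ and hence lineal), so by Lemma~\ref{lem:busatt} its action is horocyclic, not bounded. Lemma~\ref{lem:bdrad} does not help here---it concerns the bounded radical of the \emph{ambient} (non-horocyclic) action, not the action of the normal subgroup $\Ker\beta_\xi$. The paper instead proves coboundedness via a commutator trick: given $g\in\Gamma$, choose $k$ with $|\beta_\xi(g\alpha^{-k})|<|\beta_\xi(\alpha)|$; then the two quasi-geodesic rays $(\alpha^n)_n$ and $(g\alpha^{-k}\cdot\alpha^n)_n$ both converge to $\xi$ and, by Morse stability together with the Busemann bound, become $r_0$-close once $n$ is large (depending only on $d(1,g)$). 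Rewriting gives $d(g,\,[g,\alpha^{-n}]\alpha^k)\le C+r_0$ with $[g,\alpha^{-n}]\in H$, which is exactly coboundedness of $H\rtimes\langle\alpha\rangle$.

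For~(ii), you have the direction reversed. With your normalization $\beta_\xi(\alpha)>0$ (so $\xi$ is attracting and $\alpha^{-n}o\to\eta$), the displacement $d(\alpha^{-n}o,\,h\,\alpha^{-n}o)$ is \emph{large}, not small: think of $h$ a horizontal translation in $\HH^2$ fixing $\xi=\infty$, evaluated at a point near the real axis---the displacement blows up. Your ``standard fact'' that the displacement decays as $p$ moves away from $\xi$ is the opposite of what happens; it decays as $p$ moves \emph{toward} $\xi$. The paper's argument uses precisely this: the rays $(\alpha^n)_n$ and $(h\alpha^n)_n$ both tend to the common fixed point $\xi$, hence are eventually $r_0$-close with threshold depending only on $d(1,h)$; this gives $d(1,\alpha^{-n}h\alpha^n)\le r_0$, i.e.\ (after the harmless swap $\alpha\leftrightarrow\alpha^{-1}$) the confining statement. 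Once you flip the direction, your approach and the paper's coincide, and no analysis near $\eta$ (which $h$ does not fix) is needed.
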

\begin{proof}
We start with a preliminary observation. Since $\alpha$ acts as a hyperbolic isometry, the focal point $\xi$ must be either its attracting or repelling fixed point. Upon replacing $\alpha $ by $\alpha\inv$, we may assume that it is the attracting one. In particular, the sequence $(1, \alpha, \alpha^2, \dots)$ defines a quasi-geodesic ray tending to $\xi$. Therefore, so is the sequence $(g, g\alpha, g\alpha^2, \dots)$ for any $g \in \Gamma$. Recall that here is a constant $r_0$ (depending only on the hyperbolicity constant of $(\Gamma, d)$) such that any two quasi-geodesic ray with the same endpoint are eventually $r_0$-close to one another. In particular, if $|\beta_\xi(g)| \leq C$, then $d(\alpha^n, g\alpha^n) \leq C + r_0$ for all $n$ larger than some $n_0$, where $n_0$ depends only on $d(1, g)$. 

\medskip
We now turn to the assertion~(i). The only nontrivial statement is that
$H\rtimes\langle\alpha\rangle$ is cobounded. 

Let thus $g \in \Gamma$ be arbitrary, and let $k \in \ZZ$ be such that $|\beta_\xi(g) - \beta_\xi(\alpha^k) | <  C =  |\beta_\xi(\alpha)|$. By the preliminary observation, we have $d(g \alpha^n, \alpha^{n+k}) \leq C+ r_0$. Therefore we have 
$$
d(g, [g, \alpha^{-n}] \alpha^k) = d(g, g\alpha^{-n} g\inv \alpha^{n+k}) \leq C+ r_0.
$$
Since $[g, \alpha^{-n}] \alpha^k \in H \rtimes \la \alpha \ra$, this proves that $H \rtimes \la \alpha \ra$ is $(C+r_0)$-dense in $\Gamma$, as desired.

\medskip
The assertion~(ii) also follows from  the preliminary observation, since for all  $g \in H$, we have $\beta_\xi(g) = 0$. 
\end{proof}

\subsection{From confining automorphisms to hyperbolic groups} 

We now turn to the converse implication in Theorem~\ref{thm:Contracting}, which is summarized in the following proposition. 

\begin{prop}\label{prop:Contracting}
Let $H$ be a group and let $\alpha$ be an automorphism of 
$H$ which confines $H$ into some subset $A \subset H$. 
 Let $S=\{\alpha^{\pm}\}\cup A$. 
Then  the group $G= H \rtimes \la \alpha \ra$ is Gromov-hyperbolic with respect to the left-invariant word metric associated to the generating set~$S$. If the inclusion $\alpha(A)\subset A$ is strict, then it is focal.
\end{prop}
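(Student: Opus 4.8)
\medskip
The plan is to prove hyperbolicity by first pinning down the word metric $d_S$ up to uniform multiplicative and additive error, and then reading the hyperbolicity (four-point) inequality off the resulting formula; the focal statement will then follow from the description of the boundary that the formula provides. Write elements of $G=H\rtimes\langle\alpha\rangle$ as pairs $(h,k)$, with $(h,k)(h',k')=(h\,\alpha^{k}(h'),k+k')$, and let $t\colon G\to\ZZ$, $(h,k)\mapsto k$, be the projection; it is a homomorphism and is $1$-Lipschitz for $d_S$, since each letter of $S$ changes $t$ by at most $1$. After the harmless enlargements making $1\in A$ and $A=A^{-1}$ (both automatic in the situations where the proposition is used, where $A$ is a metric ball), introduce for $h\in H$ the depth
$$\mathrm{dep}(h)=\min\{n\ge 0:\alpha^{n}(h)\in A\}\ \in\ \ZZ_{\ge 0}\qquad\text{and}\qquad\underline{\mathrm{dep}}(h)=\inf\{n\in\ZZ:\alpha^{n}(h)\in A\}\ \in\ \ZZ\cup\{-\infty\},$$
both well defined since $H=\bigcup_{n\ge 0}\alpha^{-n}(A)$ and since $\alpha(A)\subseteq A$ makes $\{n:\alpha^{n}(h)\in A\}$ upward closed (so $\mathrm{dep}(h)=\max(0,\underline{\mathrm{dep}}(h))$). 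The engine of the whole proof is the quasi-ultrametric inequality $\underline{\mathrm{dep}}(h_1\cdots h_p)\le\max_i\underline{\mathrm{dep}}(h_i)+(p-1)n_0$, obtained by iterating $\alpha^{n_0}(A\cdot A)\subseteq A$, together with the identity $\underline{\mathrm{dep}}(\alpha^{j}(h))=\underline{\mathrm{dep}}(h)-j$.

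\medskip
The first step is the distance estimate
$$d_S\big((h_1,k_1),(h_2,k_2)\big)\ \asymp\ \max\Big(\,|k_1-k_2|\ ,\ \ k_1+k_2+2\,\underline{\mathrm{dep}}(h_1^{-1}h_2)\,\Big),$$
where $\asymp$ means ``up to multiplicative and additive constants depending only on $n_0$'', and where the right-hand side is to be read as $|k_1-k_2|$ when $\underline{\mathrm{dep}}(h_1^{-1}h_2)=-\infty$. For the upper bound one exhibits the explicit path which, starting from $(h_1,k_1)$, descends along $\alpha^{-1}$ to the level where a single $A$-letter turns $h_1$ into $h_2$ (legal precisely at levels $\le-\mathrm{dep}(h_1^{-1}h_2)$), and then goes straight to $(h_2,k_2)$; its length is computed directly. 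For the lower bound, take a word of length $L$ representing $(h_1,k_1)^{-1}(h_2,k_2)$ and let $-m$ be the lowest value of $t$ along it: reaching level $-m$ and ending at level $k_2-k_1$ costs at least $2m+(k_2-k_1)$ vertical letters, while, after applying $\alpha^{m}$, the $H$-part of the word becomes a product of at most $L$ elements of $A$, whose depth is therefore at most $m+(L-1)n_0$ by the quasi-ultrametric inequality; combined with the trivial bound $L\ge|k_1-k_2|$, this yields the claimed lower estimate.

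\medskip
Given the estimate, hyperbolicity reduces to verifying the four-point inequality for the Gromov product $(g_1|g_2)_1=\tfrac12\big(d_S(1,g_1)+d_S(1,g_2)-d_S(g_1,g_2)\big)$, that is $(g_1|g_3)_1\ge\min\big((g_1|g_2)_1,(g_2|g_3)_1\big)-\delta$ for a uniform $\delta$. Substituting the formula, one finds that in each regime for the signs of the three $t$-coordinates the product equals a linear expression in the $t$'s plus the term $\underline{\mathrm{dep}}(h_i)+\underline{\mathrm{dep}}(h_j)-\underline{\mathrm{dep}}(h_i^{-1}h_j)$, the latter being twice the Gromov product of $h_i$ and $h_j$ in the quasi-ultrametric space $(H,\underline{\mathrm{dep}})$; the desired inequality then follows from the quasi-ultrametric inequality together with a purely real-variable case check. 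This case check — over the signs of the $k_i$ and over which of the two arguments of each $\max$ dominates — is the main technical obstacle: it is elementary but must be organized carefully. (Alternatively one can invoke Bowditch's ``guessing geodesics'' criterion applied to the descend-then-ascend paths built for the upper bound, which are $(1,1)$-quasigeodesics, and verify that a triangle made of three such paths is uniformly slim; this replaces one case analysis by a comparable one.)

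\medskip
For the last assertion, note that $\{\alpha^{n}:n\in\ZZ\}$ is a geodesic line (since $d_S(\alpha^{i},\alpha^{j})=|i-j|$), so $\alpha$ is a hyperbolic isometry; let $\xi=\lim_{n\to+\infty}\alpha^{-n}$ be its repelling fixed point. By the estimate, $d_S(h\alpha^{-n},\alpha^{-n})=d_S\big(1,(\alpha^{n}(h^{-1}),0)\big)$ stays bounded as $n\to\infty$, so every $h\in H$ fixes $\xi$; hence $G=\langle H,\alpha\rangle$ fixes $\xi$, and by Gromov's classification the action is lineal or focal. It is lineal exactly when $G$ also fixes the attracting fixed point $\xi_+=\lim_{n\to\infty}\alpha^{n}$ of $\alpha$; but the estimate shows that $(h\alpha^{n}|\alpha^{n})_1$ stays bounded unless $\underline{\mathrm{dep}}(h)=-\infty$, i.e.\ unless $h\in\bigcap_{j\ge 0}\alpha^{j}(A)$, so $G$ fixes $\xi_+$ iff $H\subseteq\bigcap_{j\ge 0}\alpha^{j}(A)$, which forces $A=H$ (the intersection being contained in $A$), hence $\alpha(A)=A$. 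Therefore, when $\alpha(A)\subsetneq A$ the action cannot be lineal, so it is focal, as claimed. (Conversely, $\alpha(A)=A$ forces $A=H$ and $(G,d_S)$ quasi-isometric to a line, the elementary case recorded after the theorem.)
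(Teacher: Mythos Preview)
Your approach is genuinely different from the paper's and is correct in outline. The paper never writes down a distance formula: instead it proves a normal-form lemma (every $1$-geodesic is at bounded distance from a word of shape $\alpha^{-i}g_1\cdots g_k\alpha^{j}$ with $k\le k_0$ bounded in terms of $n_0$), and then shows directly that a geodesic triangle, after being put into this form, collapses via two backtrack removals to a bounded digon. The whole argument is combinatorial and yields the explicit constant $\delta=16\log_2(n_0+2)$; the focal clause is dispatched in one line by observing that $A$ consists of non-hyperbolic isometries and that $H=\bigcup\alpha^{-n}(A)$ is unbounded when $\alpha(A)\subsetneq A$, so $H$ acts horocyclically while $\alpha$ is hyperbolic.

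Your route via the estimate $d_S\bigl((h_1,k_1),(h_2,k_2)\bigr)\asymp\max\bigl(|k_1-k_2|,\,k_1+k_2+2\,\underline{\mathrm{dep}}(h_1^{-1}h_2)\bigr)$ is more informative: it exhibits $(G,d_S)$ as, up to quasi-isometry, the hyperbolic cone over the quasi-ultrametric space $(H,\underline{\mathrm{dep}})$, which makes the boundary and the focal analysis transparent (indeed your treatment of the focal clause, pinning down exactly which $h$ fix the attracting point of $\alpha$, gives more than the paper's). The one soft spot is that you state but do not carry out the four-point verification, calling it ``the main technical obstacle''. The approach is sound, but as written this step is an IOU rather than a proof. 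A cleaner way to cash it than the raw case split is to note that your formula is precisely the distance in a (quasi-)tree: $\underline{\mathrm{dep}}$ being quasi-ultrametric with defect $n_0$ means $(H,e^{\underline{\mathrm{dep}}})$ is a quasi-ultrametric space, and the logarithmic cone over such a space is $O(n_0)$-hyperbolic by a direct computation (essentially the $\RR$-tree case plus the defect). Alternatively, the Bowditch/guessing-geodesics route you mention does work, and the slimness of your descend--then--ascend triangles reduces again to the single inequality $\underline{\mathrm{dep}}(h_1^{-1}h_3)\le\max(\underline{\mathrm{dep}}(h_1^{-1}h_2),\underline{\mathrm{dep}}(h_2^{-1}h_3))+n_0$; spelling this out would close the gap.
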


Upon replacing $A$ by $A \cup A\inv \cup \{1\}$, we may assume that $A$ is symmetric and contains $1$. The group $H \rtimes \la \alpha \ra$ is endowed with the word metric associated with the symmetric generating set $S = \{\alpha^\pm \} \cup A$. Remark that this metric is \textbf{$1$-geodesic}, in the sense that for all $x,x'\in H \rtimes \la \alpha \ra$ at distance $\le n$ from one another, there exists a so-called \emph{$1$-geodesic} between them, i.e.\ $x=x_0,\ldots, x_n=x'$ such that $d(x_i,x_{i+1})=1$.  Denote by $B(n)=S^n$ the $n$-ball in this metric.

The following easy but crucial observation is a quantitative version of the fact that unbounded horocyclic actions are always distorted, see Proposition~\ref{prop:undistorted}. 

\begin{lem}\label{lem:Hdistorted}
There exists a positive integer $k_0$ such that all 1-geodesics of $G = H \rtimes \la \alpha \ra$ contained in $H$ have length $\leq k_0$.
\end{lem}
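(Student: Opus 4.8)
After the reduction already made, we assume $A$ is symmetric with $1 \in A$; we may also assume $\alpha \neq \id$, since otherwise $\langle\alpha\rangle$ is trivial, the confining hypothesis forces $H = A = S$, and the metric has diameter $\le 1$. The first step is to understand which generators can occur along a $1$-geodesic contained in $H$. If $1 = x_0, x_1, \dots, x_n$ is such a geodesic (we may translate it so that $x_0 = 1$, by left-invariance), then each $x_i^{-1}x_{i+1}$ lies in $S \cap H$. Since $\langle\alpha\rangle \cap H = \{1\}$ in the semidirect product $G = H \rtimes \langle\alpha\rangle$, we have $\alpha^{\pm 1}\notin H$, so $x_i^{-1}x_{i+1}\in A$ for every $i$. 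Consequently the endpoint satisfies $x_n \in A^n$, while $d_S(1,x_n) = n$ because the path is a geodesic.

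The second, and main, step is an iteration of the third confining axiom. From $\alpha^{n_0}(A\cdot A)\subseteq A$ one proves by induction on $j \ge 0$ that
$$\alpha^{\,j n_0}\bigl(A^{2^{j}}\bigr)\ \subseteq\ A ,$$
the inductive step being $\alpha^{(j+1)n_0}(A^{2^{j+1}}) = \alpha^{n_0}\bigl(\alpha^{jn_0}(A^{2^j})\cdot\alpha^{jn_0}(A^{2^j})\bigr)\subseteq\alpha^{n_0}(A\cdot A)\subseteq A$, using that $\alpha$ is an automorphism. Now fix $j = \lceil \log_2 n\rceil$ (the case $n\le 1$ being trivial). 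Then $2^j \ge n$, so $x_n \in A^n \subseteq A^{2^j}$ (here $1\in A$ is used to pad), whence $\alpha^{jn_0}(x_n)\in A$.

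For the final step, write $t := \alpha \in \langle\alpha\rangle \le G$, so that $\alpha^{jn_0}(x_n) = t^{jn_0}\,x_n\,t^{-jn_0}$ in $G$, and therefore $x_n = t^{-jn_0}\cdot\alpha^{jn_0}(x_n)\cdot t^{jn_0}$. Since $\alpha^{jn_0}(x_n)\in A\subseteq S$ and $t^{\pm1}\in S$, reading off the word length gives
$$n \;=\; d_S(1,x_n) \;\le\; jn_0 + 1 + jn_0 \;=\; 2n_0\lceil \log_2 n\rceil + 1 .$$
Because the right-hand side is $o(n)$, this inequality can hold only for $n$ below some explicit threshold; taking $k_0$ to be that threshold proves the lemma. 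The only slightly delicate point is the bookkeeping in this last estimate: conjugating by $\alpha^{jn_0}$ to land inside $A$ costs $2jn_0$ extra generators, but since $j$ grows only logarithmically in $n$ while $n$ itself is the length being bounded, the estimate still closes; I do not expect any genuine obstacle beyond getting these constants straight.
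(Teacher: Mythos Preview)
Your proof is correct and follows essentially the same route as the paper's: both derive $A^{2^m}\subset\alpha^{-mn_0}(A)\subset B(2mn_0+1)$ and conclude that a $1$-geodesic in $H$ of length $n$ forces $n\le 2n_0\lceil\log_2 n\rceil+1$. One small remark: your aside ``we may also assume $\alpha\neq\id$'' is unnecessary, since in the semidirect product $G=H\rtimes\langle\alpha\rangle$ the symbol $\alpha$ denotes the generator of the $\ZZ$-factor, so $\alpha^{\pm1}\notin H$ holds automatically regardless of whether the corresponding automorphism of $H$ is trivial.
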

\begin{proof}
 Actually, we will prove a stronger statement, which, roughly speaking, says that $H$ is \emph{exponentially distorted} inside $H \rtimes \la \alpha \ra$.  Note that 
 $$A^2\subset  \alpha^{-n_0}(A).$$ 
 Since  $S = \{\pm \alpha  \} \cup A$,  we infer, more generally that
$$(B(1)\cap H)^{2^m}=A^{2^m}\subset \alpha^{-n_0m}(A)\subset B(2n_0m+1)\cap H.$$
Hence, if there exists a 1-geodesic of length $2^m$ contained in $H$, then $m$ must satisfy
$2^m\leq 2n_0m+1$, which obviously implies that it is bounded by some number $k_0$ depending only on $n_0$ (say, $k_0=4\log_2(n_0+2)$).  
\end{proof}

Now let us go further and describe 1-geodesics in $G = H \rtimes \la \alpha \ra$. Observe that a 1-geodesic between $1$ and $x$ can be seen as an element in the free semigroup over $S$ of minimal length representing $x$ (note that in this semigroup we do not have $ss^{-1}=1$; the reason we work in the free semigroup rather than free group is that the loop $s^ns^{-n}$ is not viewed as at bounded distance to the trivial loop). 

\begin{lem}\label{lem:geodesics}
{Every path emanating from $1$, of the form $\alpha^{n_1}h_1\alpha^{n_2}h_2\ldots \alpha^{n_k}h_k\alpha^{n_{k+1}}$  with $n_i \in \ZZ$ and $h_i \in H$, is at distance~$\leq k(k+1)$ from a path of the form $\alpha^{-i}g_1\ldots g_{k}\alpha^{j},$ with $i, j \geq 0$ and $g_i\in A$.

In particular, every $1$-geodesic from $1$ to $x$ in $H \rtimes \la \alpha \ra$ is at uniformly bounded distance from a word of the form $\alpha^{-i}g_1\ldots g_{k}\alpha^{j},$ with $i, j \geq 0$, $k\leq k_0$, and  $g_s\in A$ for all $s$, where $k_0$ is the constant from Lemma~\ref{lem:Hdistorted}.}
\end{lem}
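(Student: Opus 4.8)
The first step is to establish the "normal form" claim for an arbitrary path $\alpha^{n_1}h_1\alpha^{n_2}h_2\cdots\alpha^{n_k}h_k\alpha^{n_{k+1}}$ by pushing all the $H$-syllables to the left past the $\alpha$-powers, at a controlled cost. Concretely, I would use the identity $\alpha^{n}h = (\alpha^{n}h\alpha^{-n})\alpha^{n} = \alpha^{n}(h)\cdot\alpha^{n}$ in $G=H\rtimes\langle\alpha\rangle$ (writing $\alpha^n(h)$ for the image of $h$ under the automorphism). Applying this repeatedly moves each $h_i$ to the front, so that the element represented is $g_1g_2\cdots g_k\,\alpha^{m}$ with $m=n_1+\cdots+n_{k+1}$ and $g_i=\alpha^{n_1+\cdots+n_i}(h_i)\in H$. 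This is an equality of group elements; the point of the lemma is that the two paths track each other, i.e.\ every prefix of one is within bounded distance of a prefix of the other. Here I would argue that each elementary swap $\alpha^n h \leftrightarrow \alpha^n(h)\alpha^n$ changes intermediate vertices by right-multiplication by a bounded power of $\alpha$ (the discrepancy is an $\alpha$-power of exponent $\le$ the relevant partial sum of the $n_i$, but since we ultimately only care about $1$-geodesics this gets absorbed), and there are at most $k$ syllables each requiring at most $k+1$ such bookkeeping moves, giving the bound $k(k+1)$. Then to get $i,j\ge 0$ rather than a single $\alpha^m$, split $\alpha^m$ at the front/back according to the sign of $m$: write $\alpha^{m}=\alpha^{-i}\alpha^{j}$ — actually one simply notes $g_1\cdots g_k\alpha^m$ already has the form $\alpha^{-0}g_1\cdots g_k\alpha^{m}$ if $m\ge 0$, and if $m<0$ one absorbs using $g_1\cdots g_k\alpha^m = \alpha^{m}\big(\alpha^{-m}(g_1\cdots g_k)\big)$ — more carefully, one can always arrange the leading $\alpha$-block to have nonnegative or the trailing one nonnegative exponent; the cleanest is to allow both an $\alpha^{-i}$ in front and an $\alpha^{j}$ behind and absorb the sign there. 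This is a routine rearrangement.

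The second, "in particular" statement combines the first part with Lemma~\ref{lem:Hdistorted}. Start with a genuine $1$-geodesic from $1$ to $x$; it is a word in the alphabet $S=\{\alpha^{\pm}\}\cup A$, so it has the syllable form above with every $h_i\in A$. Apply part one to replace it (up to bounded distance) by $\alpha^{-i}g_1\cdots g_k\alpha^{j}$; but here the $g_s$ are products of $\alpha$-conjugates of elements of $A$, not elements of $A$ themselves, and $k$ could a priori be large. The fix: the subword $g_1\cdots g_k$ lies entirely in $H$, and up to bounded distance it is itself (a bounded perturbation of) a $1$-geodesic segment inside $H$ — so by Lemma~\ref{lem:Hdistorted} its length, hence the number of $A$-letters needed to spell it, is at most $k_0$. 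More precisely, I would argue that since $g_1\cdots g_k\in H$ and it lies on (a bounded neighborhood of) a geodesic, one may re-spell this middle portion as a product of at most $k_0$ elements of $A$: take a $1$-geodesic in $G$ from $1$ to the element $g_1\cdots g_k\in H$; between consecutive times this geodesic visits $H$ it either stays in $H$ (length $\le k_0$ by Lemma~\ref{lem:Hdistorted}) or makes an excursion out via $\alpha^{\pm}$, but a shortest such representative of an element of $H$ must in fact stay in $H$ after a bounded initial segment — giving $\le k_0$ letters of $A$ after absorbing a bounded error. Then relabel, replacing $k$ by $k_0$ and each $g_s$ by an element of $A$, adjusting $i,j$ by bounded amounts.

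The step I expect to be the main obstacle is the bookkeeping in part one showing that the rearranged path stays \emph{uniformly} close to the original, rather than merely representing the same group element: the naive swaps introduce intermediate $\alpha$-powers whose exponents are partial sums of the $n_i$ and hence unbounded, so one must be careful to see that these are ultimately harmless — e.g.\ by organizing the rearrangement so that at each stage the "error" vertex differs from a vertex of the target path by left- or right-multiplication by a single $\alpha^{\pm1}$ (cost $1$ per elementary move, $\le k(k+1)$ total), rather than by a large power. Getting this accounting honest, and similarly controlling the re-spelling of the middle $H$-word in terms of $A$ in part two without the bounded errors accumulating, is where the real care is needed; the algebraic identities themselves are immediate.
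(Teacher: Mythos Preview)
Your plan has a genuine gap in the first part, and the fix is precisely the idea that makes the paper's argument work.

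You propose to push all the $h_i$'s to the front using $\alpha^n h=\alpha^n(h)\,\alpha^n$, obtaining $g_1\cdots g_k\,\alpha^m$ with $g_i=\alpha^{n_1+\cdots+n_i}(h_i)$. The problem is that for this to be a \emph{path} in the Cayley graph with respect to $S=A\cup\{\alpha^{\pm}\}$, each $g_i$ must lie in $A$. But if the partial sum $n_1+\cdots+n_i$ is negative, then $g_i\in\alpha^{-|n|}(A)$, which is typically much larger than $A$; so the rearranged word is not a path at all, and the question of its distance to the original path is moot. You sense this difficulty (``the naive swaps introduce intermediate $\alpha$-powers whose exponents are partial sums of the $n_i$ and hence unbounded'') but do not resolve it.

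The missing idea is to sort the $\alpha$-powers \emph{by sign}: move each positive $\alpha$ to the right and each negative $\alpha$ to the left. Concretely, replace a subword $\alpha\,h$ by $\alpha(h)\,\alpha$ and a subword $h\,\alpha^{-1}$ by $\alpha^{-1}\,\alpha(h)$. In both cases the new $H$-letter is $\alpha(h)$, which lies in $A$ since $\alpha(A)\subset A$; moreover the intermediate words differ at a single vertex by an element of $A$, so each elementary swap moves the path by distance $1$. After at most $k(k+1)$ such swaps all negative powers are on the left and all positive powers on the right, yielding $\alpha^{-i}g_1\cdots g_k\alpha^{j}$ with $i,j\ge 0$ and, crucially, $g_s\in A$ for every $s$.

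This also dissolves the complications in your second part. With the correct rearrangement, the new word has exactly the same length as the old one; hence if the original word was a $1$-geodesic, so is the new one, and in particular the subword $g_1\cdots g_k$ is a $1$-geodesic lying entirely in $H$. Lemma~\ref{lem:Hdistorted} then gives $k\le k_0$ directly, with no need to ``re-spell'' the middle block or worry about the $g_s$ leaving $A$.
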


\begin{proof}
{Let $m=\alpha^{n_1}h_1\alpha^{n_2}h_2\ldots \alpha^{n_k}h_k\alpha^{n_{k+1}}$ be a word in $S$ representing a path joining $1$ to some $x\in H$, such that $h_i\in A-\{1\}$ and $n_i\in\ZZ$. Note that every subword of the form $h \alpha^{-n}$ (resp. $\alpha^n h$), with $h\in A$, and $n\geq 0$  can be replaced by $\alpha^{-n} h'$ (resp. $h' \alpha^n $), with $h'=\alpha^n h\alpha^{-n} \in A$. Such an operation moves the $1$-geodesic to another $1$-geodesic at distance one (because $\alpha^{k}h\alpha^{-k} \in A$ for all $k=0,\dots,n$).

 With this process we can move positive powers of $\alpha$ all the way to the right, and negative powers to the left, obtaining after at most $k(k+1)$ operations a minimal writing of $x$ as $m'=\alpha^{-i}g_1\ldots g_{k}\alpha^j$, with  $i,j\geq 0$, and $g_s\in A$ for all $s$. This proves the first assertion. 
 
Assuming now that $m$ is of minimal length among words representing $x$.  In the above process of moving around powers of $\alpha$, the length of the word never gets longer, and since $m$ is minimal, it cannot get shorter either. Since the word $g_1\ldots g_{k}$ has minimal length, it forms a geodesic in $H$ and we deduce from Lemma~\ref{lem:geodesics} that $k\leq k_0$ and $m'$ is at distance at most $k_0(k_0+1)$ from $m$.}
\end{proof}

\begin{proof}[Proof of Proposition~\ref{prop:Contracting}]
Consider a 1-geodesic triangle $T$ in $H \rtimes \la \alpha \ra$. By Lemma~\ref{lem:geodesics}, we can suppose that $T$ is of the form 
$$T=[\alpha^{-i_1} u_1\alpha^{j_1}][\alpha^{-i_2} u_2\alpha^{j_2}][\alpha^{-i_3} u_3\alpha^{j_3}],$$ where $u_1,u_2,u_3$ are words of length $\le k_0$ in $A$ and $i_s, j_s \geq 0$. {Since $T$ forms a loop, its image under the projection map onto $\la \alpha \ra$ is also a loop, hence we have $i_1 + i_2 + i_3 =  j_1 + j _2 + j_3$. 

Let us prove that $T$ is thin, in the sense that every edge of the triangle lies in the $\delta$-neighborhood of the union of the two other edges.  Note that if after removing a backtrack in a triangle (in terms of words this means we simplify $ss^{-1}$), we obtain a $\delta$-thin triangle, then it means that the original triangle was $\delta$-thin. Upon removing the three possible backtracks, permuting cyclically the edges, and changing the orientation, we can suppose that
$$
T=[\alpha^{-k_1} u_1 \alpha^{k_2}] [u_2][\alpha^{-k_3} u_3],
$$
with $k_1, k_2 , k_3 \geq 0$ and $k_1 + k_3 = k_2$. 

By Lemma~\ref{lem:geodesics}, the $1$-path $u_2\alpha^{-k_3}$ is a distance $\le k_0$ from a $1$-geodesic segment of the form $\alpha^{-k_3}v_2$ (where $v_2$ has length $\le k_0$), and removing a backtrack by replacing $\alpha^{k_2}\alpha^{-k_3}$ by $\alpha^{k_1}$ we get a triangle $T'=[\alpha^{-k_1}u_1\alpha^{k_1}][v_2][u_3]$. Invoking Lemma~\ref{lem:geodesics} once more, we see that  $\alpha^{k_1}v_2u_3$ is a distance $\le 2k_0$ from a $1$-geodesic segment $w_2\alpha^{k_1}$, so $T'$ is at distance $\le 2k_0$ of the triangle $[\alpha^{k_1}][u_1][w_2\alpha^{k_1}]$, which after removing a backtrack is the bounded digon $[u_1][w_2]$, which is $k_0$-thin. Thus the original triangle is $4k_0$-thin and the space is $\delta$-hyperbolic with $\delta=16\log_2(n_0+2)$.}
\end{proof}

\subsection{Proof of Theorem~\ref{thm:Contracting}}

(i)$\Rightarrow$(ii) 
Let $d_o$ be the left-invariant pseudo-metric on $\Gamma$ defined by $d_o(g, h) = d(g.o, h.o)$, where $o \in X$ is a basepoint. 

By assumption, the $\Gamma$-action is regular focal and cobounded on the hyperbolic space $X$. 
By Proposition~\ref{prop:action/group}, the pair $(\Gamma, d_o)$ is a regular focal hyperbolic group. Let $\beta : \Gamma \to \RR$ be the associated Busemann character, let $H = [\Gamma, \Gamma] \subseteq \Ker \beta$ and $\alpha \not \in H$. By Proposition~\ref{prop:coco}, the group $G = H \rtimes \la \alpha \ra$ is a cobounded normal subgroup of $\Gamma$ (in particular $(G, d)$ is quasi-isometric to $(\Gamma, d_o)$, where $d$ denotes the restriction of $d_o$ to $G $), and $H$ contains some bounded subset $A$ such that $\alpha$ is confining into $A$. 

Let $S = \{\alpha^\pm\} \cup A$. It remains verify that the identity map  $(G,d_S)\to (G,d)$ is a quasi-isometry. First it is Lipschitz since $S$ is bounded for $d$. We then need to check that $(G,d)\to (G,d_S)$ is large-scale Lipschitz. Since $d$ is quasi-geodesic by Proposition~\ref{prop:undistorted}, it suffices to prove that a subset of $G$ is bounded for $d_S$ if it is bounded for $d$. In restriction to $H$, this 
follows from  Proposition~\ref{prop:coco} (ii). We conclude thanks to the fact that bounded sets of $(G,d)$ are mapped to bounded sets of $\RR$ by $\beta$.

\medskip
(ii)$\Rightarrow$(i) 
By Proposition~\ref{prop:Contracting}, the group $(G, d_S)$ is hyperbolic. Since $A^{2^n}\subset\alpha^{-nn_0}(A)$ for all $n$, it is clear that $A$ consists of non-hyperbolic isometries. If the inclusion $\alpha(A)\subset A$ is strict, then  the chain $\alpha^{-n}(A)$ is strictly ascending and thus $A$ is unbounded, hence its action on the Cayley graph is horocyclic and thus fixes a unique boundary point; in particular the action is not lineal. Since $\alpha$ acts as a hyperbolic element (because $\alpha^n$ has word length equal to $n$), the only possibility is that the action of $H\rtimes\langle\alpha\rangle$ is focal. (Obviously, if $\alpha(A)=A$, then $A=H$ is bounded and the action is lineal.) Since $(G, d_S)$ is quasi-isometric to $(\Gamma, d_x)$ which in turn is quasi-isometric to $X$ by hypothesis, the desired conclusion follows from Proposition~\ref{prop:action/group}. 
\qed

\section{Proper actions of locally compact groups on hyperbolic spaces}\label{sec:ProperActions}

In this section, we let  $G$ be a locally compact group acting isometrically continuously on the hyperbolic space $X$. We assume moreover that the action is \textbf{metrically proper}, i.e.\ the function $L(g)=d(x,gx)$ is proper for some (and hence all) $x\in X$.

\subsection{Preliminary lemmas}

\begin{lem}\label{lem:CptNorm}
If $G$ admits a non-horocyclic proper isometric action on a hyperbolic space (e.g.\ a cocompact action), then $G$ admits a maximal compact normal subgroup.
\end{lem}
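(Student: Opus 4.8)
The plan is to produce the maximal compact normal subgroup as the bounded radical $B_X(G)$ of the action, exploiting that $G$ is locally compact and the action is metrically proper. First I would invoke Lemma~\ref{lem:bdrad}: since the $G$-action is not horocyclic, the action of $B_X(G)$ on $X$ is bounded. Because the action is metrically proper and $G$ is locally compact, a subgroup acting with bounded orbits is relatively compact; hence the closure $\overline{B_X(G)}$ is compact. One checks that $\overline{B_X(G)}$ is still normal (closure of a normal subgroup is normal) and that its action on $X$ is still bounded, so by maximality it equals $B_X(G)$; thus $W := B_X(G)$ is a compact normal subgroup.

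Next I would argue that $W$ is \emph{maximal} among compact normal subgroups. This is essentially by definition of the bounded radical: if $N \trianglelefteq G$ is compact, then $N$ acts on $X$ with bounded orbits (a continuous isometric action of a compact group has bounded orbits, by continuity of $g \mapsto d(x,gx)$ and compactness), so $N$ is $X$-bounded and therefore $N \subseteq B_X(G) = W$. Hence $W$ contains every compact normal subgroup, which gives both maximality and uniqueness.

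The one point requiring a little care — and the step I expect to be the main obstacle — is the passage from ``acts with bounded orbits'' to ``relatively compact'' for a closed subgroup, and more precisely whether $B_X(G)$ is itself closed. Metric properness of the $G$-action says $\{g : d(x,gx) \le R\}$ is relatively compact for each $R$; if $B_X(G)x$ has diameter $\le R$ then $B_X(G) \subseteq \{g : d(x,gx) \le 2R\}$, so $B_X(G)$ is relatively compact, and its closure is a compact normal subgroup acting boundedly, hence (by maximality in the definition of $B_X(\Gamma)$ as a union of $X$-bounded normal subgroups) contained in $B_X(G)$; so $B_X(G)$ is already closed and compact. The only subtlety is that the definition of $B_X(\Gamma)$ quantifies over \emph{all} normal subgroups, not just closed ones, so I should note that every $X$-bounded normal subgroup has relatively compact, hence $X$-bounded, closure, which keeps the argument internally consistent. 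The parenthetical ``(e.g.\ a cocompact action)'' is immediate since a cocompact isometric action of a non-compact group is never bounded and never horocyclic (a horocyclic action has bounded intersections with quasi-geodesics by Proposition~\ref{prop:types}, incompatible with coboundedness of a non-elementary-type situation) — although for the lemma as stated one only needs that a cocompact action is in particular non-horocyclic, which follows because a cobounded horocyclic action would force $X$ bounded.
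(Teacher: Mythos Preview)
Your proof is correct and essentially identical to the paper's: both invoke Lemma~\ref{lem:bdrad}(\ref{bdr}) to see that $B_X(G)$ acts with bounded orbits, use metric properness to conclude its closure is compact, and note that every compact normal subgroup lies in $B_X(G)$; you simply spell out the closure argument that the paper compresses into a single clause. One small quibble: your justification of the parenthetical ``e.g.\ a cocompact action'' is a bit garbled---the clean reason a cocompact (hence cobounded) action is non-horocyclic is that cobounded actions are automatically quasi-convex while horocyclic actions never are (Proposition~\ref{prop:undistorted}), not that $X$ would be forced to be bounded.
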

\begin{proof}
Clearly, every compact normal subgroup is contained in the bounded radical $B_X(G)$. By Lemma~\ref{lem:bdrad}(\ref{bdr}), $B_X(G)$ acts with bounded orbits, so by properness, its closure is compact and thus $B_X(G)$ is compact.
\end{proof}

\begin{lem}\label{lem:AmenRad}
If $G$ admits a proper isometric action of general type on a hyperbolic space, then $G$ is non-amenable, and moreover the amenable radical of $G$ (the largest amenable normal subgroup of $G$) is compact.
\end{lem}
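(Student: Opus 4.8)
The plan is to play off amenability against the existence of a Schottky subgroup. First, recall from Lemma~\ref{lem:schottky} that an action of general type produces a Schottky subgroup for the action of $G$ on $X$, i.e.\ a pair $(a,b)$ such that the orbit map induces a quasi-isometric embedding of the free group $F_2 = \langle a,b\rangle$ into $X$. Since the $G$-action is metrically proper, the induced action of this free subgroup is proper as well, hence the subgroup $\langle a,b\rangle \le G$ is discrete and isomorphic to a nonabelian free group. A discrete nonabelian free subgroup obstructs amenability of $G$ (a closed subgroup of an amenable locally compact group is amenable, and $F_2$ is not amenable), so $G$ is non-amenable. This settles the first assertion.

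For the amenable radical, let $R$ denote the amenable radical of $G$, i.e.\ the largest amenable closed normal subgroup; this exists by a standard argument (the closure of the subgroup generated by all amenable closed normal subgroups is amenable, being an increasing union of amenable subgroups, and is normal). I would first observe that $R$, being normal, cannot itself act with an action of general type: indeed a general type action produces a nonabelian free subgroup, contradicting amenability of $R$; so the $R$-action on $X$ is bounded, horocyclic, lineal, or focal. The key step is to rule out every unbounded possibility using normality of $R$ together with the fact that the ambient $G$-action is of general type. If the $R$-action on $X$ is lineal or focal, then by Proposition~\ref{prop:types} the limit set $\partial_X R$ contains a point $\xi$ fixed by $R$ (in the lineal case $R$ fixes the $2$-element set $\partial_X R$, hence fixes it pointwise after passing to an index-$2$ subgroup, and since a finite-index subgroup of $R$ is still normal-up-to-finite-index one gets a genuine $G$-relevant obstruction — more cleanly, $\partial_X R$ is a finite $R$-invariant set). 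Since $R \trianglelefteq G$, the set $\partial_X R$ (equivalently the finite set of $R$-finite-orbit points, or the unique fixed point $\xi$ in the focal case) is $G$-invariant, being canonically associated to the normal subgroup $R$. This gives a finite $G$-orbit in $\partial X$, contradicting the assumption that the $G$-action is of general type (Proposition~\ref{prop:types} says a general type action has no finite orbit on the boundary). If the $R$-action on $X$ is horocyclic, then $R$ fixes a unique boundary point $\xi$, which again is canonically associated to $R$ hence $G$-invariant, giving the same contradiction. Therefore the $R$-action on $X$ is bounded, i.e.\ $R \le B_X(G)$, and by Lemma~\ref{lem:bdrad}(\ref{bdr}) together with metric properness, $B_X(G)$ has compact closure; since $R$ is closed, $R$ is compact.

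The main obstacle I anticipate is the bookkeeping around the lineal case: there $\partial_X R$ is a $2$-element set rather than a single fixed point, so one must be slightly careful that it is still $G$-invariant (it is, since it is the limit set of the normal subgroup $R$, and $g \partial_X R = \partial_X(gRg^{-1}) = \partial_X R$ for all $g\in G$), so that a finite $G$-orbit on $\partial X$ genuinely results. Everything else is a direct assembly of Lemma~\ref{lem:schottky}, Proposition~\ref{prop:types}, and Lemma~\ref{lem:bdrad}, together with the elementary facts that closed subgroups of amenable groups are amenable and that the amenable radical is well-defined for locally compact groups. One should also note that one does not even need to separately argue non-amenability of $G$ at the end: the compactness of $R$ does not imply $G$ amenable, and indeed the first assertion is exactly what prevents that, so the two statements are logically independent and both genuinely needed.
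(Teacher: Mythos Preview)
Your proof is correct and follows essentially the same approach as the paper: Schottky subgroup for non-amenability, then for the amenable radical $R$ rule out general type by amenability and observe that in each of the remaining unbounded cases (horocyclic, lineal, focal) the limit set $\partial_X R$ is a finite nonempty set, hence a finite $G$-orbit in $\partial X$ by normality of $R$, contradicting general type. The paper phrases the second part as a contrapositive and uniformly handles the three unbounded cases via Proposition~\ref{prop:types} rather than case-by-case, and once the $R$-action is shown to be bounded it concludes compactness directly from properness without the detour through $B_X(G)$ and Lemma~\ref{lem:bdrad}; but these are cosmetic differences, not substantive ones.
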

\begin{proof}
By Lemma~\ref{lem:schottky}, if $G$ admits an action of general type, then it contains a discrete nonabelian free subgroup and therefore is non-amenable.

Now let us prove the contrapositive statement. Let $G$ admit a proper action on a hyperbolic space and assume that its amenable radical $M$ is noncompact. Then the action of $M$ on $X$ is neither bounded nor general type, so is horocyclic, or lineal, or focal. In particular $M$ preserves a unique finite subset of cardinality at most~$2$ in $\bd X$.  This finite set is invariant under $G$ and it follows that the action of $G$ is not of general type.
\end{proof}

There is a partial converse to Lemma~\ref{lem:AmenRad}.

\begin{lem}\label{lem:amens}
Let $G$ admit a proper isometric action on a hyperbolic space. Assume that the action is bounded, or lineal, or focal. Then $G$ is amenable.
\end{lem}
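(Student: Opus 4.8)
The plan is to separate the easy cases from the one that carries content. If the action is bounded, then every orbit is bounded, so by metric properness $G$ itself is compact, hence amenable. If the action is lineal, then $G$ preserves a pair of points in $\bd X$, and by passing to the index-$\le 2$ subgroup fixing each of them we may assume $G$ fixes both endpoints of a geodesic-like invariant quasi-convex set; the Busemann character $\beta_\xi$ (Corollary~\ref{cor:Buse-char} applies once we know $\Isom(X)_\xi$ is amenable, or one argues directly) then exhibits $G$ as a compact-by-abelian group, since its kernel acts with bounded orbits and hence is compact by properness. So the real case is the \textbf{focal} one, and this is where the main work lies.

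In the focal case, let $\xi \in \bd X$ be the fixed boundary point. The key structural input is Proposition~\ref{prop:undistorted}: a focal action is quasi-convex, so every $G$-orbit $Gx$ is a quasi-geodesic subspace of $X$, and it is cobounded in its own ``hyperbolic hull''. Concretely, I would replace $X$ by a sufficiently large bounded neighbourhood $Y$ of a single orbit $Gx$; by Proposition~\ref{prop:undistorted} (and the remark after Lemma~\ref{lem:AmyStab} that quasi-geodesic is enough), $Y$ is a proper quasi-geodesic hyperbolic space on which $G$ acts properly, continuously, isometrically and \emph{coboundedly}. Then Adams' Lemma~\ref{lem:AmyStab} tells us that $\Isom(Y)_\xi$ is amenable. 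Since the $G$-action on $Y$ fixes $\xi$, the map $G \to \Isom(Y)$ lands in $\Isom(Y)_\xi$; as the action is metrically proper, this homomorphism is proper (in particular has compact, hence amenable, kernel) and has closed image. Therefore $G$ is a closed subgroup of the amenable group $\Isom(Y)_\xi$, up to a compact kernel — and closed subgroups of amenable locally compact groups are amenable — so $G$ is amenable.

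The one point requiring care — and the main obstacle — is the passage from ``$X$ proper'' to ``$Y$ proper'': a cobounded bounded neighbourhood of a quasi-geodesic orbit inside a proper space is automatically proper (closed balls in $Y$ are closed subsets of compact balls in $X$), so this is routine, but one must make sure $Y$ can be taken to be an honest \emph{quasi-geodesic} metric space with the induced metric, which is exactly why Lemma~\ref{lem:AmyStab} was stated for quasi-geodesic (not merely geodesic) spaces. A mild subtlety is that $G$ need not be all of $\Isom(Y)_\xi$, but that is harmless: we only need that $G$ embeds, modulo a compact normal subgroup, as a closed subgroup of an amenable group. Finally, one should note that the lineal case can in fact be folded into the focal argument verbatim, since a lineal action is also quasi-convex by Proposition~\ref{prop:undistorted}, leaving only the trivial bounded case to dispatch by properness; I would present it that way for brevity.
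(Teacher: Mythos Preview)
Your approach is essentially the paper's, but there is a genuine gap in how you obtain a \emph{proper} space to feed into Lemma~\ref{lem:AmyStab}. The hypothesis of Lemma~\ref{lem:amens} does \emph{not} assume $X$ is proper (see the standing assumptions at the start of Section~\ref{sec:ProperActions}, and the explicit warning in Section~\ref{sec:AmenableActions} that $X$ is not assumed proper unless stated). Your passage ``closed balls in $Y$ are closed subsets of compact balls in $X$'' therefore fails: you cannot borrow properness from the ambient space. The same issue undermines your separate treatment of the lineal case via Corollary~\ref{cor:Buse-char}, which requires either $G$ amenable (circular) or $X$ proper (not given).

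The fix, which is exactly what the paper does, is to work with the orbit $Go$ itself rather than a neighbourhood of it in $X$. Metric properness of the action means that for any $R$, the set $\{g\in G : d(o,go)\le R\}$ is compact in $G$; its image under the continuous orbit map is the closed $R$-ball in $Go$, which is therefore compact. Thus $Go$ with the induced metric is a proper metric space, independently of whether $X$ is. By Proposition~\ref{prop:undistorted} (which covers bounded, lineal and focal simultaneously) $Go$ is quasi-convex in $X$, hence a quasi-geodesic hyperbolic space, and $G$ acts on it properly, continuously and transitively. Lemma~\ref{lem:AmyStab} then applies directly. So: drop the neighbourhood $Y$, drop the separate case analysis, and use the orbit as the proper space.
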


\begin{proof}
By Proposition~\ref{prop:undistorted}, each $G$-orbit $Go$ is quasi-convex. Moreover the orbit  is closed and proper, by properness of the action. Thus each orbit is a proper quasi-geodesic hyperbolic space on which $G$ acts continuously, properly (and of course cocompactly). Therefore Lemma~\ref{lem:AmyStab} shows that $G$ is amenable.
\end{proof}

\begin{remark}The reader can find an example showing that the conclusion of Lemma~\ref{lem:CptNorm} can fail when the $G$-action is horocyclic.
Actually, the conclusion of Lemma~\ref{lem:amens} also fails in the horocyclic case. Indeed, as observed by Gromov \cite[\S 6.4]{Gro}, every countable discrete group has a proper horocyclic action on a hyperbolic space: it consists in endowing such a group with a left-invariant proper metric $d_0$ and define $d(g,h)=\log(1+d_0(g,h))$, and embedding it equivariantly as a horosphere into a hyperbolic space (which can be arranged to be proper). 
\end{remark}

\subsection{Proper actions of amenable groups}
 
Recall from \S\ref{hbq} that a Busemann quasicharacter  $\beta_\xi$ is defined on the stabilizer in $X$  of every boundary point $\xi \in \bd X$. 

\begin{prop}\label{prop:coctamen}
Let $X$ be a hyperbolic space.
Let $M$ be an amenable locally compact group with a continuous, (metrically) proper, cobounded isometric non-elementary action on $X$. Then we have the following.

\begin{enumerate}[(a)]
\item\label{it1} the action is focal, with a unique fixed boundary point $\xi \in \bd X$.

\item\label{it2} $\beta_\xi$ is a homomorphism and $\beta_\xi(M)$ is a closed non-zero subgroup of $\RR$. In particular, for every $\alpha\notin U = \Ker(\beta_\xi)$, the subgroup $\la U \cup \{\alpha \} \ra \cong U \rtimes \la \alpha \ra$ is closed, cocompact and normal in $M$.

\item\label{utrans} The action of $U$ on $\bd X \setminus \{\xi\}$ is proper and transitive.

\item\label{it5}  Every element $\alpha \in M$ with $ \beta_\xi(\alpha)<0$ acts as a compacting automorphism on $U$.
\end{enumerate}
\end{prop}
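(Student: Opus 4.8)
The plan is to exploit the abundant structure theory already assembled: Gromov's classification (Proposition~\ref{prop:types}), the Busemann machinery (Proposition~\ref{prop:Buse-char}, Corollary~\ref{cor:Buse-char}, Lemma~\ref{lem:busatt}), the properness lemmas of Section~\ref{sec:Focal}/\ref{sec:ProperActions}, and Theorem~\ref{thm:Contracting}. First I would establish (\ref{it1}): since $M$ is amenable, Lemma~\ref{lem:AmenRad} rules out a general-type action, so $M$ preserves a finite subset $F\subseteq\bd X$ of size $\le 2$. Because $M$ acts coboundedly on a \emph{non-elementary} hyperbolic space, $\bd_X M=\bd X$ is uncountable, so by Proposition~\ref{prop:types} the action is neither bounded, horocyclic, nor lineal; the only remaining possibility is focal, with a unique fixed point $\xi$ (uniqueness because in the lineal case there would be two boundary points, excluded). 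For (\ref{it2}): since the action is proper and cobounded on a proper space (properness of $X$ follows: closed balls in an orbit are compact by metric properness of the action, and the orbit is cobounded and quasi-convex hence quasi-geodesic), Corollary~\ref{cor:Buse-char} applies — or directly, $M$ amenable — to give that $\beta_\xi$ is a genuine continuous homomorphism $M\to\RR$. It is nonzero because the focal action has a hyperbolic element, which has $\beta_\xi\ne0$ by Lemma~\ref{lem:busatt}. Its image is a nontrivial subgroup of $\RR$; to see it is \emph{closed} (hence $\cong\ZZ$ or $\RR$), I would use properness: if $\beta_\xi(M)$ were dense, one could find $\alpha_n\in M$ with $\beta_\xi(\alpha_n)\to0$ but $\alpha_n$ hyperbolic with translation length bounded below (comparing $\beta_\xi(\alpha)$ with the translation length of $\alpha$ along the $(\xi,\cdot)$-axis, both controlled by $d(o,\alpha o)$), contradicting metric properness — more cleanly, $U=\Ker\beta_\xi$ is closed and $M/U$ embeds continuously in $\RR$; a continuous proper-ish argument or the fact that $M$ is compactly generated (Proposition~\ref{prop:Cayley}/Corollary~\ref{cor:eqprop} applied to $M$, which is hyperbolic being QI to $X$) forces the image to be a compactly generated, hence closed, subgroup of $\RR$. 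Then for $\alpha\notin U$, $\la U\cup\{\alpha\}\ra$ has image $\beta_\xi(\alpha)\ZZ$ of finite index or cofinite covolume, hence is cocompact; it is normal since $U$ is normal and $M/U$ abelian; it is closed as the preimage of a closed subgroup of $\RR$ under a continuous map, intersected appropriately — or directly $U\rtimes\la\alpha\ra$ with $\la\alpha\ra$ having closed (discrete or all of $\RR$) image.

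For (\ref{utrans}): since $U=\Ker\beta_\xi$, Lemma~\ref{lem:busatt} says $U$ contains no hyperbolic isometry, so the $U$-action is bounded or horocyclic; it is not bounded since $U$ is cocompact in $M$ whose action is unbounded, so $U$ acts horocyclically, fixing $\xi$. Lemma~\ref{lem:properU} then gives that the $U$-action on $\bd X\setminus\{\xi\}$ is metrically proper. For transitivity, I would invoke Lemma~\ref{lem:focaltt}: the $M$-action on $\bd_X M\setminus\{\xi\}=\bd X\setminus\{\xi\}$ is topologically transitive, and one upgrades this to genuine transitivity of $U$ using coboundedness and properness — given two boundary points $\eta,\eta'$, realize them as endpoints of quasi-geodesic rays from a basepoint $o$ lying in the $U$-orbit structure, then use that $M=U\cdot\la\alpha\ra$ with $\alpha$ translating along the $\xi$-axis to reduce any $m\in M$ moving $\eta$ near $\eta'$ to an element of $U$ doing so exactly; openness of the orbit map plus properness closes the gap. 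The cleanest route: $M$ acts transitively on $\bd X\setminus\{\xi\}$ (this is where I expect a short argument using that the $M$-orbit of a boundary point is open — from Lemma~\ref{lem:focaltt} and homogeneity — and closed by properness, hence everything), and the stabilizer in $M$ of a point $\eta\ne\xi$ maps isomorphically onto... no: rather, $M=U\cdot M_\eta$? That is not obvious. Instead: $M$ transitive on $\bd X\setminus\{\xi\}$, and for $\eta\ne\xi$ the pair $\{\xi,\eta\}$ determines an axis; $M_\eta$ fixes two points so acts lineally, and $\beta_\xi$ restricted to $M_\eta$ is injective-ish with image a closed subgroup; then $M=U\cdot M_\eta$ because $\beta_\xi(M)=\beta_\xi(M_\eta)$ (any hyperbolic element of $M$ with axis $[\xi,\eta]$ lies in $M_\eta$), whence $U$ is transitive on $M/M_\eta=\bd X\setminus\{\xi\}$.

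Finally (\ref{it5}): let $\alpha\in M$ with $\beta_\xi(\alpha)<0$. Then $\la U\cup\{\alpha\}\ra\cong U\rtimes\la\alpha\ra$ is cocompact in $M$, and applying Theorem~\ref{thm:Contracting} (the implication (\ref{xhyp})$\Rightarrow$(\ref{cobo})) — or more directly Proposition~\ref{prop:coco}(ii), which gives exactly that conjugation by $\alpha$ is \emph{confining} into a set of the form $A=B(1,r_0)\cap H$ — we get that $\alpha$ confines $U$ into a $d_o$-bounded subset $A$. Since the action of $M$ is metrically proper and continuous on the proper space $X$, $d_o$-bounded subsets of $U$ are relatively compact, so $A$ has compact closure; confining into a relatively compact set is precisely being a compacting automorphism (this equivalence is Corollary~\ref{cor:CompactingConfining} referenced in the text, but one can also argue directly: for $g\in U$, $\alpha^n g\alpha^{-n}$ lies in $\overline A$ for all large $n$ by the confining property applied after noting $g\in\alpha^{-k}(A)$ for some $k$ and $\alpha^{n_0}(AA)\subseteq A$). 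The sign: $\beta_\xi(\alpha)<0$ means $\alpha$ is hyperbolic with \emph{repelling} fixed point $\xi$, equivalently $\alpha^{-1}$ has attracting fixed point $\xi$ and $(1,\alpha^{-1},\alpha^{-2},\dots)$ quasi-geodesically converges to $\xi$; the preliminary observation in the proof of Proposition~\ref{prop:coco} (with $\alpha$ replaced by $\alpha^{-1}$ there, so as to contract, i.e. our $\alpha$) then yields the compacting direction on $U=H$.

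The main obstacle I anticipate is part (\ref{utrans}): topological transitivity from Lemma~\ref{lem:focaltt} is cheap, but promoting it to genuine transitivity of the \emph{smaller} group $U$ (not $M$) requires carefully using properness of the $U$-action on $\bd X\setminus\{\xi\}$ together with a dimension/openness argument — showing $U$-orbits on $\bd X\setminus\{\xi\}$ are open (so that, being a partition of a connected-enough space into open sets, there is only one), which is where the interplay of metric properness, coboundedness, and the horocyclic nature of $U$ must be combined most delicately. Everything else is a fairly direct assembly of the cited results.
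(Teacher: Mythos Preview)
Your plan for (\ref{it1}) is fine and matches the paper. Your approach to (\ref{it5}) via Proposition~\ref{prop:coco}(ii) --- confining into a $d_o$-bounded set, which by metric properness has compact closure --- is a legitimate alternative to the paper's argument, which instead uses the boundary dynamics directly: taking $\eta$ to be the second fixed point of $\alpha$, the orbit map $U/W\to\bd X\setminus\{\xi\}$ (with $W=U_\eta$ compact) is an equivariant homeomorphism, and since $\alpha$ contracts $\bd X\setminus\{\xi\}$ towards $\eta$, it compacts $U$. Both routes work; yours recycles the metric estimate, the paper's is more conceptual.

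There are, however, two genuine gaps.

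\textbf{Part (\ref{it2}), closedness of $\beta_\xi(M)$.} Your argument that a compactly generated subgroup of $\RR$ is closed is false: $\ZZ+\sqrt{2}\,\ZZ$ is finitely generated and dense. Your first attempt (translation lengths bounded below) also fails, since $\beta_\xi(\alpha_n)\to 0$ forces the translation length to go to~$0$ but says nothing about $d(o,\alpha_n o)$. The paper's argument is different: by Proposition~\ref{prop:coco}(i), the subgroup $H\rtimes\la\alpha\ra$ (with $H=[M,M]$ and a fixed $\alpha$) is cobounded in $(M,d_o)$, so any $g_n$ with $\beta_\xi(g_n)\to r$ can be written $g_n=h_n s_n$ with $d(s_n o,o)$ bounded and $\beta_\xi(h_n)\in\ZZ\cdot\beta_\xi(\alpha)$. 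By metric properness, pass to a convergent subsequence $s_n\to s$; then $\beta_\xi(h_n)=\beta_\xi(g_n)-\beta_\xi(s_n)$ converges in a discrete set, hence stabilizes, and $r=\beta_\xi(h_n s)\in\beta_\xi(M)$.

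\textbf{Part (\ref{utrans}), transitivity.} You correctly isolate this as the delicate point, but your proposed routes are either circular or rely on connectedness of $\bd X\setminus\{\xi\}$ (which fails for trees). In particular, $M$ does \emph{not} act properly on $\bd X\setminus\{\xi\}$, so ``$M$-orbits closed by properness'' is unavailable; and $\beta_\xi(M)=\beta_\xi(M_\eta)$ is not justified without already knowing $U$-transitivity. The paper's trick is much simpler and you nearly stumble onto it: choose $\eta$ to be the \emph{second fixed point of $\alpha$}. Then $\alpha\eta=\eta$, so the $U$-orbit of $\eta$ is $\la\alpha\ra$-invariant and therefore coincides with the $(U\rtimes\la\alpha\ra)$-orbit of $\eta$. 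The latter is dense by Lemma~\ref{lem:focaltt} (applied to the cobounded subgroup $U\rtimes\la\alpha\ra$). Now $U$ acts properly on $\bd X\setminus\{\xi\}$ by Lemma~\ref{lem:properU} (and properness of the $M$-action on $X$ makes this a genuine proper action on a locally compact space, since $\bd X=\bd_X M$ is compact), so $U$-orbits are closed; dense and closed gives everything.
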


Note that by Lemma~\ref{lem:amens}, the conclusion of Proposition~\ref{prop:coctamen} applies to any proper focal isometric action of a locally compact group on an arbitrary hyperbolic space $X$ (in (\ref{utrans}), $\bd X$ has to be replaced by $\bd_X M$). 

\begin{proof}
(\ref{it1}) This follows from Lemma~\ref{lem:AmenRad}.

\medskip

(\ref{it2}) First, $\beta_\xi$ is a homomorphism by Corollary~\ref{cor:Buse-char}; since the action is focal,  it is nonzero by Lemma~\ref{lem:busatt}.
Suppose that $\beta_\xi(g_n)$ tends to $r\in\RR$. By Proposition~\ref{prop:coco}, we can write $g_n=h_ns_n$ with $d(s_nx,x)$ bounded and $\beta_\xi(h_n)\in\ZZ R$ for some $R>0$. By properness, we can extract, so that the sequence $(s_n)$ converges, say to $s\in G$. Observe that $\beta_\xi(h_n)=\beta_\xi(g_n) - \beta_\xi(s_n)$ converges by continuity of $\beta_\xi$, so the sequence $(\beta_\xi(h_n))$ eventually stabilizes. So $r=\beta_\xi(h_ns)$ for large $n$ and thus the image of $\beta_\xi$ is closed. 

\medskip 

(\ref{utrans}) Let us now verify that $U$ is transitive on $\bd X \setminus \{\xi\}$. The      
action being proper by Lemma~\ref{lem:properU}, we only need to check that      
the orbit of $\eta$ is dense. Now observe that the $U$-orbit of $\eta$ coincides with its $U \rtimes \la \alpha \ra$-orbit, which is dense by Lemma~\ref{lem:focaltt}.

\medskip

(\ref{it5}) Let $\eta$ be the second fixed point of $\alpha$. Let $W$ be the stabilizer of $\eta$ in $U$, so that $W$ is compact by properness (Lemma~\ref{lem:properU}). The homeomorphism $\varphi:U/W\to \bd X-\{\xi\}$ mapping $u\mapsto u\eta$, is equivariant when $U/W$ is endowed with the action induced by the conjugation action of the cyclic group $\langle\alpha\rangle$, namely $\varphi(\alpha u\alpha^{-1})=\alpha\varphi(u)$. Since $\alpha$ acts as a homeomorphism of $X-\{\xi\}$ contracting on $\{\eta\}$, it follows that the conjugation by $\alpha$ acts as a contracting homeomorphism of $U/W$ and therefore as a compacting automorphism of $U$.
\end{proof}

\subsection{Two-ended groups}

\begin{prop}\label{pro:debout}Let $G$ be a hyperbolic locally compact group. Then the visual boundary $\bd G$ consists of two points if and only if $G$ has a maximal compact normal subgroup $W$ so that $G/W$ is isomorphic to a cocompact closed group of isometries of the real line, and more precisely isomorphic to $\mathbf{Z}$, $\mathbf{Z}\rtimes\{\pm 1\}$, $\mathbf{R}$, or $\mathbf{R}\rtimes\{\pm 1\}$.
\end{prop}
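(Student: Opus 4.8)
The plan is to prove both implications, the easier one being the ``if'' direction.

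\textbf{The ``if'' direction.} Suppose $G$ has a maximal compact normal subgroup $W$ with $G/W$ isomorphic to one of $\mathbf{Z}$, $\mathbf{Z}\rtimes\{\pm1\}$, $\mathbf{R}$, $\mathbf{R}\rtimes\{\pm1\}$. Since $W$ is compact, the quotient map $G\to G/W$ is a quasi-isometry, so $G$ is quasi-isometric to $\mathbf{R}$; hence $G$ is hyperbolic and its visual boundary, being a quasi-isometry invariant, agrees with $\partial\mathbf{R}$, which consists of two points. (One should note that each of the four listed groups is indeed a cocompact closed subgroup of $\Isom(\mathbf{R})$ and is quasi-isometric to $\mathbf{R}$; this is immediate.)

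\textbf{The ``only if'' direction.} Assume $G$ is hyperbolic with $\partial G$ consisting of exactly two points. Realize $G$ as acting continuously, properly and cocompactly by isometries on a proper geodesic hyperbolic space $X$ via Corollary~\ref{cor:eqprop}; the action is cobounded, so $\partial_X G=\partial X$ has two points and the action is \textbf{lineal}. By Lemma~\ref{lem:amens}, $G$ is amenable. Now invoke Lemma~\ref{lem:CptNorm}: since the action is non-horocyclic, $G$ has a maximal compact normal subgroup $W$, which by Lemma~\ref{lem:bdrad}\eqref{bdr} equals the bounded radical $B_X(G)$ and acts on $X$ with bounded orbits. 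Pass to $\bar G=G/W$ acting on $X$; this action is still proper (the kernel being compact) and is lineal with trivial bounded radical, so $\bar G$ embeds into $\Isom(X)$ and acts freely ``up to bounded sets''.

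\textbf{Identifying $\bar G$ as a group of isometries of $\mathbf{R}$.} The two boundary points $\{\xi,\eta\}$ are permuted by $\bar G$; let $G^0\le\bar G$ be the (index $\le 2$) subgroup fixing both. Then $G^0$ acts on $X$ with $\partial_X G^0=\{\xi,\eta\}$, so (passing to an ultrapower if necessary) $G^0$ preserves, up to bounded Hausdorff distance, a geodesic line $\ell$ joining $\xi$ to $\eta$; the orbit $G^0 o$ is quasi-convex (Proposition~\ref{prop:undistorted}) and coarsely equals $\ell$. The Busemann character $\beta_\xi\colon G^0\to\mathbf{R}$ of Corollary~\ref{cor:Buse-char} (applicable since $X$ is proper) is then a continuous homomorphism whose kernel acts with bounded orbits, hence is trivial in $\bar G$ (it lies in the bounded radical, which we have killed). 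Thus $\beta_\xi$ is injective on $G^0$; properness and coboundedness of the action force $\beta_\xi(G^0)$ to be a closed nondiscrete-or-cocompact subgroup of $\mathbf{R}$, i.e.\ $\beta_\xi(G^0)$ is either infinite cyclic or all of $\mathbf{R}$, giving $G^0\cong\mathbf{Z}$ or $G^0\cong\mathbf{R}$. Finally $\bar G/G^0$ has order $1$ or $2$; when it is nontrivial an element swapping $\xi,\eta$ acts on $\mathbf{R}\cong\beta_\xi(G^0)$ by $t\mapsto -t$ up to the obvious identifications, so $\bar G\cong G^0$ or $G^0\rtimes\{\pm1\}$, yielding the four possibilities. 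Maximality of $W$: any strictly larger compact normal subgroup would have noncompact, hence unbounded, preimage mapping into one of these four groups, contradicting that those groups have no nontrivial compact normal subgroup; alternatively $W=B_X(G)$ contains every compact normal subgroup by definition of the bounded radical.

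\textbf{Main obstacle.} The delicate point is the passage from ``$\bar G$ acts properly and coboundedly, lineally, with trivial bounded radical'' to the precise identification $\bar G\hookrightarrow\Isom(\mathbf{R})$ as one of the four named groups: one must check that $\beta_\xi$ restricted to the finite-index subgroup $G^0$ is genuinely an \emph{isomorphism onto a closed cocompact subgroup of $\mathbf{R}$} (injectivity from triviality of the bounded radical, properness of the image from properness of the action, cocompactness from coboundedness), and then control the extension by the possible flip. The rest is assembling results already available in the excerpt.
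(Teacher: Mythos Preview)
Your argument follows essentially the same route as the paper's: pass to the index-$\le 2$ subgroup fixing both boundary points, use the Busemann character to identify it with $\mathbf{Z}$ or $\mathbf{R}$, then analyse the possible $\mathbf{Z}/2$-extension. The paper reaches the compact normal subgroup slightly differently---it shows directly via Lemma~\ref{lem:properU} that $U=\Ker(\beta_\xi)$ is compact (since $U$ acts properly on $\{\eta\}$) and takes $W=U$, whereas you first invoke Lemma~\ref{lem:CptNorm} and then argue $\Ker(\beta_\xi)$ becomes trivial in the quotient---but this is only a reordering.

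There is one genuine gap. You assert ``$\bar G\cong G^0$ or $G^0\rtimes\{\pm1\}$'' without justifying that the extension $1\to G^0\to\bar G\to\mathbf{Z}/2\to 1$ splits. Knowing that a lift $g$ of the flip acts on $G^0\cong\mathbf{Z}$ or $\mathbf{R}$ by $-1$ is not enough by itself; one must still check that $g$ can be chosen of order~$2$. The paper handles this in one line: $g^2$ lies in $G^0$ and commutes with $g$, but conjugation by $g$ is multiplication by $-1$, so $g^2=-g^2$ and hence $g^2=1$. You should insert this argument. (A secondary imprecision: $\bar G=G/W$ does not literally act on $X$, only on $\partial X$ and on horokernel data; everything you need factors through $W$, so this is cosmetic, but the phrasing should be adjusted.)
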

It can be seen that these conditions also characterize two-ended groups among \emph{all} compactly generated locally compact groups. This thus improves a result of Houghton \cite[Theorem~3.7]{houghton1974ends}, who obtained a similar statement, but at the cost of passing to a closed cocompact normal subgroup.

\begin{proof}
 Write $\bd G=\{\xi,\eta\}$ and define $M$ as the stabilizer of $\xi$. By Lemma~\ref{lem:properU}, the action of $U=\Ker(\beta_\xi)$ on $\{\eta\}$ is proper and therefore $U$ is compact. By Proposition~\ref{prop:coctamen}(\ref{it2}), $D=M/U$ is isomorphic to either $\mathbf{Z}$ or $\mathbf{R}$. Note that $U$ is the kernel of the action on the boundary and therefore is normal in $G$. Set $E=G/U$.

Note that $D$ has index one or two in $E$. If $D=E$ we are done, so suppose that this index is two, i.e.\ the action of $E$ on its boundary is not trivial. Since $D$ is abelian and $E/D$ is cyclic; $D$ cannot be central in $E$, otherwise $E$ would be abelian and therefore isomorphic to $\ZZ$, $\RR\times\ZZ/2\ZZ$ or $\ZZ\times\ZZ/2\ZZ$ and these groups have a trivial action on their boundary. So for some $g\in E$, the action by conjugation on $D$ is given by multiplication by $-1$. Now $g^2$ commutes with $g$ but also belongs to $M$, so has to be fixed by multiplication by $-1$, so $g^2=1$, so that $E$ is isomorphic to $D\rtimes\{\pm 1\}$. This ends the proof.
\end{proof}

\subsection{Relative hyperbolicity}\label{sec:RelHyp}

In this paragraph, $X$ is a proper geodesic space. Let $G \leq \Isom(X)$ be a closed subgroup. A point $\xi \in \bd X$ is called a \textbf{conical limit point} (with respect to $G$) if for some (hence any) geodesic ray $\ro$ pointing to $\xi$, there  is some tubular neighbourhood of $\ro$ that intersects a $G$-orbit in an unbounded set. A point $\xi \in \bd X$ is called \textbf{bounded parabolic} if the stabilizer $G_\xi$ acts cocompactly on $\bd X \setminus \{\xi\}$. 
Following P.~Tukia~\cite{Tukia},  
we say that the $G$-action on $X$
is \textbf{cusp-uniform} if every point in $\bd X$ is a conical limit point or a non-isolated bounded parabolic point. In general, a locally compact group is said to be {\bf relatively hyperbolic} if it admits a proper cusp-uniform continuous action on some proper hyperbolic geodesic space. The stabilizers in $G$ of the boundary points that are not conical limit points are called \textbf{peripheral subgroups} and $G$ is more precisely called hyperbolic relatively to the collection of (conjugacy classes of) peripheral subgroups. We refer to~\cite{Yaman} for a proof of the equivalence between this definition and other characterizations of relative hyperbolicity in the discrete setting. 

\begin{lem}\label{lem:bordplein}
For any $G\leq\Isom(X)$, any point $\xi$ in $\bd X-\bd_X G$ is neither conical, nor non-isolated bounded parabolic.
\end{lem}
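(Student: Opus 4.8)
The plan is to show that a boundary point $\xi \notin \bd_X G$ cannot ``see'' the $G$-orbit at infinity in either of the two senses that would make it a conical limit point or a bounded parabolic point. The essential input is the definition of the limit set: $\bd_X G$ consists exactly of those boundary points representable by a Cauchy--Gromov sequence $(g_n o)$ with $g_n \in G$. So the hypothesis $\xi \notin \bd_X G$ says precisely that no $G$-orbit accumulates at $\xi$.

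First I would treat the conical case. Suppose for contradiction that $\xi$ is a conical limit point, witnessed by a geodesic ray $\rho$ pointing to $\xi$ and a tubular neighbourhood $N_R(\rho)$ meeting an orbit $Go$ in an unbounded set. Then there is a sequence $g_n \in G$ with $g_n o \in N_R(\rho)$ and $d(o, g_n o) \to \infty$. Pick points $p_n \in \rho$ with $d(g_n o, p_n) \le R$; since $\rho$ is a geodesic ray to $\xi$, the $p_n$ form a Cauchy--Gromov sequence representing $\xi$, and moving each point a bounded distance $R$ does not change the Gromov-boundary class (using $|(y|z)_x - (y|z)_w| \le d(x,w)$ together with the $\delta$-inequality). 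Hence $(g_n o)$ is a Cauchy--Gromov sequence representing $\xi$, so $\xi \in \bd_X G$, a contradiction.

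Next, the bounded parabolic case. Suppose $\xi$ is bounded parabolic, so $G_\xi$ acts cocompactly on $\bd X \setminus \{\xi\}$; in particular $\bd X \setminus \{\xi\}$ is nonempty, pick $\eta$ in it, and cocompactness gives a compact $K \subseteq \bd X \setminus \{\xi\}$ with $G_\xi \cdot K = \bd X \setminus \{\xi\}$. The point is that $\eta$ itself either lies in $\bd_X G$ or not, but in either case one produces a $G$-orbit accumulating at $\xi$: working in a (metric) ultrapower $X^*$ so that a bi-infinite geodesic $D$ joining $\xi$ to $\eta$ exists, the $G_\xi$-orbit of a point on $D$ together with the identification $\bd X \setminus \{\xi\} \cong G_\xi / (G_\xi)_\eta$ (an equivariant homeomorphism, as in the proof of Proposition~\ref{prop:coctamen}(\ref{utrans})) shows that $G_\xi$ acts on $\bd X \setminus \{\xi\}$ with a point whose orbit is all of a cocompact chunk; translating this back to $X$, the orbit $G_\xi o$ must be unbounded (otherwise $G_\xi$ fixes $\xi$ with bounded orbits and acts trivially at infinity, contradicting transitivity-up-to-compact on a set with $\ge 1$ point unless $\bd X = \{\xi\}$, excluded since then $\xi \in \bd_X G$ would fail the standing setup) and any unbounded orbit of $G_\xi$, all of whose elements fix $\xi$, is a Cauchy--Gromov sequence converging to $\xi$. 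Again $\xi \in \bd_X G$, a contradiction. I would also dispose of the isolated case trivially by noting ``non-isolated'' is part of the hypothesis being negated.

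The main obstacle I anticipate is the bounded parabolic case: unlike the conical case, which is a direct unwinding of definitions, here one must argue that cocompactness of $G_\xi$ on $\bd X \setminus \{\xi\}$ forces $G_\xi$ to have \emph{unbounded} orbits in $X$ itself, which requires knowing that $\bd X \setminus \{\xi\}$ is genuinely nontrivial and invoking that a group fixing $\xi$ with bounded orbits acts trivially on the boundary (Lemma~\ref{lem:bdrad}-type reasoning, or Proposition~\ref{prop:types}). Care is needed with the degenerate possibilities $\bd X = \{\xi\}$ or $\bd X = \{\xi,\eta\}$; in those cases the cocompact action of $G_\xi$ would be on a finite set, which already forces, via the classification in Proposition~\ref{prop:types}, that $\xi$ is in the limit set or that the hypothesis is vacuous. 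Once these edge cases are cleanly separated, the rest is routine.
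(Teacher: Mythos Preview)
Your conical case is correct and is exactly what the paper's ``obviously'' abbreviates.

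In the bounded parabolic case there is a genuine gap. You assert that if $G_\xi$ has bounded orbits in $X$ then it ``acts trivially at infinity''; this is false. For instance, the group of rotations of $\HH^n$ about a geodesic $\ell$ fixes an endpoint $\xi$ of $\ell$, has bounded orbits, yet acts nontrivially on $\bd\HH^n$. Neither Lemma~\ref{lem:bdrad} nor Proposition~\ref{prop:types} yields triviality on $\bd X$: the former concerns the action on $\bd_X\Gamma$, not on all of $\bd X$, and the latter only tells you $\bd_X G_\xi=\varnothing$. What you actually need is the weaker (true) statement that if $G_\xi$ fixes $\xi$ and has bounded orbits then for any compact $K\subset\bd X\setminus\{\xi\}$ the set $G_\xi K$ stays bounded away from $\xi$: indeed $(\xi\mid g\eta)_x$ differs from $(\xi\mid\eta)_x$ by at most $d(x,gx)$, which is uniformly bounded. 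Combined with non-isolation of $\xi$ this rules out cocompactness. Your second step is also misstated: an unbounded $G_\xi$-orbit need not be a Cauchy--Gromov sequence to $\xi$ (think of a lineal $G_\xi$); the correct claim, which does follow from Proposition~\ref{prop:types} by checking that $\{\xi\}$ is a finite orbit and hence lies in $\bd_X G_\xi$ in each of the horocyclic/lineal/focal cases, is simply $\xi\in\bd_X G_\xi$.

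The paper's proof is shorter and avoids all of this. It argues directly that since $Gx$ does not accumulate at $\xi$, a small enough neighbourhood $V$ of $\xi$ in $\bd X$ has convex hull disjoint from $Gx$; but bounded parabolicity together with non-isolation would force $G_\xi$-translates of a fixed point to lie on geodesics between $\xi$ and points of $V\setminus\{\xi\}$, hence inside that convex hull. No case analysis on the dynamical type of $G_\xi$ is needed.
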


\begin{proof}
Obviously $\xi$ is not conical. Fix $x\in X$. If $\xi$ is non-isolated and $V$ is a small enough neighbourhood of $\xi$ then the convex hull of $V$ is disjoint from the orbit $Gx$, and therefore $\xi$ is not bounded parabolic.
\end{proof}

The following appears as Lemma~3.6 in \cite{kapovich1996greenberg}.
			
\begin{lem}\label{lem:full_limit_set}
If $G\le\Isom(X)$ is quasi-convex and $\bd_XG=\bd X$, then $G$ acts cocompactly.
\end{lem}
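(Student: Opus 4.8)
The strategy is to reduce the statement to the well-known Milnor–Švarc lemma by showing that the $G$-action is cocompact on a suitable $G$-invariant quasi-convex hull of an orbit. First I would fix a basepoint $x \in X$ and an orbit $Y = Gx$; by quasi-convexity of the action there is a constant $c$ so that any two points of $Y$ are joined, inside $Y$, by a $c$-quasi-geodesic. The key point is then to establish that the quasi-convex hull of $Y$ in $X$ — concretely, the union of all $c'$-quasi-geodesic segments between pairs of points of $Y$, for a suitable $c' = c'(c,\delta)$ — is at finite Hausdorff distance from $Y$ itself; in a hyperbolic space this is the standard fact that a quasi-convex set coarsely contains its quasi-convex hull. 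Since $Y$ is $G$-invariant and, by hypothesis, $\bd_X G = \bd X$, every geodesic of $X$ has both endpoints in $\bd_X G$, hence is approximated by quasi-geodesics in $Y$; thus $X$ itself lies in a bounded neighbourhood of $Y$, i.e. the action is cobounded.

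Once the action is cobounded, I would upgrade ``cobounded'' to ``cocompact'' using properness of $X$. Concretely, coboundedness gives $R < \infty$ with $X = \bigcup_{g \in G} B(gx, R)$, so the closed ball $\overline{B}(x, 2R)$ already meets every $G$-orbit; since $X$ is proper this ball is compact, and it is a compact set whose $G$-translates cover $X$. That is exactly the statement that $G$ acts cocompactly on $X$.

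The main obstacle — and the step deserving the most care — is the coboundedness argument, namely controlling geodesics of $X$ by quasi-geodesics lying in the orbit $Y$. The subtlety is that a geodesic $[p,q]$ of $X$ need not have endpoints in $Y$; what one does know, from $\bd_X G = \bd X$ together with quasi-convexity, is that the two \emph{ideal} endpoints of any bi-infinite geodesic are limits of sequences in $Y$. One then invokes the stability of quasi-geodesics (the Morse lemma) in the hyperbolic space $X$: a quasi-geodesic in $Y$ joining points near $p$ and near $q$ fellow-travels $[p,q]$, with constants depending only on $\delta$ and $c$. For a general point $z \in X$ one runs this through the point $z$ by choosing a bi-infinite geodesic (or a long geodesic segment through $z$) and approximating; care is needed to handle finite-radius points rather than only boundary points, but the bookkeeping is routine once the Morse-type estimate is in hand. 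I would also note that the hypothesis ``$\bd_X G = \bd X$'' is used precisely here and is essential: without it, $X$ can be strictly larger than the quasi-convex hull of $Y$, and the action need not be cocompact.
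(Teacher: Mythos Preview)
Your overall strategy---show coboundedness via Morse-type estimates, then upgrade to cocompactness using properness of $X$---is reasonable, and the final step is fine. The gap is in the coboundedness step, specifically in the claim that for an arbitrary $z\in X$ one can ``choose a bi-infinite geodesic (or a long geodesic segment through $z$)'' with endpoints approximable in the orbit. A proper geodesic hyperbolic space need not be visual: there can be points arbitrarily far from every bi-infinite geodesic (picture long dead-end segments grafted onto a hyperbolic space). The alternative of a ``long geodesic segment through $z$'' does not help either, since its endpoints are just points of $X$ with no a priori relation to $Y$ or to $\bd X$. Your dismissal of this as ``routine bookkeeping'' is exactly where the argument breaks: one cannot in general manufacture such a geodesic near $z$, and showing that \emph{under the hypotheses} no bad $z$ exists is precisely the content of the lemma.

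The paper's proof sidesteps visibility entirely by arguing contrapositively with Gromov products. Assume $d(x_n, Gx_0)\to\infty$; translate so that $d(x_n, Gx_0)=d(x_n, x_0)$. Quasi-convexity together with the Morse lemma forces each geodesic $[x_0, y]$ with $y\in Gx_0$ to stay in a fixed $C$-neighbourhood of $Gx_0$; combining this with $d(x_n,x_0)=d(x_n,Gx_0)$ and thinness of the triangle $(x_0,x_n,y)$ gives a uniform bound on $(x_n\mid y)_{x_0}$ independent of $n$ and $y$. Hence any boundary accumulation point of $(x_n)$ (one exists by properness) lies in $\bd X\setminus\bd_X G$, contradicting $\bd_X G=\bd X$. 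Note that this uses exactly your Morse ingredient, but applies it to segments $[x_0,y]$ with \emph{both} endpoints already in the orbit, rather than to hypothetical bi-infinite geodesics through $z$; that is what makes the argument go through without any visibility assumption.
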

\begin{proof}
Otherwise, we can find in $X$ a sequence $(x_n)$ such that $d(x_n,Gx_0)$ tends to infinity. Translating $x_n$ by an element of $G$, we can suppose that $d(x_n,Gx_0)=d(x_n,x_0)$. From the quasi-convexity of $Gx_0$ it follows that for every sequence $(y_n)$ in $Gx_0$, the Gromov product $(x_n|y_n)=\frac12(d(x_n,x_0)+d(y_n,x_0)-d(x_n,y_n))$ does not tend to infinity and this means that any limit point of $(x_n)$ lies in $\bd X-\bd_XG$.
\end{proof}

Theorem~\ref{thm:RelHyp} from the introduction is an immediate consequence of the following.

\begin{prop}\label{prop:RelHyp:bis}
If $G\le\Isom(X)$ is  cusp-uniform, then
\begin{itemize}
\item either $G$ is cocompact (hence hyperbolic),
\item or the action is of general type (hence $G$ is not amenable). 
\end{itemize}
\end{prop}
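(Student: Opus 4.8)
The plan is to run a case analysis according to Gromov's classification (Proposition~\ref{prop:types}) applied to the $G$-action on $X$. Since $X$ is a proper geodesic hyperbolic space and $G$ is assumed relatively hyperbolic via this very action, the goal is to eliminate, or reduce to the stated conclusions, each of the five types: bounded, horocyclic, lineal, focal, and general type. The general type case gives directly the second alternative (and then $G$ is non-amenable by Lemma~\ref{lem:AmenRad}). The bounded and lineal cases are easy: a bounded action makes $G$ compact, hence cocompact, and a lineal action has $\bd_X G$ consisting of two points, so $G$ is cocompact on the quasi-line that is a bounded neighbourhood of an orbit by Proposition~\ref{prop:undistorted}; in both situations the first alternative holds. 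So the real content is to show that \emph{horocyclic and focal actions of a relatively hyperbolic $G$ force $G$ to act cocompactly on $X$}.

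First I would treat the horocyclic case, and argue it cannot happen unless $X$ itself is bounded (so the action is in fact bounded, already covered). If the action is horocyclic with fixed point $\xi$, then $\bd_X G = \{\xi\}$ by Proposition~\ref{prop:types}. By Lemma~\ref{lem:bordplein}, every point of $\bd X \setminus \{\xi\}$ is neither conical nor non-isolated bounded parabolic; by geometric finiteness, this forces $\bd X \subseteq \{\xi\}$, possibly together with isolated points that would have to be bounded parabolic — but the stabilizer of such a point is contained in $G_\xi = G$, which acts horocyclically hence (by Lemma~\ref{lem:properU}) properly and, crucially, \emph{not} cocompactly on $\bd X \setminus \{\xi\}$ unless that set is empty. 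I would combine this with the fact (from the proof of Proposition~\ref{prop:types}, via \cite{coornaert1990geometrie}) that a horocyclic action intersects every quasi-geodesic in a bounded set, so an orbit cannot be quasi-convex (Proposition~\ref{prop:undistorted}); since a cocompact action on a proper geodesic space always has quasi-convex (indeed coarsely dense) orbits, the horocyclic action cannot be cocompact, and one derives a contradiction with geometric finiteness: $\xi$ would have to be bounded parabolic, i.e.\ $G$ cocompact on $\bd X\setminus\{\xi\}$, forcing $\bd X = \{\xi\}$ and hence $X$ bounded.

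Next, the focal case. If the action is focal with fixed point $\xi$, then by Proposition~\ref{prop:undistorted} a bounded neighbourhood of a $G$-orbit is quasi-geodesic, and replacing $X$ by such a subset does not change whether the original action is cocompact (but one must be a little careful, since geometric finiteness was assumed for the action on $X$, not on the subset). The cleaner route: note $\bd_X G$ is uncountable and $G$ fixes $\xi$. By Lemma~\ref{lem:bordplein} again, every point outside $\bd_X G$ is neither conical nor non-isolated bounded parabolic, so $\bd X = \bd_X G$ up to isolated points; and any isolated bounded-parabolic point has stabilizer inside $G$ acting cocompactly on its complement in $\bd X$ — but $G$ itself, being focal, acts on $\bd_X G \setminus \{\xi\}$ which is uncountable and on which, were $G$ cocompact, $\xi$ would be bounded parabolic. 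The key structural input is Lemma~\ref{lem:full_limit_set}: if $\bd_X G = \bd X$ and the $G$-action (equivalently, an orbit) is quasi-convex — which it is, by Proposition~\ref{prop:undistorted}, in the focal case — then $G$ acts cocompactly. So it remains only to rule out the finitely many isolated points of $\bd X$ not in $\bd_X G$: such a point $\zeta$ would be bounded parabolic, its stabilizer $G_\zeta$ cocompact on $\bd X \setminus \{\zeta\}$, which contains the uncountable $G$-invariant set $\bd_X G$; but $G_\zeta \le G$ fixes $\xi \neq \zeta$, and a focal-type group cannot act cocompactly on a space containing its own (uncountable, non-compact-modulo-the-fixed-point) limit set minus one point — more precisely the stabilizer of $\xi$ acts properly on $\bd_X G\setminus\{\xi\}$ by Lemma~\ref{lem:properU} applied to the horocyclic subgroup $\Ker\beta_\xi$, so cocompactness there would force $\bd_X G \setminus \{\xi\}$ to be compact, contradicting uncountability plus the description in Proposition~\ref{prop:coctamen}(\ref{utrans}). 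Hence $\bd X = \bd_X G$, Lemma~\ref{lem:full_limit_set} applies, and $G$ is cocompact.

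The step I expect to be the main obstacle is making the horocyclic elimination fully rigorous: one needs to rule out a relatively hyperbolic horocyclic action where $\xi$ itself is declared a bounded parabolic point and $\bd X \setminus \{\xi\}$ is nonempty but $G = G_\xi$ somehow acts cocompactly on it. The tension is that Lemma~\ref{lem:properU} shows a horocyclic $G_\xi$ acts \emph{properly} on $\bd X \setminus \{\xi\}$, so a cocompact such action forces $\bd X \setminus\{\xi\}$ to be compact; one then has to show that a proper geodesic hyperbolic space with a horocyclic cocompact isometry group and compact boundary-minus-a-point is impossible unless it is bounded — this is where the quasi-convexity obstruction (a cobounded action has coarsely dense, hence quasi-convex, orbits, but horocyclic orbits meet quasi-geodesics boundedly) delivers the contradiction. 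Assembling this cleanly, and likewise handling stray isolated boundary points in the focal case, is the part requiring the most care; the lineal and bounded cases, and the identification of general type with non-amenability, are immediate from the cited results.
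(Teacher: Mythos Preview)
Your approach is essentially the paper's --- case analysis via Gromov's classification, combined with Lemma~\ref{lem:bordplein}, Lemma~\ref{lem:full_limit_set} and Proposition~\ref{prop:undistorted} --- but you have made it considerably more complicated than necessary, chiefly because of one misreading.

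Recall the definition adopted in \S\ref{sec:RelHyp}: geometric finiteness requires that every boundary point be either conical or a \emph{non-isolated} bounded parabolic point. Lemma~\ref{lem:bordplein} says that any $\zeta\in\bd X\setminus\bd_X G$ is neither conical nor non-isolated bounded parabolic. These two statements together yield $\bd X=\bd_X G$ outright: there is no residual class of ``isolated bounded parabolic points'' to eliminate. All of your extra work in both the horocyclic and the focal case --- ruling out stray isolated points, invoking Lemma~\ref{lem:properU} and Proposition~\ref{prop:coctamen}(\ref{utrans}) to show a stabiliser cannot be cocompact on a large set --- is therefore superfluous. With this observation the paper's proof is two lines: first, horocyclic is impossible because any $\eta\neq\xi$ already violates geometric finiteness; second, if the action is not of general type (hence bounded, lineal, or focal, the horocyclic case being excluded), Proposition~\ref{prop:undistorted} gives quasi-convexity uniformly, $\bd X=\bd_X G$ by the observation just made, and Lemma~\ref{lem:full_limit_set} yields cocompactness. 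There is no need to treat bounded, lineal and focal separately.

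One genuine slip: in your horocyclic discussion you write ``forcing $\bd X=\{\xi\}$ and hence $X$ bounded''. This implication is false --- a horoball in $\HH^2$, or a geodesic ray, is an unbounded proper geodesic hyperbolic space with a single boundary point. If one did want to finish along those lines, the correct conclusion from $\bd X=\{\xi\}$ is that $\xi$ is isolated, hence not non-isolated bounded parabolic, and it is not conical either (a horocyclic orbit meets any quasi-geodesic boundedly, as noted in the proof of Proposition~\ref{prop:types}); this contradicts geometric finiteness. But as explained above, even this is unnecessary once you take $\eta\neq\xi$ directly.
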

\begin{proof}
First observe that the action is not horocyclic: indeed, for a horocyclic action with fixed point $\xi$, any point boundary point $\eta \neq \xi$ is neither bounded parabolic, nor conical.

Suppose that the action is not of general type and let us show that $G$ is cocompact. By Proposition~\ref{prop:undistorted}, $G$ is quasi-convex. By Lemma~\ref{lem:bordplein}, we have $\bd X=\bd_X G$ and by Lemma~\ref{lem:full_limit_set}, this shows that $G$ acts cocompactly. 
\end{proof}

\subsection{Groups of general type}

The following is a slight improvement of     
\cite[Th.~21]{MiMoSha} (in the cocompact case) and \cite[Theorem~1]{hamenstadt2009isometry} (the improvement being that in the rank one case we do not need to pass to a finite index subgroup). It will be used in the proof of Theorem~\ref{thm:application}. The proof we provide is substantially simpler than the previous ones.

\begin{prop}\label{pro:gt}
Let $X$ be a proper geodesic hyperbolic space and let $G\leq\Isom(X)$ be a closed subgroup of general type (i.e.\ non-amenable). 

Then $G$ has a unique maximal compact normal subgroup $W$, and $G/W$ is a virtually connected simple adjoint Lie group of rank one (actually, either the full group of isometries, or its identity component, which always has index at most two), or $G/W$ is totally disconnected.
\end{prop}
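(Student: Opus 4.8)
The plan is to first extract the maximal compact normal subgroup, then analyze the quotient using the structure theory of locally compact groups (Yamabe/Montgomery-Zippin). The existence of a maximal compact normal subgroup $W$ is immediate from Lemma~\ref{lem:CptNorm}, since an action of general type is in particular non-horocyclic. Replacing $G$ by $G/W$, which still acts properly and cocompactly (hence of general type, since the finite orbit structure on $\bd X$ is unchanged) on the same $X$, we may assume $G$ has no nontrivial compact normal subgroup. By Lemma~\ref{lem:AmenRad}, the amenable radical of $G$ is now trivial; in particular $G$ has no nontrivial connected abelian normal subgroup and no nontrivial connected nilpotent normal subgroup.

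Now I would dichotomize on $G^\circ$. Suppose $G^\circ \neq 1$. I claim $G$ is then a virtually connected simple adjoint rank one Lie group. First, by Yamabe's theorem (Theorem~\ref{thm:Yamabe}), $G^\circ$ is a connected Lie group modulo a compact normal subgroup; but a compact normal subgroup of $G^\circ$ need not be normal in $G$ — however its connected part, or rather one argues that $G^\circ$ modulo its (solvable, by triviality of the amenable radical considerations) radical is semisimple. More carefully: $G^\circ$ is a connected Lie group (any compact normal subgroup of $G^\circ$ that is characteristic would be normal in $G$; in general one passes to the quotient by the maximal such and checks it is trivial using that $G$ has trivial amenable radical — the solvable radical of $G^\circ$ is a connected solvable normal subgroup of $G$, hence amenable, hence trivial, so $G^\circ$ is semisimple with trivial center and no compact factors). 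A connected semisimple Lie group acting properly cocompactly on a hyperbolic space must have real rank one (higher rank flats obstruct hyperbolicity; alternatively its symmetric space would contain a flat of dimension $\geq 2$), and having no compact factors and trivial center it is a product of adjoint rank one simple groups; a nontrivial product again produces a $2$-dimensional flat, so $G^\circ$ is a single adjoint simple rank one Lie group $S$. Then $S$ is normal in $G$ and cocompact, so $G$ is a compact extension of $S$; since $G$ has no compact normal subgroup and $\mathrm{Out}(S)$ is finite (indeed of order $\leq 2$ for rank one), $G$ embeds into $\Aut(S)$, giving $G/W = G$ equal to $\Isom(X_S)$ or its identity component, with index at most two. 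Here I would invoke Proposition~\ref{prop:undistorted} to replace $X$ by a $G$-cocompact quasi-convex orbit if needed so that the symmetric space comparison is clean.

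The remaining case is $G^\circ = 1$, i.e. $G$ is totally disconnected, which is exactly the second alternative. The main obstacle is the Lie-theoretic case: pinning down that $G^\circ$ is genuinely semisimple adjoint of rank one and that $G/G^\circ$ contributes only a factor of order $\leq 2$, i.e. handling the compact-normal-subgroup-of-$G^\circ$ issue and the outer automorphism bookkeeping without circularity. The clean way is: (a) the radical $R$ of $G^\circ$ is a connected solvable, hence amenable, normal subgroup of $G$, so $R=1$ and $G^\circ$ is semisimple; (b) its maximal compact normal (= product of compact simple factors and center) is characteristic in $G^\circ$ hence normal in $G$, hence trivial, so $G^\circ$ is semisimple with no compact factors and trivial center; (c) rank one and simplicity of $G^\circ$ follow from hyperbolicity of $X$ as above; (d) $C_G(G^\circ)$ is a totally disconnected normal subgroup acting trivially on $\bd X$ (it commutes with a Schottky subgroup coming from Lemma~\ref{lem:schottky} inside $G^\circ$), hence is in the bounded radical, hence trivial, so $G \hookrightarrow \Aut(G^\circ)$ and the description follows from the structure of $\Aut$ of a rank one adjoint simple Lie group.
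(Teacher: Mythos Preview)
Your argument has a genuine gap at the rank-one step. The proposition does \emph{not} assume that $G$ acts cocompactly on $X$; it only asks that $G$ be a closed subgroup of $\Isom(X)$ of general type. Your sentence ``which still acts properly and cocompactly'' is therefore unjustified, and your argument that $G^\circ$ has rank one (``higher rank flats obstruct hyperbolicity'') collapses without it: a proper action of $\RR^2$ on a hyperbolic space is perfectly possible (think of the unipotent horospherical $\RR^2$ in $\Isom(\Hyp^3)$), so the mere presence of a $2$-torus in $G^\circ$ gives no contradiction. Your attempted patch via Proposition~\ref{prop:undistorted} fails precisely here: that proposition explicitly says that general-type actions need not be quasi-convex, so you cannot replace $X$ by a cocompact quasi-convex orbit. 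The same objection applies to your simplicity argument (``a nontrivial product again produces a $2$-dimensional flat'').

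The paper's proof avoids this by a different mechanism that makes no use of cocompactness or quasi-convexity. After observing that $G^\circ/V$ is semisimple (with $V$ the amenable radical of $G^\circ$, compact by Lemma~\ref{lem:AmenRad}), it rules out rank~$\ge 2$ by looking at a maximal parabolic $P$: in rank~$\ge 2$, $P$ is non-amenable yet has a noncompact amenable radical, and this contradicts Lemma~\ref{lem:AmenRad} applied to $P$ itself. Simplicity of $S$ and compactness of the kernel $W=\ker(G\to\Aut(S))$ are both obtained from the centralizer Lemma~\ref{lem:centra}, again without any cocompactness. Your step~(d), by contrast, is close in spirit to Lemma~\ref{lem:centra} and essentially correct once the earlier steps are fixed; the missing ingredient is really the rank-one/simplicity argument.
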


Geometrically, this means that a hyperbolic locally compact group of general type has a proper, isometric and cocompact action on either a rank one symmetric space of noncompact type, or a vertex-transitive locally finite graph. We first prove the following lemma.

\begin{lem}
Let $X$ be a proper geodesic hyperbolic space, let $G\leq\Isom(X)$ be a closed subgroup and  $W \lhd G$ a compact normal subgroup. 

Then for every non-elementary closed subgroup $H$ of $G$, the centralizer $Z=\{g\in G:\;[g,H]\subset W\}$ of $H$ in $G$ modulo $W$, is compact.\label{lem:centra}
\end{lem}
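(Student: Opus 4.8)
The plan is to prove that the $Z$-action on $X$ is \emph{bounded}; once this is known, properness of the $\Isom(X)$-action on the proper space $X$ (together with the fact that $Z$ is closed in $\Isom(X)$, being cut out by the closed conditions $ghg\inv h\inv\in W$ for $h\in H$) forces $Z$ to be relatively compact, hence compact. Since $W\lhd G$ implies $[w,h]=w\cdot(hw\inv h\inv)\in W$ for all $w\in W$, $h\in H$, we have $W\le Z$, so the quotient $Z/W$ is then compact as well. To establish boundedness I would use Gromov's classification of actions through the limit set, i.e.\ Proposition~\ref{prop:types}.

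The heart of the matter is the claim that every $g\in Z$ fixes both fixed points of every hyperbolic element $h\in H$. First I would record the auxiliary fact that, since $W$ is normalized by $h\in G$, one has $Wh^nx=h^n(h^{-n}Wh^n)x=h^n(Wx)$, so $\diam(Wh^nx)=\diam(Wx)=:D<\infty$ for every $n\in\ZZ$ and $x\in X$. Now fix a hyperbolic $h\in H$ with attracting fixed point $h^+$. As $g$ centralizes $H$ modulo $W$, we may write $gh^ng\inv=w_nh^n$ with $w_n\in W$, so $gh^ng\inv x=w_n(h^nx)$ lies within distance $D$ of $h^nx$ for all $n$. Since $h^nx\to h^+$ in $\bd X$, a routine Gromov-product estimate (two points at bounded distance converge to the same boundary point) gives $gh^ng\inv x\to h^+$. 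On the other hand $gh^ng\inv x=g\big(h^n(g\inv x)\big)\to g\cdot h^+$, because $h^n(g\inv x)\to h^+$ and $g$ extends to a homeomorphism of $X\cup\bd X$. Hence $g\cdot h^+=h^+$, and applying this to $h\inv$ yields $g\cdot h^-=h^-$ as well.

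It remains to see that $Z$ fixes at least three distinct points of $\bd X$. Let $F\subseteq\bd X$ be the set of fixed points of hyperbolic elements of $H$; it is $H$-invariant and contains the two endpoints of some hyperbolic element (which exists since the $H$-action is non-elementary). If $F$ were finite it would be a finite $H$-invariant subset, so each of its $H$-orbits would be a finite orbit; by Proposition~\ref{prop:types}, in the general-type case there is no finite orbit (so $F=\varnothing$, impossible), and in the focal case the only finite orbit is the singleton fixed point, forcing $F$ to be a singleton, contradicting $|F|\ge2$. Hence $|F|\ge3$, and by the previous paragraph $Z$ fixes at least three distinct points $\xi_1,\xi_2,\xi_3\in\bd X$. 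A horocyclic $Z$-action would make $\bd_X Z$ the unique finite orbit, a lineal one would force the two-point set $\bd_X Z$ to contain the three singletons $\{\xi_i\}$, a focal one would again give a unique finite orbit, and a general-type one would have no finite orbit — each contradicting the existence of three distinct $Z$-fixed boundary points (Proposition~\ref{prop:types}). Therefore the $Z$-action is bounded, which completes the proof.

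I expect the only real obstacle to be the fixed-point claim, precisely the need for a bound on $d(gh^ng\inv x,h^nx)$ that is \emph{uniform in $n$}; this is exactly what the normality of $W$ in $G$ delivers, since it turns $Wh^nx$ into the $h^n$-translate of the fixed compact set $Wx$. Everything else is bookkeeping with Proposition~\ref{prop:types} and properness of the isometry action.
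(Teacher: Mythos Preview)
Your proof is correct and follows essentially the same route as the paper's: show that $Z$ fixes the endpoints of every hyperbolic element of $H$, deduce from non-elementarity that at least three boundary points are fixed, and conclude via Gromov's classification that the $Z$-action is bounded, hence $Z$ is compact by properness. The paper compresses your sequence argument into the single observation that the compact group $W$ acts trivially on $\bd X$, so $ghg^{-1}=wh$ has the same boundary dynamics as $h$; your explicit computation with $w_n h^n x$ is just an unpacking of this fact.
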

\begin{proof}
Note that $W$ acts trivially on the boundary. The assumption implies that $H$ contains two hyperbolic elements with distinct axes, so each of these axes have to be preserved by $Z$. So the action of $Z$ is bounded. Since $Z$ is clearly closed, we are done.
\end{proof}

\begin{proof}[Proof of Proposition~\ref{pro:gt}]
Let $V$ be the amenable radical of $G^\circ$. Since $G$ is of general type, $V$ is compact by Lemma~\ref{lem:AmenRad}. Moreover the quotient $S=G^\circ/V$ is semisimple with trivial center and no compact factor. A first application of Lemma~\ref{lem:centra} implies that either $S=1$ (in which case we are done) or $S$ is noncompact simple with trivial center, as we now suppose. Let $P/V$ be a maximal parabolic subgroup in $S$ (``parabolic" is here in the sense of algebraic/Lie groups). If $S$ has rank at least two, then $P$ is non-amenable, hence its action on $X$ is of general type, but $P$ has a noncompact amenable radical, contradiction. So $S$ has rank one. Let $\phi:G\to\textnormal{Aut}(S)$ be induced by the action by conjugation and $W$ its kernel. Then $V\subset W$ and $\phi$ induces a topological isomorphism from $S$ to $\textnormal{Aut}(S)^\circ$. 


 The kernel $W$ is compact: this follows from a second application of Lemma~\ref{lem:centra}, because $W$ is the centralizer modulo the compact normal subgroup $V$ of the non-amenable closed subgroup $G^\circ$. Thus $G/W$ is isomorphic to an open subgroup of $\textnormal{Aut}(S)$.

The precise statement follows from the knowledge of $\textnormal{Out}(S)$ (see \cite[Cor.~2.15]{gundoga} for a comprehensive list of automorphism groups of adjoint simple connected Lie groups): in the case of the real or complex hyperbolic space, $\textnormal{Out}(S)$ is cyclic of order two, while in the quaternionic or octonionic case, it is trivial. (Note that in complex hyperbolic spaces of even complex dimension, both components consist of orientation preserving isometries.) 
\end{proof}




\section{Structural results about compacting automorphisms}\label{sec:compaction}

\selectlanguage{french}
\begin{flushright}
\begin{minipage}[t]{0.7\linewidth}
\small\itshape  {\og Brigitte \'etait un de ces caract\`eres qui, sous le marteau de la pers\'ecution, se serrent, deviennent compactes.\fg}
\upshape
\begin{flushright}
(Balzac, Les Petits Bourgeois, Calmann L\'evy, 1892, p.\ 18.)
\end{flushright}
\end{minipage}
\end{flushright}
\selectlanguage{english}  

\fixbug

Before tackling the proof of our main theorem, we need to collect a few basic facts on the algebraic structure of 
groups admitting compacting automorphisms. This is the purpose of the present section. 

\medskip

Let $H$ be a locally compact group and $\alpha \in \Aut(H)$ be an automorphism. Recall from the introduction that $\alpha$ is said to be a  \textbf{compacting automorphism} (or a \textbf{compaction} for short) if there is some compact subset $V \subseteq H$ such that for all $g \in H$ there is some $n_0 \geq 0$ such that $\alpha^n(g) \in V$ for all $n > n_0$. Such a subset $V$ is called a {\bf pointwise vacuum set} for $\alpha$.


\subsection{Basic features of compactions and proof of Theorem~\ref{thm:Amen:intro}}\label{sec:ProofThmA}


For the sake of future reference, we collect some basic properties of compactions, which will be frequently used in the sequel. 

Following~\cite{CoTe} we define a {\bf vacuum set} as a subset $\Omega$ such that for every compact subset $M$ of $H$, there is some $n_0 \geq 0$ such that $\alpha^n(M)\subset \Omega$ for all $n > n_0$. A vacuum set is obviously a pointwise vacuum set. The following proposition gives a partial converse, showing that compactions are uniform on compact subsets. In the case of contractions, the corresponding result is due to S.P.~Wang \cite[Prop.~2.1]{wang1984mautner}.

\begin{prop}\label{wangbis}
Let $H$ be a locally compact group and $\alpha$ an automorphism.
Then $\alpha$ is a compaction if and only if there is a compact vacuum set for $\alpha$. 
\end{prop}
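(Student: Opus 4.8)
The `if' direction is immediate: a vacuum set is, by definition, a pointwise vacuum set, so if $\alpha$ admits a compact vacuum set then it is a compaction.

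For the converse the plan is to run the Baire-category argument used by Wang in the contracting case. Let $V$ be a compact pointwise vacuum set for $\alpha$. For $n\geq 0$ set $E_n=\bigcap_{k\geq n}\alpha^{-k}(V)$. Since $V$ is closed (being compact in a Hausdorff group) and $\alpha$ is bicontinuous, each $E_n$ is closed; the sequence $(E_n)$ is increasing; and the pointwise vacuum property says precisely that every $g\in H$ lies in $E_{n}$ for $n$ large, so $H=\bigcup_n E_n$. As $H$ is locally compact Hausdorff, hence a Baire space, some $E_N$ has nonempty interior: pick $g_0\in E_N$ together with an open neighbourhood $W$ of $g_0$ with $W\subseteq E_N$, and set $O=g_0^{-1}W$, an open neighbourhood of $1$. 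For $h\in O$, write $h=g_0^{-1}g$ with $g\in W$; then for all $k\geq N$ we have $\alpha^k(h)=\alpha^k(g_0)^{-1}\alpha^k(g)\in V^{-1}V$, because $g_0,g\in E_N$. Thus $\alpha^k(O)\subseteq V^{-1}V$ for every $k\geq N$, which is the desired upgrade of the pointwise hypothesis to a statement uniform on a neighbourhood of the identity.

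It then remains to check that $\Omega:=V\,V^{-1}V$, which is compact as the image of $V\times V\times V$ under multiplication, is a vacuum set. Given a compact set $M\subseteq H$, cover it by finitely many left translates $M\subseteq\bigcup_{i=1}^m g_iO$ with $g_i\in M$ (using that $O$ is an open neighbourhood of $1$ and $M$ is compact). For $n\geq N$ this gives $\alpha^n(M)\subseteq\bigcup_i\alpha^n(g_i)\,\alpha^n(O)\subseteq\bigcup_i\alpha^n(g_i)\,V^{-1}V$; applying the pointwise vacuum property to the finite set $\{g_1,\dots,g_m\}$ yields some $n_1$ with $\alpha^n(g_i)\in V$ for all $i$ and all $n>n_1$, whence $\alpha^n(M)\subseteq V\,V^{-1}V=\Omega$ for all $n>\max(N,n_1)$. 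This completes the proof.

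The only genuinely delicate step is the passage from the pointwise to the neighbourhood-uniform statement, and that is exactly where Baire's theorem is used, as in \cite[Prop.~2.1]{wang1984mautner}; the remaining arguments are the routine compactness covering manipulation.
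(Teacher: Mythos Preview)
Your proof is correct and follows the same Baire-category strategy as the paper: exhaust $H$ by the closed sets $E_n=\bigcap_{k\geq n}\alpha^{-k}(V)$, use Baire to find one with nonempty interior, translate to get a neighbourhood $O$ of the identity with $\alpha^k(O)\subseteq V^{-1}V$ for large $k$, then cover an arbitrary compact set by finitely many translates of $O$.

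The one noteworthy difference is that the paper inserts a preliminary reduction: it observes that $H$ is $\sigma$-compact, invokes the Kakutani--Kodaira theorem to mod out a compact normal subgroup and pass to a second countable quotient, and only then applies Baire. You bypass this entirely by using the fact that every locally compact Hausdorff space is a Baire space, so the countable union $H=\bigcup_n E_n$ of closed sets forces some $E_N$ to have interior without any countability hypothesis. This makes your argument strictly shorter and shows the Kakutani--Kodaira step in the paper's proof is unnecessary. The resulting compact vacuum set is also slightly smaller ($VV^{-1}V$ versus $W\cdot C^4$), though this is only a cosmetic difference.
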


In other words, compactions are ``uniform on compact subsets", and thus our choice of terminology is consistent with that in \cite{CoTe} (modulo the fact that compactions are termed ``contractions" there).

\begin{proof}[Proof of Proposition~\ref{wangbis}]
Assume that $C$ is a compact pointwise vacuum subset, symmetric and containing~1. We are going to show that for some compact normal subgroup $W$, the compact subset $W\cdot C\cdot C\cdot C\cdot C$ is a vacuum set. 

Since $H=\bigcup_{n\ge 0}\alpha^{-n}(C)$, it is $\sigma$-compact. By~\cite{KakutaniKodaira}, $H\rtimes_\alpha\mathbf{Z}$ has a compact normal subgroup $W$ such that $(H\rtimes\ZZ)/W$ is second countable. We may thus assume henceforth that $H$ is second countable. 

For each subset $A \subseteq H$ and all $m\ge 0$,  we set  $A_m= \bigcap_{n \geq m} \alpha^{-n}(A)$. 
Clearly we have $A_0 \subseteq A_1 \subseteq \dots$ and $A_m \cdot B_m \subseteq (A\cdot B)_m$ for all $A, B \subseteq H$. 

Let now $C \subseteq  H$ be a compact subset such that  $\alpha^n(g) \in C$ for all $g\in H$ and all sufficiently large $n$. Upon replacing $C$ by $C \cup C\inv \cup \{1\}$, we may assume that $ C = C\inv$ and contains the identity.
Notice that we have $H = \bigcup_{m \geq 0} C_m$. By Baire's theorem (which can be invoked since $H$ second countable), we deduce that $C_m$ has non-empty interior for some $m>0$. Since $C_m \cdot C_m\inv \subseteq (C \cdot C\inv)_m = (C\cdot C)_m$, it follows that the compact set $A = C\cdot C$ has the property that $A_m$  contains some identity neighbourhood and $\bigcup_{n \geq 0} A_n = H$. 

Let now $\Omega \subseteq H$ be any compact subset. Then there is a finite set $g_1, \dots, g_k \in \Omega$ such that $\Omega \subseteq \bigcup_{i=1}^k g_i A_m$. Since $A_m \cup \{g_1, \dots, g_k\} \subseteq A_n$ for all sufficiently large $n>0$, we deduce that the compact set  $B = A \cdot A$ contains $\alpha^n(\Omega)$ for all sufficiently large $n$. 
\end{proof}

We record the following relation between \emph{compacting} and \emph{confining} automorphisms, as defined in Section~\ref{sec:Focal} above. 

\begin{cor}\label{cor:CompactingConfining}
Let $H$ be a  [noncompact] locally compact group and $\alpha \in \Aut(H)$. 

Then $\alpha $ is compacting if and only if  there is a compact subset $A$ into which $\alpha$ is [strictly] confining. 
\end{cor}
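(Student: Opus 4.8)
The plan is to prove both implications, using Proposition~\ref{wangbis} for the forward direction and an elementary argument for the converse. First I would dispose of the converse implication, which is easy: suppose $\alpha$ is confining into a compact subset $A$. Then by definition $H = \bigcup_{n \geq 0} \alpha^{-n}(A)$, so for each $g \in H$ there is $n$ with $\alpha^n(g) \in A$; and since $\alpha(A) \subseteq A$, we get $\alpha^m(g) \in A$ for all $m \geq n$. Hence $A$ is a (compact) pointwise vacuum set and $\alpha$ is compacting. Note this direction does not even use the third defining property of confining automorphisms, nor strictness.

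For the forward direction, assume $\alpha$ is compacting. By Proposition~\ref{wangbis} there is a compact vacuum set $\Omega$; replacing $\Omega$ by $\Omega \cup \Omega^{-1} \cup \{1\}$ we may assume it is symmetric and contains $1$. Set $A = \Omega \cdot \Omega$ (or a similar small power), which is again compact. I need to verify the three confining conditions. The inclusion $H = \bigcup_{n\geq 0}\alpha^{-n}(A)$ is immediate since $A \supseteq \Omega$ and $\Omega$ is already a pointwise vacuum set. For $\alpha^{n_0}(A \cdot A) \subseteq A$: the set $A \cdot A = \Omega^4$ is compact, so because $\Omega$ is a \emph{vacuum} set (uniform on compacts), there is $n_0$ with $\alpha^{n_0}(\Omega^4) \subseteq \Omega \subseteq A$. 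Finally $\alpha(A) \subseteq A$ requires a small adjustment: $\alpha(A)$ need not sit inside $A$ on the nose. The fix is to enlarge $A$ to $A' := \bigcup_{k \geq 0} \alpha^k(A)$; this is $\alpha$-invariant by construction, and it is \emph{relatively compact} because $A$ is compact and $\alpha^k(A) \subseteq \Omega$ for all $k$ large (vacuum property), so $A' \subseteq A \cup \alpha(A) \cup \dots \cup \alpha^{N-1}(A) \cup \Omega$ for some finite $N$, a finite union of compact sets; take $A := \overline{A'}$. One then rechecks the other two conditions for this larger $A$, which still works since it contains the old $A$ and is still contained in a fixed compact set.

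It remains to handle the bracketed ``[strictly]'' clause: if $H$ is noncompact, I must arrange the inclusion $\alpha(A) \subsetneq A$ to be \emph{strict}. The idea is that if $\alpha(A) = A$ then, combined with $H = \bigcup_n \alpha^{-n}(A) = \bigcup_n A = A$, we would get $H = A$ compact, contradicting noncompactness. Conversely, starting from any confining $A$ with $\alpha(A) = A$ forces $H = A$ compact, so in the noncompact case the confinement is automatically strict once we know $A$ is compact and $H = \bigcup \alpha^{-n}(A)$ — actually this shows any confining compact set does the job. So the strict version follows formally from the non-strict one together with noncompactness of $H$. I expect the main obstacle to be the bookkeeping around $\alpha$-invariance of $A$ (the enlargement step), making sure the enlarged set stays compact and simultaneously satisfies all three confining conditions; this is routine but is the one place where a naive choice of $A$ fails. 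No deep input beyond Proposition~\ref{wangbis} (itself resting on Baire and Kakutani--Kodaira) is needed.
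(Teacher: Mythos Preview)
Your proposal is correct and follows essentially the same route as the paper. The only difference is cosmetic: the paper skips the intermediate squaring step and directly defines $A = \Omega \cup \alpha(\Omega) \cup \dots \cup \alpha^{m-1}(\Omega)$, where $m$ is chosen so that $\alpha^m(\Omega)\subseteq\Omega$ (which exists because $\Omega$ is itself compact and a vacuum set); this $A$ is then automatically a compact vacuum set with $\alpha(A)\subseteq A$, and the third confining condition follows as you argue. Your detour through $\Omega^2$ before taking the union of iterates is harmless but unnecessary, and the paper leaves both the converse and the bracketed strictness clause implicit (``obvious from the definitions''), which you spell out correctly.
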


\begin{proof}
Assume that $\alpha$ is compacting. By Proposition~\ref{wangbis}, there is a compact vacuum set $\Omega$ for $\alpha$. Then $\alpha^{m}(\Omega) \subseteq \Omega$ for some $m > 0$. Set 
$$A = \Omega \cup \alpha(\Omega)  \cup \dots \cup \alpha^{m-1}(\Omega).$$ 
Thus $A$ is itself a compact vacuum set; moreover it satisfies $\alpha(A) \subset A$. It follows that $\alpha$ is confining into $A$, as desired. 

The converse is obvious from the definitions. 
\end{proof}


\begin{proof}[Proof of Theorem~\ref{thm:Amen:intro}]
We saw in Corollary~\ref{cor:CompactingConfining}, as an application of Baire's theorem,  that if $\alpha$ is a compacting automorphism of a [noncompact] locally compact group $H$, then $H$ admits a compact subset $A$ into which $\alpha$ is [strictly] confining. In particular, Theorem~\ref{thm:Contracting} (or Proposition~\ref{prop:Contracting}) implies that the semidirect product  $H \rtimes  \la \alpha \ra$ of a locally compact group $H$ with the cyclic group generated by a compacting automorphism, is hyperbolic. Since a group of the form  $H \rtimes  \RR$ contains a cocompact subgroup of the form  $H \rtimes  \ZZ$, the `if' part of Theorem~\ref{thm:Amen:intro} follows. 

Conversely, assume that $G$ is non-elementary hyperbolic and amenable. Recall from Corollary~\ref{cor:eqprop} that $G$ admits a continuous, proper cocompact action by isometries on a proper hyperbolic geodesic metric space. The desired conclusion then follows from Proposition~\ref{prop:coctamen}.
\end{proof}

We define the {\bf limit group} of a compacting automorphism as the intersection of all compact vacuum subsets. The definition makes sense in view of Proposition~\ref{wangbis}. The limit group need not be a vacuum set unless the ambient group is compact. The main properties of the limit group are collected in the following.

\begin{lem}\label{lem:LimitGroup}
Let $H$ be a locally compact group and $\alpha \in \Aut(H)$ be a compacting automorphism with limit group $L$. Then the following properties hold.
\begin{enumerate}[(a)]
\item\label{gc1} The limit group $L$ is an $\alpha$-invariant compact subgroup. 

\item\label{gc2} For every compact subset $K\subset H$ we have $\bigcap_{n\in\ZZ}\alpha^nK\subseteq L$, with equality if $K$ is a vacuum subset.

\item\label{gc3} The limit group $L$ is the largest $\alpha$-invariant compact subgroup.

\item\label{gc4}\label{acsivs} A compact subset $K$ of $H$ is a vacuum set if and only if it is a neighbourhood of $L$.
\end{enumerate}
\end{lem}

\begin{proof}
For any $K \subseteq H$, we set $L'(K)=\bigcap_{n \in \ZZ} \alpha^n K$. 

We start with a preliminary observation. If $\Omega$ is a vacuum set and $K \subset H$ is any compact subset, then $\alpha^n K \subset \Omega$ for all $n>0 $ sufficiently large. In particular, for any infinite set $I$ of positive integers, we have $\bigcap_{n \in I} \alpha^n K \subset \Omega$. This being valid for any vacuum set $\Omega$, we infer that $\bigcap_{n \in I} \alpha^n K \subset L$.  

\medskip \noindent (\ref{gc2})
The preliminary observation implies that $L'(K) \subseteq L$ for each compact subset $K$. If in addition $K$ is a vacuum set, then $\alpha^n K$ is a vacuum set for all $n$ so that $L \subseteq L'(K)$, whence $L = L'(K)$.

\medskip \noindent (\ref{gc1})
The limit group is obviously compact and $\alpha$-invariant (because $\alpha$ permutes the vacuum subsets), but we have to show that it is indeed a subgroup. Fix a vacuum set $\Omega$. For some $n_0$, we have $\alpha^{n_0}(\Omega\cdot\Omega)\subset\Omega$. In particular, if $x,y\in L'(\Omega)=\bigcap_n\alpha^n\Omega$, for all $n$ we have $\alpha^{-n}(x),\alpha^{-n}(y)\in\Omega$, so $\alpha^{-n}(xy)=\alpha^{-n}(x)\alpha^{-n}(y)\in\alpha^{-n_0}\Omega$, i.e.\ $xy\in\alpha^{n-n_0}(\Omega)$ for all $n$. Similarly one shows that for each $x \in L'(\Omega)$ we have $x\inv \in L'(\Omega)$. Thus $L'(\Omega)$ is a subgroup and (\ref{gc1}) follows from (\ref{gc2}). 


\medskip \noindent (\ref{gc3})
If $M$ is an $\alpha$-invariant compact subgroup, it is immediate that $M$ is contained in every compact vacuum subset, and therefore $M\subset L$.

\medskip \noindent (\ref{gc4})
Let $\Omega$ be a compact vacuum subset of $H$. Upon enlarging $\Omega$ if necessary, we may assume that $\Omega$ is   a neighbourhood of $L$. If $V$ is any vacuum subset, then for some $n>0$ we have $\alpha^n \Omega \subset V$. Since $L$ is $\alpha$-invariant, it follows that $\alpha^n \Omega$, and hence also $V$, is a neighbourhood of $L$. 

Let $V$ be  a neighbourhood of $L$ and suppose for a contradiction that $V$ is not a vacuum set. Then there exists a compact subset $K$, an infinite set $I$ of positive integers and a sequence $(x_n \in K)_{n \in I}$ such that $\alpha^n x_n \not \in V$ for all $n \in I$. Since $ \alpha^n K \subset \Omega$ for all sufficiently large $n>0$, we may assume after passing to a subsequence that $\alpha^n x_n$ converges to some $x$. Since $V$ is a neighbourhood of $L$, we  have $x \not \in L$. On the other hand, since for each vacuum set $\Omega'$, there is $n_0 > 0$ such that $\bigcup_{n > n_0} \alpha^n K \subseteq \Omega'$, it follows that $x$ belongs to all vacuum sets, and hence to $L$, which is absurd.
\end{proof}

Here are some more straightforward properties of compactions.

\begin{lem}\label{lem:cbp}
Let $H$ be a locally compact group and $\alpha \in \Aut(H)$. Then the following properties hold.
\begin{enumerate}[(a)]

\item If $K < H$ is a compact normal  subgroup invariant under $\alpha$, then $\alpha $ is a compaction if and only if it induces a compaction on $H/K$.  

\item\label{maxcpco} if $\alpha$ is compacting, the group $H\rtimes\langle\alpha\rangle$ has a maximal compact subgroup, which is the intersection of all $H$-conjugates of the limit group $L\subset H$.

\item\label{power}  $\alpha$ is a compaction if and only $\alpha^n$ is a compaction for some $n>0$. 

\item\label{continuous} If $(\alpha_t)$ is a continuous one-parameter subgroup of automorphisms of $H$, then if some $\alpha_t$ is a compaction for some $t>0$ then it is true for all $t>0$, and this implies that $H$ is connected-by-compact. 

\item\label{vacg} If $H$ is  totally disconnected, then $\alpha$ is a compaction if and only if some compact open subgroup of $H$ is a vacuum set for $\alpha$. 
\end{enumerate}
\end{lem}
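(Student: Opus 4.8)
The plan is to obtain all five items from two facts already in hand: compactions are uniform on compact sets, i.e.\ admit a \emph{compact} vacuum set (Proposition~\ref{wangbis}), and the limit group $L$ has the properties recorded in Lemma~\ref{lem:LimitGroup}---above all, $L$ is the largest $\alpha$-invariant compact subgroup, and a compact set is a vacuum set precisely when it is a neighbourhood of $L$. Items (i) and (iii) are then bookkeeping with compact vacuum sets. For (i), let $q\colon H\to H/K$ be the quotient; as $K$ is compact, $q$ maps compacta to compacta and preimages of compacta are compact, so a compact vacuum set $V$ for $\alpha$ projects to a compact vacuum set $q(V)$ for the induced automorphism (test it on $q(M)$, $M$ compact), and a compact vacuum set $\bar V$ for the induced automorphism pulls back to the compact vacuum set $q^{-1}(\bar V)$ for $\alpha$. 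For (iii), one direction is trivial since $(\alpha^{n})^{k}=\alpha^{nk}$; for the converse, if $V$ is a compact vacuum set for $\alpha^{n}$ then $V\cup\alpha(V)\cup\cdots\cup\alpha^{n-1}(V)$ is one for $\alpha$, by writing an exponent $m=kn+r$ with $0\le r<n$ so that $\alpha^{m}(M)=\alpha^{r}\big((\alpha^{n})^{k}(M)\big)$.

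For (v), by Lemma~\ref{lem:LimitGroup}(iv) it suffices to find a compact open subgroup of $H$ containing $L$ (hence a neighbourhood of $L$). Since $H$ is totally disconnected and $L$ compact, take any compact open subgroup $U$ (van Dantzig); the stabiliser $L\cap N_{H}(U)$ is open in $L$ (because $N_{H}(U)\supseteq U$ is open in $H$), hence of finite index in the compact group $L$, so $V=\bigcap_{\ell\in L}\ell U\ell^{-1}$ is a finite intersection, a compact open subgroup normalised by $L$, and $L\cdot V$ is a compact open subgroup containing $L$; the converse is immediate. For (ii), set $G=H\rtimes_{\alpha}\ZZ$: a compact subgroup of $G$ maps trivially to $\ZZ$, hence lies in $H$, and if it is moreover normal in $G$ it is $\alpha$-invariant and normalised by $H$, so by Lemma~\ref{lem:LimitGroup}(iii) it lies in $hLh^{-1}$ for every $h\in H$, i.e.\ in $N:=\bigcap_{h\in H}hLh^{-1}$. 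Conversely $N$ is closed in the compact set $L$, hence compact, and---$L$ being $\alpha$-invariant---the set $N$ is normalised by $\alpha$ as well as by $H$, so $N$ is a compact normal subgroup of $G$, and thus the largest one. (It is the maximal compact \emph{normal} subgroup that is meant here; $G$ need not possess a maximal compact subgroup in general.)

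Item (iv) is the crux and has two independent halves. For the first, fix $t_{0}>0$ with $\alpha_{t_{0}}$ a compaction and a compact vacuum set $V_{0}$ for $\alpha_{t_{0}}$ (Proposition~\ref{wangbis}); then $V:=\bigcup_{0\le u\le t_{0}}\alpha_{u}(V_{0})$ is compact, being the image of $[0,t_{0}]\times V_{0}$ under the continuous action map, and it is a vacuum set for every $\alpha_{t}$ with $t>0$: for compact $M$ and large $n$, write $nt=kt_{0}+u$ with $0\le u<t_{0}$, so that $k=\lfloor nt/t_{0}\rfloor$ is large, $(\alpha_{t_{0}})^{k}(M)\subseteq V_{0}$, and $\alpha_{nt}(M)=\alpha_{u}\big((\alpha_{t_{0}})^{k}(M)\big)\subseteq\alpha_{u}(V_{0})\subseteq V$. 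Hence each $\alpha_{t}$, $t>0$, is a compaction. For the second half, pass to the totally disconnected group $D=H/H^{\circ}$; since $H^{\circ}$ is characteristic, $(\alpha_{t})$ descends to a continuous one-parameter group $(\bar\alpha_{t})$ on $D$, and $\bar\alpha_{t_{0}}$ is still a compaction (lift a compact subset of $D$ to a compact subset of $H$ using openness of the quotient together with local compactness, then push forward a compact vacuum set). Choose a compact open subgroup $O\le D$ (van Dantzig). The continuous map $\RR\times O\to D$, $(t,g)\mapsto\bar\alpha_{t}(g)$, sends the compact slice $\{0\}\times O$ into the open set $O$, so by the tube lemma $\bar\alpha_{t}(O)\subseteq O$ for all $|t|\le\varepsilon$ and some $\varepsilon>0$; applying this to $-t$ as well gives $\bar\alpha_{t}(O)=O$ for $|t|\le\varepsilon$, so $\{t:\bar\alpha_{t}(O)=O\}$ is an open subgroup of $\RR$, hence all of $\RR$. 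In particular $O$ is $\bar\alpha_{t_{0}}$-invariant, so by Lemma~\ref{lem:LimitGroup}(iii) the limit group $L_{D}$ of $\bar\alpha_{t_{0}}$ contains $O$ and is therefore open; being a compact open neighbourhood of itself, $L_{D}$ is a vacuum set by Lemma~\ref{lem:LimitGroup}(iv), whence $D=\bigcup_{n\ge 0}\bar\alpha_{t_{0}}^{-n}(L_{D})=L_{D}$ is compact, i.e.\ $H$ is connected-by-compact.

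I expect the genuine obstacle to be this last step of (iv): one must recognise that the presence of a true $\RR$-flow forces \emph{every} compact open subgroup of the totally disconnected quotient $D$ to be invariant under the whole flow---the single place where continuity is used in an essential way, via the tube lemma---and then that such an invariant compact open subgroup is trapped inside the limit group, collapsing $D$ to a compact group. Everything else reduces, once Proposition~\ref{wangbis} and Lemma~\ref{lem:LimitGroup} are invoked, to routine manipulations with (uniform) vacuum sets.
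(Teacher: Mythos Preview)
Your proof is correct and, since the paper leaves (i)--(iii) to the reader and only sketches (iv) and (v), your treatment of those items is essentially what the authors had in mind. Your handling of (ii), including the observation that ``maximal compact subgroup'' must mean ``maximal compact \emph{normal} subgroup'' (as confirmed by its use in Lemma~\ref{inner}), is exactly right.

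The one place where your route diverges from the paper is the second half of (iv). You invoke the tube lemma to show that a compact open subgroup $O\le D=H/H^\circ$ is invariant under the whole flow, then trap $O$ inside the limit group to force $D=L_D$. The paper instead observes directly that $\RR$ acts as the identity on $D$: for each $\bar g\in D$ the orbit map $t\mapsto\bar\alpha_t(\bar g)$ is continuous with connected image in a totally disconnected space, hence constant. Once the induced action is trivial, projecting a pointwise vacuum set $V\subseteq H$ gives $\bar g\in q(V)$ for every $\bar g$, so $D=q(V)$ is compact. Your argument works and is a good illustration of how Lemma~\ref{lem:LimitGroup} can be leveraged, but the paper's is the shorter path: it avoids the tube lemma and the limit-group machinery entirely by exploiting the purely topological fact that connected sets in totally disconnected spaces are singletons.
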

\begin{proof}
For (\ref{continuous}), observe that $\RR$ acts as the identity on $H/H^\circ$ so the latter has to be compact if $\alpha_1$ is compacting. Then observe that $\bigcup_{t\ge 0}\alpha_t(\Omega)$ is a compact vacuum subset, and use (\ref{power}).

For (\ref{vacg}), use Lemma~\ref{lem:LimitGroup}(\ref{acsivs}) and the fact that in a totally disconnected group, every compact subgroup is contained in a compact open subgroup. 

Other verifications and details are left to the reader. 
\end{proof}

Recall that a locally compact group has {\bf polynomial growth} if for every compact subset $S$, the Haar measure of $S^n$ is polynomially bounded with respect to $n$.

\begin{prop}\label{compamenable}
If a locally compact group $H$ admits a compaction $\alpha$, then $H$ has polynomial growth (of uniform degree) and thus is unimodular and amenable. In particular, $G=H\rtimes_\alpha\ZZ$ is amenable, and is non-unimodular unless $H$ is compact.
\end{prop}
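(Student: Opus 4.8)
The plan is to get \textbf{polynomial growth} of $H$ directly out of the confining condition (which is available via Corollary~\ref{cor:CompactingConfining}), and then to derive unimodularity and amenability from the general principle that subexponential growth implies both. So the first step is to apply Corollary~\ref{cor:CompactingConfining} together with Lemma~\ref{lem:LimitGroup}, which lets me fix a \emph{compact symmetric identity neighbourhood} $A\subseteq H$ with $1\in A=A^{-1}$ into which $\alpha$ is confining: $\alpha(A)\subseteq A$, $\ H=\bigcup_{n\geq 0}\alpha^{-n}(A)$, and $\alpha^{n_0}(A\cdot A)\subseteq A$ for some integer $n_0\geq 1$. Let $\mu$ be a left Haar measure on $H$ and let $q\in(0,\infty)$ be the modulus of $\alpha$, defined by $\mu(\alpha(E))=q\,\mu(E)$ (this makes sense because $E\mapsto\mu(\alpha(E))$ is again a left Haar measure).

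The heart of the argument is a short computation I would carry out next. Iterating $\alpha^{n_0}(A\cdot A)\subseteq A$ gives, by induction on $j$, the inclusion $\alpha^{jn_0}(A^{2^j})\subseteq A$, hence $A^{2^j}\subseteq\alpha^{-jn_0}(A)$ and $\mu(A^{2^j})\leq q^{-jn_0}\mu(A)$. Choosing $j=\lceil\log_2 n\rceil$ and using $1\in A$ (so $A^n\subseteq A^{2^j}$) yields $\mu(A^n)\leq C\,n^{d}$ with $d=n_0\log_2(1/q)$ and $C=q^{-n_0}\mu(A)$; since $A\subseteq A^n$ forces $\mu(A^n)\geq\mu(A)>0$, this also shows $q\leq 1$, so $d\geq 0$. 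For an arbitrary compact $S\subseteq H$, after replacing it by $S\cup S^{-1}\cup\{1\}$ the vacuum property of $A$ gives $\alpha^k(S)\subseteq A$ for some $k\geq0$, whence $S^n\subseteq\alpha^{-k}(A^n)$ and $\mu(S^n)\leq q^{-k}C\,n^{d}$. This is polynomial growth of the single degree $d$, independent of $S$ — the ``uniform degree'' claim.

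From subexponential growth I would then extract the remaining assertions. For unimodularity: if the modular function $\Delta_H$ were nontrivial, pick $h\in H$ with $\Delta_H(h)=\lambda\notin\{0,1\}$, and after replacing $h$ by $h^{-1}$ assume $\lambda>1$; for a compact symmetric neighbourhood $V\ni 1$ of $1$ with $h\in V$ one has $Vh^n\subseteq V^{n+1}$ while $\mu(Vh^n)=\lambda^n\mu(V)$, contradicting the polynomial bound applied to $S=V$. For amenability: the subexponential bound forces $\liminf_n\mu(S^{n+1})/\mu(S^n)=1$ for every compact symmetric $S\ni1$, so the balls $(S^n)$ are a Følner sequence for the open compactly generated subgroup $\langle S\rangle$; since $A$ is an identity neighbourhood, $H$ is the increasing union of the open amenable subgroups $\langle\alpha^{-n}(A)\rangle=\alpha^{-n}(\langle A\rangle)$, hence amenable. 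Finally $G=H\rtimes_\alpha\ZZ$ is amenable as an extension of $\ZZ$ by $H$; and writing $t$ for a generator of the $\ZZ$-factor, $\Delta_G|_H\equiv1$ and $\Delta_G(t)=q$, so $G$ is unimodular iff $q=1$. From the nesting $A\subseteq\alpha^{-1}(A)\subseteq\cdots$ and $\bigcup_n\alpha^{-n}(A)=H$ we get $\mu(H)=\lim_n q^{-n}\mu(A)$, which is finite exactly when $q=1$; thus $q=1$ iff $H$ is compact, giving the last clause. I do not expect a serious obstacle here: the one point deserving care is that $H$ itself need not be compactly generated, so amenability must be obtained from the compactly generated open pieces via the increasing-union argument rather than a single Følner sequence for $H$.
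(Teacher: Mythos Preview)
Your proof is correct and follows essentially the same route as the paper: the key inclusion $A^{2^j}\subseteq\alpha^{-jn_0}(A)$ giving the polynomial bound $\mu(A^n)\le C\,n^{d}$ is exactly the paper's argument (the paper simplifies by passing to a power of $\alpha$ so that $n_0=1$). You supply explicit arguments for unimodularity, amenability via F{\o}lner sets in the open subgroups $\langle\alpha^{-n}(A)\rangle$, and the equivalence $q=1\Leftrightarrow H$ compact, whereas the paper simply invokes these as standard consequences of polynomial growth; this extra detail is welcome and the only minor point to tighten is that, strictly speaking, only a subsequence of $(S^n)$ is F{\o}lner.
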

\begin{proof}
Let $V$ be a compact subset and $\lambda$ the Haar measure, and let us show that $\lambda(V^n)$ grows polynomially (with degree depending only on $H$ and $\alpha$). Replacing $V$ by a larger compact subset and $\alpha$ by a positive power if necessary, we can suppose that $\alpha(VV)\subset V$. Since $\alpha$ is a compacting automorphism, it multiplies the Haar measure of $H$ by some $1/c\le 1$. The above inclusion implies that for all $k\ge 0$ we have $V^{2^k}\subset \alpha^{-k}(V)$. So
$$V^n\subset V^{2^{\lceil \log_2 n\rceil}}\subset \alpha^{-\lceil \log_2 n\rceil}(V)$$
and thus
$$\lambda(V^n)\le \lambda(V)c^{\lceil \log_2 n\rceil}\le \lambda(V)cn^{\log_2c},$$
which is a polynomial upper bound of degree $\log_2c$. So $H$ has polynomial growth and therefore is unimodular and amenable. It follows that $G$ is amenable. 
If moreover $H$ is noncompact, it follows that $G$ is a semidirect product of two unimodular groups, where the action of the acting group does not preserve the volume, so $G$ is non-unimodular.
\end{proof}

\subsection{Reduction to connected and totally disconnected cases}

Given a locally compact group $G$, a subgroup $H \leq G$ is called \textbf{locally elliptic} if every finite subset of $H$ is contained in a compact subgroup of $G$. If $G$ is locally elliptic and second countable, then $G$ is a union of a countable ascending chain of compact open subgroups. By~\cite{Platonov}, the closure of a locally elliptic subgroup is locally elliptic, and an extension of a locally elliptic group by a locally elliptic group is itself locally elliptic. In particular any locally compact group $G$ has a unique maximal closed normal subgroup that is locally elliptic, called the \textbf{locally elliptic radical} of $G$ and denoted by $\LF(G)$. It is a characteristic subgroup of $G$, and the quotient $G/\LF(G)$ has trivial locally elliptic radical. (Some authors use the confusing terminology \emph{(topologically) locally finite} and {\em LF-radical} for locally elliptic groups and locally elliptic radical.) 

The following result was  established in~\cite{CoTe} (using the existence of a compact vacuum set as the definition of a compacting automorphism). 
 
\begin{prop}\label{prop:stcom}
Let $H$ be a locally compact group whose identity component $H^\circ$ has no nontrivial compact normal subgroup. If $H$ admits a compacting automorphism, then the product map $H^\circ \times \LF(H) \to H$ is an isomorphism onto an open finite index subgroup of $H$. 
\end{prop}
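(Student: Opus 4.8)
The plan is to handle the connected part $H^\circ$ and the totally disconnected quotient $H/H^\circ$ separately, and then to reconstruct $H$ by splitting the extension between them along an exhaustion by compact open subgroups, using the dynamics of $\alpha$ to make the splitting coherent. I would first record the reductions: by Proposition~\ref{compamenable}, $H$ has polynomial growth, so $H^\circ$ (compactly generated) is a connected Lie group, the maximal compact normal subgroup of a connected group of polynomial growth being trivial here by hypothesis. Replacing $H\rtimes\langle\alpha\rangle$ by its quotient modulo the maximal compact subgroup of Lemma~\ref{lem:cbp}(\ref{maxcpco}) changes neither the hypotheses (that compact group is $\alpha$-normalized and meets $H^\circ$ trivially) nor the conclusion (it lies in $\LF(H)$, and $H^\circ$ injects into the quotient); by Kakutani--Kodaira I may also assume $H$ second countable. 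Finally I would note, directly from Proposition~\ref{wangbis}, that $\alpha$ restricts to a compacting automorphism of every $\alpha$-invariant closed subgroup and descends to one on every $\alpha$-invariant Hausdorff quotient.

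Next I would show that $D:=H/H^\circ$ is locally elliptic. It is a totally disconnected compaction group, so by Lemma~\ref{lem:cbp}(\ref{vacg}) it has a compact open subgroup $\bar V$ that is a vacuum set for the induced automorphism $\bar\alpha$, and $\bar\alpha^m(\bar V)\subseteq\bar V$ for some $m>0$. Then $\bar V\subseteq\bar\alpha^{-m}(\bar V)\subseteq\bar\alpha^{-2m}(\bar V)\subseteq\cdots$ is an ascending chain of compact open subgroups exhausting $D$ (by the vacuum property every element of $D$ lands in $\bar V$ under a high power of $\bar\alpha$), so $D$ is a union of a chain of compact open subgroups, hence locally elliptic and $\LF(D)=D$. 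I would also observe that $H^\circ$ itself is merely a connected Lie group of polynomial growth with no nontrivial compact normal subgroup; unlike the contracting case it need not be nilpotent (for instance $H^\circ$ could be the motion group of the plane), and no finer description of $H^\circ$ will be needed.

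For the reconstruction, let $V_k\leq H$ be the preimage of $\bar V_k:=\bar\alpha^{-mk}(\bar V)$; it is open, with $V_k^\circ=H^\circ$ and $V_k/H^\circ$ compact, so it is connected-by-compact. By Yamabe's theorem (Theorem~\ref{thm:Yamabe}) it has a maximal compact normal subgroup $K_k$; since $K_k\cap H^\circ$ is compact normal in $H^\circ$ it is trivial, so $K_k$ is profinite and, by Lemma~\ref{lem:quot-Lie} applied to the virtually connected Lie group $V_k/K_k$, the subgroup $H^\circ K_k$ is open of finite index in $V_k$ with identity component $H^\circ$. I would then split the extension $1\to H^\circ\to H^\circ K_k\to K_k\to 1$ (after shrinking $\bar V$ if necessary, and possibly passing to finite index) and use conjugacy of its complements under $H^\circ$ to choose, with the help of $\bar\alpha(\bar V_k)\subseteq\bar V_{k+1}$, closed complements $R_k\leq R_{k+1}$ with $\alpha(R_k)\subseteq R_{k+1}$; then $R:=\overline{\bigcup_k R_k}$ is a closed locally elliptic $\alpha$-invariant subgroup with $H^\circ\cap R=1$ and, up to finite index, $H=H^\circ R$. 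Since the conjugates of $R$ in $H$ are again complements of $H^\circ$ matched by the construction, $R$ is normal, hence $R\subseteq\LF(H)$; as $\LF(H)\cap H^\circ$ is compact normal in $H^\circ$ it is trivial, and as $R$ already has finite-index image in $D$ so does $\LF(H)$. This yields that $H^\circ\times\LF(H)\to H$ is an isomorphism onto an open finite-index subgroup.

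The hard part — this is what \cite{CoTe} actually carries out — is the \emph{coherent}, \emph{$\alpha$-equivariant} choice of the complements $R_k$, together with the attendant finite-index bookkeeping. The extensions $1\to H^\circ\to H^\circ K_k\to K_k\to 1$ need not split on the nose, since there can be a real obstruction living in $H^2(K_k;\pi_1(H^\circ))$ (this is precisely why one must allow passage to a finite-index subgroup), and patching the splittings along the exhaustion while keeping them compatible with $\alpha$ is where the dynamics of the compaction genuinely has to be exploited, rather than the soft structure theory used above.
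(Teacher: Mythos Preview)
The paper does not actually prove this proposition: its entire ``proof'' is a citation to Theorem~A.5 and Corollary~A.6 of~\cite{CoTe}. So there is no in-paper argument to compare yours against; what you have written is, in effect, an outline of the strategy carried out in that reference, and you correctly flag that the substantive work lies there.

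Your outline is broadly sound (reduce to $H^\circ$ a Lie group, show $D=H/H^\circ$ is locally elliptic via a compact-open vacuum subgroup, exhaust $D$ by $\bar V_k$, lift to $V_k\leq H$, and try to build a coherent locally elliptic complement to $H^\circ$). Two corrections, however. First, the extension $1\to H^\circ\to H^\circ K_k\to K_k\to 1$ you write is already a \emph{direct product}: $K_k$ is normal in $V_k$ (hence in $H^\circ K_k$), $H^\circ$ is normal, and $K_k\cap H^\circ=1$, so $H^\circ K_k\cong H^\circ\times K_k$. There is therefore no $H^2(K_k;\pi_1(H^\circ))$ obstruction to splitting, and the canonical complement $R_k=K_k$ is handed to you. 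The genuine difficulties in~\cite{CoTe} are elsewhere: (a) arranging the \emph{coherence} $K_k\subseteq K_{k+1}$ and the $\alpha$-stability of the union (the $K_k$ are maximal compact normal only in $V_k$, not in $V_{k+1}$, so nesting is not automatic), and (b) bounding the indices $[V_k:H^\circ K_k]$ uniformly in $k$, so that the resulting complement really has finite-index image in $D$. Neither of these is a second-cohomology phenomenon. Second, your appeal to Proposition~\ref{compamenable} to deduce that $H^\circ$ is Lie is an unnecessary detour: the hypothesis that $H^\circ$ has no nontrivial compact normal subgroup already forces $H^\circ$ to be a connected Lie group, by the Gleason--Yamabe theorem.
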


\begin{proof}
See Theorem~A.5 and Corollary A.6 from~\cite{CoTe}. 
\end{proof}

\subsection{Compacting automorphisms of totally disconnected groups}

The following lemma was established in Proposition~4.6 from~\cite{CoTe}.

\begin{lem}\label{lem:TD_compactable}
Let $H$ be a totally disconnected locally compact group and $\alpha$ be a compacting automorphism. Then $H$ is locally elliptic and $G=H \rtimes \la \alpha \ra$ acts properly and vertex and edge-transitively without inversions on a $(k+1)$-regular tree $T$, and fixes an end $\xi \in \bd T$. In particular $G$ is hyperbolic. The integer $k$ is characterized by the equality $\Delta(G)=\{k^n:\;n\in\ZZ\}$, where $\Delta$ is the modular function.  
\end{lem}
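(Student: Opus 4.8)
The plan is to construct the tree $T$ directly from the coset structure of a well-chosen compact open subgroup, using the material from Section~\ref{sec:compaction}. First, since $H$ is totally disconnected and admits the compaction $\alpha$, Lemma~\ref{lem:cbp}(\ref{vacg}) provides a compact open subgroup $V \le H$ which is a vacuum set for $\alpha$; moreover, by Lemma~\ref{lem:LimitGroup}(\ref{acsivs}) any compact open subgroup containing the limit group $L$ can be used, and we are free to shrink or enlarge $V$ among compact open subgroups as convenient. Replacing $\alpha$ by a positive power (legitimate by Lemma~\ref{lem:cbp}(\ref{power}), and harmless for the final statement once we keep track of the exponent at the end) we may assume $\alpha(V) \subseteq V$. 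Then $\alpha(V)$ is an open subgroup of the compact group $V$, hence of finite index, say $[V : \alpha(V)] = k$. The fact that $H$ is locally elliptic is immediate from Lemma~\ref{lem:TD_compactable}'s hypotheses via Proposition~\ref{prop:stcom} applied to $H$ (whose identity component is trivial), or more directly: $H = \bigcup_{n \ge 0} \alpha^{-n}(V)$ is an increasing union of the compact open subgroups $\alpha^{-n}(V)$, so every finite subset lies in one of them.

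Next I would build $T$. Put $G = H \rtimes \langle \alpha \rangle$ and let $V_n = \alpha^{-n}(V)$, an increasing chain of compact open subgroups of $H$ with $[V_{n+1}:V_n] = k$. Inside $G$, consider the subgroups $U_n = V_n \rtimes \langle \alpha \rangle$ (using $\alpha V_n \alpha^{-1} = V_{n-1} \subseteq V_n$, so this is indeed a group); these form an increasing chain of compact-by-cyclic open subgroups. Take the vertex set of $T$ to be $\bigsqcup_{n \in \ZZ} G/U_n$ — or, more symmetrically, realize the vertices at "level $n$" as the cosets $gU_n$, declare $gU_n$ adjacent to $gU_{n+1}$ and to each of its $k$ preimages $g'U_{n-1} \subseteq gU_n$. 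One checks this is connected (any two vertices are joined through a high enough level), has no cycles (the level function strictly increases along one half of any edge-path leaving a vertex and there is a unique "upward" neighbour, giving the tree property), and that every vertex has degree $k+1$: one upward neighbour at level $n+1$ and $k$ downward neighbours at level $n-1$. The left $G$-action on cosets gives an action on $T$ by automorphisms; it fixes the end $\xi$ represented by the increasing ray $(U_n)_{n\ge 0}$ (this is exactly the end "towards" which $\alpha$ translates). The action is proper because vertex stabilizers are conjugates of the compact groups $U_n$ intersected appropriately — more precisely $\Stab_G(gU_n) = gU_ng^{-1}$, which is compact, and the action is metrically proper since $G$ acts transitively on each level with compact point stabilizers. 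Transitivity on vertices of each level is clear; the element $\alpha$ maps level $n$ to level $n+1$ (it sends $gU_n$ to... careful: $\alpha$ conjugation shifts levels, and acting by the group element $\alpha$ moves $U_n \mapsto \alpha U_n$, which we can identify with a vertex at an adjacent level) — so $G$ acts edge-transitively and vertex-transitively, without inversions (the level parity is preserved by the $H$-part and shifted by $\alpha$; one arranges no inversion by the bipartite-like level structure, possibly after noting the two "orbits of oriented edges" coincide). Hyperbolicity of $G$ is then automatic from Corollary~\ref{cor:eqprop} since a regular tree is a proper geodesic hyperbolic space on which $G$ acts continuously, properly, cocompactly.

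Finally, the modular function computation. The key point is that $\alpha$, viewed as an inner automorphism of $G$ by the element $\alpha$, scales Haar measure on the open subgroup $H$ by the factor $[V_1 : V_0] = k$, because $\alpha(V) = V_{-1}$ has index... let me restate: $\alpha(V_0) = V_{-1}$ sits inside $V_0$ with $[V_0 : V_{-1}] = k$, so conjugation by $\alpha$ shrinks the compact open subgroup $V_0$ by a factor $k$, hence $\Delta_G(\alpha) = k$ (or $k^{-1}$ depending on the orientation convention for $\Delta$). Since $H$ is locally elliptic it is a union of compact subgroups, hence unimodular with $\Delta_G|_H \equiv 1$; and $G/H \cong \ZZ$ is generated by the image of $\alpha$. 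Therefore $\Delta(G) = \{\Delta_G(\alpha)^n : n \in \ZZ\} = \{k^n : n \in \ZZ\}$, and this pins down $k$ intrinsically, in particular showing the construction did not depend on the choice of $V$ up to the value of $k$. The one place to be careful — and the main obstacle — is the bookkeeping of the power of $\alpha$: passing to $\alpha^m$ replaces $G$ by a finite-index overgroup-or-subgroup and replaces $k$ by $k^m$, so one must either argue that $[V:\alpha(V)]$ is already well-defined without passing to a power (it is, once one fixes $V$ to be $\alpha$-stable, which Lemma~\ref{lem:LimitGroup}(\ref{acsivs}) and Corollary~\ref{cor:CompactingConfining}'s construction of $A$ guarantee: take $V \supseteq L$ compact open with $\alpha(V) \subseteq V$ directly, no power needed), or redo the modular computation after the fact. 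I would take the first route: invoke Corollary~\ref{cor:CompactingConfining} to get a compact $A$ with $\alpha(A) \subseteq A$, then enlarge $A$ to a compact open subgroup $V$ with $\alpha(V) \subseteq V$ using that $H$ is totally disconnected and $\alpha$-orbits of compact sets are eventually absorbed into any neighbourhood of $L$ — this makes the whole argument power-free and $k$ canonically $[V:\alpha(V)]$, matching $\Delta(G)$.
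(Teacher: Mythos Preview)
Your overall strategy is exactly the paper's: find a compact open subgroup $V$ with $\alpha(V)\subseteq V$, recognize $G=H\rtimes\langle\alpha\rangle$ as an ascending HNN extension over $V$, and take the Bass--Serre tree. The local ellipticity argument and the modular function computation are fine.

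There is, however, a concrete error in your construction of the tree. You set $U_n = V_n \rtimes \langle\alpha\rangle$ and justify this by ``$\alpha V_n \alpha^{-1} = V_{n-1} \subseteq V_n$, so this is indeed a group''. But inclusion is not enough for an internal semidirect product: you need $\langle\alpha\rangle$ to \emph{normalize} $V_n$, i.e.\ $\alpha V_n\alpha^{-1}=V_n$, and this fails since $V_{n-1}\subsetneq V_n$. In fact the subset $V_n\cdot\langle\alpha\rangle$ is not closed under multiplication (compute $(v\alpha^{-1})(w\alpha)$ with $w\in V_n$: you get $v\,\alpha^{-1}(w)\in V_{n+1}$, generally not in $V_n$), and the subgroup \emph{generated} by $V_n$ and $\alpha$ is all of $G$. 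So your vertex set $\bigsqcup_n G/U_n$ is not well-defined.

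The fix is minor and brings you back to the Bass--Serre picture: take the vertex set to be $G/V$, with $gV$ adjacent to $g\alpha V$. The projection $\pi:G\to\ZZ$ induces a level function $\ell(gV)=\pi(g)$; each vertex has one neighbour at level $\ell+1$ and $[V:\alpha(V)]=k$ neighbours at level $\ell-1$, giving a $(k+1)$-regular tree with fixed end. Equivalently, vertices at level $n$ are identified with $H/\alpha^{n}(V)$, which is presumably what you were reaching for. Your worry about passing to a power of $\alpha$ is also avoidable directly: if $V_0$ is any compact open subgroup containing the limit group (hence a vacuum set), choose $n_0$ with $\alpha^{n_0}(V_0)\subseteq V_0$ and set $V=\bigcap_{i=0}^{n_0-1}\alpha^i(V_0)$; this is a compact open subgroup with $\alpha(V)\subseteq V$, no power needed.
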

\begin{proof}[About the proof]
By Lemma~\ref{lem:cbp}(\ref{vacg}), there is a compact open subgroup $\Omega$ which is vacuum for the compaction; replacing $\Omega$ by $\bigcup_{n\ge 0}\alpha^n(\Omega)$ if necessary, we can suppose that $\alpha(\Omega)\subset\Omega$ and the associated tree is nothing else than the Bass--Serre tree associated to the corresponding ascending HNN-extension. 
\end{proof}

\subsection{Compacting automorphisms of almost connected groups}

Let $H$ be a virtually connected Lie group. Its {\bf nilpotent radical} is defined here as the largest normal connected nilpotent subgroup. It is closed and characteristic on $H$.

There is a general decomposition result for one-parameter groups of compactions.

\begin{prop}[Hazod-Siebert~\cite{HazodSiebert}]\label{hazsie}
Let $H$ be a connected-by-compact locally compact group with a continuous compacting action $\alpha$ of $\RR$. Then the nilpotent radical $N$ of $H^\circ$ is simply connected and there is an $\alpha$-invariant semidirect product decomposition $$H=N\rtimes K_\alpha,$$ where $K_\alpha$ is the limit group of $\alpha$; moreover $N$ exactly consists of the contracted elements (i.e.\ those $h\in H$ such that $\lim_{t\to +\infty}\alpha(t)(h)=1$).
\end{prop}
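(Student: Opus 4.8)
This proposition is due to Hazod and Siebert; the plan is to reduce it to their Lie-theoretic structure theorem with the help of the preparatory material of Section~\ref{sec:compaction}, and then read off the three assertions. First I would reduce to the case that $H$ is a connected Lie group: by Lemma~\ref{lem:cbp}(\ref{continuous}) every $\alpha_t$ with $t>0$ is already a compaction, and Yamabe's theorem together with Lemma~\ref{lem:cbp}(i) allows one to pass to a virtually connected Lie quotient without destroying the compacting property; modding out further by the maximal compact normal subgroup of $H^\circ$, which is characteristic and hence $\alpha$-invariant, and noting that the relevant compactness information survives these quotients by Lemma~\ref{lem:LimitGroup}, one is reduced to $H$ connected Lie (the passage back to the general statement being routine, since $H^\circ$ is then open of finite index). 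The delicate point here is that the limit group $K_\alpha$ need not be normal, so it cannot simply be quotiented out and must be carried along.

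With $H$ a connected Lie group, pass to its Lie algebra $\mathfrak h$ and the derivation $D$ defined by $d\alpha_t=\exp(tD)$. Splitting $\mathfrak h\otimes\CC$ into generalized eigenspaces of $D$ and regrouping them according to whether the real part of the eigenvalue is negative, zero, or positive gives $\mathfrak h=\mathfrak h_-\oplus\mathfrak h_0\oplus\mathfrak h_+$. The compacting hypothesis, which forces the forward orbit $\{\alpha_t(g):t\ge 0\}$ to be bounded for each $g$, yields $\mathfrak h_+=0$ and, less obviously, that $D|_{\mathfrak h_0}$ is semisimple with purely imaginary spectrum — a nontrivial Jordan block at an eigenvalue of vanishing real part would produce polynomially unbounded orbits. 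Then $\mathfrak h_-$ is an ideal (from $[\mathfrak h_\lambda,\mathfrak h_\mu]\subseteq\mathfrak h_{\lambda+\mu}$, additivity of the real part, and the absence of positive real parts), it is nilpotent by the Jacobson-type criterion that a Lie algebra carrying a derivation all of whose eigenvalues have negative real part is nilpotent, and $\mathfrak h=\mathfrak h_-\rtimes\mathfrak h_0$ with $\mathfrak h_0$ integrating to a relatively compact group of automorphisms.

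Exponentiating, put $N=\exp(\mathfrak h_-)$; since $\alpha_t$ ($t>0$) contracts $N$ by construction, Siebert's theorem on contractible Lie groups (see \cite{HazodSiebert}, also \cite{CoTe}) shows $N$ is closed, connected, simply connected and nilpotent. The analytic subgroup $K$ with Lie algebra $\mathfrak h_0$ has compact closure; since $N\cap\overline K$ is a compact subgroup of the simply connected nilpotent group $N$ it is trivial, so a dimension count inside $N\rtimes\overline K\le H$ forces $K=\overline K$ compact and $H=N\rtimes K$. As $K$ is $\alpha$-invariant and $N$ admits no nontrivial $\alpha$-invariant compact subgroup, Lemma~\ref{lem:LimitGroup} identifies $K$ with the limit group $K_\alpha$. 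That $N$ is exactly the set of contracted elements follows because one inclusion is built in, and conversely if $h=nk$ with $n\in N$, $k\in K_\alpha$ and $\alpha_t(h)\to 1$ then $\alpha_t(k)\to 1$, whence $k=1$ upon applying any accumulation point of $(\alpha_t|_{K_\alpha})_{t\to+\infty}$, which is an automorphism of $K_\alpha$. Finally $N$, being a normal connected nilpotent subgroup, lies in the nilpotent radical of $H^\circ$, and in the reduced setting of Proposition~\ref{prop:stcom} the reverse inclusion holds because the nilpotent radical cannot meet the compact complement $K_\alpha$ nontrivially. I expect the substantive part to be the Lie-algebraic analysis of the second paragraph — extracting $\mathfrak h_+=0$ and the semisimplicity of $D|_{\mathfrak h_0}$ from mere boundedness of orbits — together with Siebert's simple-connectivity theorem; the reduction to the connected Lie case, while routine, is the most laborious.
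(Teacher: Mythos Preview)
The paper does not prove this proposition at all --- it is stated with attribution to Hazod--Siebert and used as a black box. So there is no proof in the paper to compare your attempt against; what follows is an assessment of your sketch on its own merits.

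Your Lie-algebra approach (splitting $\mathfrak h$ by the real parts of the eigenvalues of the generating derivation $D$, showing $\mathfrak h_+=0$ and that $D|_{\mathfrak h_0}$ is semisimple with purely imaginary spectrum) is the natural route and is essentially what Hazod--Siebert do. The identification of $\mathfrak h_-$ with a nilpotent ideal and of $\exp(\mathfrak h_-)$ with a simply connected closed normal subgroup is fine, and your argument that this subgroup equals the nilpotent radical in the reduced setting is correct (the nilpotent radical is simply connected there because its maximal compact subgroup is characteristic, hence normal in $H$, hence trivial).

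There is, however, a real gap at the step ``the analytic subgroup $K$ with Lie algebra $\mathfrak h_0$ has compact closure''. You seem to infer this from the fact that $\{\exp(tD|_{\mathfrak h_0})\}$ is relatively compact in $\GL(\mathfrak h_0)$, but that concerns the \emph{automorphisms}, not the group $K$ itself; nothing prevents, a priori, a noncompact connected Lie group from carrying a one-parameter automorphism group with semisimple purely imaginary generator (think of $\RR^n$ with a rotation). What actually forces compactness of $H/N'$ is the \emph{compacting} hypothesis: one shows that the induced $\RR$-action on the homogeneous manifold $H/K_\alpha$ contracts it to a point, hence the tangent representation $\mathfrak h/\mathfrak k$ of $D$ (where $\mathfrak k=\mathrm{Lie}(K_\alpha)$) has only eigenvalues with negative real part, which forces $\mathfrak h_0\subseteq\mathfrak k$ and hence $\mathfrak k=\mathfrak h_0$. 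This is exactly the mechanism used in the paper's proof of the companion Proposition~\ref{prop:cpli} (limit group is maximal compact via contractibility of $H/K_\alpha$). Once $K_\alpha$ is identified as the connected subgroup with Lie algebra $\mathfrak h_0$, your remaining arguments go through. Your reduction to the connected Lie case is also too quick --- lifting the semidirect decomposition back through a compact normal quotient requires knowing that $K_\alpha$ absorbs that kernel (Lemma~\ref{lem:LimitGroup}(iii)) and that the contracted subgroup lifts, which you should spell out.
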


A similar result for arbitrary compactions holds under further assumptions.

\begin{prop}\label{prop:cpli}
Let $H$ be a virtually connected Lie group and $N$ its nilpotent radical. If $H$ has no nontrivial compact normal subgroup and admits a compacting automorphism $\alpha$, then $N$ is simply connected and 
for every compacting automorphism $\alpha$ of $H$, denoting by $K_\alpha$ its limit subgroup, we have $H=N\rtimes K_\alpha$ and $N$ consists exactly of the elements of $H$ that are contracted by $\alpha$.
\end{prop}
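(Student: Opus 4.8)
The plan is to establish the three assertions in turn, reducing everything to an eigenvalue analysis of $d\alpha$ on $\g := \mathrm{Lie}(H^\circ)$ together with the properties of the limit group recorded in Lemma~\ref{lem:LimitGroup}. Write $N$ for the nilpotent radical of $H^\circ$ and $\mathfrak n \se \g$ for its Lie algebra (the nilradical of $\g$); both $N$ and $\mathfrak n$ are characteristic in $H^\circ$, hence normal in $H$.

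\emph{Step 1: $N$ is simply connected.} The connected nilpotent Lie group $N$ has a unique maximal compact subgroup $T$, a central torus; by uniqueness $T$ is characteristic in $N$, hence normal in $H$, hence trivial since $H$ has no nontrivial compact normal subgroup. A connected nilpotent Lie group without nontrivial compact subgroup is simply connected (indeed $\exp$ is a diffeomorphism onto it); so $N$ is simply connected and contains no nontrivial compact subgroup.

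\emph{Step 2: the contracted subgroup $C_\alpha := \{g\in H:\alpha^n(g)\to 1\}$ equals $N$.} Decompose $\g = \g_{<1}\oplus\g_{=1}\oplus\g_{>1}$ into the sums of generalized eigenspaces of $d\alpha$ for eigenvalues of modulus $<1$, $=1$, $>1$; this splitting is $d\alpha$‑invariant, and since $\alpha\circ\exp = \exp\circ\, d\alpha$, a nonzero vector in $\g_{>1}$, or a vector in $\g_{=1}$ outside the semisimple part, would produce an element of $H^\circ$ with unbounded $\alpha$‑orbit, contradicting that $\alpha$ is compacting. Hence $\g_{>1}=0$ and $d\alpha|_{\g_{=1}}$ is semisimple with eigenvalues of modulus $1$. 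As there is no $\g_{>1}$, the bracket relations give $[\g,\g_{<1}]\se\g_{<1}$, so $\g_{<1}$ is an ideal, nilpotent because $d\alpha$ contracts it, whence $\g_{<1}\se\mathfrak n$. The connected subgroup $U\le N$ with Lie algebra $\g_{<1}$ is therefore closed, simply connected, normal in $H$, $\alpha$‑invariant, and $\alpha|_U$ is contracting, so $U\se C_\alpha\se N$; conversely, for $g\in C_\alpha$ the automorphism induced on the finite group $H/H^\circ$ has finite order so $g\in H^\circ$, and for large $n$ the element $\alpha^n(g)$ lies in a neighbourhood of $1$ where $C_\alpha$ and $U$ coincide (by the local computation above), so $g=\alpha^{-n}(\alpha^n(g))\in\alpha^{-n}(U)=U$. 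Thus $C_\alpha=U$. Finally $N\se C_\alpha$: by Proposition~\ref{wangbis}, intersecting a compact vacuum set of $\alpha$ with the characteristic subgroup $N$ shows $\alpha|_N$ is compacting; its limit group is a compact subgroup of $N$, hence trivial by Step 1, so by Lemma~\ref{lem:LimitGroup}(iv) every compact neighbourhood of $1$ is a vacuum set for $\alpha|_N$, forcing $\alpha^n(g)\to1$ for all $g\in N$. Hence $C_\alpha=N$, and $\mathfrak n=\g_{<1}$.

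\emph{Step 3: the splitting $H=N\rtimes K_\alpha$.} First, $H/N$ is compact: the induced automorphism $\bar\alpha$ of $H/N$ is compacting (image of a compact vacuum set of $\alpha$), its differential on $\mathrm{Lie}((H/N)^\circ)\cong\g/\mathfrak n$ is a quotient of $d\alpha|_{\g_{=1}}$, hence semisimple with modulus‑$1$ eigenvalues, so the closed group $P:=\overline{\langle\bar\alpha\rangle}\le\Aut(H/N)$ is compact; then every $\bar g\in H/N$ satisfies $\bar\alpha^m(\bar g)\in\bar\Omega$ for some $m$ and a fixed compact vacuum set $\bar\Omega$, so $\bar g\in\bar\alpha^{-m}(\bar\Omega)\se P\cdot\bar\Omega$, and $H/N=P\cdot\bar\Omega$ is compact. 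Now $1\to N\to H\to H/N\to1$ is an extension of a compact group by a simply connected nilpotent group, so it splits, and any two complements are conjugate by an element of $N$ (classical Malcev–Iwasawa theory; this also follows from the structure theory of compaction groups in \cite{CoTe}, cf.\ Proposition~\ref{prop:stcom}). Since $\alpha(N)=N$, $\alpha$ permutes the complements: if $K_0$ is one of them and $\alpha(K_0)=mK_0m\inv$ with $m\in N$, a complement $n K_0 n\inv$ is $\alpha$‑fixed as soon as $n=\alpha(n)m$; iterating $n\mapsto\alpha(n)m$ from $1$ gives the telescoping products $\alpha^{k-1}(m)\cdots\alpha(m)m$, which converge in the simply connected nilpotent group $N$ because $\alpha|_N$ is a contraction, to a solution $n_*$. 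Thus $K:=n_*K_0n_*\inv$ is an $\alpha$‑invariant compact complement, so $K\se K_\alpha$ by Lemma~\ref{lem:LimitGroup}(iii); conversely $K_\alpha$ is a compact subgroup of $N\rtimes K$ meeting $N$ trivially (Step 1) with $NK=H$, which forces $K_\alpha=K$. Hence $H=N\rtimes K_\alpha$, and by Step 2, $N$ is exactly the set of $\alpha$‑contracted elements. The argument applies verbatim to any compacting automorphism of $H$.

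I expect the main obstacle to be Step 3 — locating an $\alpha$‑invariant complement and identifying it with the limit group $K_\alpha$ — since it requires both the (standard but nontrivial) splitting/conjugacy theorem for extensions of compact groups by simply connected nilpotent groups and the observation that $\alpha|_N$ being a \emph{contraction} (not merely a compaction) produces the needed convergent fixed‑point iteration; the eigenvalue bookkeeping in Steps 1–2 is routine given Lemma~\ref{lem:LimitGroup}.
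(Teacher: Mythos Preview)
Your argument is essentially correct and takes a genuinely different route from the paper. The paper first invokes amenability (Proposition~\ref{compamenable}) and the solvable radical, then uses Lemma~\ref{finimodn} (a Zariski‑closure argument) to force $\alpha$ to have finite order on $R/N$, whence $H/N$ is compact; it then shows the limit group $L$ is a maximal compact subgroup by observing that $H/L$ is a contractible manifold, and concludes via Lemma~\ref{gnk}. Your approach bypasses all of this with a direct eigenvalue analysis identifying $\mathfrak n=\g_{<1}$, followed by a Banach‑style fixed‑point iteration (using that $\alpha|_N$ is a genuine contraction) to produce an $\alpha$‑invariant compact complement, which you then identify with $K_\alpha$ via Lemma~\ref{lem:LimitGroup}. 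Your route is more elementary in that it avoids Lemma~\ref{finimodn} and the contractibility argument; the paper's route is shorter once those lemmas are in place.

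There is, however, one genuine gap in Step~3. The inference ``$d\bar\alpha$ is semisimple with modulus‑one eigenvalues, hence $P:=\overline{\langle\bar\alpha\rangle}\le\Aut(H/N)$ is compact'' is not valid as stated: for $H/N=\RR\rtimes\ZZ/2\ZZ$ one can take $\bar\alpha$ to be the identity on $(H/N)^\circ=\RR$ and a nontrivial translation on the other coset; then $d\bar\alpha=\id$ but $\langle\bar\alpha\rangle$ is unbounded in $\Aut(H/N)$. The compacting hypothesis has to be used \emph{here} as well. The cleanest fix avoids $P$ altogether: since $\g/\mathfrak n\cong\g_{=1}$ and $d\bar\alpha|_{\g_{=1}}$ has $|\det|=1$, the automorphism $\bar\alpha$ preserves Haar measure on $(H/N)^\circ$; combined with Corollary~\ref{cor:CompactingConfining} (a compact confining set $A$ with $\bar\alpha(A)\subset A$ and $(H/N)^\circ=\bigcup_n\bar\alpha^{-n}(A)$, an increasing union of sets of equal finite measure), this forces $(H/N)^\circ$ to have finite Haar measure, hence to be compact, and then $H/N$ is compact. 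With this patch your Step~3 goes through, and the rest of your argument (splitting, conjugacy of complements by $N$, the contraction fixed‑point to make the complement $\alpha$‑invariant, and the identification with $K_\alpha$) is correct.
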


\begin{lem}\label{finimodn}
Let $G$ be a connected solvable Lie group, without nontrivial compact normal subgroups, and $N$ its nilpotent radical. Then for any automorphism $\alpha$ of $G$, the automorphism induced on $G/N$ has finite order.
\end{lem}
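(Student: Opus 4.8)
The plan is to reduce the statement to linear algebra on the Lie algebra. Since the nilpotent radical $N$ is characteristic in $G$, the automorphism $\alpha$ induces an automorphism $\bar\alpha$ of the quotient, which is a connected abelian Lie group $G/N$ (abelian because $[\mathfrak{g},\mathfrak{g}]\subseteq\mathfrak{n}$). A continuous automorphism of a connected Lie group is determined by its differential, so it is equivalent to show that the induced linear map on $\mathfrak{g}/\mathfrak{n}$ has finite order, where $\mathfrak{g}=\mathrm{Lie}(G)$ and $\mathfrak{n}=\mathrm{Lie}(N)$ is the nilradical of $\mathfrak{g}$. I will write $\alpha_{*}$ for this induced map.

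The key input is the description of $\mathfrak{n}$ via the adjoint representation. As $\mathfrak{g}$ is solvable, Lie's theorem puts all operators $\ad(x)$, $x\in\mathfrak{g}$, in simultaneous upper-triangular form on $\mathfrak{g}\otimes\CC$; the diagonal entries define finitely many distinct linear forms $\mu_{1},\dots,\mu_{m}\colon\mathfrak{g}\to\CC$, the weights of the adjoint representation. The standard fact that the nilradical of a solvable Lie algebra is exactly the set of $\ad$-nilpotent elements translates into the identity $\mathfrak{n}=\bigcap_{j}\ker\mu_{j}$. Consequently the $\mu_{j}$ span the annihilator $(\mathfrak{g}/\mathfrak{n})^{*}\otimes\CC$ of $\mathfrak{n}\otimes\CC$ inside $(\mathfrak{g}\otimes\CC)^{*}$.

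To conclude I would observe that $\alpha$ permutes the weights: from $\ad(\alpha(x))=\alpha_{*}\,\ad(x)\,\alpha_{*}^{-1}$ the operators $\ad(\alpha(x))$ and $\ad(x)$ are conjugate, hence have the same eigenvalues for every $x$, and comparing characteristic polynomials (using unique factorization in the polynomial ring on $\mathfrak{g}$, the factors $T-\mu_{j}(x)$ being distinct and irreducible) gives $\{\mu_{j}\circ\alpha\}=\{\mu_{j}\}$ as a set of linear forms. Thus the transpose of $\alpha_{*}$ permutes the finite spanning subset $\{\mu_{1},\dots,\mu_{m}\}$ of $(\mathfrak{g}/\mathfrak{n})^{*}\otimes\CC$; since a linear automorphism that permutes a finite spanning set is determined by, hence injects into, the symmetric group of that set, it has finite order. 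Therefore $\alpha_{*}$ has finite order on $(\mathfrak{g}/\mathfrak{n})\otimes\CC$, and so on $\mathfrak{g}/\mathfrak{n}$, which is what we needed. The argument is soft; the only step I would quote rather than reprove is the identity $\mathfrak{n}=\bigcap_{j}\ker\mu_{j}$ (a standard consequence of Lie's theorem). I would also note that the hypothesis on compact normal subgroups is not used here: it is needed elsewhere (for instance to make $N$ simply connected), but not for this finiteness statement.
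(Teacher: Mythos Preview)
Your proof is correct and takes a genuinely different route from the paper's.

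The paper argues by contradiction at the group level: it takes the Zariski closure $P$ of $\langle\alpha_*\rangle$ inside $\Aut(\mathfrak{g})$, forms the connected solvable Lie group $\tilde G\rtimes P^\circ$ (with $\tilde G$ the simply connected cover), and uses that the derived subgroup of a connected solvable Lie group is nilpotent. If $\alpha$ had infinite order on $G/N$, some power lying in $P^\circ$ would act nontrivially on $G/N$, forcing $[\tilde G,P^\circ]$ to be a nilpotent normal subgroup strictly larger than the preimage of $N$, contradicting maximality.

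Your argument is purely linear-algebraic on $\mathfrak g$: Lie's theorem produces the finite set of weights of the adjoint representation, the identity $\mathfrak n=\bigcap_j\ker\mu_j$ shows they span $(\mathfrak g/\mathfrak n)^*\otimes\CC$, and the conjugacy $\ad(\alpha_* x)=\alpha_*\,\ad(x)\,\alpha_*^{-1}$ forces $\alpha_*$ to permute this finite spanning set; finite order follows. This is more elementary (no algebraic-group machinery, no passage to the simply connected cover) and in fact proves the slightly stronger statement that the whole image of $\Aut(G)$ in $\GL(\mathfrak g/\mathfrak n)$ is finite, not just that each element has finite order. The paper's approach, by contrast, makes transparent \emph{why} the obstruction is nilpotency of $N$: any nontrivial one-parameter family of automorphisms would enlarge the nilpotent radical.

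Your closing remark is also correct: the hypothesis that $G$ has no nontrivial compact normal subgroup is not used in either proof of this lemma; it enters only in the surrounding statements (e.g.\ to ensure $N$ is simply connected in Proposition~\ref{prop:cpli}).
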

\begin{proof}
Consider the Zariski closure $P$ of $\langle\alpha\rangle$ in $\textnormal{Aut}(\mathfrak{g})$, and $P^\circ$ its identity component in the real topology. Write $G=\tilde{G}/Z$ with $Z$ discrete and central, and $\tilde{G}$ simply connected. Then $\tilde{G}\rtimes P^\circ$ is a connected solvable Lie group and therefore its derived subgroup is nilpotent. Now if $\alpha$ has infinite order as an automorphism of $G/N$, then some power of $\alpha$ belonging to $P^{\circ}$ acts nontrivially on $G/N$, so the connected group $[\tilde{G},P^{\circ}]$ strictly contains the inverse image $\tilde{N}$ of $N$ in $\tilde{G}$. Its image in $G$ is a connected nilpotent normal subgroup of $G$ strictly containing $N$, contradicting the maximality of $N$.
\end{proof}

\begin{lem}\label{gnk}
Let $G$ be a virtually connected Lie group without nontrivial compact normal subgroups, and $N$ its nilpotent radical, and assume that $G/N$ is compact. Then for any maximal compact subgroup $K$ of $G$, we have $G=N\rtimes K$.
\end{lem}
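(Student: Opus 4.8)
The plan is to exhibit one maximal compact subgroup realizing the semidirect decomposition, and then to transport it to an arbitrary one by conjugacy.

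First I would record that $N$ is simply connected. A connected nilpotent Lie group has a unique maximal compact subgroup, namely a central torus; being unique it is characteristic in $N$, hence normal in $G$ (as $N\lhd G$), hence trivial since $G$ has no nontrivial compact normal subgroup. Thus $N$ has no nontrivial compact subgroup, i.e.\ $N$ is simply connected; in particular every compact subgroup of $G$ meets $N$ trivially. This is the only place the hypothesis on compact normal subgroups is used.

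The heart of the matter is the following statement, proved by induction on $\dim M$: \emph{if $\Gamma$ is a Lie group with finitely many components and $M\lhd\Gamma$ is a closed connected simply connected nilpotent normal subgroup with $\Gamma/M$ compact, then $\Gamma=M\rtimes C$ for some compact subgroup $C$, which moreover maps isomorphically onto $\Gamma/M$ and is a maximal compact subgroup of $\Gamma$.} When $\dim M=0$ we have $M=1$ and take $C=\Gamma$. For the inductive step put $A=Z(M)$; since $M$ is simply connected nilpotent, $\exp$ identifies $A$ with a vector group, $A$ is characteristic in $M$ (hence $A\lhd\Gamma$), and $A\neq 1$ because $M\neq 1$. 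Set $\bar\Gamma=\Gamma/A$, $\bar M=M/A$; then $\bar M$ is again closed connected simply connected nilpotent, normal in $\bar\Gamma$, of strictly smaller dimension, with $\bar\Gamma/\bar M\cong\Gamma/M$ compact. By induction there is a compact $\bar C\le\bar\Gamma$ with $\bar M\bar C=\bar\Gamma$ and $\bar M\cap\bar C=1$. Let $\Gamma_1$ be the preimage of $\bar C$ in $\Gamma$; then $1\to A\to\Gamma_1\to\bar C\to1$ is an extension of a compact group by a vector group, and such an extension splits (the standard Mostow-type fact; equivalently $H^2_{\mathrm{cont}}(\bar C;A)=0$ by averaging over the Haar measure of $\bar C$). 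A splitting yields a compact $C\le\Gamma_1\le\Gamma$ mapping isomorphically onto $\bar C$. Then $M\cap\Gamma_1$ maps into $\bar M\cap\bar C=1$, so $M\cap\Gamma_1\subseteq A$ and hence $M\cap C\subseteq A\cap C=1$; and $MC=\Gamma$ because its image in $\bar\Gamma$ is $\bar M\bar C=\bar\Gamma$ while it contains $A=\ker(\Gamma\to\bar\Gamma)$. Finally $C$ is maximal compact in $\Gamma$: if $C\le L$ with $L$ compact then $L\cap M=1$, so $L$ embeds into $\Gamma/M$, forcing $L=C$ since $C$ already surjects onto $\Gamma/M$.

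Applying this with $\Gamma=G$ and $M=N$ (legitimate by the first step) yields a maximal compact subgroup $K_0$ of $G$ with $G=N\rtimes K_0$. For an arbitrary maximal compact subgroup $K$ of $G$, the Cartan--Iwasawa--Malcev conjugacy theorem (valid since $G$ has finitely many components) gives $g\in G$ with $K=gK_0g\inv$; conjugating the decomposition by $g$ and using $gNg\inv=N$ gives $G=N\rtimes K$, as claimed. The only non-formal ingredient is the splitting of an extension of a compact group by a vector group, which is classical; so I do not anticipate a genuine obstacle, the main care being to organise the induction so that the ``no compact normal subgroup'' hypothesis is used exactly once and then discarded.
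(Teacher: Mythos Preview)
Your proof is correct, but it takes a genuinely different route from the paper's. After the common first step (showing $N$ is simply connected via the characteristic torus argument), the paper argues topologically: since $N$ is contractible, the projection $G\to G/N$ is a homotopy equivalence, and by Mostow's theorem the inclusion $K\hookrightarrow G$ is also one; hence the injective homomorphism $K\to G/N$ is a homotopy equivalence between compact manifolds of equal dimension, forcing it to be a homeomorphism. This handles every maximal compact $K$ at once, with no induction and no appeal to conjugacy. Your approach instead builds a splitting by induction on $\dim N$ through the center, using the vanishing $H^2_{\mathrm{cont}}(\bar C;A)=0$ for compact $\bar C$ and vector $A$, and then transports to an arbitrary $K$ via Cartan--Iwasawa--Malcev conjugacy. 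Your argument is more self-contained in that it avoids the homotopy-equivalence statement from \cite{mostow1955self}, but it trades this for the conjugacy theorem and a cohomological splitting lemma (both of comparable depth). The paper's argument is shorter and more conceptual; yours is more constructive and would generalise more readily to settings where the homotopy input is unavailable.
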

\begin{proof}
Since $G$ has no nontrivial compact subgroup, $N$ is simply connected and therefore $N\cap K=1$, and we also deduce that $G\to G/N$ is a homotopy equivalence. Since the inclusion $K\to G$ is also a homotopy equivalence \cite[Theorem~3.1]{mostow1955self}, the composite homomorphism $K\to G/N$ is injective and is a homotopy equivalence between (possibly not connected) compact manifolds. This is necessarily a homeomorphism and therefore $G=N\rtimes K$.
\end{proof}

\begin{proof}[Proof of Proposition~\ref{prop:cpli}]
Fix a compaction $\alpha$. By Proposition~\ref{compamenable}, $H$ is amenable. Let $R$ be the solvable radical of $H$ (the largest connected solvable normal subgroup), so $N\subset R$ and $H/R$ is compact by amenability. By Lemma~\ref{finimodn}, $\alpha$ induces an automorphism of finite order of $R/N$ and therefore, since $\alpha$ is compacting, we deduce that $R/N$ and hence $H/N$ is compact. 

Let $L$ be the limit group of $\alpha$. Then $\alpha$ induces a contraction of the manifold $H/L$; since $H/L$ is locally contractible, it immediately follows that $H/L$ is contractible (because it has all its homotopy groups trivial). Therefore $L$ is a maximal compact subgroup (indeed, if $L'$ is a maximal compact subgroup containing $L$, then $H/L$ retracts by deformation to $L'/L$, which is a compact manifold, so cannot be contractible unless it is reduced to a point). By Lemma~\ref{gnk}, we deduce that $H=N\rtimes L$.

Finally, the restriction $\alpha|_N$ is a compaction, but $N$ has no nontrivial compact subgroup, so the limit group of $\alpha|_N$ is trivial and hence $\alpha|_N$ is a contraction.
\end{proof}

A compaction as in Proposition~\ref{prop:cpli} cannot in general be extended to a one parameter subgroup; however we point out the following fact in this direction.

\begin{lem}\label{lem:CoctEmbedding}
Let $H$ be a virtually connected Lie group without nontrivial compact normal subgroup and $\alpha \in \Aut(H)$ be a compaction. 

Then there is a proper injective homomorphism with cocompact image
$$i:\;G= H\rtimes_\alpha \ZZ  \to G'=H'\rtimes \RR$$
with compacting action of $\RR$ on $H'$, so that $i(H)$ is open of finite index in $H'$, and $i$ induces the standard embedding $\ZZ\to\RR$ (on the compacting factors). Moreover $i(G)$ is normal in $G'$ and $G'/i(G)$ is abelian.
\end{lem}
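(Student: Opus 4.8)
The plan is to reduce, via the structure theorem Proposition~\ref{prop:cpli}, to a statement about the simply connected nilpotent radical $N$ of $H$; then to realize the contraction $\alpha|_N$ as a finite‑order automorphism composed with the time‑one map of a one‑parameter group of contractions of $N$, obtained from the multiplicative Jordan decomposition of (the differential of) $\alpha|_N$ inside the real algebraic group $\Aut(N)$; and finally to assemble $G'$ inside the holomorph $N\rtimes\Aut(N)$.

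\emph{Step 1 (reduction).} By Proposition~\ref{prop:cpli}, letting $N$ be the nilpotent radical of $H$, there is an $\alpha$‑invariant decomposition $H=N\rtimes K$ with $N$ simply connected, $K=K_\alpha$ the limit group of $\alpha$ (compact, and $\alpha$‑invariant by Lemma~\ref{lem:LimitGroup}), and $\alpha|_N$ a contraction. Since $H$ has no nontrivial compact normal subgroup, the action homomorphism $\phi\colon K\to\Aut(N)$ is injective (its kernel centralizes $N$ and is normal in $H$, hence trivial), so $H\cong N\rtimes\phi(K)$ sits inside the holomorph $N\rtimes\Aut(N)$, the automorphism $\alpha$ being realized there as conjugation by $\alpha|_N\in\Aut(N)$. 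Hence $G=H\rtimes_\alpha\ZZ$ is identified with $N\rtimes\Gamma$, where $\Gamma:=\la\phi(K),\alpha|_N\ra=\phi(K)\rtimes\la\alpha|_N\ra\le\Aut(N)$ (the product is semidirect because the contraction $\alpha|_N$ has infinite order and no positive power lands in the compact group $\phi(K)$). It therefore suffices to enlarge $\Gamma$, within $\Aut(N)$, to a closed subgroup $\Gamma'=K'\cdot\la\lambda_t:t\in\RR\ra$ with $K'$ compact, $\phi(K)$ normal in $K'$ of finite index, $(\lambda_t)$ a one‑parameter group of contractions of $N$ normalizing $K'$, and $\alpha|_N$ projecting to $1$ under $\Gamma'\to\RR$: then $H':=N\rtimes K'$ and $G':=N\rtimes\Gamma'=H'\rtimes\RR$ (the $\RR$‑action on $H'$ being compacting, since $\lambda_1$ contracts $N$ and preserves the compact $K'$) will satisfy all the requirements, with $i$ the inclusion $N\rtimes\Gamma\hookrightarrow N\rtimes\Gamma'$.

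\emph{Step 2 (the one‑parameter group).} Put $g:=\alpha|_N$ and $A:=\Aut(N)\cong\Aut(\mathfrak n)$, a real algebraic group, and let $Z$ be the Zariski closure of $\la g\ra$ in $A$; it is commutative, and its multiplicative Jordan decomposition makes its identity component $Z^\circ$ a connected commutative algebraic group whose Lie algebra lies in $\mathrm{Der}(\mathfrak n)$ and satisfies $\exp(\mathrm{Lie}(Z^\circ))=Z^\circ(\RR)^0$. Since $Z/Z^\circ$ and $Z^\circ(\RR)/Z^\circ(\RR)^0$ are finite, some power $g^m$ equals $\exp(\delta)$ for a derivation $\delta$ of $\mathfrak n$. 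Set $\lambda_t:=\exp(\tfrac tm\delta)$; this is a one‑parameter group of automorphisms of $\mathfrak n$, hence of $N$ by simple connectedness, contained in $Z^\circ$. As $\lambda_m=g^m$ is a contraction and $N$ has no nontrivial compact subgroup, Lemma~\ref{lem:cbp}(\ref{continuous}) shows that every $\lambda_t$ with $t>0$ is a contraction. Finally, $\rho:=g\lambda_1^{-1}$ lies in the commutative group $Z$, so it commutes with every $\lambda_t$ and satisfies $\rho^m=g^m\lambda_1^{-m}=\mathrm{id}$; thus $g=\rho\lambda_1$ with $\rho$ of finite order commuting with the one‑parameter group $(\lambda_t)$ of contractions.

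\emph{Step 3 (assembly).} The point that makes everything fit together is that $\phi(K)$ is a \emph{real algebraic} subgroup of $A$: being a compact subgroup of $\Aut(\mathfrak n)\le GL(\mathfrak n)$ it preserves some inner product, hence lies in a compact orthogonal group, and closed subgroups of compact Lie groups are algebraic. Consequently $N_A(\phi(K))$ is Zariski‑closed, and since $g$ normalizes $\phi(K)$, the whole Zariski closure $Z$ — in particular $\rho$ and all the $\lambda_t$ — normalizes $\phi(K)$. Put $K':=\phi(K)\cdot\la\rho\ra$; it is compact (a finite extension of $\phi(K)$), $\phi(K)$ is normal in it of index dividing $m$, and $(\lambda_t)$ normalizes $K'$ (it normalizes $\phi(K)$ and centralizes $\rho$). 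Take $\Gamma':=K'\cdot\la\lambda_t:t\in\RR\ra\cong K'\rtimes\RR$ (a subgroup of $A$: the one‑parameter group meets the compact $K'$ trivially because $\lambda_1$ is a contraction) and $G':=N\rtimes\Gamma'=H'\rtimes\RR$. The inclusion $i\colon G=N\rtimes\Gamma\hookrightarrow G'=N\rtimes\Gamma'$ — the identity on $N$, the identity on $\phi(K)$, and $g=\alpha|_N\mapsto\rho\lambda_1$ on the cyclic factor — is a continuous injective homomorphism; it is proper with cocompact image because $[K':\phi(K)]<\infty$ and $\RR/\ZZ$ is compact; it restricts to the standard inclusion $\ZZ\hookrightarrow\RR$ on the cyclic, resp. real, factor; $i(H)=N\rtimes\phi(K)$ is open of finite index in $H'=N\rtimes K'$; and a direct check shows $\Gamma$ is normal in $\Gamma'$ with $[\Gamma',\Gamma']\subseteq\phi(K)\subseteq\Gamma$ (all commutators among $\phi(K)$, $\rho$ and the $\lambda_t$ land in $\phi(K)$, while $\rho$ and the $\lambda_t$ commute with $g$), so that $i(G)$ is normal in $G'$ with $G'/i(G)\cong(K'/\phi(K))\times(\RR/\ZZ)$ abelian.

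The conceptual heart is Step~2: extracting a genuine one‑parameter group of contractions from the discrete datum $\alpha|_N$, which unavoidably produces the finite‑order ``rotational'' defect $\rho$ and hence the finite‑index enlargement $H'\supseteq H$. One might worry that the compact direction $K$ obstructs the construction; Step~3 shows it does not, the decisive input being the algebraicity of the compact group $\phi(K)$, which forces its normalizer to absorb the one‑parameter group produced in Step~2.
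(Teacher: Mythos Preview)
Your proof is correct and follows essentially the same route as the paper's. Both arguments reduce via Proposition~\ref{prop:cpli} to the simply connected nilpotent radical $N$, pass to the real algebraic group $\Aut(N)$, and extract a one-parameter group together with a finite-order factor from the Zariski closure of $\langle\alpha|_N\rangle$; the paper packages your Step~2 into the auxiliary Lemma~\ref{paramab}, while you unfold it explicitly via the exponential of the connected abelian Lie group $Z^\circ(\RR)^0$. One small inaccuracy: your identification $G'/i(G)\cong(K'/\phi(K))\times(\RR/\ZZ)$ is not literally correct in general (the generator $g=\rho\lambda_1$ projects diagonally, so the quotient is $\big((K'/\phi(K))\times\RR\big)/\langle(\bar\rho,1)\rangle$, a connected circle rather than a product when $\bar\rho$ generates $K'/\phi(K)$); but this does not affect abelianness or compactness, which are all that is claimed.
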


To check the lemma, we use the following easy subsidiary fact.

\begin{lem}\label{paramab}
Let $G$ be the group of real points of an affine algebraic group defined over the reals, viewed as a locally compact group. Let $Z$ be a discrete subgroup of $G$ isomorphic to $\ZZ$. 

Then there exists a closed subgroup $P$ of $G$, contained in the Zariski closure of $Z$, isomorphic to $\RR\times F$ with $F$ finite cyclic, such that $Z$ is contained and cocompact in $P$. 
\end{lem}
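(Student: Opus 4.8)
The plan is to reduce the statement to the structure theory of commutative real algebraic groups. Write $Z=\langle g\rangle$. Since $Z$ is abelian, its Zariski closure $A:=\overline{Z}$ is a commutative affine algebraic group defined over $\RR$, and it suffices to find $P$ inside the Lie group $A(\RR)$ of its real points. The first step is to invoke the structure of such groups: decomposing $A$ as a product $A_{\mathrm{mult}}\times A_{\mathrm{uni}}$ of a group of multiplicative type and a unipotent group, and using that in characteristic $0$ one has $A_{\mathrm{uni}}(\RR)\cong\RR^d$ while $A_{\mathrm{mult}}(\RR)$ is, as a topological group, a product of a compact abelian Lie group with a vector group (this in turn reduces to the classification of tori over $\RR$), one obtains an isomorphism of topological groups $A(\RR)\cong K\times\RR^N$ with $K$ compact abelian and $N\ge 0$. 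Under this isomorphism write $g=(k_0,v_0)$.

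Next I would pin down $v_0$ and reduce modulo the component group. Since $Z$ is infinite and discrete in $G$, it is not contained in any compact subset of $G$, hence $v_0\ne 0$. Let $m\ge 1$ be the order of the image of $g$ in the finite group $A(\RR)/A(\RR)^\circ\cong K/K^\circ$, so that $g^m=(k_0^m,mv_0)$ lies in $A(\RR)^\circ=K^\circ\times\RR^N$ with $k_0^m$ in the torus $K^\circ$. Choosing a one-parameter subgroup $\chi\colon\RR\to K^\circ$ with $\chi(1)=k_0^m$ (possible since $K^\circ$ is a torus), set $P_1:=\{(\chi(t),tmv_0):t\in\RR\}$. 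Because $mv_0\ne 0$, the map $t\mapsto(\chi(t),tmv_0)$ is injective and proper, so $P_1$ is a closed one-parameter subgroup of $A(\RR)$ isomorphic to $\RR$, with $P_1(k)=g^{mk}$ for all $k\in\ZZ$; in particular $\langle g^m\rangle$ is cocompact in $P_1$.

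Then I set $P:=Z\cdot P_1=\langle g\rangle\cdot P_1$, an abelian subgroup of $A(\RR)$. Since $g^m\in P_1$, the group $P$ is the union of the $m$ cosets $g^iP_1$ for $0\le i<m$, hence closed in $A(\RR)$ and therefore in $G$, and of course $P\subseteq A(\RR)\subseteq A$ as required. Because $P_1\subseteq A(\RR)^\circ$ while $g^i\notin A(\RR)^\circ$ for $0<i<m$, the image of $g$ has order exactly $m$ in $P/P_1\cong\ZZ/m$; as $\RR$ is divisible this extension of topological groups splits, giving $P\cong\RR\times F$ with $F$ finite cyclic of order $m$. Finally $Z$ surjects onto $P/P_1$, so $Z\cdot P_1=P$ and $P/Z\cong P_1/(P_1\cap Z)=P_1/\langle g^m\rangle\cong\RR/\ZZ$ is compact, i.e.\ $Z$ is cocompact in $P$.

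The one genuinely delicate point is this last paragraph: passing to the power $g^m$ in order to land in the identity component of $A(\RR)$ forces a finite factor back into $P$, and one must verify that reassembling $P=Z\cdot P_1$ really yields a group of the clean form $\RR\times F$ (rather than a non-split or non-closed object) and that $Z$ still sits cocompactly in it; the coset description of $P$ together with the splitting of the extension by $\ZZ/m$ are exactly what make this go through. The other, milder, point is locating the precise reference for the shape $A(\RR)\cong K\times\RR^N$, which is standard but is the only non-elementary input.
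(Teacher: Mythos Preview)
Your proof is correct and follows essentially the same route as the paper's: pass to the (real points of the) Zariski closure, use the structure of abelian Lie groups, and find a closed one-parameter subgroup through (a power of) the generator. The paper's version is slightly more streamlined: it invokes directly the splitting $M\cong M^\circ\times F$ of the component group (Bourbaki, \emph{Th\'eories spectrales}, II.\S2.1), projects $Z$ into $M^\circ$, finds a closed one-parameter subgroup $P_0\subset M^\circ$ containing this projection, and takes $P=P_0\times F$; this avoids your passage to $g^m$ and the subsequent reassembly $P=Z\cdot P_1$, but the content is the same.
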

\begin{proof}
Let $M$ be the Zariski closure of $Z$. As a compactly generated abelian Lie group with finitely many components, it is isomorphic to $M^\circ\times F$ with $M^\circ$ an abelian connected Lie group and $F$ finite \cite[II.~\S 2.1]{bourbaki2007theories}; we can replace $F$ by the projection of $Z$ so as to suppose it cyclic. The projection of $Z$ in $M^\circ$ is contained in a closed one-parameter subgroup; its inverse image in $M$ is the desired subgroup $P$.
\end{proof}

\begin{proof}[Proof of Lemma~\ref{lem:CoctEmbedding}]
We may assume $H$ non-compact, since otherwise $H$  would be trivial by hypothesis, in which case the desired conclusions are clear.  
Let $N$ be the nilpotent radical of $H$ and $K$ be the limit group of $\alpha$, so that $H = N \rtimes K$ by Proposition~\ref{prop:cpli}. Observe that $\Aut(N)$ is an affine algebraic group; let $\rho \colon  K\rtimes_\alpha \ZZ \to\Aut(N)$ be the homomorphism defined by the action by conjugation. The hypothesis that $\alpha$ is a compaction, together with the fact that $H$ is non-compact, implies that $\rho$ is injective and moreover that $\rho(\ZZ)$ is a discrete subgroup of $\Aut(N)$. Let  then $P=\RR\times F\supset\rho(\ZZ)$ be a subgroup of $\Aut(N)$  as given by Lemma~\ref{paramab}. Since $K$ is compact, so is $\rho(K)$, which is thus Zariski closed in $\Aut(N)$ by the Weierstrass approximation theorem.
Since $P$ is contained in the Zariski closure of $Z$, it normalizes $\rho(K)$. Therefore
$\rho(K)P$ is a virtually connected Lie group. The homomorphism $\rho \colon K\rtimes_\alpha \ZZ\to \rho(K)P$ is proper and has a cocompact image, containing the group of commutators  of $\rho(K)P$ since $P$ is abelian and $\rho(K)$ is normal. Thus we obtain a proper embedding 
$$N\rtimes(K\rtimes_\alpha \ZZ)\to G'=N\rtimes \rho(K)P$$ 
whose image contains $[G',G']$. Since $P = \RR \times F$, we may write $\rho(K)P=\rho(K)F\rtimes\RR$. We finally obtain the desired claim by setting $H'=N\rtimes \rho(K)F$.
\end{proof}

We deduce the following corollary, which will be used in the proof of Theorem~\ref{thm:main}.

\begin{cor}\label{Heintzeco}
Let $G=H\rtimes\RR$ or $H\rtimes\ZZ$ be a semidirect product of a locally compact group $H$ with compacting action of $\RR$ or $\ZZ$, and assume in the second case that $H$ is connected-by-compact. Then $G$ has a proper cocompact isometric action on a homogeneous negatively curved Riemannian manifold $X$ fixing at point at infinity. More precisely, in the case of $H\rtimes\RR$ this action is transitive, and in the case of $H\rtimes\ZZ$, this action has, as orbits, the subsets $\{x:\;b(x)-r\in\ZZ\}$ where $b$ is (a scalar multiple of) some Busemann function and $r\in\RR/\ZZ$; in particular, the orbit space $G\backslash X$ is a circle in the latter case. 
\end{cor}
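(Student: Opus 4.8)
The plan is to reduce everything to the case $G=H\rtimes\RR$, and then to realize $X$ as a Heintze Lie group equipped with a negatively curved left-invariant metric for which the compact ``limit'' factor of $H$ acts by isometric automorphisms.

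\emph{Reductions.} In the case $G=H\rtimes\RR$, Lemma~\ref{lem:cbp}(\ref{continuous}) already shows that $H$ is connected-by-compact. In the case $G=H\rtimes\ZZ$ with $H$ connected-by-compact, $H$ has a maximal compact normal subgroup $W$ (it exists since $H$ is almost connected, and is characteristic, hence $\alpha$-invariant); by Lemma~\ref{lem:cbp}(i) the automorphism $\alpha$ induces a compaction on $H/W$, and $H/W$ has no nontrivial compact normal subgroup, hence is a virtually connected Lie group. Since $W$ is a compact normal subgroup of $G$, it is harmless to replace $G$ by $G/W$: orbits, properness, cocompactness, and the existence of a fixed boundary point are unaffected. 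So we may assume $H$ is a virtually connected Lie group with no nontrivial compact normal subgroup, and then Lemma~\ref{lem:CoctEmbedding} provides a proper embedding with cocompact image $i\colon G=H\rtimes\ZZ\hookrightarrow G'=H'\rtimes\RR$, where $H'$ is virtually connected Lie with compacting $\RR$-action, $i(H)$ is open of finite index in $H'$, and $i$ restricts to the standard inclusion $\ZZ\hookrightarrow\RR$ on the distinguished factors. Granting the conclusion for $G'$ — with $G'$ acting transitively on $X$ fixing a boundary point $\xi$, Busemann function $b$ at $\xi$, and $\beta_\xi$ restricting to a nonzero multiple of the projection $G'\to\RR$ with kernel $H'$ — the subgroup $i(G)$ acts properly (as $i$ is proper), cocompactly (as $i(G)$ is cocompact in $G'$ and $G'$ is transitive on $X$), and fixes $\xi$. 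Moreover $i(H)\subseteq\Ker\beta_\xi=H'$ has finite index in $H'$; since $H'$ acts transitively on each horosphere centred at $\xi$ (a level set of $b$) and such a horosphere is connected, $i(H)$ is already transitive on it, so after suitably normalizing $b$ the $i(G)$-orbits on $X$ are exactly the sets $\{x:b(x)-r\in\ZZ\}$ and $G\backslash X$ is a circle. This reduces the statement to the case $G=H\rtimes\RR$.

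\emph{Structure of $G=H\rtimes\RR$.} Here $H$ is connected-by-compact with a continuous compacting action $(\alpha_t)$ of $\RR$. By the Hazod--Siebert decomposition (Proposition~\ref{hazsie}), $H=N\rtimes K$ with $N$ the simply connected nilpotent radical of $H^\circ$ (consisting exactly of the elements contracted by the flow) and $K$ the compact limit group; both $N$ (characteristic) and $K$ (the limit group) are $(\alpha_t)$-invariant. Since $\alpha_t|_K$ is a one-parameter group of automorphisms of the compact group $K$, its image lies in the identity component $\Inn(K)$, so $\alpha_t|_K=\mathrm{Ad}(\exp(t\xi))$ for some $\xi$ in the Lie algebra of $K$ (a small extra argument handles the disconnected part of $K$); replacing the one-parameter subgroup $a(t)$ of the $\RR$-factor by $a(t)\exp(-t\xi)$ — again a one-parameter subgroup, as $\xi$ is $\alpha_t$-fixed — we may assume the $\RR$-factor centralizes $K$. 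Then $\Lambda:=N\rtimes\RR$ is a Heintze group (the flow still contracts $N$), $\Lambda$ is normal in $G$ with $\Lambda\cap K=1$ and $\Lambda K=G$, so $G\cong\Lambda\rtimes K$, with $K$ acting on $\Lambda$ by automorphisms preserving $N$ and commuting with the flow.

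\emph{The metric, and the main obstacle.} I would take $X:=\Lambda$ equipped with a left-invariant negatively curved Riemannian metric à la Heintze~\cite{Heintze}, chosen to be invariant under the automorphisms coming from $K$. Granting this, $G\cong\Lambda\rtimes K$ acts on $X$ by isometries, $\Lambda$ acting simply transitively by left translations and $K$ fixing the base point; the action is continuous, transitive, cocompact, and proper (the base-point stabilizer is the compact group $K$). Thus $X$ is a homogeneous negatively curved Riemannian manifold; being CAT($-1$) it has a visual boundary, and the $\RR$-direction determines a boundary point $\xi$ toward which the flow translates and which is fixed by $N$ and by $K$ (the latter because it commutes with the flow and hence fixes the $\RR$-direction), so $G$ fixes $\xi$; the Busemann function at $\xi$ restricts on $\Lambda$ to a scalar multiple of the $\RR$-coordinate, which together with the first paragraph yields the orbit description. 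The delicate point — the main obstacle — is the existence of a $K$-invariant negatively curved left-invariant metric on $\Lambda$: one cannot simply average a Heintze metric over $K$, since negative sectional curvature is not stable under convex combinations of metrics. Instead one revisits Heintze's construction. Writing $D$ for the derivation of $\mathfrak n$ induced by the generator of the $\RR$-factor, $\mathrm{Ad}(K)$ commutes with $D$, hence preserves its real generalized eigenspace decomposition and commutes with the semisimple and nilpotent parts of $D$; Heintze's recipe produces the metric from any inner product on $\mathfrak n$ lying in a suitable family (roughly, those making the semisimple part normal with symmetric part bounded below, after rescaling the flow so the nilpotent part is small), and this family is both $\mathrm{Ad}(K)$-stable and convex in the relevant parameters, so averaging one of its members over the compact group $\mathrm{Ad}(K)$ yields an $\mathrm{Ad}(K)$-invariant inner product still satisfying Heintze's curvature estimate. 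Checking this last claim carefully is where the real work lies.
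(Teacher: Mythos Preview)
Your reductions (modding out the maximal compact normal subgroup of $H$, then invoking Lemma~\ref{lem:CoctEmbedding} to pass from $H\rtimes\ZZ$ to $H'\rtimes\RR$) match the paper's proof exactly, as does the orbit description in the $\ZZ$-case via transitivity of $i(H)$ on the connected horospheres. The difference lies in the treatment of the core case $G=H\rtimes\RR$: the paper simply writes $G=N\rtimes(K\rtimes\RR)$ via Proposition~\ref{hazsie} and then cites \cite[Proposition~4.3]{CoTe} (which in turn rests on Heintze~\cite{Heintze}) to conclude that the homogeneous space $G/K$ carries a $G$-invariant negatively curved Riemannian metric. You instead attempt to reprove that proposition by first rewriting $G$ as $\Lambda\rtimes K$ with $\Lambda=N\rtimes\RR$ a Heintze group, and then seeking a $K$-invariant Heintze metric on $\Lambda$.

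Two comments on your route. First, your argument that the $\RR$-factor can be adjusted to centralize $K$ is shaky for disconnected $K$ (the claim that the image of $\RR$ in $\Aut(K)$ lies in $\Inn(K)$, and the ``small extra argument'', are not justified as written), but the conclusion is correct and is exactly Lemma~\ref{lem:directr} applied to $K\rtimes\RR$: since $\Out(K)$ is totally disconnected, one gets $K\rtimes\RR\cong K\times\RR'$ directly. Second, the ``main obstacle'' you identify---producing a left-invariant negatively curved metric on $\Lambda$ that is in addition $\mathrm{Ad}(K)$-invariant---is real and is precisely the content of \cite[Proposition~4.3]{CoTe}; your sketch of averaging inside Heintze's family of admissible inner products is the right idea, but as you yourself acknowledge, carrying it out requires re-entering Heintze's curvature estimates. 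So your proof is correct in outline but leaves unfinished exactly the step the paper outsources to~\cite{CoTe}.
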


\begin{proof}
Note that $H$ is connected-by-compact (this follows by Lemma~\ref{lem:cbp}(\ref{continuous}) in the first case, and by assumption in the second). Start by modding out by the maximal compact normal subgroup, so that in particular $H$ is a virtually connected Lie group. Second, in view of Lemma~\ref{lem:CoctEmbedding}, it suffices to deal with  the case of a semidirect product by $\RR$. 

So we assume that $G=H\rtimes\RR$ with compacting action. By Proposition~\ref{hazsie}, we can write $G=N\rtimes (K\rtimes\RR)$ with $K$ compact. By \cite[Proposition~4.3]{CoTe} (which relies on the results of Heintze~\cite{Heintze}), the connected manifold $G/K$ admits a $G$-invariant metric of negative curvature and the corollary is proved, the orbit description being clear.
\end{proof}

The following proposition will also be used in the proof of Theorem~\ref{thm:main}.

\begin{prop}\label{someall}
Let $H$ be a connected-by-compact locally compact group, and let $\Lambda$ be either $\RR$ or $\ZZ$, and denote by $\Lambda_+$ the set of positive elements in $\Lambda$. Let $G = H\rtimes\Lambda$ be a semidirect product such that some element $\alpha \in H\rtimes\Lambda_+$ compacts $H$. 
Then every element in $H\rtimes\Lambda_+$ compacts $H$.
\end{prop}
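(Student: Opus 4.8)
The plan is to reduce to the connected Lie case via Yamabe's theorem, and then to exploit the fact that the set of compacting elements in $H \rtimes \Lambda_+$ is both ``large'' (it contains a whole coset-translate of a neighbourhood, by uniformity of compactions) and ``closed under a natural operation'', so that a connectedness/density argument forces it to be everything. First I would mod out by the maximal compact normal subgroup of $H^\circ$; since being a compaction is insensitive to passing to a quotient by an invariant compact normal subgroup (Lemma~\ref{lem:cbp}(i)), and since $H$ is connected-by-compact, Yamabe's theorem (Theorem~\ref{thm:Yamabe}) lets us assume $H$ is a virtually connected Lie group, indeed one with no nontrivial compact normal subgroup. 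Write elements of $H \rtimes \Lambda_+$ as $g\beta_t$ with $g \in H$ and $\beta_t$ the canonical generator of the $\Lambda$-direction at ``time'' $t>0$; note that $\gamma := g\beta_t$ acts on $H$ by the automorphism $h \mapsto g\,\beta_t(h)\,g\inv$, which is a compaction if and only if $\beta_t$-conjugation followed by the inner automorphism by $g$ is one. Since inner automorphisms are trivially compacting-preserving in the sense that $\Inn(g)\circ\theta$ is a compaction iff $\theta$ is (conjugate a compact vacuum set), it suffices to treat the case $g = 1$, i.e.\ to show that if $\beta_{t_0}$ compacts $H$ for one $t_0 > 0$, then $\beta_t$ compacts $H$ for every $t > 0$ — and more generally that every element of $H \rtimes \Lambda_+$ does. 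Wait: this last reduction only handles $g=1$; for general $g$ I would instead argue that $\gamma^n = (g\beta_t)^n$ equals $g_n \beta_{nt}$ for a suitable sequence $g_n \in H$, and that $\Omega$ being a vacuum set for $\beta_{nt}$ (for $n$ large, once we know $\beta_{t}$ compacts) combined with boundedness considerations on the $g_n$ gives a vacuum set for $\gamma$.

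The key step is therefore: if $\alpha = g_0\beta_{t_0}$ compacts $H$, then $\beta_t$ compacts $H$ for all $t > 0$. In the case $\Lambda = \ZZ$ this is immediate from Lemma~\ref{lem:cbp}(iii) (compaction is equivalent for $\alpha$ and $\alpha^n$) together with the observation that $\alpha^n$ and $\beta_{n}$ differ by an inner automorphism of $H$, and then $\beta_1$ compacts $H$, hence so does $\beta_n = \beta_1^n$, and hence so does any $g\beta_n$. So the substance is the case $\Lambda = \RR$. Here I would use Lemma~\ref{lem:cbp}(iv): if $(\beta_t)$ is the continuous one-parameter group obtained after the inner-automorphism correction — more precisely, one must first check that conjugation by $\alpha$ and the flow $\beta_t$ generate, after passing to the Lie quotient, a situation where a genuine one-parameter flow of automorphisms is present — then as soon as one $\beta_{t_0}$ is a compaction, all $\beta_t$ ($t>0$) are. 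The cleanest route is: by Proposition~\ref{hazsie} (Hazod--Siebert) applied to the flow $\beta_t$ once we know \emph{some} $\beta_{t_0}$ is compacting, $H = N \rtimes K_{\beta_{t_0}}$ with $N$ simply connected and $N$ exactly the contracted subgroup; then compacting-ness of $\beta_t$ for all $t>0$ reads off from the fact that $N$ consists of the contracted elements, which is a statement about the flow, not about a single time. The one delicate point is that $\alpha$ is not literally a time-$t_0$ map of the flow but differs from it by $\Inn(g_0)$; I would handle this by noting $\alpha$ normalizes the flow up to inner automorphisms (both lie in $\Aut(H)$ and $H\rtimes\RR$ is our ambient group, so $\alpha$-conjugation on $H$ differs from $\beta_{t_0}$-conjugation by an inner automorphism), whence $\alpha$ compacting forces $\beta_{t_0}$ compacting by Lemma~\ref{lem:cbp}(i) applied after killing... — actually more simply: $\Inn(g_0)\inv \circ (\text{conj by }\alpha) = \text{conj by }\beta_{t_0}$, and conjugating a compact pointwise vacuum set for $\text{conj by }\alpha$ by elements of $H$ together with Baire/Proposition~\ref{wangbis} gives one for $\text{conj by }\beta_{t_0}$. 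Then Lemma~\ref{lem:cbp}(iv) finishes.

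The main obstacle, I expect, is the bookkeeping around inner automorphisms: an element $g\beta_t \in H \rtimes \Lambda_+$ does not act on $H$ by a ``clean'' automorphism but by $\beta_t$-conjugation twisted by $\Inn(g)$, and its powers $(g\beta_t)^n$ involve a product $g\,\beta_t(g)\,\beta_{2t}(g)\cdots$ that must be shown to stay in a compact set (after we know $\beta_t$ is a compaction) in order to conclude that $g\beta_t$ itself is a compaction via Proposition~\ref{wangbis}. This is exactly the kind of estimate that makes the statement non-formal: one needs that a compact vacuum set $\Omega$ for $\beta_t$ absorbs not just $\beta_{nt}(M)$ for compact $M$ but also the correction terms, which should follow because the partial products $g\,\beta_t(g)\cdots\beta_{(n-1)t}(g)$ are eventually trapped in a fixed compact set (their tails $\beta_{kt}(g)$ converge into $\Omega$, indeed into the limit group, so the products converge). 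Once this convergence is in hand, $g\beta_t$ has a compact pointwise vacuum set, and Proposition~\ref{wangbis} upgrades it to a compaction. I would present the argument in the order: (1) reduce to $H$ virtually connected Lie with no compact normal subgroup; (2) dispose of $\Lambda = \ZZ$ using Lemma~\ref{lem:cbp}(iii); (3) for $\Lambda = \RR$, show $\alpha$ compacting $\Rightarrow$ $\beta_{t_0}$ compacting $\Rightarrow$ (Lemma~\ref{lem:cbp}(iv) or Hazod--Siebert) $\beta_t$ compacting for all $t>0$; (4) deduce every $g\beta_t$ is compacting via the partial-product convergence estimate and Proposition~\ref{wangbis}.
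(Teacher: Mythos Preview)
Your overall architecture matches the paper's: reduce to the inner-automorphism step (``if $\theta$ compacts $H$ and $\phi$ is inner, then $\phi\theta$ compacts $H$''), use it once to pass from the given $\alpha = g_0\beta_{t_0}$ to the pure flow element $\beta_{t_0}$, propagate along $\Lambda_+$ via Lemma~\ref{lem:cbp}(iii)/(iv), and use it again to reach every $g\beta_t$. The paper isolates exactly this step as Lemma~\ref{inner}.

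The gap is that you treat this step as easy, and your justification does not work. You first call it ``trivially compacting-preserving (conjugate a compact vacuum set)'', then in step~(3) you repeat the error: from $\Inn(g_0)^{-1}\circ(\text{conj by }\alpha)=\text{conj by }\beta_{t_0}$ you cannot get a vacuum set for $\beta_{t_0}$ by conjugating one for $\alpha$, because $(c_{g_0^{-1}}\circ\theta)^n$ is not $c_{g_0^{-1}}\circ\theta^n$. You do eventually identify the right object, the partial products $x(n)=x\,\theta(x)\cdots\theta^{n-1}(x)$, but your reason for their boundedness --- ``the tails $\theta^k(x)$ converge into the limit group, so the products converge'' --- is false: a sequence of elements accumulating on a compact \emph{subgroup} can have unbounded partial products (think of a fixed nontrivial rotation repeated). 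What the paper does, and what you are missing, is to invoke Proposition~\ref{prop:cpli} to write $H=N\rtimes K$ with $N$ simply connected nilpotent and contracted, $K$ the (compact, $\theta$-invariant) limit group, and then treat $x\in N$ and $x\in K$ separately: for $x\in N$ the contraction is geometric, so the products genuinely converge in $N$; for $x\in K$ every factor and hence every product stays in $K$. General $x=mk$ is handled by composing. Without this decomposition the boundedness of $x(n)$ is simply not available, and your argument does not close in either the $\ZZ$ or the $\RR$ case (note that even your $\ZZ$ reduction ``$\alpha^n$ and $\beta_n$ differ by an inner automorphism, hence done'' already presupposes this lemma).
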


We need the following lemma.

\begin{lem}\label{inner}
Let $H$ be a locally compact group, $\alpha$ a compaction of $H$ and $\phi$ an inner automorphism of $H$. 
Then $\alpha\phi$ is a compaction as well.
\end{lem}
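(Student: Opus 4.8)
The plan is to route $\alpha\phi$ through the boundary dynamics of the hyperbolic group $G=H\rtimes\ZZ$, instead of manipulating the iterates $(\alpha\phi)^n$ directly (those involve negative powers of $\alpha$, which behave badly). First I would dispose of the trivial case: if $H$ is compact, every automorphism is a compaction, so assume $H$ is noncompact and write $\phi(h)=ghg\inv$ for a fixed $g\in H$. Since $\alpha$ is a compaction of the noncompact group $H$, Corollary~\ref{cor:CompactingConfining} provides a compact $A$ into which $\alpha$ is strictly confining, so by Proposition~\ref{prop:Contracting} the group $G=H\rtimes\la t\ra$ (with $tht\inv=\alpha(h)$) is hyperbolic and acts focally, hence non-elementarily, on its Cayley graph; and $G$ is amenable by Proposition~\ref{compamenable}. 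Via Corollary~\ref{cor:eqprop} I replace that Cayley graph by a proper geodesic hyperbolic space $X$ on which $G$ acts continuously, properly and cocompactly, with $\bd X$ uncountable. Then Proposition~\ref{prop:coctamen} applies to $G\curvearrowright X$: there is a fixed point $\xi$, the Busemann homomorphism $\beta_\xi\colon G\to\RR$ has closed nonzero image, and every $\gamma\in G$ with $\beta_\xi(\gamma)<0$ acts as a compacting automorphism on $U:=\Ker\beta_\xi$.

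The crucial observation is that $\beta_\xi$ vanishes on $H$. Indeed, if $V$ is a compact pointwise vacuum set for $\alpha$, then for $h\in H$ the value $\beta_\xi(h)=\beta_\xi(\alpha^n h)=\beta_\xi(tht\inv)$ lies in the bounded set $\beta_\xi(V)$ for all large $n$ (using that $\beta_\xi$ is a conjugation-invariant homomorphism), and a bounded subgroup of $\RR$ is trivial, so $\beta_\xi|_H=0$. Hence $H\subseteq U$; and since $\beta_\xi$ factors through $G/H\cong\ZZ$ with nonzero image, it is injective on $G/H$, forcing $U=H$. In particular $\beta_\xi(t)\neq 0$, and I claim $\beta_\xi(t)<0$: otherwise $\beta_\xi(t\inv)<0$, so by Proposition~\ref{prop:coctamen}(\ref{it5}) the automorphism $\mathrm{conj}_{t\inv}|_U=\alpha\inv$ would compact the noncompact group $U=H$; but then both $\alpha$ and $\alpha\inv$ compact $H$, whereas by Proposition~\ref{compamenable} a compaction of a noncompact locally compact group multiplies Haar measure by a factor $\le 1$ which is $<1$ when the group is noncompact, so $\mathrm{mod}(\alpha)$ and $\mathrm{mod}(\alpha\inv)=\mathrm{mod}(\alpha)\inv$ cannot both be $<1$.

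Finally, consider $tg\in G$. Since $g\in H=U$ we have $\beta_\xi(tg)=\beta_\xi(t)<0$, so Proposition~\ref{prop:coctamen}(\ref{it5}) yields that conjugation by $tg$ is a compacting automorphism of $U=H$. But for $h\in H$ one computes $(tg)h(tg)\inv=t(ghg\inv)t\inv=\alpha(\phi(h))$, so the restriction of $\mathrm{conj}_{tg}$ to $H$ is exactly $\alpha\phi$; hence $\alpha\phi$ is a compaction of $H$. I expect the main obstacle to be conceptual rather than computational: recognizing that $\alpha\phi$ should be read as conjugation by $tg$ inside $G$ and that the Busemann character does not see the inner factor $g$ (precisely because $\beta_\xi|_H=0$, forcing $U=H$); once these are in place the statement follows from Proposition~\ref{prop:coctamen} together with the structural facts on compactions already established. (Incidentally, the connected-by-compact hypothesis on $H$ is not used in this argument.)
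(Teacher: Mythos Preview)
Your proof is correct and takes a genuinely different route from the paper's.

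The paper proceeds by direct computation: after killing the maximal compact normal subgroup, Proposition~\ref{prop:cpli} supplies an $\alpha$-invariant splitting $H=N\rtimes K$ with $N$ the simply connected nilpotent radical and $K$ the (compact) limit group. Writing $\phi=c_x$, one has $(\alpha c_x)^n=c_{x(n)}\alpha^n$ with $x(n)=x\,\alpha(x)\cdots\alpha^n(x)$; it then suffices to treat $x\in N$ and $x\in K$ separately, and in both cases $(x(n))$ stays bounded, so $(\alpha c_x)^n$ differs from $\alpha^n$ by a bounded conjugation and is compacting. This is short and elementary but leans essentially on the structure theory available under the connected-by-compact hypothesis.

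Your argument instead interprets $\alpha\phi$ as conjugation by $tg$ inside $G=H\rtimes\la t\ra$ and invokes Proposition~\ref{prop:coctamen}(\ref{it5}) on the proper model space furnished by Corollary~\ref{cor:eqprop}. The key steps---$\beta_\xi|_H=0$ via conjugation-invariance of the Busemann character together with the pointwise vacuum set, hence $U=H$; and $\beta_\xi(t)<0$ via the Haar-modulus obstruction to having both $\alpha$ and $\alpha\inv$ compact a noncompact group---are all sound and non-circular (nothing from \S\ref{sec:compaction} beyond Corollary~\ref{cor:CompactingConfining} and Proposition~\ref{compamenable} is used). As you note, this bypasses the connected-by-compact assumption entirely, so you have in fact proved a strictly stronger statement than the one recorded. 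The trade-off is that your proof imports the focal/Busemann machinery of \S\S\ref{sec:AmenableActions}--\ref{sec:ProperActions}, whereas the paper's argument stays within \S\ref{sec:compaction}; for the paper's purposes the lemma is only applied (in Proposition~\ref{someall}) under the connected-by-compact hypothesis anyway, which explains the choice.

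One cosmetic point: your displayed identity ``$\beta_\xi(h)=\beta_\xi(\alpha^n h)=\beta_\xi(tht\inv)$'' should read $\beta_\xi(h)=\beta_\xi(t^nht^{-n})=\beta_\xi(\alpha^n(h))$; the content is unaffected.
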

\begin{proof}
By the solution to Hilbert's fifth's problem, $H^\circ$ admits a maximal compact normal subgroup $W$; we freely use the observation that an automorphism of $H$ is compacting if and only it induces a compacting automorphism of $H/W$.

Write $c_x(t)=xtx^{-1}$. We have to 
prove that $\alpha\circ c_x$ is a compaction for every $x\in H$.
We have $$(\alpha c_x)^n(t)=c_{x(n)}(\alpha^n(t)),\text{ where }x(n)=\alpha(x)\dots \alpha^n(x).$$ So it is enough to check that the sequence $(x(n))$ is bounded (for each fixed $x$).

We begin with two particular easier cases. The first is when $x$ belongs to a closed, $\alpha$-invariant locally elliptic subgroup; then all $\alpha^n(x)$ belong to a single compact subgroup and thus $x(n)$ is bounded. The second is when $x$ belongs to a closed $\alpha$-invariant subgroup which is a simply connected nilpotent Lie group. Then $(\alpha^n(x))$ converges exponentially to zero and thus is summable, so $(x(n))$ is bounded.

In general, we claim that if $W$ is the maximal compact normal subgroup in $H^\circ$, then for every compaction $\alpha$ of $H$, the group $H/W$ is generated $E_1\cup N_1$, where $E_1$ is an $\alpha$-invariant closed locally elliptic subgroup and $N_1$ is a characteristic subgroup, which is a simply connected nilpotent Lie group. Granting the claim, starting from a compaction $\alpha$, 
and choosing $E_1$ and $N_1$ accordingly, we deduce from the first particular case that $\beta=\alpha\circ c_x$ is a compaction for all $x\in E_1$. Since $N_1$ is $\beta$-invariant, it follows from the second particular case that $\alpha\circ c_{xy}=\beta\circ c_y$ is a compaction for all $y\in N_1$. Since $N_1$ is normal, every element in $H$ can be written such a form $xy$ for $x\in E_1$ and $y\in N_1$, which completes the proof modulo the claim.

It remains to prove the claim. Modding out if necessary, we can suppose that $H^\circ$ has no non-trivial compact normal subgroup. Let $E$ be the locally elliptic radical and $C$ the identity component of $H$; by Proposition~\ref{prop:stcom}, $E$ and $C$ generate their topological direct product, and $E\times C$ stands an open subgroup of finite index of $H$. Let $W/E$ be the maximal compact normal subgroup of $H/E$. Then $W$ is locally elliptic, and it follows from the definition of $E$ that $W=E$. So we can apply Proposition~\ref{prop:cpli} and get an $\alpha$-invariant decomposition $H/E=N\rtimes K$ with $N$ simply connected nilpotent Lie group and $K$ compact; $N$ is the nilpotent radical of $H/E$ and thus is characteristic. Denoting by $p$ the projection $H\to H/E$, Lemma~\ref{lem:quot-Lie} implies that $p(C)=(H/E)^\circ$, so $p^{-1}((H/E)^\circ)=E\times C$. Thus $E\times C\to (H/E)^\circ$ is isomorphic to the projection $E\times C\to C$. In particular, $E\times N=p^{-1}(N)$, and $N$ can be identified with the subgroup $p^{-1}(N)^\circ$, so $p^{-1}(N)$ is generated by $p^{-1}(N)^\circ$ and $\Ker(p)\subset p^{-1}(K)$. Thus $H$ is generated by the simply connected nilpotent Lie group $N_1=p^{-1}(N)^\circ$ (which is characteristic) and the locally elliptic group $E_1=p^{-1}(K)$, and the claim is proved.
\end{proof}

\begin{proof}[Proof of Proposition~\ref{someall}]
Let $\pi : G \to \Lambda$ be the projection homomorphism. By Lemma~\ref{inner}, every element in $\pi^{-1}(\pi(\alpha))$ is compacting. In particular there is no loss of generality in assuming that $\alpha \in \Lambda$. Since an element is compacting if and only if some/every positive power is compacting (see Lemma~\ref{lem:cbp}), we are done if $\Lambda=\ZZ$. In case $\Lambda=\RR$, we let $\varphi : \RR \to \Lambda : t \mapsto \varphi_t$ be an isomorphism with $\varphi_1 = \alpha$ and $C \subseteq H $ be a compact vacuum set for $\alpha$ (see Lemma~\ref{lem:cbp}). Then $K = \bigcup_{0 \leq s \leq 1} \varphi_s(C)$ is compact and contains $\varphi_t^n(h)$ for all $h \in H$ and all sufficiently large $n \geq 0$. Thus every element of $\Lambda_+$ is compacting, whence the claim by Lemma~\ref{inner}.
\end{proof}

\subsection{The closure of a contraction subgroup}

Recall that to any automorphism $\alpha $ of a locally compact group $G$, one can associate the \textbf{contraction subgroup} $U_\alpha = \{g \in G \; | \;  \lim_n \alpha^n g  =1\}$. It is important to point out that this contraction subgroup need not be closed in general: an excellent illustration of this fact is provided by the group $G =\Aut(T)$ of all automorphisms of a regular locally finite tree. It is easy to see that the contraction subgroup $U_\alpha$ associated to any hyperbolic element $\alpha \in G$ is never closed. It is therefore natural to study what the closure of the contraction subgroup $U_\alpha$ can be. The following observation, which will be used in the proof of Theorem~\ref{thm:application} therefore provides additional motivation to consider \emph{compacting} automorphisms. 

\begin{prop}\label{prop:ClosureContraction}
Let $G$ be a  locally compact group and $\alpha \in \Aut(G)$. 

Then the restriction of $\alpha$ to the closure $\overline{U_\alpha}$ acts as a compacting automorphism. 
\end{prop}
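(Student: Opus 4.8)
The plan is to reduce the statement, via Proposition~\ref{wangbis}, to exhibiting a \emph{compact vacuum set} for the restriction $\alpha|_{\overline{U_\alpha}}$; equivalently, by the definition of compacting, a compact subset $V \subseteq \overline{U_\alpha}$ such that for every $g \in \overline{U_\alpha}$ one has $\alpha^n(g) \in V$ for all large $n$. Note first that $\overline{U_\alpha}$ is $\alpha$-invariant, so $\alpha|_{\overline{U_\alpha}}$ is indeed an automorphism of that group, and it suffices to work inside $H := \overline{U_\alpha}$; I may as well rename $G := H$ and assume $U_\alpha$ is dense in $G$. The first step is the classical fact (due to Siebert, or see~\cite{CoTe}) that the closure of a contraction group is again ``contractive'' in a weak sense; concretely, I would show: there is a compact identity neighbourhood $V_0$ in $G$ such that $\alpha^n(V_0) \subseteq V_0$ for $n$ large, or better, a compact neighbourhood of $1$ which is a vacuum set for the restricted action. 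This is where the bulk of the work sits.

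Here is how I would get that compact vacuum set. Pick any compact identity neighbourhood $W$ in $G$. For the \emph{dense} subgroup $U_\alpha$, each element is genuinely contracted: $\alpha^n(u) \to 1$. The subtlety is that this convergence is only pointwise on the dense subgroup, not uniform, and that $\overline{U_\alpha}$ may contain elements (limits of $u_k \in U_\alpha$) whose forward $\alpha$-orbit need not converge to $1$ at all — think of $\Aut(T)$. So I cannot hope for a contraction; only a compaction. The mechanism is a Baire-category argument in the spirit of the proof of Proposition~\ref{wangbis}: working in the $\sigma$-compact (hence, after quotienting by a compact normal subgroup via~\cite{KakutaniKodaira}, second countable) group $G \rtimes_\alpha \ZZ$, I would consider the sets $W_m := \bigcap_{n \geq m} \alpha^{-n}(W)$. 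Since $U_\alpha$ is dense and every element of $U_\alpha$ eventually enters the identity neighbourhood $W$, the union $\bigcup_m W_m$ contains $U_\alpha$, hence is dense; but I need it to be all of $G$, or at least to have non-empty interior. By Baire, some $\overline{W_m}$ has non-empty interior; passing to $W \cdot W$ as in Proposition~\ref{wangbis} (using $W_m \cdot W_m^{-1} \subseteq (W W^{-1})_m$) upgrades this to: the compact set $A := WW$ satisfies that $A_m := \bigcap_{n\geq m}\alpha^{-n}(A)$ contains an identity neighbourhood for some $m$, and — this is the crucial point needing the density of $U_\alpha$ and a covering argument — that $\bigcup_n A_n$ covers a neighbourhood of $1$, indeed covers $G$ because $U_\alpha$ is dense and $A_n$ are increasing with non-empty interior, so $\bigcup_n A_n$ is an open subgroup containing the dense set $U_\alpha \cap (\text{that open subgroup})$... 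I would need to argue this open subgroup is everything. Then $B := A\cdot A$ absorbs every compact set under forward iteration, exactly as in the last paragraph of the proof of Proposition~\ref{wangbis}.

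The main obstacle is precisely the gap between \emph{pointwise contraction on a dense subgroup} and \emph{uniform compaction on the closure}: I must ensure the Baire-category set $\bigcup_m W_m$ is not merely dense but actually exhausts $G$ (or a neighbourhood of $1$, whence an open subgroup, whence — since it contains the dense $U_\alpha$ and is closed as well as open — all of $G$). The honest route is: $\bigcup_m W_m$ contains an identity neighbourhood $\Omega$ by Baire applied to $WW$ as above; then $\Omega \cdot \Omega \subseteq$ some $(WWWW)_m$ after applying $\alpha^{n_0}$ to kill the product, so the set of $g$ with $\{\alpha^n(g)\}_{n\gg0}$ bounded is an open subgroup of $G$; it contains $U_\alpha$ which is dense, and an open subgroup is closed, so it equals $G$. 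This delivers a compact vacuum set and Proposition~\ref{wangbis} finishes the proof. I would double-check the reduction steps (quotient by compact normal subgroup to get second countability, invariance of everything under that quotient, and that a compacting automorphism on the quotient lifts back by Lemma~\ref{lem:cbp}(i)) since those are routine but must be stated.
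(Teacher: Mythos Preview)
Your approach has a genuine gap at the Baire step. You correctly observe that $\bigcup_m W_m$ contains the dense subgroup $U_\alpha$, but density is not enough to invoke Baire's theorem: you would need the union to be \emph{all} of $G$ (or at least non-meager), and a dense subgroup can perfectly well be meager. Concretely, take $G = \prod_{n\in\ZZ} F$ for a nontrivial finite group $F$ with $\alpha$ the shift; then $U_\alpha = \{(\sigma_n) : \sigma_n = 1 \text{ for all large } n\}$ is dense in $G$, yet for any basic compact open subgroup $W = \{\sigma : \sigma_n = 1 \text{ for } |n| \leq N\}$ each $W_m$ has empty interior and $\bigcup_m W_m = U_\alpha$ is meager. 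Here $G$ is compact, so the proposition is trivially true, but your method fails to detect even this. The same obstruction persists in noncompact examples such as $G = \QQ_p \times \prod_{\ZZ} F$ with $\alpha$ acting by multiplication by $p$ on the first factor and by the shift on the second. The circularity you yourself flag --- needing a pointwise vacuum set to run the argument of Proposition~\ref{wangbis}, while a pointwise vacuum set is precisely what you are trying to produce --- is real, and the proposed escape via density of $U_\alpha$ does not close it. Your concluding sentence, that the set of $g$ with bounded forward $\alpha$-orbit is an \emph{open} subgroup, simply restates the goal: openness of that subgroup is equivalent to the existence of a compact pointwise vacuum set.

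The paper circumvents this by structure theory rather than by a direct Baire argument. For the totally disconnected quotient $G/G^\circ$ it invokes Willis's theory~\cite{Willis94} to produce a compact open subgroup $V$ for which $V_{--} := \bigcup_n \alpha^{-n}(V_-)$ is \emph{closed}; since $V_{--}$ visibly contains $U_\alpha^{G/G^\circ}$, closedness forces $V_{--} = G/G^\circ$, and then $V_-$ is the sought compact vacuum set. For the connected part, after killing the maximal compact normal subgroup of $G^\circ$ one is in a Lie group, where contraction groups are already closed; the two pieces are assembled via the direct-product splitting of~\cite{CoTe} (Proposition~\ref{prop:stcom}). The upshot is that the passage from ``contraction on a dense subgroup'' to ``compaction on the closure'' genuinely requires structural input (tidy subgroups in the totally disconnected case, Lie theory in the connected case) and does not follow from a soft Baire argument.
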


\begin{proof}
Clearly the group $\overline{U_\alpha}$ is invariant under $\alpha$. There is thus no loss of generality in assuming that $G = \overline{U_\alpha}$. 

We next observe that if $H \leq G$ is any $\alpha$-invariant closed normal subgroup, then $G/H$ is a locally compact group such that the contraction subgroup $U_\alpha^{G/H}$ associated to the automorphism of $G/H$ induced by $\alpha$ is dense. In view of this observation, there is no loss of generality in assuming that the maximal compact normal subgroup of the neutral component $G^\circ$ is trivial. In particular $G^\circ$ is a Lie group. 

\medskip
We next apply this observation to the identity component $H = G^\circ$. The quotient $G/G^\circ$ is totally disconnected. Given a compact open subgroup  $V \leq G/G^\circ$, we set   $V_- = \bigcap_{n \geq 0} \alpha^{-n}(V)$ and $V_{--} = \bigcup_{n \geq 0} \alpha^{-n}( V_-)$. Clearly $V_-$ is compact. According to Theorem~1 from~\cite{Willis94}, we can find a compact open subgroup $V$ such that $V_{--}$ is closed in $G/G^\circ$. 

Let $u \in U_\alpha^{G/G^\circ}$. Since $V$ is open, there is some $N$ such that $\alpha^n(u) \in V$ for all $n \geq N$. Thus $\alpha^N(u) \in V_-$ and, hence, we have $u \in \alpha^{-N}(V_-) \leq V_{--}$. This proves that $ U_\alpha^{G/G^\circ} $ is contained in $V_{--}$. Since the latter is closed, we deduce that 
$$
G/G^{\circ} = \overline{U_\alpha^{G/G^\circ} } \leq V_{--}
$$
and, hence, that $\alpha$ acts on $G/G^\circ$ as a compacting automorphism: indeed, for all $g \in  G/G^{\circ} $ we have $\alpha^n(g) \in V_-$ for all sufficiently large $n$.

\medskip
We have shown that $G/G^\circ$ admits a compacting automorphism. By Proposition~\ref{prop:stcom}, this implies that $G/G^\circ$ is locally elliptic. Since the neutral component $G^\circ$ has no nontrivial compact normal subgroup, we can thus invoke Theorem~A.5 from~\cite{CoTe}, which ensures that $G$ has a characteristic open subgroup splitting as a direct product $J \cong G^\circ \cdot D$, where $D \leq G$ is a closed, totally disconnected, locally elliptic and characteristic in $G$. Since the quotient $G/J$ is discrete and contains a dense contraction subgroup by the preliminary observation above, it must be trivial. Thus $G \cong G^\circ \times D$. Since $G^\circ \cong G/D$ is a Lie group with no nontrivial compact normal subgroup, the contraction subgroup $U_\alpha^{G/D}$ is closed in $G/D$ by Proposition~\ref{prop:cpli}. Therefore $\alpha $ acts on $G^\circ$ as a contracting automorphism. Thus we have $G^\circ \leq U_\alpha $ and the desired result follows since $G = G^\circ \times D$ and since we have already established that $\alpha$ acts on the totally disconnected group $D$ as a compacting automorphism. 
\end{proof}

\subsection{Decompositions of compactions}

We can wonder whether a converse to Proposition~\ref{prop:ClosureContraction} holds. Here is a precise formulation of this question:
Given a locally compact group $H$ and a compaction $\alpha$ of $H$, do we always have $H=KU_\alpha$, where $K$ is the limit group of the compaction and $U_\alpha$ the contraction subgroup of $\alpha$? By Baumgartner-Willis \cite[Cor.~3.17]{BaumgartnerWillis} {(see \cite{Jaworski} for the general case of possibly non-metrizable groups)}, this is true when $H$ is totally disconnected and this easily extends (in view of Propositions~\ref{prop:stcom} and~\ref{prop:cpli}) to the case when $H^\circ$ has no nontrivial compact normal subgroup. A positive answer would simplify the statement in Theorem~\ref{thm:application}(\ref{i_Hilbert}) below, avoiding the need to mod out by a compact normal subgroup.

\section{{Amenable hyperbolic groups and millefeuille spaces}}\label{prooff}

The purpose of this section is to establish a    sharpening of Theorem~\ref{thm:Amen:intro}, namely Theorem~\ref{thm:main} below. Theorem~\ref{thmintro:cat-1} and Corollary~\ref{cor:woesskai} from the introduction will then follow easily. This first requires a discussion of millefeuille spaces.

\subsection{Millefeuille spaces}\label{sec:millefeuille}

Fix $-\kappa\le 0$. Let $X$ be a complete CAT($-\kappa$) metric space and $b:X\to\RR$ a surjective, 1-Lipschitz convex function. 


For example if we set $b(x)=\lim_{n\to +\infty}(d(x,x_n)-d(x_0,x_n))$ for some sequence $x_n$ tending to infinity along a geodesic ray,
then $b$ satisfies these conditions and is a Busemann function (see Proposition~II.8.19 in~\cite{bridson1999metric}).

For $k\ge 1$, we define a new CAT($-\kappa$)-space $X[k]$ as follows. 
Let $T$ be the $(k+1)$-regular tree (identified with its 1-skeleton) with a surjective Busemann function $b'$ (taking integral values on vertices). As a topological space
$$X[k]=\{(x,y)\in X\times T:\;b(x)=b'(y)\}.$$
Note that $X[1]=X$.
Note that in case $X$ is a $d$-dimensional Riemannian manifold the map $b: X\to\RR$ is a trivial bundle with fibre $\RR^{d-1}$ and thus $X[k]$ is homeomorphic to $\RR^{d-1}\times T$ and in particular is contractible.

Locally, $X[k]$ is obtained from $X$ by gluing finitely many CAT($-\kappa$) spaces along closed convex subsets, and thus (see \cite[II.11]{bridson1999metric}) $X[k]$ is canonically endowed with a locally CAT($-\kappa$) metric; it is called the {\bf millefeuille space} of degree $k$ associated to $X$. This metric defines the same uniform structure and bornology as the metric induced by inclusion, and thus in particular $X[k]$ is a complete metric space and if $X$ is proper then so is $X[k]$.

Now let $G$ be a locally compact group with a homomorphism $p$ onto $\ZZ$, and isometric actions on $X$ and $T$ satisfying $b(gx)=b(x)+p(g)$ for all $(g,x)\in G\times X$ and $b'(gx)=b'(x)+p(g)$ for all $(g,x)\in G\times T$ (we say that $b$ and $b'$ are {\bf equivariant} with respect to $G$). Then the product action of $G$ on $X\times T$ preserves $X[k]$ and preserves its metric.

The following result is one of the essential steps in the proof of Theorem~\ref{thm:main}, and provides a precise formulation of the geometric consequences that can be derived from Theorem~\ref{thm:main}(\ref{i_str}). We isolate its statement to emphasize the specific role of the millefeuille space (the case of a semidirect product $H\rtimes\RR$ was already considered in Corollary~\ref{Heintzeco}).

\begin{thm}\label{thm:actmilf}
Let $G=H\rtimes_\alpha\ZZ$ be a locally compact group, where $\ZZ$ acts by a compaction $\alpha$ of $H$. Then
\begin{enumerate}[(a)]
\item\label{glf} $G/\LF(G)$ acts properly cocompactly by isometries on a negatively curved manifold $X$, with an equivariant Busemann function $b$, and the projection $X\stackrel{b}\to\RR\to\RR/\ZZ$ identifies $G\backslash X$ with the circle $\RR/\ZZ$;
\item\label{ggci} $G/G^\circ$ acts properly, vertex and edge-transitively on a $(k+1)$-regular tree for some $k$, with an equivariant Busemann function $b'$;
\item\label{concerto_pour_violon} the corresponding product action of $G$ restricts to a proper cocompact action on the negatively curved millefeuille space $X[k]$, the projection $X[k]\stackrel{\beta}\to\RR\to\RR/\ZZ$ identifying the orbit space $G\backslash X[k]$ with the circle $\RR/\ZZ$.  
\end{enumerate}
\end{thm}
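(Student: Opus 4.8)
The plan is to assemble the required action on the millefeuille space from two results already in hand — Corollary~\ref{Heintzeco}, which handles (\ref{glf}), and Lemma~\ref{lem:TD_compactable}, which handles (\ref{ggci}) — and then to fibre them together, the only genuine work being that the combined action is proper and cocompact. I would start with a harmless reduction: the maximal compact normal subgroup $W$ of $H^\circ$ is characteristic in $H^\circ$, hence normal in $G$ and $\alpha$-invariant, and it lies in $G^\circ\cap\LF(G)$; since $\LF(G)=\LF(H)$ (as $\ZZ$ is not locally elliptic), one has $\LF(G/W)=\LF(G)/W$ and $(G/W)^\circ=G^\circ/W$, so $G/W$ has the same $G/\LF(G)$, $G/G^\circ$, hence the same spaces $X$, $T$, $X[k]$, and all three assertions are unaffected. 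We may therefore assume $H^\circ$ is a connected Lie group with no nontrivial compact normal subgroup, so that Proposition~\ref{prop:stcom} applies and gives an open finite-index subgroup $H^\circ\times\LF(H)\le H$; in particular $G^\circ\cap\LF(G)=H^\circ\cap\LF(H)=1$.

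Next I would produce the two factor actions. As $\ZZ$ is discrete, $G^\circ=H^\circ$ and $G/G^\circ=(H/H^\circ)\rtimes_{\bar\alpha}\ZZ$, where $\bar\alpha$ is a compaction of the totally disconnected group $H/H^\circ$ (images of compact vacuum sets are vacuum sets). Lemma~\ref{lem:TD_compactable} then yields (\ref{ggci}); taking $b'$ to be the Busemann function at the fixed end $\xi$, integral on vertices, I would check that $b'$ is equivariant for the projection $p\colon G\to\ZZ$: elements of $H/H^\circ$ are elliptic on the tree (being locally elliptic), hence have vanishing Busemann character, while the generator $t$ of $\ZZ$ is hyperbolic of translation length one on the Bass--Serre tree, so shifts $b'$ by $1=p(t)$. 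On the other hand $H/\LF(H)$ has an open finite-index subgroup isomorphic to $H^\circ$, hence is virtually connected Lie, in particular connected-by-compact, so Corollary~\ref{Heintzeco} applied to $G/\LF(G)=(H/\LF(H))\rtimes\ZZ$ gives (\ref{glf}); the orbit description there says precisely that, after the appropriate scaling, $b$ is likewise $p$-equivariant.

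For (\ref{concerto_pour_violon}), the manifold $X$ is complete, simply connected and of sectional curvature bounded away from $0$, hence $\mathrm{CAT}(-\kappa)$ for some $\kappa>0$, and $b$ is a surjective $1$-Lipschitz convex function; together with $T$ and $b'$ this is exactly the input of \S\ref{sec:millefeuille}, so $X[k]$ is defined, complete, locally $\mathrm{CAT}(-\kappa)$ and contractible, hence globally $\mathrm{CAT}(-\kappa)$ by Cartan--Hadamard. The product $G$-action on $X\times T$ preserves $X[k]$ and its metric because $b$ and $b'$ are equivariant for the same $p$. For properness, the reduction makes $G\to(G/\LF(G))\times(G/G^\circ)$ injective, and its image is closed: it is the preimage of the diagonal under the projection onto $G/\LF(G)G^\circ$, which is Hausdorff (in fact virtually $\ZZ$, since $\LF(H)H^\circ$ is open of finite index in $H$). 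Since $G/\LF(G)$ and $G/G^\circ$ act properly on $X$ and on $T$, the product group acts properly on $X\times T$, so the closed subgroup $G$ acts properly on the closed invariant set $X[k]$.

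Cocompactness is where the two actions must really cooperate: $G^\circ=H^\circ$ acts trivially on $T$, yet acts transitively on each horosphere $b^{-1}(t)$ of $X$, because $G^\circ$ maps isomorphically onto $(G/\LF(G))^\circ$, which contains the nilpotent radical $N$ of the Heintze group underlying $X$ (via Lemma~\ref{lem:CoctEmbedding}), and $N$ acts simply transitively on horospheres by Heintze's structure. Meanwhile $G/G^\circ$ acts cocompactly on $T$, indeed transitively on every level set $b'^{-1}(t)$ (any isometry matching two vertices at the same level, or two geometric edges spanning the same pair of levels, must be $p$-trivial, and vertex- and edge-transitivity without inversions then give level-transitivity). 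Combining: fixing a compact $C\subseteq T$ with $(G/G^\circ)C=T$ and a geodesic segment $\sigma\subseteq X$ meeting each level in $b'(C)$ exactly once, any $(x,y)\in X[k]$ can be brought by $G$ into the compact set $(\sigma\times C)\cap X[k]$ — first move $y$ into $C$, then apply an element of $N\subseteq G^\circ$, which fixes the $T$-coordinate, to drag $x$ onto $\sigma$. The same mechanism shows the continuous $G$-invariant map $X[k]\to\RR/\ZZ$ induced by $b$ separates $G$-orbits, giving $G\backslash X[k]\cong\RR/\ZZ$. I expect the main obstacle to be not any isolated deep step but this bookkeeping: arranging the reductions so that they are simultaneously compatible with (\ref{glf}), (\ref{ggci}) and (\ref{concerto_pour_violon}), and extracting properness and cocompactness from two actions, neither of which is proper on its own factor.
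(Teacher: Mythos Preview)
Your proposal is correct and follows essentially the same architecture as the paper: invoke Proposition~\ref{prop:stcom} to split off the connected and totally disconnected pieces, feed them to Corollary~\ref{Heintzeco} and Lemma~\ref{lem:TD_compactable} respectively, and then check properness via the diagonal embedding $G\hookrightarrow (G/\LF(G))\times(G/G^\circ)$ being closed. Your cocompactness argument is slightly more roundabout than the paper's --- you chase the nilpotent radical $N$ back through Lemma~\ref{lem:CoctEmbedding} to get $G^\circ$ acting transitively on the $X$-horospheres, whereas the paper simply observes that $H$ (being the common kernel of $p$ on both quotients) already acts transitively on each product level set $\{(x,y):b(x)=b'(y)=r\}$, which avoids the detour --- but both arguments are sound and yield the same conclusion.
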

\begin{proof}
By Proposition~\ref{prop:stcom}, $H/\LF(G)$ is connected-by-compact, and (\ref{glf}) follows from Corollary~\ref{Heintzeco}. 

The group $G/G^\circ$ is totally disconnected and (\ref{ggci}) follows from Lemma~\ref{lem:TD_compactable} (which also gives the explicit value of $k$).

It follows that for each $r$, the product action of $H$ preserves and is transitive on $$\{(x,y)\in X\times T|\;b(x)=r \text{ and } b'(y)=r\};$$
since the generator of $\ZZ$ adds $(1,1)$ to $(b(x),b(y))$, it follows that for every $r\in\RR$, the action of $G$ is transitive on $\{(x,y)\in X\times T|b(x)=b'(y)\in\ZZ+r\}\subset X[k]$ and in particular is cocompact on $X[k]$. 

By Proposition~\ref{prop:stcom}, modulo some compact normal subgroup of $G^\circ$, the subgroup generated by the kernels $\LF(G)$ and $G^\circ$ is a topological direct product; it follows that the product action is proper.
\end{proof}

\begin{remark}
Given two hyperbolic spaces with Busemann functions $(X,b)$ $(Y,b')$, a notion of {\bf horocyclic product} 
$$\{(x,y)\in X\times Y:\;b(x)+b'(y)=0\}$$
was introduced by Woess and studied by various authors. In the case of two trees, it is known as {\bf Diestel--Leader graph}. Despite an obvious analogy, millefeuille spaces are not horocyclic products. Actually, horocyclic products are never hyperbolic, except in a few degenerate uninteresting cases. 
\end{remark}

\subsection{A comprehensive description of amenable hyperbolic groups}\label{proofaa}

The following statement, where all actions and homomorphisms are implicitly assumed to be continuous, is a more comprehensive version of Theorem~\ref{thm:Amen:intro}: indeed, the latter is covered by the equivalence (\ref{i_hyp})$\Leftrightarrow$(\ref{i_str}).

\begin{thm}\label{thm:main}
Let $G$ be a locally compact group. 
Then the following assertions are equivalent.
\begin{enumerate}
\item\label{i_qp} $G$ is focal hyperbolic.
\item\label{i_unimod} $G$ is amenable, hyperbolic and non-unimodular. 

\item\label{i_hyp} $G$ is amenable and non-elementary hyperbolic.

\item\label{i_com} $G$ is $\sigma$-compact and there exists a homomorphism $\beta:G\to\RR$ with closed image, such that $\Ker(\beta)$ is noncompact and for some $x$ with $\beta(x)\neq 0$, the action by conjugation of $x$ on $\Ker(\beta)$ is compacting.

\item\label{i_str} One of the two following holds:
\begin{itemize}
\item $G=N\rtimes(K\times\RR)$ where $N$ is a nontrivial simply connected nilpotent Lie group on which $\RR$ acts by contractions, and $K$ is compact subgroup. 
\item $G=H\rtimes\ZZ$, where $H$ is closed, noncompact, and the action of $\ZZ$ on $H$ is compacting.
\end{itemize}

\item\label{i_fib} $G$ has actions by isometries on a homogeneous negatively curved manifold $X$ and on a regular tree $T$ with $G$-equivariant surjective Busemann functions $b$ and $b'$ ($X$ and $T$ not being both reduced to a line), so that the action on the product $X\times T$ is proper and preserves cocompactly the fibre product 
$$\{(x,y)\in X\times T \; | \;b(x)=b'(y)\}.$$

\item\label{i_cat} $G$ acts properly and cocompactly by isometries on a proper  geodesically complete  \cathyp~space $X \not \cong \RR$ and fixes a point in the visual boundary $\bd X$.
\end{enumerate}
\end{thm}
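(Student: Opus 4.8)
The plan is to prove the equivalences by establishing a cycle of implications, exploiting the machinery built up in Sections~\ref{sec:AmenableActions}--\ref{sec:compaction}, and treating the hardest direction --- producing the structural normal form (\ref{i_str}) --- via the analysis of proper focal actions in Proposition~\ref{prop:coctamen} together with the structure theory of compaction groups. Concretely, I would organize it around the chain
\[
(\ref{i_hyp}) \Rightarrow (\ref{i_qp}) \Rightarrow (\ref{i_com}) \Rightarrow (\ref{i_str}) \Rightarrow (\ref{i_fib}) \Rightarrow (\ref{i_cat}) \Rightarrow (\ref{i_hyp}),
\]
inserting the equivalence with (\ref{i_unimod}) as a short side argument once (\ref{i_str}) is available.

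\emph{From amenable non-elementary to focal, and the Busemann character.} First I would show (\ref{i_hyp})$\Rightarrow$(\ref{i_qp}): by Corollary~\ref{cor:eqprop}, $G$ acts continuously, properly and cocompactly by isometries on a proper geodesic hyperbolic space $X$; since the action is cocompact it is not bounded, and since $G$ is amenable Lemma~\ref{lem:AmenRad} forbids general type. If $G$ is non-elementary, $\bd_X G$ is uncountable, so by Proposition~\ref{prop:types} the action is focal. This is exactly assertion (\ref{i_qp}) interpreted via $G$ acting on itself (equivalently on $X$). For (\ref{i_qp})$\Rightarrow$(\ref{i_com}): a focal hyperbolic group is in particular hyperbolic hence $\sigma$-compact (compactly generated locally compact groups are $\sigma$-compact), and now I invoke Proposition~\ref{prop:coctamen} applied to the proper cobounded action of $G=M$ on $X$ (amenability of $G$ is automatic here since, by Lemma~\ref{lem:amens}, a locally compact group admitting a proper focal action is amenable). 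That proposition directly yields a continuous homomorphism $\beta=\beta_\xi:G\to\RR$ with closed nonzero image, noncompact kernel $U=\Ker\beta$ (noncompact because $U$ acts transitively and properly on the infinite set $\bd X\setminus\{\xi\}$), and the statement that any $\alpha$ with $\beta(\alpha)<0$ (hence, replacing $\alpha$ by $\alpha^{-1}$, any $\alpha$ with $\beta(\alpha)\neq 0$ after possibly inverting) acts on $U$ as a compacting automorphism. That is precisely (\ref{i_com}).

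\emph{From the homomorphism to the normal form.} The step (\ref{i_com})$\Rightarrow$(\ref{i_str}) is where the real work sits. Given $\beta:G\to\RR$ with closed image, the image is either discrete ($\cong\ZZ$) or all of $\RR$. In the discrete case, pick $\alpha\in G$ generating a complement; then $G=H\rtimes\langle\alpha\rangle$ with $H=\Ker\beta$ noncompact and $\alpha$ compacting, which is the second bullet of (\ref{i_str}). In the case $\beta(G)=\RR$, I first pass to $H=\Ker\beta$; by Proposition~\ref{compamenable} $H$ has polynomial growth, hence is amenable and, being a compaction group, Lemma~\ref{lem:cbp}(\ref{continuous}) forces $H$ to be connected-by-compact (this uses that the compacting element is inside a one-parameter subgroup image --- here I should be slightly careful: $\beta(G)=\RR$ gives a copy of $\RR$ only after choosing a continuous section, which need not exist a priori, so instead I would argue that the conjugation action of the full group $\beta^{-1}((0,\infty))$ together with Proposition~\ref{someall} reduces to a one-parameter compaction, forcing $H$ connected-by-compact). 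Then modding out the maximal compact normal subgroup of $H^\circ$ and applying Proposition~\ref{prop:stcom} we reduce to the almost-connected case, where Proposition~\ref{hazsie} (Hazod--Siebert) gives $H=N\rtimes K_\alpha$ with $N$ simply connected nilpotent carrying a contracting $\RR$-action and $K_\alpha$ compact; a short argument (the complement $\RR$ and the compact $K$ can be arranged to commute and split off, using that $K_\alpha$ is the limit group) puts $G$ in the form $N\rtimes(K\times\RR)$. I expect this to be the main obstacle: carefully handling the compact normal subgroup one has modded out, splitting the extension to get an honest semidirect product by $\RR$ rather than a mere exact sequence, and checking the homogeneity of $N$ (i.e. that the $\RR$-action on $\mathfrak n$ can be chosen with all weights positive and that $N$ is Carnot-type in the sense required); here I would lean on the results of \cite{CoTe} and \cite{HazodSiebert} quoted in Section~\ref{sec:compaction}, and on Corollary~\ref{Heintzeco}.

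\emph{Geometric realization and closing the loop.} The implication (\ref{i_str})$\Rightarrow$(\ref{i_fib}) is essentially Theorem~\ref{thm:actmilf} (together with Corollary~\ref{Heintzeco} for the $H\rtimes\RR$ case, where the tree factor degenerates to a line): in the $H\rtimes\ZZ$ case, Theorem~\ref{thm:actmilf}(\ref{glf})--(\ref{ggci}) produces the negatively curved manifold $X$ with equivariant Busemann function $b$ and the regular tree $T$ with equivariant Busemann function $b'$, and part (\ref{concerto_pour_violon}) says the product action on the fibre product is proper and cocompact. Then (\ref{i_fib})$\Rightarrow$(\ref{i_cat}): the fibre product $\{(x,y)\in X\times T : b(x)=b'(y)\}$ is, by the millefeuille construction in \S\ref{sec:millefeuille}, a complete locally $\mathrm{CAT}(-\kappa)$ space, geodesically complete, on which $G$ acts properly cocompactly and fixing the boundary point determined by $b,b'$; rescaling gives a $\mathrm{CAT}(-1)$ space, and it is not isometric to $\RR$ since not both factors are lines. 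Finally (\ref{i_cat})$\Rightarrow$(\ref{i_hyp}): a proper geodesically complete $\mathrm{CAT}(-1)$ space is hyperbolic, and a cocompact action on it makes $G$ hyperbolic (Corollary~\ref{cor:eqprop}); fixing a boundary point while acting cocompactly (hence with a hyperbolic element) makes the action focal, hence non-elementary, and by Lemma~\ref{lem:amens} (the action being focal and proper) $G$ is amenable. For the remaining equivalence with (\ref{i_unimod}): (\ref{i_str})$\Rightarrow$(\ref{i_unimod}) is immediate from Proposition~\ref{compamenable} (the semidirect product by $\RR$ or $\ZZ$ of a noncompact compaction group is non-unimodular, and $G$ is hyperbolic by the cycle just closed, and amenable); conversely (\ref{i_unimod})$\Rightarrow$(\ref{i_hyp}) because a hyperbolic non-unimodular group cannot be elementary --- elementary hyperbolic groups are compact-by-(cocompact in $\Isom(\RR)$), hence unimodular. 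This closes all the implications.
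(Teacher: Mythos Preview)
Your overall scheme---the cycle $(\ref{i_hyp})\Rightarrow(\ref{i_qp})\Rightarrow(\ref{i_com})\Rightarrow(\ref{i_str})\Rightarrow(\ref{i_fib})\Rightarrow(\ref{i_cat})\Rightarrow(\ref{i_hyp})$ with $(\ref{i_unimod})$ handled on the side---is exactly the route the paper takes, and your treatment of all implications except $(\ref{i_com})\Rightarrow(\ref{i_str})$ is correct and matches the paper's arguments closely.

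There is, however, a genuine gap in your handling of $(\ref{i_com})\Rightarrow(\ref{i_str})$ in the case $\beta(G)=\RR$. You correctly identify the obstacle: one needs a one-parameter subgroup $P<G$ with $\beta(P)=\RR$, and a continuous section of $\beta$ need not exist a priori. But your proposed workaround---invoking Proposition~\ref{someall} on the conjugation action of $\beta^{-1}((0,\infty))$---is circular: Proposition~\ref{someall} is stated for groups already of the form $H\rtimes\Lambda$ with $\Lambda\in\{\ZZ,\RR\}$, which is precisely the splitting you are trying to produce. Likewise, Lemma~\ref{lem:cbp}(\ref{continuous}) presupposes a continuous one-parameter group of automorphisms, so you cannot use it to conclude $H$ is connected-by-compact before you have such a subgroup.

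The paper's resolution is a short but essential chain you are missing: since $\beta:G\to\RR$ is an open surjection onto a Lie group, Lemma~\ref{lem:quot-Lie} gives $\beta(G^\circ)=\RR$; then Yamabe's theorem makes $G^\circ$ a projective limit of Lie groups, and Montgomery--Zippin (Theorem~4.15.1 in \cite{montgomery1955topological}) produces a one-parameter subgroup $P<G^\circ$ with $\beta(P)=\RR$. Only now do Proposition~\ref{someall} and Lemma~\ref{lem:cbp}(\ref{continuous}) apply legitimately, forcing $G$ connected-by-compact, after which Proposition~\ref{hazsie} gives $G=N\rtimes(K\rtimes P)$ directly---no modding out is needed (and indeed modding out would be problematic, since $(\ref{i_str})$ asserts an exact description of $G$, not of a quotient). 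Finally, the passage from $K\rtimes P$ to $K\times\RR$ is the content of Lemma~\ref{lem:directr}, which you allude to but do not name; it requires its own short argument via the outer automorphism group of $K$.
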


The final subsidiary fact needed for the proof is the following.

\begin{lem}\label{lem:directr}
Let $G$ be a locally compact group and $W$ a compact normal subgroup so that $G/W\simeq\RR$. Then $G$ can be written as a direct product $W\times\RR$.
\end{lem}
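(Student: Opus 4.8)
The plan is to produce a central subgroup $S\le G$ with $S\cong\RR$, $S\cap W=1$, and $SW=G$; then the multiplication map $W\times S\to G$, $(w,s)\mapsto ws$, is a continuous homomorphism (the two factors commute), it is bijective, and since $W\times S$ is $\sigma$-compact it is open by the open mapping theorem, so $G\cong W\times S\cong W\times\RR$. Thus everything reduces to exhibiting such an $S$.

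The first and main step is to show that $G$ acts on $W$ by \emph{inner} automorphisms, equivalently that $G=C_G(W)\cdot W$, where $C_G(W)$ denotes the (closed) centralizer of $W$ in $G$. The composite $G\to\Aut(W)\to\Out(W)$ kills $W$ (which maps into $\Inn(W)$), hence factors through $G/W\cong\RR$; so its image in $\Out(W)$ is connected. For a compact group $W$ the group $\Out(W)$ is totally disconnected, so this image is trivial, i.e.\ the image of $G$ in $\Aut(W)$ lies in $\Inn(W)$ — which is exactly $G=C_G(W)W$. The total disconnectedness of $\Out(W)$ may be obtained by reduction to the Lie case: $G$ is compact-by-$\RR$, hence almost connected, hence pro-Lie, so there is a filtering family of compact normal subgroups $V_i\trianglelefteq G$ with $V_i\subseteq W$, with $G/V_i$ a Lie group (Yamabe's Theorem~\ref{thm:Yamabe} provides one such $V_i$ after intersecting with $W$, and the pro-Lie property gives $\bigcap_iV_i=\{1\}$). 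Each $W/V_i$ is a compact Lie group, so $\Out(W/V_i)$ is discrete, and the argument above applied to $G/V_i$ shows that conjugation by any $g\in G$ induces an inner automorphism of $W/V_i$; the sets $X_i=\{w\in W:c_w=c_g\text{ on }W/V_i\}$ are then nonempty closed cosets forming a filtering family in the compact group $W$, so $\bigcap_iX_i\neq\varnothing$, and any $w$ therein satisfies $c_w=c_g$ on $W=\varprojlim_iW/V_i$. Hence $G=C_G(W)W$.

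Now set $C:=C_G(W)$. Then $C\cap W=Z(W)$, which lies in the centre of $C$, and since $CW=G$ the map $C\to G/W\cong\RR$ is surjective with kernel $Z(W)$, so $C/Z(W)\cong\RR$. Thus $C$ is a central extension of $\RR$ by the compact abelian group $Z(W)$; its commutator map descends to a continuous \emph{alternating} bihomomorphism $\RR\times\RR\to Z(W)$, which vanishes (compose with characters of $Z(W)$, reducing to the fact that an alternating continuous bihomomorphism $\RR\times\RR\to\mathbf T$ is trivial), so $C$ is abelian. Hence $C$ is a compactly generated locally compact abelian group whose maximal compact subgroup is $Z(W)$ and with $C/Z(W)\cong\RR$; by the structure theorem for compactly generated LCA groups $C\cong Z(W)\times\RR$. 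Let $S$ be the $\RR$-factor.

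To conclude: $S\cong\RR$ is central in $C$ and, being contained in $C=C_G(W)$, centralizes $W$; with $G=CW$ this gives $S\le Z(G)$. Also $S\cap W=S\cap Z(W)=1$, and $S$ maps isomorphically onto $C/Z(W)=CW/W=G/W$, so $SW=G$. Thus $S$ has the required properties and the open mapping theorem finishes the argument. The only genuinely delicate point is the total disconnectedness of $\Out(W)$ for a compact group $W$ (equivalently, that $G$ acts on $W$ by inner automorphisms); this is where the structure theory of compact — or pro-Lie — groups is used, and it is the step I expect to be the main obstacle, the rest being routine.
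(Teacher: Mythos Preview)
Your proof is correct. Both your argument and the paper's begin identically: the conjugation action $G\to\Aut(W)$ lands in $\Inn(W)$ because $\Out(W)$ is totally disconnected and the image of $G/W\cong\RR$ is connected, whence $G=C_G(W)\cdot W$. (You give a careful pro-Lie justification of this step; the paper simply asserts it.)

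The second halves diverge. The paper applies its Lemma~\ref{lem:quot-Lie} to the centralizer $Z=C_G(W)$ to get $p(Z^\circ)=\RR$, then invokes Montgomery--Zippin (Theorem~4.15.1 of \cite{montgomery1955topological}) to extract a one-parameter subgroup $P\le Z^\circ$ with $p(P)=\RR$, and concludes $G=W\times P$. You instead show directly that $C=C_G(W)$ is \emph{abelian}: the commutator descends to an alternating continuous bihomomorphism $\RR\times\RR\to Z(W)$, which vanishes after composing with characters of $Z(W)$. Then the structure theorem for compactly generated LCA groups gives $C\cong Z(W)\times\RR$, and the $\RR$-factor is the desired central complement. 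Your route trades the Montgomery--Zippin one-parameter-subgroup machinery for the LCA structure theorem and a short bihomomorphism computation; it is more self-contained relative to the present paper (it does not rely on Lemma~\ref{lem:quot-Lie}), at the cost of a slightly longer argument.
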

\begin{proof}
Since the outer automorphism group of $W$ is totally disconnected, the $G$-action by conjugation on $W$ is by inner automorphisms, so $WZ=G$ where $Z$ is the centralizer of $W$. If $p : G \to \RR$ is the projection modulo $W$, by an easy case of Lemma~\ref{lem:quot-Lie} we have $p(Z^\circ)=\RR$. By Theorem~4.15.1 in  \cite{montgomery1955topological}, the group  $Z^\circ$ contains a one-parameter subgroup $P$ such that $p(P)=\RR$. It follows that $G=W\times P$.
\end{proof}

\begin{proof}[Proof of Theorem~\ref{thm:main}]
We shall prove that (\ref{i_qp})$\Leftrightarrow$(\ref{i_hyp})$\Rightarrow$(\ref{i_com})$\Rightarrow$(\ref{i_str})$\Rightarrow$(\ref{i_fib})$\Rightarrow$(\ref{i_cat})$\Rightarrow$(\ref{i_hyp}), and (\ref{i_unimod})$\Leftrightarrow$(\ref{i_hyp}). 
A direct, independent and conceptually different approach for the implication (\ref{i_com})$\Rightarrow$(\ref{i_hyp}) is provided, in a much more general setting, by Theorem~\ref{thm:Contracting}.

\medskip\noindent
(\ref{i_qp})$\Leftrightarrow$(\ref{i_hyp}) Since $G$ is focal, it is non-elementary by definition and amenable by Lemma~\ref{lem:AmyStab}. Conversely if $G$ is amenable and non-elementary hyperbolic, it is either focal or general type by Proposition~\ref{prop:types}, but cannot be of general type since otherwise it could contain a discrete nonabelian free subgroup by Lemma~\ref{lem:schottky}, contradicting amenability.

\medskip \noindent 
(\ref{i_unimod})$\Rightarrow$(\ref{i_hyp}) Remark that a horocyclic action is never cocompact (see Proposition~\ref{prop:undistorted}). Therefore an elementary hyperbolic locally compact group is either bounded or lineal, and it follows that it must contain a cyclic group as a uniform lattice (see Proposition~\ref{pro:debout} if necessary). In particular, it must be unimodular, and the desired implication follows.

\medskip \noindent 
(\ref{i_hyp})$\Rightarrow$(\ref{i_com}) follows from Proposition~\ref{prop:coctamen} and the fact that any hyperbolic locally compact group admits a continuous, proper cocompact action by isometries on a proper hyperbolic geodesic metric space (See Corollary~\ref{cor:eqprop}).

\medskip \noindent 
(\ref{i_com})$\Rightarrow$(\ref{i_str}) If $\beta$ has cyclic image, the desired statement is trivial. We assume henceforth that  $\beta$ has non-discrete image. Thus $\beta$ is surjective. Since $G$ is $\sigma$-compact, the homomorphism $\beta$ is open and is thus a quotient map. Thus it follows from Lemma~\ref{lem:quot-Lie} that $\beta(G^\circ) = \RR$. By Yamabe's theorem (see Theorem~\ref{thm:Yamabe})  $G^\circ$ is a projective limit of Lie groups. Therefore, Theorem~4.15.1 from ~\cite{montgomery1955topological} implies that  there exists some one-parameter subgroup $P < G^\circ$ such that  $\beta(P) = \RR$. By Proposition~\ref{someall}, the action of $P$ on $\Ker(\beta)$ is compacting. By Lemma~\ref{lem:cbp}(\ref{continuous}), it follows that $G$ is connected-by-compact. By Proposition~\ref{hazsie}, we can write $G=H\rtimes (K\rtimes P)$. By Lemma~\ref{lem:directr} below, $K\rtimes P$ can be rewritten as a direct product $K\times\RR$.

\medskip \noindent 
(\ref{i_str})$\Rightarrow$(\ref{i_fib}) In the first case when $G$ maps onto $\RR$, Corollary~\ref{Heintzeco} directly applies.

In the second case, we have $G=H\rtimes\ZZ$ and Theorem~\ref{thm:actmilf} applies.

\medskip \noindent 
(\ref{i_fib})$\Rightarrow$(\ref{i_cat}) This follows from the remarks preceding Theorem~\ref{thm:actmilf}.

\medskip \noindent 
(\ref{i_cat})$\Rightarrow$(\ref{i_hyp}) Recall from Lemma~\ref{lem:AmyStab} that if $X$ is a proper hyperbolic metric space of a cocompact isometry group (or, more generally, of bounded geometry), then the stabilizer $\Isom(X)_\xi$ of every point $\xi \in \bd X$ is amenable. Thus (\ref{i_hyp})  follows from (\ref{i_cat}) since any \cathyp space is hyperbolic. 

\medskip \noindent 
(\ref{i_hyp})$\Rightarrow$(\ref{i_unimod}) The only thing to check is that $G$ is non-unimodular. We know that (\ref{i_hyp}) implies (\ref{i_str}). Thus it suffices to observe that a group $G$ satisfying  (\ref{i_str}) cannot be unimodular. This follows from Proposition~\ref{compamenable}.
\end{proof}

Clearly, Theorem~\ref{thmintro:cat-1} is immediate from Theorem~\ref{thm:main}(\ref{i_cat}).

\begin{proof}[Proof of Corollary~\ref{cor:woesskai}]
Let $\Gamma$ be a group acting vertex-transitively on a hyperbolic locally finite graph and fixing a point at infinity,  and let $G$ be its closure in the full automorphism group of the graph. Then $G$ is hyperbolic totally disconnected and fixes a point on its boundary, so, if non-elementary (the elementary case being trivial), it satisfies the properties of Theorem~\ref{thm:main}; in particular, it can be written as $N\rtimes\ZZ$ with a compacting action of $\ZZ$ on the totally disconnected group $N$. It follows by Lemma~\ref{lem:TD_compactable} that $G$ is quasi-isometric to a regular tree.
\end{proof}


\section{Characterizing standard rank one groups}\label{sec:rankone}

Standard rank one groups were defined after Theorem~\ref{thm:Clight} in the introduction. The goal of this section is to prove the following statement. 

\begin{thm}\label{thm:application}
Let $G$ be a locally compact group. 
Then the following assertions are equivalent. 

\begin{enumerate}[(i)]
\item\label{i_HypUnim} 
$G$ is hyperbolic of general type and contains a cocompact amenable closed subgroup.  

\item\label{i_HypNonAmen} 
$G$ is non-elementary hyperbolic and the action of $G$ on its visual boundary is transitive.

\item\label{i_HypNonAmen2} 
$G$ is non-elementary hyperbolic and the action of $G$ on its visual boundary is 2-transitive.

\item\label{i_Hilbert}
$G$ is unimodular and contains a compact normal subgroup $W$ such that $G/W$ contains an element $\alpha$ so that, defining $U_\alpha := \{g \in G/W \; | \; \lim_{n \to \infty} \alpha^n g \alpha^{-n} =1\}$ as the \emph{contraction subgroup} associated with $\alpha$, the subgroup $U_\alpha$ has noncompact closure and the closed subgroup $\overline{\la \alpha \ra U_\alpha}$ is cocompact in $G/W$.

\item\label{i_Classif} 
$G$ has a maximal compact normal subgroup $W$, and $G/W$ is a standard rank one group. 
\end{enumerate}
\end{thm}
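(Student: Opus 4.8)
The plan is to prove the cycle of implications $(\ref{i_Classif})\Rightarrow(\ref{i_HypNonAmen2})\Rightarrow(\ref{i_HypNonAmen})\Rightarrow(\ref{i_HypUnim})\Rightarrow(\ref{i_Classif})$ together with $(\ref{i_Classif})\Leftrightarrow(\ref{i_Hilbert})$. The three implications at the top of the cycle are quick. For $(\ref{i_Classif})\Rightarrow(\ref{i_HypNonAmen2})$: a rank one symmetric space has $\Isom$ acting $2$-transitively on its sphere at infinity, trees of type~(2) are $2$-transitive on ends by definition, and passing to a compact extension changes neither the boundary nor the action on it. The implication $(\ref{i_HypNonAmen2})\Rightarrow(\ref{i_HypNonAmen})$ is trivial. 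For $(\ref{i_HypNonAmen})\Rightarrow(\ref{i_HypUnim})$: by Corollary~\ref{cor:eqprop} we may realize $G$ acting continuously, properly and cocompactly on a proper hyperbolic geodesic space $X$ with $\partial X=\partial G$; a transitive action on the uncountable set $\partial X$ has no finite orbit, so by Proposition~\ref{prop:types} the action is of general type; the stabilizer $G_\xi$ is cocompact because $G/G_\xi\cong\partial X$ is compact, and it is amenable because it is a closed subgroup of the amenable group $\Isom(X)_\xi$ (Lemma~\ref{lem:AmyStab}, applicable since $\Isom(X)$ acts cocompactly on the proper geodesic hyperbolic space $X$).

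The heart of the matter is $(\ref{i_HypUnim})\Rightarrow(\ref{i_Classif})$. First I would show that $(\ref{i_HypUnim})$ forces the action of $G$ on $\partial G$ to be $2$-transitive. Let $P\le G$ be a cocompact amenable closed subgroup, acting (continuously, properly, coboundedly) on the model space $X$. By Lemma~\ref{lem:schottky} together with amenability, $P$ is not of general type; it is not bounded or lineal (being non-elementary, as it is cocompact in the non-elementary $G$) and, being cocompact, not horocyclic (Proposition~\ref{prop:undistorted}). Hence $P$ is focal, fixing some $\xi\in\partial X$, and by Proposition~\ref{prop:coctamen} the subgroup $U=\Ker(\beta_\xi|_P)$ acts transitively on $\partial X\setminus\{\xi\}$. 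Since $G$ is of general type it has no global fixed point on $\partial X$, so some $g\in G$ moves $\xi$; then for any $\eta\ne\xi$ one has $g\xi\in\partial X\setminus\{\xi\}=P\eta\subseteq G\eta$, hence $\xi\in G\eta$, so $G$ is transitive on $\partial X$; and $G_\xi\supseteq P\supseteq U$ is transitive on $\partial X\setminus\{\xi\}$, giving $2$-transitivity. Now apply Proposition~\ref{pro:gt}: $G$ has a maximal compact normal subgroup $W$, and $\bar G:=G/W$ is either a virtually connected simple adjoint Lie group of rank one — in which case it is a standard rank one group of type~(1) and we are done — or $\bar G$ is totally disconnected.

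In the totally disconnected case I would argue as follows. Since $\bar G$ acts transitively on $\partial\bar G=\bar G/\bar G_\xi$, and a compact open subgroup $K\le\bar G$ has only finitely many orbits on the compact space $\partial\bar G$, each of them clopen and homeomorphic to a coset space of the profinite group $K$, the boundary $\partial\bar G$ is totally disconnected, hence a Cantor set. A hyperbolic space with totally disconnected boundary is quasi-isometric to a tree; invoking accessibility of (compactly presented) locally compact groups over compact subgroups — equivalently, the quasi-isometric rigidity of trees in the locally compact setting — one obtains a proper cocompact action of $\bar G$ on a locally finite tree $T$ with $\partial T=\partial\bar G$. Then $\bar G$ acts $2$-transitively on $\partial T$; after passing, if necessary, to the barycentric subdivision to remove inversions and arrange exactly two orbits of vertices, Bass--Serre theory exhibits $\bar G$ as a nontrivial amalgam of two profinite groups over a common open subgroup, i.e.\ a standard rank one group of type~(2). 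Since $W$ was taken maximal, $(\ref{i_Classif})$ follows. \emph{This tree construction is where I expect the real work to lie}: verifying that accessibility/tree-rigidity applies in the locally compact non-discrete setting, and that the terminal vertex stabilizers are genuinely compact (which they are, since a non-compact terminal stabilizer would have a connected, resp.\ one-point, limit set inside a Cantor set, and a totally disconnected group with a one-point boundary splits further by Lemma~\ref{lem:TD_compactable}).

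Finally, for $(\ref{i_Classif})\Leftrightarrow(\ref{i_Hilbert})$: in the direction $(\ref{i_Classif})\Rightarrow(\ref{i_Hilbert})$ one checks directly that standard rank one groups are unimodular (rank one simple Lie groups and their isometry groups are unimodular; automorphism groups of biregular trees acting edge-transitively are unimodular) and that for a hyperbolic element $\alpha$ the contraction group $U_\alpha$ — the horospherical subgroup — has noncompact closure while $\overline{\langle\alpha\rangle U_\alpha}$ is a point stabilizer, hence cocompact; this survives the passage through $W$. For $(\ref{i_Hilbert})\Rightarrow(\ref{i_Classif})$: by Proposition~\ref{prop:ClosureContraction}, $\alpha$ acts on $\overline{U_\alpha}$ as a compacting automorphism, so $M:=\overline{\langle\alpha\rangle U_\alpha}\cong\overline{U_\alpha}\rtimes\langle\alpha\rangle$ is amenable and, by Theorem~\ref{thm:main}, focal hyperbolic; $M$ is cocompact in $\bar G:=G/W$, so $\bar G$ is compactly generated, quasi-isometric to $M$, and therefore hyperbolic. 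The group $\bar G$ is unimodular (a quotient of the unimodular $G$), so if it were amenable it would be elementary by Theorem~\ref{thm:main}, hence bounded or lineal (not horocyclic, being cocompact on its model space), which would force $\overline{U_\alpha}$ to be compact — a contradiction. Thus $\bar G$ is non-amenable hyperbolic, hence of general type, and it contains the cocompact amenable subgroup $M$; so $\bar G$ satisfies $(\ref{i_HypUnim})$, and by the already-established implication $(\ref{i_HypUnim})\Rightarrow(\ref{i_Classif})$ it has a maximal compact normal subgroup $W_0$ with $\bar G/W_0$ standard rank one. The preimage of $W_0$ in $G$ is then the (maximal) compact normal subgroup witnessing $(\ref{i_Classif})$ for $G$.
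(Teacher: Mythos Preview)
Your overall architecture matches the paper's almost exactly: the same cycle through (\ref{i_HypUnim})--(\ref{i_HypNonAmen2}), the reduction via Proposition~\ref{pro:gt} to the totally disconnected case, and the same route for (\ref{i_Hilbert}). Two places, however, are genuine gaps rather than just sketches.

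\medskip
\textbf{The tree construction.} You correctly identify this as the crux, but your appeal to ``accessibility of compactly presented locally compact groups'' or ``quasi-isometric rigidity of trees in the locally compact setting'' is not something one can cite off the shelf. The paper's argument is more concrete: once $\bar G$ is totally disconnected, the boundary $\partial X\cong \bar G/\bar G_\xi$ is totally disconnected, so the hyperbolic graph $X$ (which one may take as a Cayley--Schreier graph) is not one-ended and hence has infinitely many ends; since the set of ends is a quotient of $\partial X$, the group $\bar G$ acts transitively on it, and then one invokes Nevo's theorem (\emph{a group acting transitively on the ends of a locally finite graph acts on a tree with the same end-space}) to produce the tree. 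Your subsequent Bass--Serre step is also too quick: barycentric subdivision alone does not give exactly two vertex orbits; one needs to take a minimal invariant subtree, suppress erasable vertices, and then use $2$-transitivity on $\partial T$ to deduce $2$-transitivity of each vertex stabilizer on its link, forcing the quotient graph of groups to be a single edge (and it is a segment rather than a loop because a loop would make the action focal).

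\medskip
\textbf{The implication $(\ref{i_Classif})\Rightarrow(\ref{i_Hilbert})$.} Your claim that $\overline{\langle\alpha\rangle U_\alpha}$ \emph{is} a point stabilizer is false in general: already in a rank one simple Lie group with Iwasawa decomposition $KAN$, the point stabilizer is $MAN$ with $M$ compact nontrivial, while $\overline{\langle\alpha\rangle U_\alpha}=\langle\alpha\rangle\ltimes N$ sits properly inside $AN$. What you need is only cocompactness, but that is not automatic either, particularly in the tree case where $U_\alpha$ is not closed. The paper argues differently: starting from the cocompact amenable subgroup $G_1$ provided by (\ref{i_HypUnim}), it passes via Propositions~\ref{prop:stcom} and~\ref{prop:cpli} to a cocompact $G_2=(N\times E)\rtimes_\alpha\ZZ$ with $N$ simply connected nilpotent and $E$ totally disconnected, and then invokes the Baumgartner--Willis result that $\overline{U_\alpha(E)}$ is cocompact in $E$ to conclude. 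Unimodularity is read off from (\ref{i_Classif}) via compactness of the abelianization.
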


Notice that the condition (\ref{i_Hilbert}) in Theorem~\ref{thm:application} is purely in the language of the category of locally compact groups: no geometric or analytic property is involved. This feature of Theorem~\ref{thm:application} is in fact shared by Theorem~\ref{thm:Amen:intro}: the geometric condition of negative curvature is deduced from a condition of algebraic/topological nature through the concept of contraction or {compaction}. 

\medskip
Let us point out that Theorem~\ref{thm:Clight} from the introduction follows immediately:

\begin{proof}[Proof of Theorem~\ref{thm:Clight}]
All elementary hyperbolic groups are amenable. Moreover, by Lemma~\ref{lem:AmyStab}, a focal hyperbolic group also is amenable. Thus a non-amenable hyperbolic group is non-elementary of general type. The implication (\ref{i_HypUnim})$\Rightarrow$(\ref{i_Classif}) from Theorem~\ref{thm:application} therefore yields the desired conclusion.
\end{proof}

\subsection{Proof of Theorem~\ref{thm:application}}\label{proofb}

The proof requires the following classical lemma {(see the corollary to Theorem~8 in \cite{Arens} or Ch.~VII, App.~1, Lemme~2 in \cite{Bourbaki}).

\begin{lem}\label{lem:orbihomeo}Let $G$ be a locally compact, $\sigma$-compact group $G$ and $X$ a locally compact topological space on which $G$ acts continuously and transitively. Then for every $x\in X$, the orbital map $G/G_x\to X$ is a homeomorphism.\qed
\end{lem}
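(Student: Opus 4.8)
The orbital map $\phi\colon G/G_x\to X$, $gG_x\mapsto g\cdot x$, is well defined and bijective: well-definedness and injectivity come directly from the definition of the stabilizer $G_x$, while surjectivity is exactly the transitivity hypothesis. It is continuous because the action $G\times X\to X$ is continuous, so $g\mapsto g\cdot x$ is continuous from $G$ to $X$, and this map factors through the quotient $G/G_x$ carrying the quotient topology. Thus $\phi$ is already a continuous bijection, and the entire content of the lemma is that $\phi$ is \emph{open}, equivalently that the orbit map $\psi\colon G\to X$, $g\mapsto g\cdot x$, is open.

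Since $\psi$ is $G$-equivariant and $X$ carries a transitive $G$-action, openness of $\psi$ at the identity propagates to openness everywhere by homogeneity. Hence it suffices to show that for every neighbourhood $U$ of $1$ in $G$, the set $U\cdot x$ is a neighbourhood of $x$ in $X$. The plan is to extract this from a Baire category argument applied to $X$.

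Fix such a $U$ and, using the local compactness of $G$ together with the continuity of multiplication and inversion, choose a compact symmetric neighbourhood $V$ of $1$ with $VV\subseteq U$. As $G$ is $\sigma$-compact, it is covered by countably many left translates $g_1V, g_2V, \dots$, and applying $\psi$ together with transitivity gives $X=\bigcup_{n\ge 1} g_nV\cdot x$. Each $g_nV\cdot x$ is the continuous image of the compact set $g_nV$, hence compact, and therefore closed in the locally compact Hausdorff space $X$. Since such an $X$ is a Baire space, some $g_{n_0}V\cdot x$ has nonempty interior, so there is $v_0\in V$ with $g_{n_0}v_0\cdot x$ an interior point of $g_{n_0}V\cdot x$. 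The self-homeomorphism $y\mapsto (g_{n_0}v_0)^{-1}\cdot y$ of $X$ carries $g_{n_0}V\cdot x$ onto $v_0^{-1}V\cdot x$ and sends $g_{n_0}v_0\cdot x$ to $x$; consequently $x$ lies in the interior of $v_0^{-1}V\cdot x$. As $v_0\in V$ and $V$ is symmetric, $v_0^{-1}V\subseteq VV\subseteq U$, so $U\cdot x$ is a neighbourhood of $x$, as required.

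The main obstacle is precisely the Baire category step, which is where \emph{all} the hypotheses are used in an essential way: $\sigma$-compactness of $G$ is needed to reduce to a \emph{countable} cover of $X$ by the sets $g_nV\cdot x$; local compactness of $G$ is needed to choose $V$ compact so that these sets are closed; and local compactness of $X$ is needed to guarantee that $X$ is Baire, so that a closed member of a countable cover has nonempty interior. Everything else—the continuity and bijectivity of $\phi$ and the reduction of openness of $\psi$ to its behaviour at the identity via equivariance—is formal.
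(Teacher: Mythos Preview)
Your proof is correct and is the standard Baire category argument for this classical result. The paper itself does not give a proof but simply cites Arens and Bourbaki (note the \texttt{\textbackslash qed} immediately after the statement); the argument you have written is essentially what one finds in those references. One small implicit assumption worth flagging is that $X$ is Hausdorff, so that compact subsets are closed and the Baire property holds---this is part of the Bourbaki convention for ``locally compact'' and is clearly intended in context.
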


\begin{proof}[Proof of Theorem~\ref{thm:application}]
We first show that (\ref{i_HypUnim})  $\Leftrightarrow$ (\ref{i_HypNonAmen}) $\Leftrightarrow$ (\ref{i_HypNonAmen2}), because this is based only on general arguments of hyperbolic geometry from Section~\ref{sec:AmenableActions}. Throughout the proof, we let $X$ be a proper geodesic hyperbolic space on which $G$ acts continuously, properly and cocompact by isometries, see Corollary~\ref{cor:eqprop}.

\medskip \noindent
(\ref{i_HypUnim}) $\Rightarrow$ (\ref{i_HypNonAmen2}) Let us first check that the action on the visual boundary is transitive. Indeed, let $H$ be a cocompact amenable subgroup. By Proposition~\ref{prop:coctamen}(\ref{utrans}), the action of $H$ on the visual boundary has exactly two orbits, namely one singleton $\{\omega\}$ and its complement. Since $G$ is of general type, it has no fixed point and therefore it has only one orbit.
Since by Proposition~\ref{prop:coctamen} the stabilizer $H$ is transitive on $\bd X-\{\omega\}$, we deduce that the action on the visual boundary is 2-transitive.

\medskip \noindent
(\ref{i_HypNonAmen}) $\Rightarrow$ (\ref{i_HypUnim}). Since $\bd X$ is compact and the action on $\bd X$ is continuous, and $G$ is $\sigma$-compact, the stabilizer $H$ of a point is cocompact by Lemma~\ref{lem:orbihomeo}. By Lemma~\ref{lem:AmyStab}, $H$ is amenable.

\medskip \noindent
(\ref{i_HypNonAmen2}) $\Rightarrow$ (\ref{i_HypNonAmen}) is trivial.

\medskip

Now (\ref{i_HypUnim})  $\Leftrightarrow$ (\ref{i_HypNonAmen}) $\Leftrightarrow$ (\ref{i_HypNonAmen2}) is granted and we prove the equivalence of these properties with the last two ones. That (\ref{i_Classif}) implies (\ref{i_HypNonAmen}) is immediate. To establish the equivalence of Properties (\ref{i_HypUnim}), (\ref{i_HypNonAmen}), (\ref{i_HypNonAmen2}) with (\ref{i_Classif}) we now tackle the following implication.

\medskip \noindent
(\ref{i_HypNonAmen2}) $\Rightarrow$ (\ref{i_Classif})
Clearly the $G$-action on $X$ cannot be focal, so $G$ is of general type. By Lemma~\ref{lem:CptNorm}, the group $G$ has a maximal compact normal subgroup $W$. Upon replacing $G$ by $G/W$, there is no loss of generality in  assuming that $W$ is trivial. 

By Proposition~\ref{pro:gt}, $G$ is either virtually a simple adjoint Lie group of rank one (of the specified type), or is totally disconnected. In the former case (\ref{i_Classif}) follows and are done. We thus henceforth assume that $G$ is totally disconnected.

Let $\omega \in \bd X$ be a point at infinity and set $ H = G_\omega$. By assumption the orbit map induces a continuous bijection $\pi : G/H \to \bd X$, which is a homeomorphism by Lemma~\ref{lem:orbihomeo}.


Since $G$ is Gromov-hyperbolic, it is compactly generated. Recall that any compactly generated totally disconnected locally compact group acts continuously, properly and vertex-transitively by automorphisms on some locally finite connected graph $\mathfrak g$, namely its Cayley--Abels graph (see  the discussion following Proposition~\ref{prop:Cayley}). Since $\mathfrak g$ is quasi-isometric to $G$, hence to $X$, we may assume without loss of generality that $\mathfrak g = X$ or, equivalently, that $X$ is a graph. Since $G/H \cong \bd X$, it follows that $\bd X$ is totally disconnected. Thus $X$ cannot be one-ended, and must therefore have  infinitely many ends.  Since the set of ends of $X$ is a quotient of $\bd X$, we deduce that $G$ acts transitively on the set of ends of $X$. By a result independently due to M\"oller \cite{moller1992ends} and Nevo \cite{Nevo}, there exists an equivariant quasi-isometry from $X$ to a locally finite tree $T$ on which $G$ acts  properly and cocompactly. We can further suppose that this action is minimal and without inversions. We can also suppose that there is no erasable vertex in $T$ (a vertex is \textbf{erasable} if it has degree two and its stabilizer fixes both edges emanating from it). Observe that $G$ is the fundamental group of a certain finite graph of groups with universal covering $T$, so that in the graph of groups, all vertex and edge groups are profinite groups and inclusions are open. 

It is easy to check that for every vertex $v$ of degree at least three, the action of $G_v$ on the set $E(v)$ of neighbouring edges of $v$ is 2-transitive (see~\cite{burger2000groups}). This moreover holds also for vertices of degree two, since no such vertex is erasable. It follows that the graph of groups is actually an edge. This edge cannot be a loop, since otherwise the $G$-action on $T$ would be focal, which is excluded. 

\medskip
 It remains to establish the equivalence between (\ref{i_Hilbert}) and the other properties. This follows from the following two implications.

\medskip \noindent [(\ref{i_HypUnim}) and (\ref{i_Classif})]$\Rightarrow$(\ref{i_Hilbert})
We start from (\ref{i_HypUnim}). By Lemma~\ref{lem:CptNorm}, $G$ has a maximal compact normal subgroup $W$. 
So by assumption, $G/W$ has a closed amenable cocompact subgroup $G_1$. Being quasi-isometric to $G$, the group $G_1$ is non-elementary hyperbolic and therefore is, by Theorem~\ref{thm:main}(\ref{i_str}),
of the form $H\rtimes\Lambda$, where $\Lambda\in\{\ZZ,\RR\}$ acts by compacting $H$. 
Using Propositions~\ref{prop:stcom} and~\ref{prop:cpli}, $G_1$ admits a closed cocompact subgroup $G_2$ of the form $(N\times E)\rtimes_\alpha\ZZ$, where $N$ is a simply connected nilpotent Lie group and $E$ is totally disconnected, the action of $\ZZ$ preserving the direct decomposition $N\times E$, contracting $N$ and compacting $E$. Set $$U_\alpha(N\times E)=\{g\in N\times E:\lim_{n\to+\infty}\alpha^n(g)=1\}=N\times U_\alpha(E).$$
Since $E$ is totally disconnected, Corollary~3.17 from Baumgartner-Willis \cite{BaumgartnerWillis} implies that the closure of $U_\alpha(E)$ is cocompact in $E$. So the group $G_3=\overline{U_\alpha(N\times E)}\rtimes_\alpha\ZZ$ is a closed, cocompact subgroup of $G/W$ of the required form.

We finally use (\ref{i_Classif}), which implies that the abelianization of $G$ is compact, to deduce that $G$ is unimodular.

\medskip \noindent 
(\ref{i_Hilbert})$\Rightarrow$(\ref{i_HypUnim})
Let $G_1$ be the subgroup $\overline{\langle\alpha\rangle U_\alpha}$. We are going to show that $G_1$ is hyperbolic, amenable, and non-unimodular. Taking this for granted, this implies that $G_1$ is non-elementary hyperbolic by Theorem~\ref{thm:main}, so that $G$ is also non-elementary hyperbolic. But $G$ cannot be amenable since otherwise by Theorem~\ref{thm:main}, $G$ would be non-unimodular, so $G$ is of general type and admits the amenable group $G_1$ as a closed amenable cocompact subgroup.

Let us now check that $G_1$ is indeed hyperbolic, amenable, and non-unimodular.
Note that $\overline{U_\alpha}$ is normal in $G_1$. By Proposition~\ref{prop:ClosureContraction}, the action of $\alpha$ on $\overline{U_\alpha}$ is compacting; moreover $\overline{U_\alpha}$ is noncompact by assumption. In particular, the Haar multiplication of $\alpha$ on $\overline{U_\alpha}$ is less than one, while its Haar multiplication on the abelian quotient $G_1/\overline{U_\alpha}$ is obviously trivial. Thus $\Delta_{G_1}(\alpha)\neq 1$ and $G_1$ is non-unimodular. By Theorem~\ref{thm:main}(\ref{i_com}) (with $\beta=\log\Delta_{G_1}$), we deduce that $G_1$ is also hyperbolic. Finally, $G_1$ is amenable by Proposition~\ref{compamenable}.
\end{proof}

\subsection{Relatively hyperbolic non-uniform lattices}

Theorem~\ref{thm:RelHyp:cocpt} from the introduction follows readily from the combination of Theorem~\ref{thm:application} and the following. 

\begin{prop}\label{prop:RelHyp:cocpt}
Let $X$ be a proper hyperbolic geodesic metric space with cocompact isometry group, and let $\Gamma \leq \Isom(X)$ be  a closed cusp-uniform subgroup. 

If the  $\Gamma$-action is not cocompact, then $\Isom(X)$ is doubly transitive on $\bd X$. 
\end{prop}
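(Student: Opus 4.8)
The plan is to reduce the assertion, via Theorem~\ref{thm:application}, to the production of a cocompact amenable closed subgroup of $G:=\Isom(X)$. Since $X$ is proper, $G$ is a second countable locally compact group acting continuously, properly and cocompactly on $X$; thus $G$ is hyperbolic and $\bd X=\bd_X G$. Throughout I use that, $\Gamma$ being a relatively hyperbolic subgroup of $\Isom(X)$, its action on $X$ is geometrically finite.

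First I would show that the action of $\Gamma$ on $X$ is of general type. Geometric finiteness forces every point of $\bd X$ to be a conical limit point or a bounded parabolic point of $\Gamma$, hence to lie in the limit set, so that $\bd_X\Gamma=\bd X$; this set has at least two points, being nonempty (otherwise $X$ is bounded and $\Gamma$ acts cocompactly, contrary to hypothesis) and not a singleton (by the trichotomy for hyperbolic groups applied to $G$). Since $\Gamma$ does not act cocompactly, Lemma~\ref{lem:full_limit_set} shows its action is not quasi-convex, so by Proposition~\ref{prop:undistorted} it is horocyclic or of general type; it is not horocyclic, for otherwise $\bd_X\Gamma$ would be a single point by Proposition~\ref{prop:types}. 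Hence the $\Gamma$-action --- and therefore the $G$-action --- is of general type; in particular $\bd X$ is uncountable (so $G$ is non-elementary hyperbolic) and $G$ is non-amenable.

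Next I would locate a suitable boundary stabilizer. As the geometrically finite action of $\Gamma$ is not cocompact but has full limit set $\bd X$, it admits a bounded parabolic point $\xi\in\bd X$: if all boundary points were conical the action would be quasi-convex, and Lemma~\ref{lem:full_limit_set} would render it cocompact. Set $P:=G_\xi$. Since $\Gamma_\xi\le P$ and $\Gamma_\xi$ acts cocompactly on $\bd X\setminus\{\xi\}$, so does $P$; as $\bd X$ is perfect, $\bd X\setminus\{\xi\}$ is locally compact and noncompact, whence $P$ is noncompact and acts with unbounded orbits on $X$. By Lemma~\ref{lem:AmyStab} (due to S.~Adams), the stabilizer $\Isom(X)_\xi=P$ is amenable, and $P$ is closed in $G$. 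It therefore suffices to prove that $P$ acts \emph{cocompactly} on $X$: then $P$ is cocompact in $G$, and Theorem~\ref{thm:application}, implication (\ref{i_HypUnim})$\Rightarrow$(\ref{i_HypNonAmen2}), shows that $G=\Isom(X)$ acts doubly transitively on $\bd G=\bd X$, as required.

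To establish the cocompactness of $P$ on $X$ I would aim to show that the $P$-action on $X$ is focal with full limit set $\bd_X P=\bd X$: then Proposition~\ref{prop:undistorted} makes it quasi-convex and Lemma~\ref{lem:full_limit_set} makes it cocompact. Since $P$ fixes $\xi$, its action on $X$ is bounded, horocyclic, lineal or focal, and it is not bounded. It is convenient to split according to Proposition~\ref{pro:gt} applied to $G$: modulo the maximal compact normal subgroup $W$, either $G/W$ is a rank one simple Lie group, in which case $P$ maps onto a minimal parabolic subgroup of $G/W$, hence acts cocompactly on $X$ and is focal with $\bd_X P=\bd X$ (here one may also quote the classical double transitivity of rank one groups on their boundary directly); or $G/W$ is totally disconnected, and then one must rule out the horocyclic and lineal possibilities for $P$ and verify $\bd_X P=\bd X$. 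Horocyclicity is excluded via Lemma~\ref{lem:properU} --- a horocyclic $P$ would act both metrically properly and cocompactly on the noncompact locally compact space $\bd X\setminus\{\xi\}$, which (together with $\bd X$ having no isolated points) is the situation to be contradicted --- while $\bd_X P=\bd X$ is drawn from the cocompactness of $P$, already of $\Gamma_\xi$, on $\bd X\setminus\{\xi\}$. The main obstacle is exactly this last point in the totally disconnected case: upgrading the abstract ``parabolic'' datum attached to $\Gamma$ at $\xi$ --- which on its own only produces a horocyclic $\Gamma_\xi$ --- to the statement that the \emph{full} stabilizer $P=\Isom(X)_\xi$ acts focally on $X$ with full limit set, and hence cocompactly.
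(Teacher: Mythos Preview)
Your overall reduction---find a boundary point whose $G$-stabilizer is a cocompact amenable closed subgroup, then invoke Theorem~\ref{thm:application}---is legitimate, and your treatment of the preliminary steps (full limit set, general type, existence of a bounded parabolic point $\xi$) is fine. However, the gap you yourself identify at the end is real and is precisely where the argument fails: there is no evident mechanism, from the hypotheses alone, forcing $P=G_\xi$ to contain a hyperbolic isometry. The only input at $\xi$ is that $\Gamma_\xi$ is horocyclic and acts cocompactly on $\bd X\setminus\{\xi\}$; this passes to $P$, but Lemma~\ref{lem:properU} together with cocompactness only yields that $P$ (if horocyclic) acts properly cocompactly on $\bd X\setminus\{\xi\}$, which is not a contradiction. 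Your case split via Proposition~\ref{pro:gt} does not help either, since in the totally disconnected case nothing is known about $G$ beyond general type. So as written the proof is incomplete.

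The paper's argument bypasses this obstacle entirely by \emph{not} attempting to prove that $G_\xi$ itself is cocompact. Instead it runs a rescaling/limiting construction: choose a ray $\rho$ to $\xi$, use cocompactness of $G$ on $X$ to pick $g_n\in G$ with $g_n\rho(t_n)\to x_0$ and $g_n\rho$ converging to a full geodesic line $\ell$, and set $\eta=\lim g_n\xi$. The key computation shows that for every $x\in X$ the conjugates $g_n\gamma_n g_n^{-1}$ (with $\gamma_n\in\Gamma_\xi$ chosen via bounded parabolicity) stay bounded in $G$---this is exactly where $\beta_\xi(\Gamma_\xi)=0$ is used---and subconverge to an element of $G_\eta$ carrying $x$ into a fixed tube around $\ell$. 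Thus $G_\eta\cdot\mathcal N_R(\ell)=X$, which immediately gives transitivity of $G_\eta$ on $\bd X\setminus\{\eta\}$ and hence double transitivity of $G$. No appeal to Theorem~\ref{thm:application}, no focal/horocyclic dichotomy for the stabilizer, and no case split on the structure of $G$ are needed. The moral is that the cocompactness you seek is manufactured at a \emph{limit} point $\eta$ rather than at the original parabolic point $\xi$.
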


\begin{proof}
If $\Isom(X)$ stabilizes a point or a pair of points in $\bd X$, then it is amenable by Lemma~\ref{lem:AmyStab}, and so is $\Gamma$. It follows that $\Gamma$ acts cocompactly by Proposition~\ref{prop:RelHyp:bis}, and we are done.

We may thus assume that $\Isom(X)$ is of general type, and that the $\Gamma$-action is not cocompact. Thus there is some $\xi \in \bd X$ which is bounded parabolic and not conical with respect to $\Gamma$. 

Our goal is to show that there is some $\eta \in \bd X$ such that the stabilizer $P = \Isom(X)_\eta$ is transitive on on $\bd X \setminus \{\eta\}$. Since $\Isom(X)$ is of general type, it follows that $\Isom(X)$ is doubly transitive on $\bd X$, as desired. 

Since $\xi$ is not conical with respect to $\gamma$, we have $\beta_\xi(\Gamma_\xi) = 0$. This means that every horosphere around $\xi$, defined with respect to some choice of horofunction, say $h$, is preserved by $\Gamma_\xi$ up to some fixed constant. Let now $\rho : \RR_+ \to X$ be a geodesic ray pointing to $\xi$. Since $\Isom(X)$ is cocompact, we can find sequences $(g_n)$ in $G$ and $(t_n)$ in $\RR_+$ such that $\lim_n t_n = \infty$, $\lim g_n(\rho(t_n)) = x_0$ for some $x_0 \in X$, and $g_n.\rho$ converges uniformly on compact sets to some geodesic line $\ell$ passing through $x_0$. In particular, one of the two endpoints of $\ell$, say $\eta$, coincides with $\lim_n g_n.\xi$. Let $P = \Isom(X)_\eta$.

\medskip
We claim that there is some constant $R$ such that $P.\mathcal N_R(\ell) = X$, where $\mathcal N_R(\ell)$ denotes the $R$-neighbourhood of $\ell$. Since $\eta$ is an endpoint of $\ell$, this immediately implies that $P$ acts transitively on $\bd X \setminus \{\eta\}$, so the claim implies the theorem.

\medskip
Indeed, let $x \in X$ be arbitrary. Since $g_n\inv(x) $ converges to $\xi$ within some bounded neighbourhood of the ray $\rho$, we have $h(g_n\inv.x) = -\infty$, where $h$ is the horofunction which was chosen above. Now we observe that the assumption that $\xi$ is bounded parabolic implies that there is some constant $R$ such that for all $z \in X$, if $h(z) <0$ then there is some $\gamma \in \Gamma$ such that $\gamma.z$ is a distance at most $R$ from $\rho$. In particular, for all $n$ large enough, we can find $\gamma_n \in \Gamma_\xi$ such that $\gamma_n g_n\inv(x) \in \mathcal N_R(\rho)$.  Thus 
\begin{equation}\label{eq:conj}
g_n \gamma_n g_n\inv(x) \in \mathcal N_R(g_n.\rho). 
\end{equation}
Combining the fact that $g_n\inv(x)$ remains at bounded distance from $\rho$ and 
$\beta_\xi(\gamma_n) = 0$, we deduce that  $d(\gamma_n  g_n\inv(x),g_n\inv(x))$ is bounded.  Hence the sequence $g_n \gamma_n g_n\inv$ is bounded in $\Isom(X)$ and we can assume, upon passing to a subsequence, that it converges to some $h \in \Isom(X)$. Since $\lim_n g_n\xi  = \eta$, we have $h \in P$. Passing to the limit as $n \to \infty$ in (\ref{eq:conj}), we obtain $h(x) \in \mathcal N_R(\ell)$, which proves the claim. 
\end{proof}

%
%
%
%

\bibliographystyle{amsalpha}
\bibliography{amenhyp}
\end{document}